\documentclass[a4paper,12pt]{amsart}

\usepackage{amssymb}
\usepackage{amsmath}
\usepackage{tikz}

\newtheorem{theorem}{Theorem}[section] 
\newtheorem{lemma}[theorem]{Lemma} 
\newtheorem{corollary}[theorem]{Corollary} 
\newtheorem{proposition}[theorem]{Proposition}
 
\theoremstyle{definition}
\newtheorem*{ack}{Acknowledgments}
\newtheorem{definition}[theorem]{Definition} 
\newtheorem{remark}[theorem]{Remark}

\title[]{Classification of degenerate Verma modules for $E(5,10)$.}

\author{Nicoletta Cantarini}\author{Fabrizio Caselli}\author{Victor Kac}
\subjclass[2010]{08A05, 17B05 (primary), 17B65, 17B70 (secondary)}
\keywords{Linearly compact Lie superalgebras, finite Verma modules, singular vectors, de Rham complexes for $E(5,10)$.}
\address{Fabrizio Caselli and Nicoletta Cantarini, Dipartimento di matematica, Universit\`a di Bologna, Piazza di Porta San Donato 5, 40126 Bologna, Italy}

\email{fabrizio.caselli@unibo.it}
\email{nicoletta.cantarini@unibo.it}
\address {Victor Kac, Department of Mathematics, MIT, 77 Massachusetts Avenue, Cambridge, MA 02139, USA}

\email{kac@math.mit.edu}

%\subjclass[2010]{17B15, 17B25 (primary), 17B65, 17B70 (secondary)}
%\keywords{Lie superalgebras, Verma modules, Singular vectors}
 \DeclareMathOperator{\Sym}{Sym}
  \DeclareMathOperator{\Hom}{Hom}
\newcommand{\N}{\mathbb{N}}
\newcommand{\C}{\mathbb{C}}
\newcommand{\Z}{\mathbb{Z}}
\newcommand{\F}{\mathbb{C}}
\newcommand{\de}{\partial}
\newcommand{\slc}{\mathfrak{sl}_5}
\newcommand{\inlinewedge}{\textrm{\raisebox{0.6mm}{\footnotesize $\bigwedge$}}}
\newcommand{\displaywedge}{\textrm{\raisebox{0.6mm}{\tiny $\bigwedge$}}}
\newcommand{\g}{\mathfrak {g}}
\newcommand{\deb}{\bold \partial}
\addtolength {\textwidth}{4cm}
\addtolength{\hoffset}{-2cm}
\makeatletter 
\makeatother
\begin{document}
	\maketitle
	\begin{abstract}
	Given a Lie superalgebra $\g$ with a subalgebra $\g_{\geq 0}$, and a finite-dimensional irreducible $\g_{\geq 0}$-module $F$, the induced $\g$-module $M(F)=\mathcal{U}(\g)\otimes_{\mathcal{U}(\g_{\geq 0})}F$ is called a finite Verma
	module. In the present paper we classify the non-irreducible finite Verma modules over the largest exceptional linearly compact Lie superalgebra $\g=E(5,10)$ with the subalgebra $\g_{\geq 0}$ of minimal codimension. This is done via classification of all singular vectors in the modules $M(F)$. Besides known singular vectors of degree 1,2,3,4 and 5, we discover two new singular vectors, of degrees 7 and 11. We show that the corresponding morphisms of finite Verma modules of degree 1,4,7, and 11 can be arranged in an infinite number of bilateral infinite complexes, which may be viewed as “exceptional" de Rham complexes for $E(5,10)$. 
\end{abstract}
\section{Introduction} Recall that a linearly compact Lie (super)algebra $\g$ is defined by the property that, viewed as a vector space, $\g$ is linearly compact. According to E.\ Cartan's classification, the list of infinite-dimensional simple linearly compact Lie algebras consists of four Lie--Cartan series: $W_n$, $S_n$, $H_n$, and $K_n$.

The infinite-dimensional simple linearly compact Lie superalgebras were classified in \cite{K} and explicitely described in \cite{CK}; all their maximal open subalgebras were classified in \cite{CaK}. The complete list consists of ten “classical" series (which include the Lie--Cartan series), and five exceptional
examples, denoted by $E(1,6)$, $E(3,6)$, $E(3,8)$, $E(4,4)$, and $E(5,10)$.
With the exception of $E(4,4)$, these Lie superalgebras carry a $\Z$--gradation, compatible with the parity:
$$\g=\bigoplus_{j\geq -d} \g_j,$$
where $d=2$ for $E(1,6)$, $E(3,6)$ and $E(5,10)$, and $d=3$ for $E(3,8)$.
Then $\g_{\geq 0}:=\oplus_{j\geq 0}\g_j$ is a maximal open subalgebra of $\g$
of minimal codimension.
In the case of $\g=E(3,6)$ and $E(3,8)$ the subalgebra $\g_0$ is isomorphic to $\mathfrak{sl}_3\oplus \mathfrak{sl}_2\oplus\C$, and for $\g=E(5,10)$, $\g_0$ is isomorphic to $\mathfrak{sl}_5$, which hints to connections to particle physycs \cite{KR2}.

Let $F$ be an irreducible finite-dimensional $\g_0$-module, extend it to $\g_{\geq 0}$ by letting all $\g_j$ with $j>0$ act by 0, and consider the {\em finite Verma $\g$-module}
$$M(F)=\mathcal{U}(\g)\otimes_{\mathcal{U}(\g_{\geq 0})}F,$$
where $M(F)$ is viewed as a vector space with discrete topology. These modules are especially interesting since their topological duals are linearly compact.

The first problem of representation theory of linearly compact Lie superalgebras is to classify their degenerate (i.e., non-irreducible) finite Verma modules and
morphisms between them. This is equivalent to classification of {\em singular vectors} in these modules, i.e., those which are annihilated by $\g_j$ with $j\geq 1$. This problem was solved for Lie algebras $W_n$, $S_n$ and $H_n$ by Rudakov
\cite{R1}, \cite{R0}. In particular, he showed that the degenerate finite $W_n$-modules form the de Rham complex in a formal neighborhood of 0 in $\C^n$ (rather its  topological dual).

In a series of papers \cite{KR1}, \cite{KR2}, \cite{KR3} this problem was solved for the exceptional linearly compact Lie superalgebra $E(3,6)$. It turned out
that all the morphisms between the degenerate finite Verma modules over $E(3,6)$ can be arranged in an infinite number of complexes, and cohomology of these complexes was computed in \cite{KR2} as well. The most difficult technical part of this work is \cite{KR3}, where all singular vectors have been classified.

In the subsequent paper \cite{KR} a solution to this problem was announced for $E(3,8)$, and a conjecture on classification of degenerate finite Verma modules for $E(5,10)$ was posed, motivated by the singular vectors of degree 1 constructed there
(the degree on $M(F)=\mathcal{U}(\g_{<0})\otimes F$ is induced by the degree on 
$\g_{<0}=\oplus_{j<0}\g_j$). In a more recent paper \cite{R} it was proved that these are all singular vectors of degree 1, and also some singular vectors of degree 2,3,4 and 5 have been constructed. In the subsequent paper \cite{CC} it was shown that the singular vectors of degree less than or equal to 3 constructed by Rudakov 
are all singular vectors of degree less than or equal to 3. Actually, the morphisms of degrees 2, 3 and 5 corresponding to singular vectors constructed in \cite{R} are composition of morphisms of degree 1 and 4, and the morphisms of degree 1 and 4 can be arranged in an infinite number of infinite complexes \cite{R}. However, in Figure 2 of \cite{R}
there are two notable gaps in the complexes.

The key discovery of the present paper is the existence of morphisms of degree 7 and 11, which fill these gaps (see Figure \ref{E510morphisms}). Moreover, we show that there are no further singular vectors (Theorem \ref{conclusions}), thereby proving the conjecture from \cite{KR}
on classification of degenerate finite Verma modules over $E(5,10)$.

The proof of Theorem \ref{conclusions} goes as follows. First, using a result
from \cite{R0} on $S_n$-modules for $n=5$, which is the even part of $E(5,10)$, we show that there are no singular vectors of degree greater than 14. Next we find that for degrees between 11 and 14 there is only one singular vector, it has degree 11 and defines a morphism from $M(\C^5)$ to $M({\C^5}^*)$, where $\C^5$
is the standard $\slc$-module and ${\C^5}^*$ its dual. After that, using the techniques of \cite{CC}, we show that in degrees between 6 and 10 the
only singular vector has degree 7 and it defines a morphism from $M(S^2\C^5)$
to $M(S^2{\C^5}^*)$. These are precisely the two morphisms, missing in Figure 2 of \cite{R}. Finally, we show that in degrees less than or equal to 6 there are no other singular vectors as compared to \cite{R}. The calculations involve solution of large
systems of linear equations, which are performed with the aid of computer.
Note also that the construction of morphisms is facilitated by the duality, constructed in \cite{CCK}, such that the morphism $M(F) \rightarrow M(F_1)$
induces the morphism $M(F_1)^*\rightarrow M(F)^*$ and for $E(5,10)$ has the property that $M(F)^*=M(F^*)$.

We have learned recently that Daniele Brilli obtained in \cite{B} the upper bound 12 on the degrees of singular vectors for finite Verma modules over $E(5,10)$, using
the techniques of representation theory of Lie pseudoalgebras.

\section{Preliminaries}\label{S1}
We let $\N=\{0,1,2,3,\dots\}$ be the set of non-negative integers and for $n\in\N$ we set $[n]=\{i\in\N ~|~ 1\leq i\leq n\}$.

%If $P$ is a proposition we let $\chi_{P}=1$ if $P$ is true and $\chi_{P}=0$ if $P$ is false.

We consider the simple, linearly compact Lie superalgebra of exceptional type $\g=E(5,10)$ whose even and odd parts are
as follows: $\g_{\bar{0}}$ consists of zero-divergence vector fields in five (even) indeterminates $x_1,\ldots,x_5$, i.e., 
\[\g_{\bar{0}}=S_5=\{X=\sum_{i=1}^5f_i\partial_i ~|~ f_i\in\C[[x_1,\dots,x_5]], \textrm{div}(X)=0\},\]
where $\partial_i=\partial_{x_i}$,
and $\g_{\bar{1}}=\Omega^2_{cl}$ consists of closed two-forms in the five indeterminates $x_1,\ldots,x_5$.
The bracket between a vector field and a two-form is given by the Lie derivative and for $f,g\in \C[[x_1,\dots,x_5]]$ we have
$$[fdx_i\wedge dx_j,g dx_k\wedge dx_l]=\varepsilon_{ijkl}fg\partial_{t_{ijkl}}$$ 
where, for $i,j,k,l\in [5]$, $\varepsilon_{ijkl}$ and $t_{ijkl}$ are defined as follows: if $|\{i,j,k,l\}|=4$ we let $t_{ijkl}\in [5]$ be such that $|\{i,j,k,l,t_{ijkl}\}|=5$ and $\varepsilon_{ijkl}$ be the sign of the permutation 
$(i,j,k,l,t_{ijkl})$. If $|\{i,j,k,l\}|<4$ then $\varepsilon_{ijkl}=0$. 

From now on we shall denote $dx_i\wedge dx_j$ simply by $d_{ij}$.

The Lie superalgebra $\g$ has a consistent, irreducible, transitive $\Z$-grading of depth 2 where,
for $k\in\N$,
\begin{align*}
\g_{2k-2}&=\langle f\partial_i ~|~i=1,\dots,5, f\in\C[[x_1,\dots, x_5]]_{k}\rangle\cap S_5\\
\g_{2k-1}&=\langle fd_{ij} ~|~ i,j=1,\dots,5, f\in\C[[x_1,\dots, x_5]]_{k}\rangle\cap\Omega^2_{cl}
\end{align*}
where 
by $\C[[x_1,\dots, x_5]]_{k}$ we denote the homogeneous component of $\C[[x_1,\dots, x_5]]$ of degree $k$.

Note that $\g_0\cong \mathfrak{sl}_5$, $\g_{-2}\cong (\C^5)^*$, $\g_{-1}\cong \inlinewedge^2\C^5$ as $\g_0$-modules (where $\C^5$ denotes the
standard $\mathfrak{sl}_5$-module).
We set $\g_{-}=\g_{-2}\oplus \g_{-1}$, $\g_{+}=\oplus_{j>0}\g_j$ and $\g_{\geq 0}=\g_0\oplus \g_+$.

We denote by $U$ (resp.\ $U_{-}$) the universal enveloping algebra of $\g$ (resp.\ $\g_-$). Note that $U_-$ is
a $\g_0$-module with respect to the adjoint action: for $x\in \g_0$ and $u\in U_-$,
$$x.u=[x,u]=xu-ux.$$
We also point out that the $\Z$-grading of $\g$ induces a $\Z$-grading on the enveloping algebra $U_-$.
It is customary, though, to invert the sign of the degrees hence getting a grading over $\N$. Note that the homogeneous component
$(U_-)_d$ of degree $d$
of $U_-$ under this grading is a $\g_0$-submodule. 

We fix the Borel subalgebra $\langle x_i\partial_j, h_{ij}=x_i\partial_i-x_j\partial_j ~|~ i<j\rangle$ of $\g_0$ and we consider the usual base of the corresponding root system given by $\{\alpha_{12},\ldots,\alpha_{45}\}$. We let $\Lambda$ be the weight lattice of $\frak{sl}_5$ and we express all weights of $\frak{sl}_5$ using their coordinates with respect to the fundamental weights $\varphi_{12},\varphi_{23},\varphi_{34},\varphi_{45}$, i.e., for $\lambda\in \Lambda$ we write $\lambda=(\lambda_{12},\ldots,\lambda_{45})$ for some $\lambda_{i\,i+1}\in \mathbb Z$ to mean $\lambda=\lambda_{12}\varphi_{12}+\cdots+\lambda_{45}\varphi_{45}$.

%For $i<j$ we denote as usual
%\[
%\alpha_{ij}=\sum_{k=i}^{j-1}\alpha_{k\,k+1}
%\]
%and $\alpha_{ji}=-\alpha_{ij}$. For notational convenience we also let $\alpha_{ii}=0$.
%Viewed as elements in the weight lattice we have 
%\[
%\alpha_{12}=(2,-1,0,0),\,\alpha_{23}=(-1,2,-1,0),\,\alpha_{34}=(0,-1,2,-1),\, \alpha_{45}=(0,0,-1,2).
%\]
 If $\lambda\in \Lambda$ is a weight,  we use the following convention: for all $1\leq i<j\leq 5$ we let
\[
\lambda_{ij}=\sum_{k=i}^{j-1}\lambda_{k\,k+1}.
\]
If $V$ is a $\frak {sl}_5$-module and $v\in V$ is a weight vector we denote by $\lambda(v)$ the weight of $v$ and by $\lambda_{ij}(v)=(\lambda(v))_{ij}$.

If $\lambda=(a,b,c,d)\in \Lambda$ is a dominant weight, i.e. $a,b,c,d\geq 0$, let us denote by $F(\lambda)=F(a,b,c,d)$ the irreducible $\mathfrak{sl}_5$-module of highest 
weight $\lambda$. In this paper we always think of $F(a,b,c,d)$ as the irreducible submodule of \[\Sym^a(\C^5)\otimes \Sym^b(\displaywedge^2(\C^5))\otimes 
\Sym^c(\displaywedge^2(\C^5)^*)\otimes \Sym^d((\C^5)^*)\] generated by the highest weight vector $x_1^ax_{12}^b{x_{45}^*}^c{x_5^*}^d$,
where  $\{x_1,\dots, x_5\}$ denotes the standard basis of $\C^5$, $x_{ij}=x_i\wedge x_j$, and $x_i^*$ and $x_{ij}^*$ are the corresponding dual
basis elements. Besides, for a weight $\lambda=(a,b,c,d)$ we let  $\lambda^*=(d,c,b,a)$, so that $F(\lambda)^*\cong F(\lambda^*)$.
 
Notice that, as a $\g_0$-module, $\g_1\cong F(1,1,0,0)$ and that $x_5d_{45}$ is a lowest weight vector in $\g_1$. Moreover,
for $j\geq 1$, we have $\g_j=\g_1^j$.

\section{Generalized Verma modules and morphisms}\label{S3}
We recall the definition and some properties of (generalized) Verma modules over $E(5,10)$, most of which hold in the generality of arbitrary $\mathbb Z$--graded Lie superalgebras (for  some detailed proofs see \cite{CC}). 
 
Given a $\g_0$-module $V$, we extend it to
a $\g_{\geq 0}$-module by letting $\g_+$ act trivially, and define
$$M(V)=U\otimes_{U(\g_{\geq 0})}V.$$
Note that $M(V)$ has a $\g$-module structure by multiplication on the left, and is called the (generalized) Verma module associated to $V$. We also observe that $M(V)\cong U_{-}\otimes_{\C}V$ as $\g_0$-modules.

 If the $\g_0$-module $V$ is finite-dimensional and irreducible, then we call $M(V)$ a finite Verma module (it is finitely-generated as a $U_-$-module). We denote by $M(\lambda)$ the finite Verma module $M(F(\lambda))$. A finite Verma module is said to be non-degenerate if it is irreducible and degenerate otherwise.
\begin{definition}
We say that an element $w\in M(V)$ is homogeneous of degree $d$ if $w\in (U_-)_d\otimes V$.
\end{definition}
\begin{definition} A vector $w\in M(V)$ is called a {\em singular} vector if it satisfies the following conditions:
\begin{itemize}
\item[(i)] $x_i\partial_{i+1}w=0$ for every $i=1,\dots,4$;
\item[(ii)] $zw=0$ for every $z\in \g_1$;
\item[(iii)] 
$w$ does not lie in $V$.
\end{itemize}
\end{definition}
We observe that the homogeneous components of positive degree of a singular vector are singular vectors. 
The same holds for its weight components. From now on we will thus assume that a singular vector is a homogeneous weight
vector unless otherwise specified.
Notice that if  condition (i) is satisfied then condition (ii) holds if $x_5d_{45}w=0$ since $x_5d_{45}$ is a lowest weight vector in $\g_1$. 

We recall that a minimal Verma module $M(V)$ is degenerate if and only if it contains a singular vector \cite[Proposition 3.3]{CC}.

Degenerate Verma modules can be described in terms of morphisms.
A morphism $\varphi: M(V)\rightarrow M(W)$ can always be associated to an element $\Phi\in U_{-}\otimes \Hom(V,W)$ as follows: for $u\in U_-$ and $v\in V$ we let
$$\varphi(u\otimes v)=u\Phi(v)$$
where, if $\Phi=\sum_iu_i\otimes \theta_i$ with $u_i\in U_-$,
$\theta_i\in \Hom(V,W),$ we let $\Phi(v)=\sum_iu_i\otimes \theta_i(v)$.
We will say that $\varphi$ (or $\Phi$) is a morphism of degree $d$ if $u_i\in (U_-)_d$ for every $i$.

\medskip

The following proposition characterizes morphisms between Verma modules.

\begin{proposition}\cite{KR,R}\label{morphisms}
Let $\varphi: M(V)\rightarrow M(W)$ be the linear map associated with the element $\Phi\in U_{-}\otimes \Hom(V,W)$.
Then $\varphi$ is a morphism of $\g$-modules if and only if the following conditions
hold:
\begin{itemize}
\item[(a)] $\g_0.\Phi=0$;
\item[(b)] $X\varphi(v)=0$ for every $X\in \g_1$ and for every $v\in V$.
\end{itemize} 
\end{proposition}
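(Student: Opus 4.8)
The plan is to show both implications directly from the definitions, using the fact that $M(W)$ is generated over $U_-$ by $W$ and that $\g$ is generated by $\g_{-}$, $\g_0$ and $\g_1$ (since $\g_j=\g_1^j$ for $j\geq 1$). First I would observe that $\varphi$ is automatically $U_-$-linear by construction: $\varphi(u'\cdot(u\otimes v))=\varphi(u'u\otimes v)=u'u\Phi(v)=u'\varphi(u\otimes v)$. Hence $\varphi$ is a $\g$-module map if and only if it commutes with the action of $\g_0$ and of $\g_1$, because these together with $\g_{-}$ generate $\g$, and compatibility with an arbitrary product of generators follows from compatibility with each factor once $U_-$-linearity is known. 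So the problem reduces to encoding ``$\varphi$ commutes with $\g_0$'' and ``$\varphi$ commutes with $\g_1$'' as conditions (a) and (b).

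For the $\g_0$ part: since $\varphi$ is determined by its restriction to $1\otimes V\cong V$, and since for $x\in\g_0$ and $v\in V$ we have $x\cdot(1\otimes v)=1\otimes xv$ (the $\g_0$-action on $V\subseteq M(V)$), commuting with $\g_0$ on all of $M(V)$ is equivalent — again by $U_-$-linearity together with the fact that $\g_0$ normalizes $U_-$ — to the condition $\varphi(1\otimes xv)=x\varphi(1\otimes v)$ for all $x\in\g_0$, $v\in V$. Writing $\Phi=\sum_i u_i\otimes\theta_i$ and using that $x$ acts on $U_-\otimes\Hom(V,W)$ by the tensor of the adjoint action on $U_-$ and the natural action on $\Hom(V,W)$, a short computation turns $\varphi(1\otimes xv)-x\varphi(1\otimes v)$ into the component of $(x.\Phi)(v)$, so the requirement is exactly $x.\Phi=0$ for all $x\in\g_0$, i.e.\ condition (a). (One may phrase this as: $\Phi$ must be a $\g_0$-invariant in $U_-\otimes\Hom(V,W)$, which is the natural home of morphisms of degree $d$.)

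For the $\g_1$ part: assuming (a) holds, I claim $\varphi$ is a $\g$-map iff it commutes with the action of each $X\in\g_1$. The nontrivial direction is: if $X\varphi(u\otimes v)=\varphi(X(u\otimes v))$ fails to be automatic, we must impose it. Because $\varphi$ is $U_-$-linear and $\g_0$-equivariant, and because $[\g_1,\g_{-}]\subseteq\g_0\oplus\g_{-}$ (depth $2$ grading), one reduces commuting with $X\in\g_1$ on all of $M(V)$ to commuting with $X$ on the generating subspace $1\otimes V$: indeed for $u\in(U_-)_k$ one moves $X$ past $u$ using the bracket relations, producing lower-degree terms already handled by induction on $k$ together with (a). Thus the only new constraint is $X\varphi(1\otimes v)=\varphi(X(1\otimes v))=\varphi(0)=0$ (since $\g_1$ kills $V$ in $M(V)$), which is precisely condition (b). Conversely, if (a) and (b) hold, reversing these reductions shows $\varphi$ intertwines the action of every generator of $\g$, hence of all of $\g$.

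The main obstacle is the inductive ``moving $X\in\g_1$ past $u\in U_-$'' step: one must check that the bracket terms $[X,u]$ that appear land in strictly lower degree (or in $\g_0\cdot U_-$, controlled by (a)) so that the induction on degree closes, and that nothing in $\g_{\geq 2}$ intervenes — this is where the depth-$2$ and transitivity properties of the grading, and the identity $\g_{j}=\g_1^j$, are essential. Everything else is a routine unwinding of definitions; since the statement is attributed to \cite{KR,R}, I would keep the verification brief and refer to \cite{CC} for the analogous detailed argument.
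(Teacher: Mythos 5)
The paper does not actually prove this proposition: it is quoted from \cite{KR,R}, with the detailed verifications deferred to \cite{CC}, so there is no in-text argument to compare yours against. That said, your proof is correct and is the standard one: $U_-$-linearity is built into the definition of $\varphi$, so equivariance reduces to commuting with $\g_0$ and with $\g_1$ (which generate $\g_+$ via $\g_j=\g_1^j$ and, together with $\g_-$, all of $\g$, the commutant being a subalgebra); the $\g_0$-computation identifies $x\varphi(u\otimes v)-\varphi(x(u\otimes v))$ with $u\,(x.\Phi)(v)$, giving (a); and the induction moving $X\in\g_1$ past $U_-$ closes because $[\g_1,\g_{-1}]\subseteq\g_0$ and $[\g_1,\g_{-2}]\subseteq\g_{-1}$, so the bracket terms are handled by (a) and by $U_-$-linearity, leaving $X\varphi(1\otimes v)=\varphi(X(1\otimes v))=0$ as the only new constraint, which is (b). No gaps.
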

We observe that, if $M(V)$ is a finite Verma module and condition (a) holds, it is enough to verify  condition (b) for an element $X$ generating
$\g_1$ as a $\g_0$-module and for $v$ a highest weight vector in $V$.

We recall that   a finite Verma module $M(\mu)$  contains a singular vector if and only if
 there exist a finite Verma module $M(\lambda)$ and a morphism $\varphi:M(\lambda)\rightarrow M(\mu)$ of positive degree
 \cite[Proposition 3.5]{CC}.

We recall the following duality on Verma modules which is established in \cite{CCK} in a much wider generality. %Recall that for a $\g_0$-module $F(\lambda)$ we denote by $\lambda^*$ the highest weight of $F(\lambda)^*$.
\begin{theorem}\label{duality}
Let $\varphi:M(\lambda)\rightarrow M(\mu)$ be a morphism of $\g$-modules of degree $d$. Then there exists a dual morphism $\varphi^*:M(\mu^*)\rightarrow M(\lambda^*)$ of the same degree $d$. Equivalently, if $M(\lambda)$ contains a singular vector of degree $d$ and weight $\mu$, then $M(\mu^*)$ contains a singular vector of degree $d$ and weight $\lambda^*$.
\end{theorem}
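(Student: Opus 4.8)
The plan is to obtain Theorem~\ref{duality} as a formal consequence of the single isomorphism $M(V)^*\cong M(V^*)$, valid for every finite-dimensional $\g_0$-module $V$, together with the functoriality of the (topological) dual; this is the content of \cite{CCK}, so I only indicate the mechanism and the point special to $E(5,10)$. The module $M(V)=U(\g)\otimes_{U(\g_{\geq 0})}V$ is induced from $\g_{\geq 0}$, and the decisive feature is that $\g_{\geq 0}$ has \emph{finite} codimension in $\g$: indeed $\g/\g_{\geq 0}\cong\g_-=\g_{-2}\oplus\g_{-1}$ has dimension $5+10=15$. The dual of an induced module is coinduced, $M(V)^*\cong\Hom_{U(\g_{\geq 0})}(U(\g),V^*)$ — here one must be careful about the interplay between discrete and linearly compact modules and about the continuity of the $\g$-action, which is defined through the principal anti-automorphism $x\mapsto -x$ of $\g$, and this is exactly where the generality of \cite{CCK} is used. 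Since the codimension is finite, coinduction and induction from $\g_{\geq 0}$ differ only by a twist by a one-dimensional $\g_{\geq 0}$-module, namely the Berezinian $\mathrm{Ber}(\g_-)$ of $\g/\g_{\geq 0}$, so that
\[
M(V)^*\ \cong\ U(\g)\otimes_{U(\g_{\geq 0})}\bigl(V^*\otimes\mathrm{Ber}(\g_-)\bigr)=M\bigl(V^*\otimes\mathrm{Ber}(\g_-)\bigr).
\]

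The step specific to $E(5,10)$ is to observe that this twist is trivial, which is ultimately because $\g_0\cong\slc$ admits no nontrivial character. Concretely, as a $\g_0$-module $\mathrm{Ber}(\g_-)$ is built from $\Lambda^{5}(\g_{-2})\cong\Lambda^{5}((\C^5)^*)$ and $\Lambda^{10}(\g_{-1})\cong\Lambda^{10}(\wedge^{2}\C^5)$, and both are powers of the determinant of the standard $\slc$-module, hence trivial; moreover $\g_+$ acts by $0$ on any one-dimensional module. Therefore $\mathrm{Ber}(\g_-)$ is the trivial $\g_{\geq 0}$-module and $M(V)^*\cong M(V^*)$. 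By contrast, for $E(3,6)$ and $E(3,8)$ the relevant $\g_0$ has a one-dimensional summand $\C$, so nontrivial characters occur and the Berezinian twist genuinely survives; this is why $M(V)^*=M(V^*)$ is particular to $E(5,10)$ among the exceptional cases.

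It remains to transport this along morphisms and to keep track of degrees. Given $\varphi\colon M(\lambda)\to M(\mu)$ of degree $d$, its transpose $\varphi^*$ is a morphism of $\g$-modules $M(\mu)^*\to M(\lambda)^*$, hence, after the identification above, a morphism $M(\mu^*)\to M(\lambda^*)$; tracking the internal degrees through the (co)induction comparison — in which $\mathrm{Ber}(\g_-)$ enters in degree $0$ and therefore affects source and target symmetrically — shows that $\varphi^*$ is again homogeneous of degree $d$. For the equivalent formulation one uses \cite[Proposition 3.5]{CC}: a morphism $M(\nu)\to M(\lambda)$ of positive degree $d$ is the same datum as a singular vector of degree $d$ and weight $\nu$ in $M(\lambda)$; applying this to $\varphi$ and to its dual $M(\lambda^*)\to M(\mu^*)$ converts the statement about morphisms into the asserted statement about singular vectors. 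I expect the genuinely delicate part to be the one flagged above: realizing $M(V)^*$ rigorously as an object of the category of $\Z$-graded linearly compact $\g$-modules and verifying that transposition is an exact contravariant functor matching the two grading conventions correctly and carrying conditions (a)--(b) of Proposition~\ref{morphisms} for $(\lambda,\mu)$ to the same conditions for $(\mu^*,\lambda^*)$; by comparison the vanishing of the Berezinian twist is a one-line representation-theoretic check.
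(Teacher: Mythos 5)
The paper itself gives no proof of Theorem \ref{duality}: it is imported verbatim from \cite{CCK} (``established in \cite{CCK} in a much wider generality''), so your argument is necessarily a reconstruction of that reference rather than of anything in this paper. Within your reconstruction, two things are right and worth keeping: the reduction of the second sentence to the first via \cite[Proposition 3.5]{CC}, and the observation that the would-be character twist is trivial for $E(5,10)$ because $\g_0\cong\slc$ admits no nontrivial one-dimensional representations (so $\Lambda^{5}(\g_{-2})$ and $\Lambda^{10}(\g_{-1})$ are trivial, and $\g_+$ kills any one-dimensional $\g_{\geq 0}$-module); this is indeed why no analogue of the $E(3,6)$/$E(3,8)$ twist survives here.

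The gap is in your central mechanism. The identity ``coinduction $\cong$ induction twisted by $\mathrm{Ber}(\g/\g_{\geq 0})$'' does not follow from finite codimension of $\g_{\geq 0}$; it requires $U(\g/\g_{\geq 0})$ to be finite-dimensional, i.e.\ the quotient to be purely odd. Here $\g_{-2}\cong(\C^5)^*$ is even, so $U(\g_-)$ contains the polynomial algebra $\C[\de_1,\ldots,\de_5]$ and $\Hom_{U(\g_{\geq 0})}(U(\g),V^*)\cong\Hom_{\C}(U(\g_-),V^*)$ is a direct product (a completion), which cannot be isomorphic to any induced module $M(W)$ with $W$ finite-dimensional. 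Replacing the full dual by the graded dual does not repair this: under the contragredient action $\g_-$ lowers the $\N$-degree on $M(V)^*$, so one obtains a costandard (co-Verma) object rather than a highest-weight induced module --- exactly as for ordinary Verma modules over $\slc$, whose restricted duals are not Verma modules --- and since $\g_j=0$ for $j<-2$ while $\g_j\neq 0$ for all $j\geq 0$, there is no grading-reversing (anti-)automorphism of $E(5,10)$ with which to convert it back. This is precisely why \cite{CCK} cannot and does not run the standard Berezinian-twist argument; the duality there is of a different nature, and at the level needed for this theorem its substantive content is the statement that the transposed datum $\Phi^*\in U_-\otimes\Hom(F(\mu^*),F(\lambda^*))$, obtained from $\Phi\in U_-\otimes\Hom(F(\lambda),F(\mu))$ via the canonical $\g_0$-isomorphism $\Hom(V,W)\cong\Hom(W^*,V^*)$ (which handles condition (a) of Proposition \ref{morphisms} and the degree count), again satisfies condition (b). That equivalence is the whole theorem, and your outline does not supply it.
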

\begin{remark}\label{dual}
Let $\varphi: M(V)\rightarrow M(W)$ be a linear map of degree $d$ associated to an element $\Phi\in U_-\otimes \Hom(V,W)$ that satisfies condition (a) of Proposition \ref{morphisms}. Then there exists a $\g_0$-morphism $\psi: (U_-)_d^*\rightarrow \Hom(V,W)$ such that
$\Phi= \sum_i u_i\otimes \psi(u_i^*)$ where $\{u_i, i\in I\}$ is any basis of $(U_-)_d$ and $\{u_i^*, i\in I\}$ is the corresponding dual basis.
\end{remark}

\begin{definition} Let $M(\mu)$ be a finite Verma module and let $\pi: M(\mu)\rightarrow U_-\otimes F(\mu)_{\mu}$ be the natural projection,
$F(\mu)_{\mu}$ being the weight space of $F(\mu)$ of weight $\mu$. Given a singular vector $w\in M(\mu)$, we call $\pi(w)$ the leading term of $w$.
\end{definition}

It is shown in \cite{CC} that the leading term of a singular vector is non-zero, and therefore a singular vector is uniquely determined by its leading term.

\medskip
The action of $E(5,10)$ on a module $M$ restricts to an action of its even part on $M$. It is therefore natural to take into account the structure of $M$ as an $S_5$-module also. In order to do this 
we consider the grading on $S_5$ given by $\deg x_i=2$ and $\deg (\de_i)=-2$ to be consistent with the embedding of $S_5$ in $E(5,10)$. The definition of a Verma module for $S_5$ is analogous to the one for $E(5,10)$. Rudakov classified all singular vectors for the infinite-dimensional Lie algebra $S_n$ in \cite{R0} and we recall here his results in the special case of $S_5$. 
\begin{theorem}\cite{R0}\label{classS5}
The following is a complete list (up to multiplication by a scalar) of singular vectors $w$ in Verma modules $M(\lambda)$ for $S_5$.
\begin{itemize}

\item[R1.] $\lambda=(1,0,0,0)$, $w=\de_1 \otimes x_1+ \de_2 \otimes x_2  +\de_3 \otimes x_3 + \de_4 \otimes x_4 +\de_5\otimes x_5$;
\item [R2.] $\lambda=(0,1,0,0)$, $w=\de_2 \otimes x_{12}+ \de_3 \otimes x_{13}+ \de_4 \otimes x_{14}+\de_5 \otimes x_{15}$;
\item [R3.] $\lambda=(0,0,1,0)$, $w=\de_3 \otimes x_{45}^*+ \de_4 \otimes x_{53}^*+ \de_5 \otimes x_{34}^*$;
\item [R4.] $\lambda=(0,0,0,1)$, $w=\de_4 \otimes x_5^*- \de_5 \otimes x_4^*$;
\item [R5.] $\lambda=(0,0,0,0)$, $w=\de_5 \otimes 1$;
\item[R6.] $\lambda=(1,0,0,0)$, $w=\de_5(\de_1 \otimes x_1+ \de_2 \otimes x_2  +\de_3 \otimes x_3 + \de_4 \otimes x_4 +\de_5\otimes x_5)$;
\end{itemize}  
\end{theorem}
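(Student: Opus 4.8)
The statement is the $n=5$ case of Rudakov's classification in \cite{R0}; the plan is to recover it by reducing to a finite computation. I work with the grading of this paper, so $\g_0\cong\mathfrak{sl}_5$, $\g_{-2}=\langle\de_1,\dots,\de_5\rangle\cong(\C^5)^*$, $\g_{\ge 2}$ is generated by $\g_2\cong F(2,0,0,1)$, and $M(\lambda)\cong\C[\de_1,\dots,\de_5]\otimes F(\lambda)=\Sym((\C^5)^*)\otimes F(\lambda)$ as a $U(\g_{-2})$-module, the degree of $\de^\alpha\otimes v$ being $2|\alpha|$. As in Section~\ref{S3}, a singular vector may be taken homogeneous of some even degree $d\ge 2$ and a weight vector, it is annihilated by all of $\g_{>0}$ as soon as it is killed by the raising operators $x_i\de_{i+1}$ of $\mathfrak n^+\subset\mathfrak{sl}_5$ and by a lowest weight vector $z$ of $\g_2$, and it is determined by its leading (top-weight) term; so it suffices to enumerate the admissible leading terms.

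First I would establish the a priori bound $d\le 4$, that is, that a singular vector involves at most two $\de$'s. This is the technical heart and the step I expect to be hardest, since it must be made uniform in $\lambda$. Following Rudakov, one uses that $M(\lambda)$ is a free $\C[\de_1,\dots,\de_5]$-module of rank $\dim F(\lambda)$ and that, because $[z,\de_i]\in\g_0$, the conditions $x_i\de_{i+1}w=0$ and $zw=0$ become an increasingly over-determined linear system on the coefficients of $w$ as the degree grows; a careful analysis of this system (together with a weight count ruling out most leading terms outright) shows that no singular vector survives beyond degree $4$. For $S_5$ specifically one may alternatively establish a crude bound first and then pin down the sharp value by computer.

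Next, degree $d=2$. Writing $w=\sum_{i=1}^5\de_i\otimes v_i$ with $v_i\in F(\lambda)$, and using $[x_jx_k\de_l,\de_i]=-(\delta_{ij}x_k+\delta_{ik}x_j)\de_l$ together with the vanishing of $\g_{>0}$ on $F(\lambda)$, one gets, for every divergence-free quadratic field,
\[
(x_jx_k\de_l)\cdot w=-1\otimes\bigl((x_k\de_l)v_j+(x_j\de_l)v_k\bigr),
\]
so $w$ is singular precisely when $(x_k\de_l)v_j+(x_j\de_l)v_k=0$ in $F(\lambda)$ for all admissible $j,k,l$, together with $\mathfrak n^+w=0$. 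Putting $j=k$ shows each $v_j$ is annihilated by every $x_j\de_l$ with $l\ne j$; in particular $v_1$ is a highest weight vector and $v_5$ a lowest weight vector of $F(\lambda)$, and the remaining relations tie the $v_i$ together along the Weyl group. A finite case analysis over the dominant weights $\lambda$ then shows the solution space is one-dimensional exactly for $\lambda\in\{(1,0,0,0),(0,1,0,0),(0,0,1,0),(0,0,0,1),(0,0,0,0)\}$, giving R1--R5; conceptually these are the de~Rham morphisms $d\colon\Omega^k\to\Omega^{k+1}$ for $k=0,1,2,3$ and the extra $S_5$-morphism $\Omega^4\to\Omega^5\cong\Omega^0$ afforded by volume-preservation.

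Finally, degree $d=4$. Here $w$ has leading term in $\Sym^2((\C^5)^*)\otimes F(\lambda)_\mu$, and imposing $\mathfrak n^+w=0$ and $zw=0$ yields a finite linear system whose only nonzero solution, up to scalar, is $\lambda=(1,0,0,0)$ with $w=\de_5\!\cdot\!\bigl(\sum_i\de_i\otimes x_i\bigr)$ --- that is R6, which is visibly the composite $M(0,0,0,1)\xrightarrow{\mathrm{R5}}M(0,0,0,0)\xrightarrow{\mathrm{R1}}M(1,0,0,0)$. Given the bound $d\le 4$ and the degree-$2$ list, this last step reduces to checking that this single composite remains singular and that no other weight survives. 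Combining the three steps yields exactly the list R1--R6.
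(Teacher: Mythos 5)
First, a point of comparison: the paper does not prove this statement at all --- Theorem \ref{classS5} is imported from Rudakov's paper \cite{R0}, specialized to $n=5$, and is used as a black box. Your proposal is therefore an attempt to reconstruct Rudakov's argument, and its architecture (bound the degree of a singular vector, then solve an explicit finite linear system in each surviving degree) is the right one. Your degree-$2$ analysis is essentially correct: the identity $(x_jx_k\de_l)\cdot w=-1\otimes\bigl((x_k\de_l)v_j+(x_j\de_l)v_k\bigr)$, the relations $(x_a\de_b)v_i=\delta_{ib}\,v_a$ coming from $\mathfrak{n}^+w=0$, and the branching on the first nonvanishing $v_i$ do lead to R1--R5 (one should add a sentence explaining why the resulting conditions on $\lambda$, which a priori range over an infinite set of dominant weights, reduce to finitely many numerical constraints --- they do, via the $\mathfrak{sl}_2$-strings through the $v_i$ --- but this is minor). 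The degree-$4$ step is likewise a finite check once everything else is in place.

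The genuine gap is the bound $d\le 4$. You correctly single it out as the hard step, but you do not prove it: ``a careful analysis of this system \dots shows that no singular vector survives beyond degree $4$'' asserts that a proof exists rather than giving one, and the fallback of ``establish a crude bound first and then pin down the sharp value by computer'' is circular, since without \emph{some} a priori bound, uniform in $\lambda$, there is no finite computation to hand to a computer. The difficulty is real: the leading term $\de^\alpha\otimes v_\lambda$ of a candidate singular vector has weight $\lambda-\sum_i\alpha_i\epsilon_i$, and for every fixed degree there are infinitely many dominant $\lambda$ for which this weight is dominant (e.g.\ $\de_1^{k}\otimes v_\lambda$ whenever $\lambda_{12}\ge k$), so no naive weight count eliminates high degrees; one needs a structural argument on the top-order part of the system, of the same flavour as the filtration and leading-term analysis this paper carries out for $E(5,10)$ in Sections \ref{first} and \ref{>10}. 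Until that bound is actually established, the classification is not proved; with it, the rest of your outline goes through.
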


\medskip
\noindent
Theorem \ref{classS5} provides the diagram of all non-zero morphisms between finite Verma modules for $S_5$ shown in Figure \ref{S5morphisms}.
\begin{figure}[h]
	\begin{center}
		$$
		\begin{tikzpicture}
		\draw[->, line width=3pt](1.8,0)--(-1.8,0);
		\draw[->, line width=3pt](3.17,3.6)--(2.07,0.2);
		\draw[->, line width=3pt](0.16,6.62)--(3.06,3.96);
		\draw[->, line width=3pt](-3.06,3.96)--(-0.16,6.62);
		\draw[->, line width=3pt](-2.07,0.2)--(-3.17,3.6);
		\draw[->, line width=3pt](-3,3.8)--(3,3.8);
		
		\draw[fill=black]{(2,0) circle(3pt)};
		\draw[fill=black]{(-2,0) circle(3pt)};
		\draw[fill=black]{(3.23,3.8) circle(3pt)};
		\draw[fill=black]{(-3.23,3.8) circle(3pt)};
		\draw[fill=black]{(0,6.75) circle(3pt)};
		
		\node at (2.3,-0.5) {$M(0,1,0,0)$};
		\node at (-2.3,-0.5) {$M(0,0,1,0)$};
		\node at (4.6,3.8) {$M(1,0,0,0)$};
		\node at (-0.2,7.2) {$M(0,0,0,0)$};
		\node at (-4.6,3.8) {$M(0,0,0,1)$};
		\node at (1.9,5.5){$\varphi_1$};
		\node at (-1.9,5.5){$\varphi_5$};
		\node at (0,4.1){$\varphi_6=\varphi_1 \circ \varphi_5$};
		\node at (-3,1.8){$\varphi_4$};
		\node at (3,1.8){$\varphi_2$};
		\node at (0,0.3){$\varphi_3$};
		
		\end{tikzpicture}$$
	\end{center}
	\caption{\label{S5morphisms} All non-zero morphisms between finite Verma modules for $S_5$. External morphisms have degree 2, and the internal one has degree 4. The morphisms $\varphi_1, \dots, \varphi_6$ correspond to the singular vectors in R1, \dots, R6 in Theorem \ref{classS5}. } 
	\end{figure}
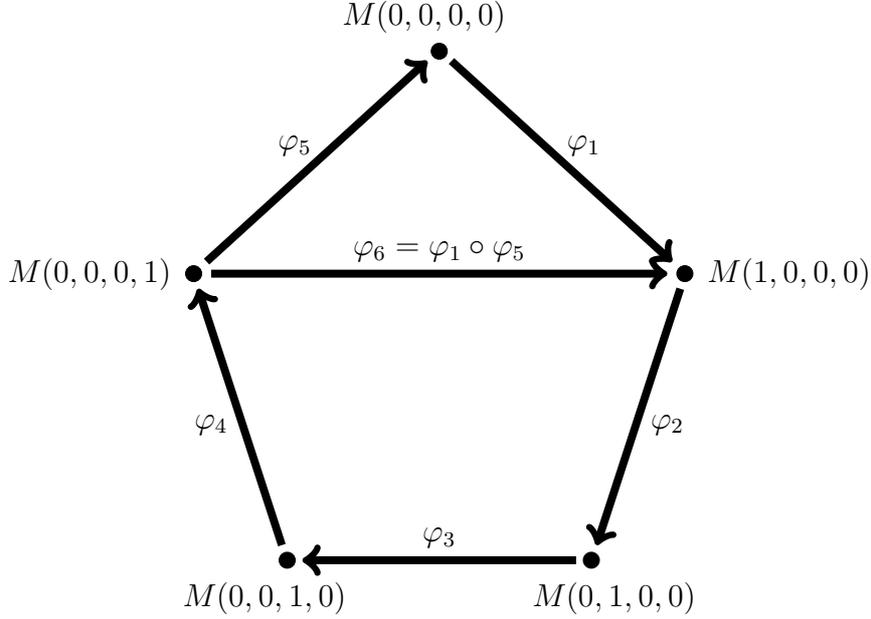
	
\section{A first bound}\label{first}
Let $\Omega=\{\{1,2\},\{1,3\},\{1,4\},\{1,5\},\{2,3\},\{2,4\},\{2,5\},\{3,4\}, \{3,5\},\{4,5\}\}$ and, if $p=\{i,j\}\in \Omega$ with $i<j$, then we let $d_p=d_{ij}=dx_i\wedge dx_j$. In order to avoid cumbersome notation, when no confusion may arise we will denote in this section the subset $\{i,j\}$ simply as $ij$.

Let $V$ be a finite dimensional $\g_0$-module. For all $k\geq 0$ we let
\[
M_k(V)=\F[\deb] \sum_{j\leq k} \sum_{p_1,\ldots,p_j \in \Omega} \F d_{p_1}\cdots d_{p_j}\otimes V.
\]

Note that $M_k(V)$ is not an $E(5,10)$-submodule of $M(V)$. Nevertheless the following result holds.
\begin{proposition}\label{s5mod}
	For all $k=0,1,\ldots,10$ the subspace $M_k(V)$ is an $S_5$-module.
\end{proposition}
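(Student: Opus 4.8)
The plan is to show that $M_k(V)$ is closed under the action of $S_5=\g_{\bar 0}$. Since $S_5$ is generated as a Lie algebra by $\g_{-2}$, $\g_0$ and $\g_2$ (indeed $\g_{\bar 0}$ is generated in degrees $-2,0$ and $2$, as a transitive graded Lie algebra whose local part generates it), it suffices to check that $M_k(V)$ is stable under each of these three homogeneous pieces. For $\g_{-2}=\langle \de_1,\ldots,\de_5\rangle$ and $\g_0=\slc$ this is essentially immediate: $M_k(V)$ is by construction $\F[\deb]$-stable, hence $\g_{-2}$-stable, and it is visibly a $\g_0$-submodule since $\g_0$ preserves the span of products of at most $k$ of the $d_p$'s (the $d_p$ span $\inlinewedge^2\C^5\cong\g_{-1}$, a $\g_0$-module) and acts on $V$. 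So the entire content is the stability under $\g_2$.

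Write a general homogeneous element of $\g_2$ as a vector field $Y=\sum_i f_i\de_i$ with $f_i$ quadratic and $\mathrm{div}(Y)=0$; equivalently $\g_2$ is generated as a $\g_0$-module by a single element, and by $\g_0$-equivariance it is enough to treat that generator, say $Y=x_5^2\de_4-x_4x_5\de_5$ (a convenient zero-divergence quadratic field, obtained from the lowest weight vector), acting on a monomial $d_{p_1}\cdots d_{p_j}\de^{(\alpha)}\otimes v$ with $j\le k$. Bring $Y$ past the $\de$'s and past the $d_p$'s using the bracket relations: $[Y,\de_i]\in\g_0$, so moving $Y$ through a single $\de_i$ produces a $\g_0$-term times the remaining word, which stays in $M_k(V)$ by the already-established $\g_0$-stability. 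The crucial commutators are $[Y,d_{p}]$ for $p\in\Omega$: since $Y\in\g_{\bar 0}=S_5$ and $d_p\in\g_{\bar 1}=\Omega^2_{cl}$, the bracket is the Lie derivative $\mathcal L_Y(d_p)$, which is again a closed two-form of the appropriate degree, i.e.\ an element of $\g_1=F(1,1,0,0)$, spanned by elements $x_r d_{st}$. The key algebraic point to verify is that commuting $Y$ (degree $2$) through one $d_p$ (degree $-1$) yields a degree-$1$ element whose further commutators, when pushed all the way to the right, do not increase the number of two-form factors beyond $k$.

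Concretely, I would argue by downward induction on the position: write $Y\,(d_{p_1}\cdots d_{p_j}\otimes v)=\sum_{m}(d_{p_1}\cdots [Y,d_{p_m}]\cdots d_{p_j})\otimes v + d_{p_1}\cdots d_{p_j}\otimes(\text{lower-order in }Y)$, and observe that each $[Y,d_{p_m}]\in\g_1$ can itself be commuted to the right through the remaining $d_{p}$'s: $[\g_1,\g_{-1}]\subseteq\g_0$, so each such commutation converts two two-form factors ($[Y,d_{p_m}]$ and some $d_{p_\ell}$) into a single $\g_0$-element, strictly \emph{decreasing} the count, while the term where $[Y,d_{p_m}]$ reaches the far right acts on $V\subseteq M(V)$ via $\g_1\cdot V=0$ and so contributes nothing. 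Hence every resulting word has at most $j\le k$ two-form factors (in fact the genuinely new words have $\le j-1$), and the $\F[\deb]$-prefactor only grows by the degree-$2$ shift built into $Y$, which is harmless. Thus $Y\cdot M_k(V)\subseteq M_k(V)$, completing the check on generators.

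I expect the main obstacle to be purely bookkeeping: making the ``commute to the right'' argument uniform, i.e.\ verifying that no rearrangement of a word $d_{p_1}\cdots d_{p_j}$ (which is needed because the $d_p$ super-commute only up to the $\g_0$-valued bracket $[d_p,d_q]=\varepsilon\,x\de$, nonzero in this superalgebra) can push the number of surviving two-form factors above $k$. The point is that every ``collision'' of factors is a net loss, not a gain, so the bound $k$ is preserved; but writing this cleanly requires a careful induction on word length together with the observation $[\g_{\pm1},\g_{\mp1}]\subseteq\g_0$ and $[\g_1,\g_{-2}]\subseteq\g_{-1}$. The restriction $k\le 10=|\Omega|$ enters only to ensure $M_k(V)$ is a proper, well-defined truncation (for $k\ge 10$ one gets all of $M(V)$), and plays no role in the stability argument itself beyond guaranteeing the sum over $j\le k$ is the ``honest'' partial sum.
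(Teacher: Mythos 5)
Your strategy is sound but genuinely different from the paper's, and it carries one concrete error. The paper proves directly, for \emph{every} $X\in S_5$, that $Xd_{p_1}\cdots d_{p_j}\otimes v\in M_j(V)$ and $[X,d_{p_1}]d_{p_2}\cdots d_{p_j}\otimes v\in M_j(V)$ by a single double induction on $j$ and $\deg X$ (base cases $j=1$ and $\deg X=-2$), using only the two identities $Xd_{p_1}\cdots = [X,d_{p_1}]\cdots + d_{p_1}X\cdots$ and $[X,d_{p_1}]d_{p_2}\cdots = -d_{p_2}[X,d_{p_1}]\cdots + [[X,d_{p_1}],d_{p_2}]\cdots$, the last bracket dropping $\deg X$ by $2$. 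That argument needs neither of the external inputs your reduction relies on, namely that $S_5$ is generated by its local part $\g_{-2}\oplus\g_0\oplus\g_2$ and that $\g_2\cong F(2,0,0,1)$ is $\g_0$-irreducible so a single generator suffices. Both facts are true and standard, so your route does work, and it has the merit of isolating the only nontrivial case ($\g_2$) and making the ``every collision loses two-form factors'' mechanism explicit; but the bookkeeping you defer --- in particular the terms where a $\g_1$-commutator crosses a $\de_i$ and regenerates a two-form via $[\g_1,\g_{-2}]\subseteq\g_{-1}$, and the need to pull the $\F[\deb]$-part back to the left after a $\g_0$-element is created mid-word --- is exactly what the paper's short induction packages uniformly, so you do not actually save work. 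The concrete slip: your proposed generator $Y=x_5^2\de_4-x_4x_5\de_5$ has $\mathrm{div}(Y)=-x_4\neq 0$, so it does not lie in $S_5$ at all; replace it by, say, $x_5^2\de_1$ (a genuine lowest weight vector of $\g_2$), or note that by irreducibility any nonzero divergence-free quadratic field would do. With that repair and the bookkeeping carried out as you indicate, the argument is complete.
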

\begin{proof}
	It is enough to show that for all $X\in S_5$, $1\leq j\leq k$, $p_1,\ldots,p_j\in \Omega$, and $v\in V$
	\begin{equation}\label{s5mod1}
	Xd_{p_1}\cdots d_{p_j}\otimes v \in M_j(V),
	\end{equation}
	since $M_j(V)\subseteq M_k(V)$. We also show that
	\begin{equation}
	\label{s5mod2} [X,d_{p_1}]d_{p_2}\cdots d_{p_j}\otimes v\in M_j(V)
	\end{equation}
	and we prove that \eqref{s5mod1} and \eqref{s5mod2} hold simultaneously by a double induction on $j$ and $\deg X$. 
	
	If $j=1$ then \eqref{s5mod2} is trivial and \eqref{s5mod1} follows from \eqref{s5mod2}.
	
	If $\deg X=-2$ then \eqref{s5mod1} and \eqref{s5mod2} are both trivial, so we assume that $j\geq 2$ and $\deg X\geq 0$.
	
	We have
	\[
	Xd_{p_1}\dots d_{p_j}\otimes v=[X,d_{p_1}]d_{p_2}\cdots d_{p_j}\otimes v + d_{p_1} X d_{p_2}\cdots d_{p_j} \otimes v.
	\]
	The latter summand clearly lies in $M_j(V)$ by induction on $j$ and so  \eqref{s5mod1} will follow from \eqref{s5mod2}. We have
	\[
	[X,d_{p_1}]d_{p_2}\cdots d_{p_j}\otimes v = -d_{p_2}[X,d_{p_1}]d_{p_3}\cdots d_{p_j}\otimes v +[[X,d_{p_1}],d_{p_2}]d_{p_3}\cdots d_{p_j}\otimes v.
	\]
	The former summand lies in  $M_j(V)$ by induction on $j$ and the latter by induction on $\deg X$: the result follows.
	\end{proof}
By Proposition \ref{s5mod} we have a filtration
\[
\{0\}=M_{-1}(V)\subseteq \F[\deb]\otimes V=M_0(V) \subseteq M_1(V) \subseteq \cdots \subseteq M_{10}(V)=M(V)
\]
of $S_5$-modules and we let
\[
N_k(V)=M_k(V)/M_{k-1}(V)
\]
for all $k=0,\ldots,10$.
\begin{proposition}\label{NkV}
	For all $k=0,\ldots,10$ and for any total order $\prec$ on $\Omega$ we have
	\[
	N_k(V)\cong \F[\deb]\otimes \bigoplus_{p_1\prec \cdots \prec p_k} \F d_{p_1}\cdots d_{p_k}\otimes V
	\]
as $\C$-vector spaces. 
\end{proposition}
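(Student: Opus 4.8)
The plan is to exhibit an explicit $\C$-linear isomorphism between $N_k(V) = M_k(V)/M_{k-1}(V)$ and the claimed space, by producing a spanning set for $N_k(V)$ and then checking it is a basis. First I would fix the total order $\prec$ on $\Omega$. By definition $M_k(V) = \F[\deb]\sum_{j\le k}\sum_{p_1,\dots,p_j\in\Omega}\F d_{p_1}\cdots d_{p_j}\otimes V$, so as a $\C$-vector space $M_k(V)$ is spanned by elements $P(\deb)\,d_{p_1}\cdots d_{p_j}\otimes v$ with $j\le k$, $P\in\F[\deb]$, $v\in V$, and arbitrary $p_1,\dots,p_j\in\Omega$. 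Modulo $M_{k-1}(V)$ only the terms with $j=k$ survive, so $N_k(V)$ is spanned by the classes of $P(\deb)\,d_{p_1}\cdots d_{p_k}\otimes v$. Now the odd elements $d_p$ ($p\in\Omega$) pairwise anticommute in $U_-$ (they lie in $\g_{-1}$, the odd part, and $[\g_{-1},\g_{-1}]\subseteq\g_{-2}$ consists of even elements which can be absorbed into $\F[\deb]$), so reordering $d_{p_1}\cdots d_{p_k}$ into $\prec$-increasing order changes the element only by a sign and by terms in which two of the indices coincide — but $d_p d_p$, being (up to sign) a product of an odd element with itself, equals $\tfrac12[d_p,d_p]\in\g_{-2}\subseteq\F[\deb]$, hence such terms drop to $M_{k-1}(V)$ and vanish in $N_k(V)$. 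Therefore $N_k(V)$ is spanned by the classes of $P(\deb)\,d_{p_1}\cdots d_{p_k}\otimes v$ with $p_1\prec\cdots\prec p_k$, giving a surjection $\F[\deb]\otimes\bigoplus_{p_1\prec\cdots\prec p_k}\F d_{p_1}\cdots d_{p_k}\otimes V \twoheadrightarrow N_k(V)$.

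It remains to show this surjection is injective, i.e.\ that these elements are linearly independent in $N_k(V)$, equivalently that no nontrivial combination of them lies in $M_{k-1}(V)$. Here I would invoke the PBW theorem for $U_- = U(\g_-)$: since $\g_- = \g_{-2}\oplus\g_{-1}$ with $\g_{-2}$ spanned by the $\de_i$ (even) and $\g_{-1}$ spanned by the $d_p$, $p\in\Omega$ (odd), a PBW basis of $U_-$ is given by $\de_1^{a_1}\cdots\de_5^{a_5}\,d_{p_1}\cdots d_{p_j}$ with $a_i\ge 0$ and $p_1\prec\cdots\prec p_j$ (strictly increasing, because the $d_p$ are odd). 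Under the $\N$-grading with $\deg\de_i = \ldots$ wait — in our grading $\deg d_p = 1$ and $\deg \de_i = 2$, so the number of $d$-factors in a PBW monomial is intrinsic: it is the parity-count, unchanged by the grading. Thus $(U_-)$ has a direct sum decomposition by the number $j$ of odd PBW factors, and $M_k(V)$ is exactly $\big(\bigoplus_{j\le k}(U_-)^{(j)}\big)\otimes V$ where $(U_-)^{(j)}$ is the span of PBW monomials with exactly $j$ odd factors; consequently $N_k(V) \cong (U_-)^{(k)}\otimes V$, and $(U_-)^{(k)}$ has basis $\{\de_1^{a_1}\cdots\de_5^{a_5}\,d_{p_1}\cdots d_{p_k} : a_i\ge0,\ p_1\prec\cdots\prec p_k\} = \F[\deb]\cdot\bigoplus_{p_1\prec\cdots\prec p_k}\F d_{p_1}\cdots d_{p_k}$, which is precisely the asserted isomorphism.

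The one point requiring care — and the place where I expect the main (though modest) obstacle — is the bookkeeping that $M_k(V)$, defined via \emph{arbitrary} products $d_{p_1}\cdots d_{p_j}$ with $j\le k$ acted on by $\F[\deb]$ on the left, coincides with the span of PBW monomials having at most $k$ odd factors. The containment $\supseteq$ is immediate; for $\subseteq$ one must check that rewriting a general monomial $P(\deb)d_{p_1}\cdots d_{p_j}$ into PBW form does not increase the odd-factor count. This follows because the straightening relations in $U_-$ only involve moving even generators $\de_i$ past odd ones (which is free, they commute as $[\g_{-2},\g_{-1}]\subseteq\g_{-3}=0$) and replacing a pair $d_p d_q$ with $d_q d_p$ plus an element of $\g_{-2}\subseteq\F[\deb]$, i.e.\ the odd count is weakly decreasing under straightening. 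I would state this as a short inductive lemma (induction on $j$) parallel to the double induction used in the proof of Proposition \ref{s5mod}, and the proposition then follows formally from PBW. Finally, everything here is purely a statement of $\C$-vector spaces, so no compatibility with the $S_5$-action needs to be checked for this particular proposition.
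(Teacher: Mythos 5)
Your proposal is correct and follows essentially the same route as the paper: establish the reordering relation $d_{p_1}\cdots d_{p_k}\equiv \varepsilon(\sigma)d_{p_{\sigma(1)}}\cdots d_{p_{\sigma(k)}} \bmod M_{k-1}(V)$ to get a spanning set, then invoke the Poincar\'e--Birkhoff--Witt theorem for $U(\g_-)$ for linear independence. The extra bookkeeping you flag (that straightening into PBW form does not increase the number of odd factors, so $M_k(V)$ is exactly the span of PBW monomials with at most $k$ odd factors) is left implicit in the paper's one-line appeal to PBW, and your filling it in is a correct elaboration rather than a different argument.
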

\begin{proof}
	For all $p_1,\ldots,p_k \in \Omega$ and every permutation $\sigma$ of the indices $\{1,\ldots,k\}$ we have 
	\begin{equation}\label{permform}d_{p_1}d_{p_2}\cdots d_{p_k}-\varepsilon(\sigma) d_{p_{\sigma(1)}}\cdots d_{p_{\sigma(k)}}\in M_{k-1}(V)
	\end{equation}
	and so $N_k(V)$ is generated as $\F[\deb]$-module by the elements $d_{p_1}\cdots d_{p_k}\otimes v$ for all $p_1\prec \cdots \prec p_k$ and all $v\in V$. The result follows by Poincar\'e--Birkhoff--Witt theorem for $U(\g_{-})$.
\end{proof}	

Next we observe that the subspace
\[
F_k(V)=\bigoplus_{p_1\prec \cdots \prec p_k}\F d_{p_1}\cdots d_{p_k}\otimes V
\]
of $N_k(V)$ also has a special structure:
\begin{proposition}\label{NkVisVerma}
	The subspace $F_k(V)$ of $N_k(V)$ is an $\slc$-module annihilated by $(S_5)_{>0}$.
	The $S_5$-module $N_k(V)$ is the finite Verma module for $S_5$ induced by $F_k(V)$, i.e.
	\[
	N_k(V)=M(F_k(V)).
	\]
\end{proposition}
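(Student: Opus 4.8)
The plan is to establish the two assertions of Proposition~\ref{NkVisVerma} in sequence: first that $F_k(V)$ is an $\slc$-submodule of $N_k(V)$ on which $(S_5)_{>0}$ acts by zero, and then that $N_k(V)$ is the $S_5$-Verma module it induces.

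First I would verify the $\slc$-invariance of $F_k(V)$. An element $x\in\slc=\g_0\subseteq(S_5)_0$ acts on $d_{p_1}\cdots d_{p_k}\otimes v$ (with $p_1\prec\cdots\prec p_k$) by the Leibniz rule: it produces $\sum_m d_{p_1}\cdots[x,d_{p_m}]\cdots d_{p_k}\otimes v + d_{p_1}\cdots d_{p_k}\otimes x.v$. Each bracket $[x,d_{p_m}]$ is again a degree-one element of $\g$, i.e.\ a $\C$-linear combination of the $d_q$, $q\in\Omega$ (since $\g_{-1}\cong\inlinewedge^2\C^5$ as $\g_0$-modules, $\g_0$ preserves $\langle d_q : q\in\Omega\rangle$). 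Reordering the resulting products $d_{p_1}\cdots d_q\cdots d_{p_k}$ into $\prec$-increasing form costs only terms in $M_{k-1}(V)$ by \eqref{permform}, hence vanishes in $N_k(V)$; and any repeated index makes the product lie in $M_{k-1}(V)$ as well. So modulo $M_{k-1}(V)$ the action of $x$ stays inside $F_k(V)$, and it is visibly the tensor-product action on $\bigoplus_{p_1\prec\cdots\prec p_k}\C d_{p_1}\cdots d_{p_k}\otimes V$, i.e.\ $(\inlinewedge^k\g_{-1})\otimes V$ with the diagonal $\slc$-action (the $d_q$'s spanning a copy of $\inlinewedge^2\C^5$). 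Next, for $Y\in(S_5)_{>0}$, so $\deg Y\geq 2$, the same Leibniz expansion gives $Y d_{p_1}\cdots d_{p_k}\otimes v=\sum_m d_{p_1}\cdots[Y,d_{p_m}]\cdots d_{p_k}\otimes v$ (the final term $d_{p_1}\cdots d_{p_k}\otimes Y.v$ vanishes because $\g_{>0}$, in particular $(S_5)_{>0}$, acts trivially on $V$). Each $[Y,d_{p_m}]$ lies in $\g_{\geq\deg Y-1}\subseteq\g_{\geq 1}$, so after reordering into $\prec$-increasing form (again at the cost of lower filtration terms) we land in $\F[\deb]\bigl(\sum_{q_1\prec\cdots\prec q_k}\C d_{q_1}\cdots d_{q_k}\otimes V\bigr)$ but with at least one of the positive-degree factors carrying a genuine positive grading, hence outside $F_k(V)$; more precisely, each such monomial is $\F[\deb]$ times a product in which one $d$-factor is replaced by something of strictly positive degree, so in $N_k(V)$ it has positive $\F[\deb]$-degree and lies in $\F[\deb]_{>0}\cdot(\text{stuff})$, in particular not in $F_k(V)=\F[\deb]_0\otimes(\cdots)$. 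Thus $(S_5)_{>0}$ kills $F_k(V)$. (Here I would be slightly careful: the cleanest formulation is that $F_k(V)$ is exactly the degree-zero part of the induced $\N$-grading on $N_k(V)$, and positive elements of $S_5$ raise degree.)

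For the second assertion I would argue that the natural $S_5$-module map $M(F_k(V))=U(S_5)\otimes_{U((S_5)_{\geq 0})}F_k(V)\to N_k(V)$, induced by the inclusion $F_k(V)\hookrightarrow N_k(V)$ and the $S_5$-module structure, is an isomorphism. It is well defined because $F_k(V)$ is $(S_5)_{\geq 0}$-stable with $(S_5)_{>0}$ acting by $0$, which we just checked. Surjectivity is Proposition~\ref{NkV}: that result identifies $N_k(V)$ as $\F[\deb]\otimes F_k(V)$ as a $\C$-vector space, and $\F[\deb]=U((S_5)_{-2})$ generates the negative part; more precisely the image of $M(F_k(V))$ is an $S_5$-submodule containing $F_k(V)$, hence containing $U((S_5)_-)\cdot F_k(V)$, which by PBW for $S_5$ and Proposition~\ref{NkV} is all of $N_k(V)$. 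For injectivity I would compare graded dimensions: by the PBW theorem $M(F_k(V))\cong U((S_5)_-)\otimes F_k(V)$ as $\slc$-modules, and likewise — this is precisely what Proposition~\ref{NkV} says — $N_k(V)\cong \F[\deb]\otimes F_k(V)=U((S_5)_{-2})\otimes F_k(V)$ as $\C$-vector spaces. Since the negative part of $S_5$ in the grading $\deg x_i=2$ is exactly $(S_5)_-=(S_5)_{-2}\cong(\C^5)^*$ (divergence-free vector fields of degree $-2$ are just the constant-coefficient $\partial_i$, and $\sum\partial_i$-relations do not occur; in fact $(S_5)_{<0}=(S_5)_{-2}$ and $U((S_5)_-)=\F[\deb]$), a surjection of two spaces with equal graded dimension in each degree is an isomorphism.

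The main obstacle I anticipate is not the $\slc$-equivariance (routine Leibniz bookkeeping) but getting the grading/filtration comparison exactly right: one must be sure that $F_k(V)$ really is a complement inside $N_k(V)$ on which $(S_5)_{>0}$ acts as $0$ \emph{and} that it generates $N_k(V)$ freely over $U((S_5)_-)$, with no hidden relations. The key structural input that makes this go through is that $(S_5)_{<0}$ is concentrated in a single degree $-2$ and equals a copy of $(\C^5)^*$ acting on $M(V)$ by $\deb$, so $U((S_5)_-)=\F[\deb]$ is a polynomial algebra; combined with Proposition~\ref{NkV}'s explicit description $N_k(V)\cong\F[\deb]\otimes F_k(V)$, the freeness is immediate and the identification $N_k(V)=M(F_k(V))$ follows from matching the two $\F[\deb]\otimes F_k(V)$ descriptions. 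I would close by remarking that this also pins down $F_k(V)\cong\inlinewedge^k(\inlinewedge^2\C^5)\otimes V$ as the inducing $\slc$-module, which is what later sections will need.
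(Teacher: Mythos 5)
Your handling of the $\slc$-structure on $F_k(V)$ (Leibniz rule plus reordering via \eqref{permform}) and of the identification $N_k(V)=M(F_k(V))$ (the natural map $U(S_5)\otimes_{U((S_5)_{\geq 0})}F_k(V)\to N_k(V)$, surjective because $F_k(V)$ generates over $\F[\deb]=U((S_5)_{<0})$, injective by matching the PBW description against Proposition~\ref{NkV}) is correct and is exactly the argument the paper compresses into two sentences. The problem is the middle step, the annihilation of $F_k(V)$ by $(S_5)_{>0}$. You conclude that $Y\,d_{p_1}\cdots d_{p_k}\otimes v$ lands in ``$\F[\deb]_{>0}\cdot(\text{stuff})$, in particular not in $F_k(V)$'', and from this you infer that $(S_5)_{>0}$ kills $F_k(V)$. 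That inference is a non sequitur: an element of $N_k(V)$ lying outside $F_k(V)$ is not thereby zero, and if your description of the image were accurate it would in fact \emph{contradict} the annihilation you are trying to prove. Moreover the description is wrong by a degree count: in the $\N$-grading on $M(V)$ in which $\deb^{M}d_{p_1}\cdots d_{p_j}\otimes v$ has degree $2|M|+j$, an element $Y\in(S_5)_{>0}$ of degree $i\geq 2$ maps the degree-$d$ component to the degree-$(d-i)$ component, i.e.\ it \emph{lowers} this degree. Hence $Y\cdot F_k(V)$ sits in degree $k-i<k$, and since every element of $M_k(V)$ of degree $<k$ necessarily involves fewer than $k$ factors $d_p$, it lies in $M_{k-1}(V)$ and so vanishes in $N_k(V)$. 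Concretely, in your Leibniz expansion each $[Y,d_{p_m}]$ lies in $\g_{\geq 1}$ and must be commuted further to the right until it either dies on $v$ or collapses, through iterated brackets, into elements of $\g_0$ acting on $v$ or into elements of $\g_{<0}$; every surviving term carries at most $k-2$ factors from $\g_{-1}$, not $k$ factors times a positive power of $\deb$.

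This one-line degree argument is what the paper means by ``follows easily by degree reasons''. Your parenthetical almost captures it but has the direction reversed: if you normalize so that $F_k(V)$ is the degree-zero piece of the induced $\N$-grading on $N_k(V)$, what you need is that positive elements of $S_5$ push it \emph{below} zero (out of the $\N$-graded module), not above. With that one correction the proof is complete and agrees with the paper's.
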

\begin{proof}
The subspace $F_k(V)$ of $N_k(V)$ is an $\slc$-module since $\g_{-1}$ is a $\g_0$-module, and by the definition of $N_k(V)$. The fact that $F_k(V)$ is annihilated by $(S_5)_{>0}$ follows easily by degree reasons.  
The second part follows from Proposition \ref{NkV} and the first part.
\end{proof}
This result together with Theorem \ref{classS5} allows us to determine a first bound on the degree of singular vectors for $E(5,10)$.
\begin{corollary}
	Let $M(V)$ be a Verma module for $E(5,10)$ and $w\in M(V)$ be a singular vector. Then $w$ has degree at most 14.
\end{corollary}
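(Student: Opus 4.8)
The plan is to push a singular vector of $E(5,10)$ through the $S_5$-module filtration constructed above and then invoke the $S_5$ classification (Theorem \ref{classS5}) layer by layer. Let $w\in M(V)$ be a singular vector of degree $d$, so in particular $w$ is annihilated by $\g_1\supseteq(S_5)_2$ and by the raising operators $x_i\partial_{i+1}$ of $\slc\subseteq(S_5)_0$; thus $w$ is a singular vector for the even subalgebra $S_5$ in the sense of Rudakov as well. Let $k$ be minimal with $w\in M_k(V)$; since $w$ has positive degree and the leading term is nonzero, $k\geq 1$. I would look at the image $\bar w$ of $w$ in $N_k(V)=M_k(V)/M_{k-1}(V)$, which by minimality of $k$ is nonzero, and which by Proposition \ref{NkVisVerma} is an $S_5$-singular vector in the finite Verma module $M(F_k(V))$ for $S_5$.

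The next step is a bookkeeping of degrees. A basis element $d_{p_1}\cdots d_{p_k}$ of $F_k(V)$ already carries $E(5,10)$-degree $k$ (each $d_p\in\g_{-1}$), but with respect to the $S_5$-grading — where the $d_p$'s sit inside $\math\Omega^2_{cl}$ and the grading is rescaled so that $\deg\partial_i=-2$ — the module $F_k(V)$ is the "top" $\slc$-module inducing $N_k(V)$, and the remaining $E(5,10)$-degree of $w$, namely $d-k$, is carried purely by a product of $\partial_i$'s, i.e. it is $2(d-k)$ units of $S_5$-degree, so $\bar w$ is a homogeneous $S_5$-singular vector of $S_5$-degree $d-k$ (in the convention where the generators of $M(F_k(V))$ sit in degree $0$). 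Now Theorem \ref{classS5} says every $S_5$-singular vector has degree $1$ or $2$ (cases R1–R5 have degree $1$, R6 has degree $2$). Hence $d-k\leq 2$, and since $k\leq 10$ (there are only $10$ elements in $\Omega$, so $F_k(V)=0$ for $k\geq 11$, forcing $w\in M_{10}(V)$ with $k\leq 10$), we get $d\leq 12$. To reach the stated bound $14$ one should be slightly more careful: the claim in the corollary is $d\leq 14$, which is the weaker statement, so in fact $d-k\le 2$ together with $k\le 10$ already gives $d\le 12\le 14$; I would simply state $d\le 12$ and note it implies the corollary, or, if one wants to match the $14$ verbatim, observe that it suffices to use the cruder bound that the $S_5$-degree of $\bar w$ is at most, say, $4$ (the maximal degree appearing among the morphisms in Figure \ref{S5morphisms}), again giving $d\le 14$.

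The main obstacle — and the place where one must be careful rather than hand-wavy — is the correct translation between the $E(5,10)$-grading and the $S_5$-grading, and the verification that $\bar w$ really is an $S_5$-singular vector and not merely an $S_5$-highest-weight vector killed by $(S_5)_{>0}$ only modulo $M_{k-1}(V)$. For the latter: $M_k(V)$ and $M_{k-1}(V)$ are genuine $S_5$-submodules by Proposition \ref{s5mod}, so $(S_5)_{>0}w=0$ in $M(V)$ descends to $(S_5)_{>0}\bar w=0$ in $N_k(V)$, and likewise the $\slc$-raising relations descend; the only subtlety is that $\bar w$ must be nonzero, which is exactly the minimality of $k$, and that its degree component in $M(F_k(V))$ is the one I claimed, which follows from Proposition \ref{NkV} identifying $N_k(V)$ as $\F[\deb]\otimes F_k(V)$ as graded vector spaces with $F_k(V)$ in $S_5$-degree $0$. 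Once this is set up cleanly, the corollary is immediate from Theorem \ref{classS5}.
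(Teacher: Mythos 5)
Your strategy is exactly the paper's: take $k$ minimal with $w\in M_k(V)$, pass to the class $\bar w$ in $N_k(V)=M(F_k(V))$, and invoke Theorem \ref{classS5}. However, there is a concrete error in the degree bookkeeping that makes your quantitative conclusion unjustified as written. In the $S_5$-grading used in Theorem \ref{classS5} one has $\deg\partial_i=-2$, so the singular vectors R1--R5 (one $\partial$) have degree $2$ and R6 (two $\partial$'s) has degree $4$ --- see the caption of Figure \ref{S5morphisms} --- not degrees $1$ and $2$ as you assert. On the other side of the ledger, your own accounting is internally inconsistent: you first say the $S_5$-degree of $\bar w$ is ``$2(d-k)$ units'' and in the next clause call it $d-k$. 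The correct count is this: the monomials in $\partial$ appearing in $\bar w$ carry $E(5,10)$-degree $d-k$, each $\partial_i$ contributing $2$ in both gradings, so the $S_5$-degree of $\bar w$ is $d-k$. Combining with the correct degrees $2$ and $4$ from Theorem \ref{classS5} gives $d-k\le 4$, hence $d\le k+4\le 14$, which is the paper's bound. Your headline conclusion $d\le 12$ does not follow from the argument you gave (it rests on the two compensating miscounts); it happens to be a true statement, but only after the much harder analysis of Section \ref{>10} (or Brilli's work cited in the introduction). Your ``cruder'' fallback is in fact the correct argument, not a weakening of it.

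Two smaller points. First, when $d=k$ the class $\bar w$ lies in $F_k(V)$ itself, so it is a highest weight vector of the inducing module rather than a singular vector of the $S_5$-Verma module (condition (iii) of the definition fails); this case must be listed separately, as the paper does, though it is harmless for the bound since then $d=k\le 10$. Second, the containment ``$\g_1\supseteq (S_5)_2$'' is false as stated ($\g_1$ is odd); what you want is that $\g_1 w=0$ forces $\g_2 w=0$ because $\g_2=[\g_1,\g_1]$, and $(S_5)_{>0}=\oplus_{j\ge 1}\g_{2j}$.
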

\begin{proof}
Let $k$ be minimal such that $w\in M_k(V)$. Then $w$ is a fortiori either a highest weight vector  in $F_k(V)$ or a singular vector in the $S_5$-Verma module $N_k(V)$, and as such it has  degree at most 4. It follows that $w$ has degree at most $k+4$, where $k\leq 10$.
\end{proof}
\section{Singular vectors of degree greater than 10}\label{>10}
The description of singular vectors for $S_5$ allows us to give a much more precise description of possible singular vectors for $E(5,10)$ of degree greater than 10.
We fix a total order $\prec$ on the set $\Omega=\{\{1,2\},\{1,3\},\ldots,\{4,5\}\}$. If $I=\{p_1,\ldots,p_j\}\subseteq \Omega$ with $p_1 \prec \cdots \prec p_j$ we let $d^\prec_I=d_{p_1}\cdots d_{p_j}$.  
We let 
\[
L^{\prec}_{h}(V)=\F[\deb] \bigoplus_{I:\, |I|=h}\F d^\prec_I \otimes V.
\]
By construction we have
\[
M_h(V)=L^\prec _{h}(V)\oplus M_{h-1}(V)
\]
for all $h=0,\ldots,10$ and in particular 
\[
M(V)=\bigoplus_{h=0}^{10} L^{\prec}_{h}(V).
\]

Every non-zero vector $w$ in $M(V)$ can be expressed uniquely in the following form:
\[
w=w_h^\prec+w_{h-1}^\prec+\cdots +w_0^\prec,
\]
for some $h=0,\dots, 10$,
with $w_h^\prec\neq 0$ and $w_j^\prec \in L^\prec_j(V)$ for all $j\leq h$. We say in this case that $w$ has {\em height} $h$ and we call $w_h^\prec$ the {\em highest term} of $w$. Note that the height of an element does not depend on the order $\prec$, while its highest term does.

If $M=(m_1,\ldots,m_5)\in \N^5$ we let $\de^M=\de_1^{m_1}\cdots \de_5^{m_5}$ and $|M|=m_1+\cdots+m_5$. Moreover we let $e_1=(1,0,0,0,0)$, $e_2=(0,1,0,0,0),\ldots,e_5=(0,0,0,0,1)$.

If $w$ is homogeneous of degree $d$, then the term $w_j^\prec$ has the following form
\begin{equation}\label{formofterms}
w_j^\prec=\sum_{I\subseteq \Omega:\, |I|=j}\sum_{M\in \mathbb N^5:\, |M|=\frac{d-j}{2}}\deb^M d^\prec_{I}\otimes v_{M,I},
\end{equation}
where $v_{M,I}\in V$.

Note that if $w$ is homogeneous of height $h$, then $w_j^\prec=0$ for all $j\not \equiv h \mod 2$. Observe that, by construction, if $w$ has height $h$, then
\[
w\equiv w_h^\prec \mod M_{h-1}(V),
\]
and, in particular, $w$ and $w_h^\prec$ lie in the same class in $N_h(V)$. Theorem \ref{classS5} provides us the following description of possible singular vectors for $E(5,10)$.
\begin{corollary}\label{d=h,h+4}
	Let $w\in M(V)$ be a singular vector of degree $d$ and height $h$, and let $w_h^\prec$ be its highest term. Let $w_j^\prec$ be as in \eqref{formofterms}. Then one of the following applies:
	\begin{enumerate}
		\item[(i)] $d=h$;
		\item[(ii)] $d=h+2$ and there exists $i\in [5]$ such that 
		\[\sum_{I:\, |I|=h}d^\prec_I v_{e_i,I}\neq 0
		\]
		is a highest weight vector for $\slc$ in $N_h(V)$ and 
		\[
		w_h^\prec =\sum_{j=i}^5\partial _j \sum_{I:\, |I|=h} d^\prec_I\otimes v_{e_j,I}
		\]
		with
		\[\lambda(w)=\begin{cases}
		(0,0,0,0) & \textrm{if }i=1;\\
			(1,0,0,0) & \textrm{if }i=2;\\
				(0,1,0,0) & \textrm{if }i=3;\\
					(0,0,1,0) & \textrm{if }i=4;\\
						(0,0,0,1) & \textrm{if }i=5;\\
		\end{cases}\hspace{5mm}
		\textrm{ and }\hspace{5mm} \lambda\Big(\sum_{I:\, |I|=h}d^\prec_I v_{e_i,I}\Big)= \begin{cases}
		(1,0,0,0) & \textrm{if }i=1;\\
		(0,1,0,0) & \textrm{if }i=2;\\
		(0,0,1,0) & \textrm{if }i=3;\\
		(0,0,0,1) & \textrm{if }i=4;\\
		(0,0,0,0) & \textrm{if }i=5;\\
		\end{cases}\]
		\item[(iii)] $d=h+4$,\[\sum_{I:\, |I|=h}d_I v_{e_1+e_5,I}\neq 0
		\]
		is a highest weight vector for $\slc$ in $N_h(V)$ and 
		\[
		w_h^\prec =\partial _5\sum_{j=1}^5\partial _j \sum_{I:\, |I|=h} d^\prec_I\otimes v_{e_j+e_5,I}
		\]
		with $\lambda(w)=(0,0,0,1)$ and $\lambda\big(\sum_{I:\, |I|=h}d_I v_{e_1+e_5,I}\big)=(1,0,0,0)$.
\end{enumerate}
\end{corollary}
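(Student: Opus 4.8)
The statement is a direct consequence of Theorem \ref{classS5} applied to the $S_5$-module $N_h(V)$, via the identification $N_h(V)\cong M(F_h(V))$ from Proposition \ref{NkVisVerma}. The first step is to reduce to a statement about $S_5$. Since $w$ is a singular vector for $E(5,10)$ of degree $d$ and height $h$, condition (i) of the definition of singular vector says that $x_i\partial_{i+1}w=0$ for $i=1,\dots,4$, i.e. $w$ is annihilated by the positive root vectors of $\slc=\g_0$. Moreover $w$ is annihilated by $\g_1$, and in particular by $(S_5)_2=\g_0$-span of the $x_i\partial_j$ of the even part in degree $2$; more precisely $w$ is annihilated by $(S_5)_{>0}\subseteq \g_{>0}$ since $(S_5)_{>0}\subseteq \g_{\ge 1}$ acts on $w$ through $\g_+$, which kills $w$ by definition of singular vector (condition (ii) plus its consequences). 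Passing to the quotient $N_h(V)$, the class of $w$ equals the class of its highest term $w_h^\prec$, and this class is annihilated by the positive nilradical of $\slc$ and by $(S_5)_{>0}$ acting on $N_h(V)$. Hence $\bar w_h^\prec$ is either a highest weight vector of $F_h(V)\subseteq N_h(V)$ (this gives case (i), where $d=h$ and $w=w_h^\prec$ lies in the bottom piece) or it is a genuine singular vector of the $S_5$-Verma module $M(F_h(V))$.

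In the latter case Theorem \ref{classS5} gives the complete list R1--R6 of singular vectors for $S_5$-Verma modules. Inspecting this list: the singular vectors R1--R5 have degree $2$ with respect to the $S_5$-grading (recall $\deg\partial_i=-2$), so they contribute $d-h=2$, which is case (ii); the singular vector R6 has degree $4$, contributing $d-h=4$, which is case (iii). For case (ii), each of R1--R5 has the shape $\sum_{i=i_0}^5 \partial_i\otimes (\text{weight vector})$ for an appropriate starting index $i_0$ determined by the weight $\lambda$; translating the weights listed in R1--R5 (which are weights $\lambda$ of the source $S_5$-Verma module) into our $\slc$-conventions yields exactly the table for $\lambda(w)$, and the ``leading coefficient vector'' $\sum_{I:\,|I|=h}d^\prec_I v_{e_{i_0},I}$ — which plays the role of the generator of the source module $F(\lambda)$ in R1--R5 — has the weight listed in the second table. (The precise correspondence: R5 with $\lambda=(0,0,0,0)$ gives $i_0=5$ but here one must check that R5 forces $w_h^\prec=\partial_5\otimes(\text{h.w.\ vector})$, which matches $i=1$ in our indexing after the shift in how the cases are labelled — I would verify the indexing alignment carefully here.) For case (iii), R6 is $\partial_5(\partial_1\otimes x_1+\cdots+\partial_5\otimes x_5)$, which after writing $\partial_i\partial_5 = \deb^{e_i+e_5}$ gives exactly the displayed form of $w_h^\prec$, with $\lambda(w)=(0,0,0,1)$ and the inner vector of weight $(1,0,0,0)$.

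The remaining step is to rewrite the abstract description of the $S_5$-singular vector in terms of the explicit expansion \eqref{formofterms} of $w_h^\prec$. By \eqref{formofterms} with $j=h$, the coefficient of $\deb^M d^\prec_I$ in $w_h^\prec$ is $v_{M,I}\in V$, and for $d-h=2$ only $M=e_i$ (for various $i$) occur, while for $d-h=4$ only $M=e_i+e_j$ occur; matching against the shape of R1--R6 forces $v_{M,I}=0$ unless $M$ is of the specific form appearing in the statement, and the surviving coefficients assemble into the vectors $\sum_I d^\prec_I\otimes v_{e_i,I}$ (resp.\ $\sum_I d^\prec_I \otimes v_{e_1+e_5,I}$) which, being the generators of the $S_5$-source modules in R1--R6, are nonzero highest weight vectors for $\slc$ in $N_h(V)$. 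This is essentially bookkeeping once the reduction to $S_5$ is in place.

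\textbf{Main obstacle.} The only subtle point is the very first reduction: one must justify that the $E(5,10)$-singular vector $w$, when projected to $N_h(V)$, becomes an $S_5$-singular vector (or a highest weight vector) — i.e.\ that annihilation by $\g_1$ and by the nilradical of $\g_0$ descends to annihilation by $(S_5)_{>0}$ and the nilradical of $\slc$ on the quotient. This uses Proposition \ref{s5mod} (so that $M_{h-1}(V)$ is an $S_5$-submodule and the quotient $N_h(V)$ is a well-defined $S_5$-module) together with the fact that $(S_5)_{>0}\subseteq \g_{>0}$ so that it kills $w$ outright; the nilradical of $\slc$ is contained in $\g_0$ and kills $w$ by condition (i). After that, everything is a translation of Theorem \ref{classS5} through the isomorphism $N_h(V)\cong M(F_h(V))$ and a comparison of weight conventions, which is routine but must be done with care to get the index shifts in the two tables right.
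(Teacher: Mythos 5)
Your proposal is correct and follows essentially the same route as the paper: the paper's proof likewise observes that the class of $w_h^\prec$ in $N_h(V)$ is annihilated by $(S_5)_{>0}$ and by the $x_i\partial_j$ with $i<j$, hence is either a highest weight vector in $F_h(V)$ (the case $d=h$) or a genuine $S_5$-singular vector, and then invokes Theorem \ref{classS5}. The only slip is in your parenthetical about R5: the correspondence is simply R$k\leftrightarrow i=k$ with no index shift (R5 gives $i=5$, $\lambda(w)=(0,0,0,1)$ and source weight $(0,0,0,0)$, not $i=1$), but since you flagged this as a point to verify it does not affect the argument.
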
	
\begin{proof}
	This is a straightforward consequence of Theorem \ref{classS5}. 
	We know that $N_h(V)$ is a Verma module for $S_5$ and $w_h^\prec$ is annihilated by $(S_5)_{>0}$ and by $x_i \de_j$ for all $i<j$. In particular, if $d\neq h$, we have that  the class of $w_h^\prec$ in $N_h(V)$ is a genuine singular vector for $S_5$: the classification of singular vectors in Theorem \ref{classS5} then completes the proof. Note that if $d=h$, then the class of $w_h^\prec$ in $N_h(V)$ is actually a highest weight vector in $F_h(V)$, i.e. the $\slc$-module we are inducing from. 
\end{proof}

In this section we classify all possible singular vectors with degree strictly bigger than height, i.e. we treat the cases $d=h+2$ and $d=h+4$ in Corollary \ref{d=h,h+4}, and, in particular, we find all singular vectors of degree greater than 10. 
We fix the lexicographic order on $\Omega=\{\{1,2\},\{1,3\},\ldots,\{4,5\}\}$, i.e. we set
\[
\{1,2\}\prec \{1,3\}\prec\{1,4\}\prec \{1,5\} \prec \{2,3\}\prec\{2,4\}\prec \{2,5\}\prec \{3,4\}\prec\{3,5\}\prec\{4,5\}
\]
and we simply write $L_h(V)$ instead of $L^\prec_h(V)$, $w_h$ instead of $w^\prec _h$ and $d_I$ instead of $d_I^\prec$.

\begin{remark}\label{firstred}The following inclusions are immediate from the definition of the action of $\g_0$ and $\g_1$ on  $M(V)$:
	\begin{align}\label{g0action} \g_0.L_h(V)&\subseteq L_h(V)\oplus L_{h-2}(V)\\
	\label{x5d45action} \g_1.L_h(V) &\subseteq L_{h+1}(V)\oplus L_{h-1}(V)\oplus L_{h-3}(V).
	\end{align}
	%It follows that
	%$\g_0.M_k\subset M_k$, $x_5d_{45}. M_k\subset M_{k-1}$.
\end{remark}

Due to (\ref{g0action}), for $X\in\g_0$ and $w\in L_h(V)$, we adopt the following notation:
\begin{equation}
X w=X^0 w+X^{-2}w
\label{not1}
\end{equation}
with $X^0w \in L_h(V)$ and $X^{-2}w \in L_{h-2}(V)$.
Similarly, due to (\ref{x5d45action}), for $X\in \g_1$ and $w\in L_h(V)$ we write:
\begin{equation}
Xw=X^1w+X^{-1}w+X^{-3}w
\label{not2}
\end{equation}
with $X^1w \in L_{h+1}(V)$, $X^{-1}w \in L_{h-1}(V)$ and $X^{-3}w \in L_{h-3}(V)$.
The following simple observation will be crucial in the sequel.
\begin{remark}
	Let $w\in M(V)$ be a singular vector of height $h$. Then for all $X\in \g_1$ we have
	\begin{align}\label{1act} &X^1w_h=0\\ 
	\label{-1act}&X^{-1}w_h+X^1w_{h-2}=0.
	\end{align} 
Moreover, for all $i=1,2,3,4$ and $E_i=x_i \de_{i+1}\in\g_0$ we have 
\begin{align}\label{0act}&E_i^0 w_h=0\\
\label{-2act}&E_i^{-2}w_h+E_i^0w_{h-2}=0.
\end{align}
	 \end{remark}
 It will be convenient to rephrase \eqref{1act} in the following equivalent way:
 for all $X\in \g_1$ we have
 \begin{equation}\label{1actbis}
 Xw_h\equiv 0 \mod M_h(V).
 \end{equation}
\begin{proposition}\label{d=h+4prima}Let $w$ be a singular vector in $M(F)$ with height $h$ and degree $d$ with $d=h+4$. Then $d=14$ and $F=F(1,0,0,0)$ is the standard representation of $\g_0$.
\end{proposition}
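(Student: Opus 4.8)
The plan is to exploit Corollary \ref{d=h,h+4}(iii), which already pins down a great deal: if $w$ is a singular vector of height $h$ and degree $d=h+4$, then its highest term $w_h$ has the very rigid shape
\[
w_h=\partial_5\sum_{j=1}^5\partial_j\sum_{I:\,|I|=h}d_I\otimes v_{e_j+e_5,I},
\]
the vector $u:=\sum_{I:\,|I|=h}d_Iv_{e_1+e_5,I}$ is a nonzero $\slc$-highest weight vector in $N_h(V)$ of weight $(1,0,0,0)$, and $\lambda(w)=(0,0,0,1)$. So the first step is to read off constraints on $F=V$: since $v_{e_1+e_5,I}\in V$ and the $d_I$ have weight equal to minus the sum of the weights of the corresponding $x_{p}$'s, the weight of $u$ being $(1,0,0,0)$ forces $V$ to contain a weight whose value is $(1,0,0,0)$ plus the $\slc$-weight of $d_I$ for the relevant $I$; combined with dominance of the highest weight of $F$ and the fact that $F$ is irreducible finite-dimensional, this should already cut the list of possible $F=F(a,b,c,d)$ down to a very short one. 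In particular one expects $F$ to have to be ``small'', and the standard representation $F(1,0,0,0)$ to survive; the bulk of this step is bookkeeping with weights of $d_I\otimes v$ and using that $u\ne 0$ forces at least one index set $I$ of size $h$ to actually occur.

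The second step is to bound $h$ from below, forcing $h=10$ and hence $d=14$. Here the key is that the highest term $w_h$ lies in $L_h(V)$ with $|I|=h$, so if $h<10$ there is at least one pair $p\in\Omega$ with $p\notin I$ for every $I$ appearing — but more usefully, one uses the singular vector equations \eqref{1act}, \eqref{-1act}, \eqref{0act}, \eqref{-2act} together with the action formulas of Remark \ref{firstred}. The idea is: apply an element $X\in\g_1$ (concretely $x_5d_{45}$, the lowest weight vector of $\g_1$) and elements $E_i=x_i\partial_{i+1}\in\g_0$, and track how they move a term of height exactly $h$. Equation \eqref{1actbis} says $Xw_h\equiv 0\bmod M_h(V)$; since $Xw_h$ a priori has a component in $L_{h+1}(V)$, vanishing of that component is a linear condition on the coefficients $v_{e_j+e_5,I}$ of $w_h$. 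When $h<10$ there is room in $\Omega\setminus(\text{supports})$ to produce a nonzero $L_{h+1}$-component — essentially because multiplying $d_I$ by a new $d_{p}$ (coming from the $\g_1$-action producing a two-form times something) cannot be cancelled — and this contradicts $w_h\ne 0$. So one should be able to argue that the only way \eqref{1act} (and its shifted partner \eqref{-1act}) can hold with $w_h\ne 0$ is that $h$ is maximal, i.e.\ $h=10$, in which case $L_{11}(V)=0$ and the obstruction disappears.

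Once $h=10$ and $d=14$ are established, the remaining task is to nail down $F=F(1,0,0,0)$ exactly, ruling out the handful of other candidates that survived the weight analysis in step one. For this I would use Theorem \ref{classS5}: with $h=10$, the $S_5$-module $N_{10}(V)=M(F_{10}(V))$ is the Verma module induced from $F_{10}(V)$, and the class of $w_{10}$ in it must be (a scalar multiple of) the singular vector R6 of Theorem \ref{classS5}, which lives in $M(1,0,0,0)$ — i.e.\ $F_{10}(V)\cong F(1,0,0,0)$ as $\slc$-modules. But $F_{10}(V)=d_{12}d_{13}\cdots d_{45}\otimes V\cong(\textstyle\bigwedge^{10}\g_{-1})\otimes V$, and $\bigwedge^{10}\g_{-1}=\bigwedge^{10}(\bigwedge^2\C^5)$ is a one-dimensional $\slc$-module (top exterior power of a $10$-dimensional module), in fact trivial since $\slc$ acts by $\mathrm{tr}=0$. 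Hence $F_{10}(V)\cong V$ as $\slc$-modules, so $V\cong F(1,0,0,0)$, which is the claim.

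The main obstacle I anticipate is step two: showing that height $h<10$ is incompatible with the singular vector equations. Corollary \ref{d=h,h+4}(iii) constrains $w_h$ but says nothing directly about the lower terms $w_{h-2},w_{h-4},\dots$, and equations \eqref{-1act}, \eqref{-2act} couple $w_h$ to $w_{h-2}$; so a naive ``look at the top component'' argument must be supplemented by an analysis showing that no choice of $w_{h-2}$ can absorb the obstruction. The cleanest route is probably to note that $X^1w_{h-2}\in L_{h-1}(V)$ while the problematic part of $Xw_h$ that one wants to cancel sits in $L_{h+1}(V)$, so \eqref{1act} (the pure top-degree equation $X^1w_h=0$) is genuinely a closed condition on $w_h$ alone, and then to show combinatorially — using the explicit product structure $d_I\mapsto$ (sum over new pairs $p$ of $\pm d_{I\cup\{p\}}\otimes(\text{something})$) forced by the bracket $[x_5d_{45},\,\cdot\,]$ and the commutation relations among the $d_{ij}$ — that $X^1w_h=0$ with $w_h$ of the Corollary \ref{d=h,h+4}(iii) form and $h<10$ forces all $v_{e_j+e_5,I}=0$. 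This is the step where a short computer-assisted linear-algebra check, in the spirit of the calculations announced in the introduction, may be the most efficient finish.
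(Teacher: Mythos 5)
There is a genuine gap, and it sits exactly where you anticipated trouble: the exclusion of heights strictly between the first bound and $10$. Your step two claims that the top-degree equation $X^1w_h=0$ (equation \eqref{1act}) alone forces $h=10$, ``essentially because multiplying $d_I$ by a new $d_p$ cannot be cancelled.'' That heuristic is false: distinct summands $\de_5\de_j d_I\otimes v_{j,I}$ can and do produce the same monomial in $L_{h+1}(V)$ after reordering the $d_p$'s via \eqref{permform}, so the conditions are a nontrivial linear system rather than term-by-term vanishing. Indeed, in the closely parallel case $d=h+2$ at height $9$ this system has a nonzero solution (it is the top term of the degree-$11$ singular vector $w[11]$), so one cannot expect ``$h<10$ is impossible'' to fall out of \eqref{1act} by pure combinatorics. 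What the paper actually extracts from \eqref{1actbis} is only the bound $h\geq 9$: applying $X=x_kd_{kj}$ forces every $I$ with $v_{1,I}\neq 0$ to contain all pairs through $1$ and through $5$, and applying $X=x_1d_{23}+x_2d_{13}$ and its analogues forces $I$ to contain at least two of $\{2,3\},\{2,4\},\{3,4\}$. The case $h=9$, $d=13$ is then killed by a completely different mechanism that your proposal does not contain: by Corollary \ref{d=h,h+4}(iii) such a vector gives a morphism $M(0,0,0,1)\to M(\lambda)$ of degree $13$, the duality of Theorem \ref{duality} produces a degree-$13$ morphism $M(\lambda^*)\to M(1,0,0,0)$, which forces $\lambda=(1,0,0,0)$, and then one checks there is no $I$ with $|I|=9$ and $\lambda(d_I)=(0,0,0,0)$. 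Without this (or some substitute), your argument never reaches $h=10$.

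Two smaller remarks. Your step one, determining $F$ by ``weight bookkeeping'' before $h$ is known, does not actually constrain $F$: the weight of $d_I$ ranges over a large set as $I$ varies, so nothing is pinned down until $h$ is fixed; the paper determines $F$ only after the height analysis. Your step three, on the other hand, is correct and is essentially the paper's closing remark in a cleaner form: once $h=10$, $F_{10}(V)=d_\Omega\otimes V\cong V$ because $\textstyle\bigwedge^{10}(\bigwedge^2\C^5)$ is the trivial $\slc$-module, and Corollary \ref{d=h,h+4}(iii) exhibits a highest weight vector of weight $(1,0,0,0)$ in it, whence $V\cong F(1,0,0,0)$.
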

\begin{proof}
By Corollary \ref{d=h,h+4} we have
\[
w_h=\de_5\sum_{i,I}\de_i d_I\otimes v_{i,I}.
\]
By applying \eqref{1actbis} with $X=x_kd_{kj}$ and all $k\neq j$,  we deduce that if $v_{i,I}\neq 0$ then $I$ must contain all pairs containing $i$ and all pairs containing $5$, and, in particular, $w$ has height at least 7 since $v_{1,I}\neq 0$ for some $I$.
If we apply \eqref{1actbis} with $X=x_1d_{23}+x_2d_{13}$, we obtain
\[
-\de_5 d_{23}\sum_I d_I v_{1,I}-\de_5 d_{13}\sum_{I} d_I v_{2,I}\equiv 0 \mod M_h(V).
\]
We deduce that, if $v_{1,I}\neq 0$ and $I$ does not contain $23$, then it necessarily contains $24$, since all terms in the second summand do, and one can similarly show that $I$ must contain $34$ using $X=x_1d_{23}-x_3 d_{12}$. Permuting the roles of $2,3,4$, this argument shows that $I$ must contain at least two of the three pairs 23, 24, 34 and hence $w$ has height at least 9.
A singular vector of height 9 and degree 13 produces a morphism $\varphi:M(0,0,0,1)\rightarrow M(\lambda)$ for some $\lambda$, by Corollary \ref{d=h,h+4}. The dual morphism $\varphi^*:M(\lambda^*)\rightarrow M(1,0,0,0)$ is also a morphism of degree 13 and so we necessarily have $\lambda=(1,0,0,0)$. Therefore, if $v_{1,I}\neq 0$ then the weight of $d_I$ must be $(0,0,0,0)$, but one can easily check that there are no $I$ with $|I|=9$ such that $\lambda(d_I)=(0,0,0,0)$ (see \cite[\S 6]{CC} for an easy way to compute the weight of the $d_I$'s).

If $w$ has height 10 and degree 14, then by Corollary \ref{d=h,h+4}, and an argument analogous to the previous one shows that $w\in M(1,0,0,0)$.

\end{proof}

Now we can rule out the only left case with $d=h+4$.
\begin{proposition}\label{d!=h+4}Let $w$ be a singular vector of degree $d$ and height $h$. Then $d< h+4$.
\end{proposition}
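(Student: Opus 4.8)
By Proposition~\ref{d=h+4prima}, the only case with $d=h+4$ still to be excluded is that of a singular vector $w$ of degree $14$ and height $10$ in $M(V)$ with $V=F(1,0,0,0)=\C^5$. The first step is to determine the highest term $w_{10}$ completely. Since $|I|=10$ forces $I=\Omega$, and since $d_\Omega:=d_{12}d_{13}\cdots d_{45}$ has $\slc$-weight $0$, we have $F_{10}(V)\cong\C^5$ as $\slc$-modules and $N_{10}(V)=M(F_{10}(V))$ is the $S_5$-Verma module on the standard representation. As $w$ has height $10$, its class in $N_{10}(V)$ coincides with the class of $w_{10}$ and is a genuine $S_5$-singular vector of degree $14-10=4$, hence, by Theorem~\ref{classS5}, a scalar multiple of R6. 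Translated back into $M(V)$, this means that, up to a scalar,
\[
w_{10}=\partial_5\sum_{i=1}^5 \partial_i\, d_\Omega\otimes x_i .
\]

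The plan is then to show that this $w_{10}$ cannot be the highest term of a singular vector, by examining the obstruction to constructing the next component $w_8\in L_8(V)$. Since $w$ has height $10$, the singular-vector conditions yield, at the level just below the top, the relations \eqref{-1act} and \eqref{-2act}: $X^1w_8=-X^{-1}w_{10}$ for every $X\in\g_1$, and $E_i^0w_8=-E_i^{-2}w_{10}$ for $i=1,\dots,4$, where $E_i=x_i\partial_{i+1}$. As $x_5d_{45}$ is a lowest weight vector of $\g_1$, it suffices to impose the first relation for $X=x_5d_{45}$. The right-hand sides $X^{-1}w_{10}\in L_9(V)$ and $E_i^{-2}w_{10}\in L_8(V)$ are explicitly computable from the formula for $w_{10}$, while the coefficients $v_{M,I}\in V$ entering $w_8$ (in the notation of \eqref{formofterms}) span a finite-dimensional space once the degree ($14$), the height ($8$) and the $\slc$-weight ($(0,0,0,1)$) of $w$ are fixed; in fact the weight condition makes the monomial $\partial^M$ unique for each pair $(I,v)$. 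We are thus reduced to a concrete finite system of linear equations for $w_8$.

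I expect the main difficulty to be purely computational: although finite, the system is sizeable, as $I$ runs over subsets of $\Omega$ of size $8$ and $9$. The plan is to work inside the single $\slc$-weight space of weight $(0,0,0,1)$, which keeps the system tractable, and to solve it with the aid of a computer. The expected outcome is that the system is already inconsistent — $-X^{-1}w_{10}$ is not of the form $X^1w_8$ compatibly with the $E_i$-relations — so that no such $w_8$ exists; should inconsistency not appear yet at this level, one continues downwards, determining $w_6$ from the relations at the next level in terms of $w_8$ and $w_{10}$, and so on, adjoining also the necessary conditions coming from $\g_{\geq 2}$ at the levels where they involve only the components already taken into account. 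In any case one concludes that $w_{10}=0$, contradicting that $w$ has height $10$. Therefore no singular vector with $d=h+4$ exists, and, by Corollary~\ref{d=h,h+4}, every singular vector satisfies $d<h+4$.
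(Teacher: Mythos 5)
Your reduction to the single remaining case is exactly the paper's: by Proposition \ref{d=h+4prima} one may assume $d=14$, $h=10$, $V=F(1,0,0,0)$, and $w_{10}=\de_5\sum_i\alpha_i\de_i d_{\Omega}\otimes x_i$ with $\alpha_1\neq 0$ (you normalize to the R6 form, which is fine). The gap is in everything after that: you set up the linear system for $w_8$ coming from \eqref{-1act} and \eqref{-2act} and then assert that ``the expected outcome is that the system is already inconsistent,'' with a fallback of descending to $w_6$ if it is not. That is a computational plan, not a proof --- the contradiction is never exhibited, and you yourself leave open whether it even occurs at the first level. The paper closes the case in two lines, with no computer: expanding $x_5d_{45}(w_{10}+w_8)$ and reading off the coefficient of $\de_1\de_4\,d_{\Omega\setminus\{23\}}\otimes x_4$ gives $-\alpha_4-\alpha_{(1,0,0,1,1),\Omega\setminus\{23,45\},4}=0$, while expanding $x_4d_{45}(w_{10}+w_8)$ and reading off the coefficient of $\de_1\de_5\,d_{\Omega\setminus\{23\}}\otimes x_4$ gives $\alpha_1-\alpha_4-\alpha_{(1,0,0,1,1),\Omega\setminus\{23,45\},4}=0$; subtracting yields $\alpha_1=0$, a contradiction.

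A second, related problem is your claim that, $x_5d_{45}$ being a lowest weight vector of $\g_1$, it suffices to impose \eqref{-1act} for $X=x_5d_{45}$ alone. That reduction is valid for the full singular-vector condition $zw=0$, but once you truncate to the components $w_{10},w_8$ and to the single level $L_9(V)$, the relation for a general $X\in\g_1$ is only recovered from the $x_5d_{45}$-relation together with the $E_i$-relations at adjacent levels; it is not automatic that your truncated system contains it. This matters here: the paper's contradiction genuinely uses the second element $x_4d_{45}$, and the $x_5d_{45}$-relation by itself is solvable at this level. So the system you propose to feed to the computer may well be consistent as stated, and the argument would not close.
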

\begin{proof}
By Propositions \ref{d=h,h+4} and \ref{d=h+4prima} we can assume that $d=14$, $h=10$ and
\[
w_{10}=\de_5 (\alpha_1\de_1 d_{\Omega}\otimes x_1+\cdots +\alpha_5\de_5 d_{\Omega}\otimes x_5),
\]
for some $\alpha_1,\ldots,\alpha_5 \in \F$ with $\alpha_1\neq 0$,
and that $w_8$ has the following form:
\[
w_8=\sum_{I:|I|=8} \sum_{M: |M|=3} \sum_{k=1}^5\alpha_{M,I,k}\de^M d_I \otimes x_k, 
\]
for some $\alpha_{M,I,k}\in \F$.
If we expand
\[x_5 d_{45}(w_{10}+w_8)=\sum \beta_{M,I,k} \de^M d_I\otimes x_k,\]
by \eqref{-1act} we obtain the relation
\[
\beta_{(1,0,0,1,0),\Omega\setminus \{23\},4}=-\alpha_4-\alpha_{(1, 0, 0, 1, 1), \Omega\setminus\{23,45\},4}=0.
\]
Similarly, if we expand
\[x_4 d_{45}(w_{10}+w_8)=\sum \gamma_{M,I,k} \de^M d_I\otimes x_k,\]
by \eqref{-1act} we obtain te relation
\[
\gamma_{(1,0,0,0,1),\Omega\setminus\{23\},4}=\alpha_1-\alpha_4-\alpha_{(1, 0, 0, 1, 1), \Omega\setminus\{23,45\},4}=0,
\]
and hence $\alpha_1=0$, a contradiction.
\end{proof}
Next target is to deal with the case $d=h+2$ in Corollary \ref{d=h,h+4}.
\begin{proposition}\label{hgeq8}
Let $w$ be a singular vector in $M(V)$ of degree $d$ and height $h$, with $d=h+2$. Then $h\geq 8$. Moreover, if $h=8$ and 
\[w_8 =\sum_{j=i}^5\partial _j \sum_{I:\, |I|=8} d_I\otimes v_{j,I}\] 
as in Corollary \ref{d=h,h+4}, then $v_{j,I}\neq 0$ only if $\lambda(\de_jd_I)=(0,0,0,0)$.
\end{proposition}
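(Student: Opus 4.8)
The plan is to apply the relation \eqref{1act} (that $X^1w_h=0$ for every $X\in\g_1$) to the explicit form of $w_h$ furnished by Corollary \ref{d=h,h+4}(ii), namely $w_h=\sum_{j=i}^{5}\de_j\sum_{I:\,|I|=h}d_I\otimes v_{j,I}$ with $\sum_{I}d_Iv_{i,I}\neq 0$; in particular the only partial derivatives occurring in $w_h$ are $\de_i,\dots,\de_5$. The point to record first is that, for $X$ a constant multiple of a monomial $x_ad_{bc}\in\g_1$, the only way $X$ can raise the height when applied to a term $\de^Md_I\otimes v$ is by hitting one of the single partials, since $[x_ad_{bc},\de_k]=-\delta_{ka}d_{bc}\in\g_{-1}$ whereas $[x_ad_{bc},d_p]\in\g_0$; so \eqref{1act} becomes an explicit linear system in the $v_{j,I}$ that only ``sees'' the partials of $w_h$.

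Next I would extract two families of necessary conditions on the index sets carrying $v_{j,I}\neq 0$. Taking $X=x_ad_{ab}$ ($b\neq a$), the $L_{h+1}(V)$-component of $Xw_h$ equals $-d_{ab}\sum_I d_I\otimes v_{a,I}$, and injectivity of $I\mapsto I\cup\{ab\}$ forces the \emph{star condition}: for $a\in\{i,\dots,5\}$, $v_{a,I}\neq 0$ implies $\mathrm{star}(a):=\{\{a,b\}:b\neq a\}\subseteq I$. Taking then the closed two-form $X=x_ad_{bc}+x_bd_{ac}\in\g_1$ with $a\in\{i,\dots,5\}$, $b<i$ (so $\de_b$ is absent from $w_h$) and $c\notin\{a,b\}$, the $L_{h+1}(V)$-component of $Xw_h$ reduces to $-d_{bc}\sum_I d_I\otimes v_{a,I}$, which forces $v_{a,I}=0$ unless $\{b,c\}\in I$. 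Combining the two, $v_{i,I}\neq 0$ forces $I$ to contain every pair whose smaller index is $\le i$, a set of cardinality $10-\binom{5-i}{2}$; since $\sum_I d_Iv_{i,I}\neq 0$ this gives $h\ge 10-\binom{5-i}{2}$, which is $\ge 8$ for $i\in\{3,4,5\}$.

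For $i=2$ the above only yields $h\ge 7$, and I would rule out $h=7$ directly: then the conditions force each $v_{j,I}$ to be supported on the single index set $I^{(j)}$ of all pairs meeting $\{1,j\}$, so $w_7=\sum_{j=2}^{5}\de_jd_{I^{(j)}}\otimes v_j$ with $v_2\neq 0$, and feeding $X=x_2d_{34}+x_3d_{24}$ into \eqref{1act} produces the $L_8(V)$-identity $\pm d_{I^{(2)}\cup\{34\}}\otimes v_2\pm d_{I^{(3)}\cup\{24\}}\otimes v_3=0$; as $I^{(2)}\cup\{34\}\neq I^{(3)}\cup\{24\}$ the two monomials are linearly independent, whence $v_2=0$, a contradiction. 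For $i=1$ only the star condition is available and one has to work harder: for a pair $jk$ with $1\notin\{j,k\}$, feeding both closed two-forms $x_1d_{jk}+x_jd_{1k}$ and $x_1d_{jk}-x_kd_{1j}$ into \eqref{1act} and invoking the star condition for $j$ and for $k$ shows that if $v_{1,I}\neq 0$ and $jk\notin I$ then $I\supseteq\mathrm{star}(1)\cup(\mathrm{star}(j)\setminus\{jk\})\cup(\mathrm{star}(k)\setminus\{jk\})$, a set of size $8$; and if no such $jk$ is missing then $I=\Omega$. Either way $h\ge 8$. I expect this $i=1$ case (together with the $h=7$ exclusion for $i=2$) to be the main obstacle: unlike the cases $i\ge3$ there is no single clean counting bound, so one must track which index sets can support a nonzero $v_{j,I}$ and verify linear independence of the surviving monomials in $L_{h+1}(V)$; the duality of Theorem \ref{duality} could be invoked to cut down the number of weights to be examined.

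Finally, for the ``moreover'' part ($h=8$): now every admissible $I$ is $\Omega\setminus\{p,q\}$ with $p\neq q$, and the star condition at $j$ forces $j\notin p\cup q$. If $p\cap q=\{c\}$, write $p=\{c,x\}$ and apply \eqref{1act} with $X=x_jd_{cx}+x_cd_{jx}\in\g_1$ (note $j,c,x$ are distinct): when $c<i$ this directly forces $v_{j,I}=0$, and when $c\ge i$ the monomial $d_{cx}d_I=\pm d_{\Omega\setminus\{q\}}$ could only be cancelled against a term $d_{jx}d_{I'}$ with $I'=\Omega\setminus\{q,\{j,x\}\}$, which excludes the pair $q\in\mathrm{star}(c)$ and hence has $v_{c,I'}=0$ by the star condition — again forcing $v_{j,I}=0$, a contradiction. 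Therefore $p\cap q=\emptyset$, so $\{j\}\sqcup p\sqcup q=[5]$; then, using that the top exterior powers of $\g_{-1}$ and of $\g_{-2}$ are trivial $\slc$-modules (so $\lambda(d_\Omega)=0$ and $\lambda(\de_1)+\cdots+\lambda(\de_5)=0$) and that $\lambda(d_{ab})=-\lambda(\de_a)-\lambda(\de_b)$, one gets $\lambda(\de_jd_I)=\lambda(\de_j)+\lambda(d_\Omega)-\lambda(d_p)-\lambda(d_q)=\lambda(\de_1)+\cdots+\lambda(\de_5)=(0,0,0,0)$, as required.
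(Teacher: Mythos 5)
Your argument is correct and is essentially the paper's: both proofs apply \eqref{1actbis} first with the elements $x_kd_{kl}$ (giving your ``star condition'') and then with the closed forms $x_kd_{lm}+x_ld_{km}$, $x_kd_{lm}-x_md_{kl}$, reducing everything to linear independence of the monomials $d_J$ in $N_{h+1}(V)$. The only difference is organizational: the paper runs a single uniform count (for each $l\neq k$, a nonzero $v_{k,I}$ forces $I$ to contain at least two of the three pairs through $l$, whence $|I|\geq 4+4$ and, at $h=8$, the two missing pairs are automatically disjoint, which yields the weight claim immediately), whereas you split into cases according to $i$ — needing a separate exclusion of $h=7$ when $i=2$ and an extra application of $\g_1$ to get disjointness of the missing pairs — but each of these steps checks out.
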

\begin{proof}

By \eqref{1actbis} we have for all $k\neq l$
\[
x_kd_{kl}\, w_h\equiv -d_{kl}\sum_I d_I\otimes v_{k,I}\equiv 0 \mod M_h(V).
\]
This implies that, if $v_{k,I}\neq 0$, then $\{k,l\}\in I$ for all $l\neq k$. In particular, we immediately deduce that $h\geq 4$ and 
\[
w_h\equiv\sum_{j} \de_j d_{1j}\cdots \hat{d_{jj}}\cdots d_{5j}\sum_{I_j}d_{I_j}\otimes v_{j,I_j} \mod M_{h-1}(V),
\]
%oppure
%\begin{align*}
%w_h&=\de_1 d_{12}d_{13}d_{14}d_{15} \sum_{I_1}d_{I_1}\otimes v_{1,I_1}+\de_2 d_{12}d_{23}d_{24}d_{25} \sum_{I_2}d_{I_2}\otimes v_{2,I_2}+\de_3 d_{13}d_{23}d_{34}d_{35} \sum_{I_3}d_{I_3}\otimes v_{3,I_3}\\
%&\hspace{3mm}+\de_4 d_{14}d_{24}d_{34}d_{45} \sum_{I_4}d_{I_4}\otimes v_{4,I_4}+\de_5 d_{15}d_{25}d_{35}d_{45} \sum_{I_5}d_{I_5}\otimes v_{5,I_5},
%\end{align*}
where $I_1$ runs through all subsets of $\{\{2,3\},\{2,4\},\{2,5\},\{3,4\},\{3,5\},\{4,5\}\}$ of cardinality $h-4$, and similarly for $I_2,\ldots,I_5$,  where the vectors $v_{j,I_j}$ have been reindexed.
Now let $k,l,m$ be distinct integers in $[5]$ and use again \eqref{1actbis} with the element $X=x_kd_{lm}+x_ld_{km}$. We obtain:
\[
d_{lm} d_{1k}\cdots \hat{d_{kk}}\cdots d_{5k}\sum_{I_k}d_{I_k}\otimes v_{k,I_k}+d_{km}d_{1l}\cdots \hat{d_{ll}}\cdots d_{5l}\sum_{I_l}d_{I_l}\otimes v_{l,I_l}\equiv 0 \mod M_h(V).
\]
Again, by \eqref{permform}, this implies that, if $v_{k,I_k}\neq 0$, then  $I_k$ contains $\{l,m\}$ (in which case the corresponding summand is zero), or it contains both pairs $\{l,r\}$ and $\{l,s\}$, where $\{k,l,m,r,s\}=[5]$. It follows that $I_k$ must contain at least two pairs containing $l$ (since if it does not contain one such pair it must contain the other two). This implies that $I_k$ contains at least four pairs, and this completes the proof that $h\geq 8$.

If $h=8$, by the previous argument, the two missing pairs in $I_k$ must contain the four elements distinct from $k$ exactly once, and so the weight of $\de_k d_{1k}\cdots \hat d_{kk}\cdots d_{5k}d_{I_k}$ is $(0,0,0,0)$.

\end{proof}

We can now tackle the case of singular vectors of height 8 and degree 10.
\begin{proposition}\label{h8d10}
There are no singular vectors $M(V)$ of height 8 and degree 10.
\end{proposition}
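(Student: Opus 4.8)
The plan is to assume that such a singular vector $w$ exists and derive a contradiction from the structural constraints already in place. By Proposition \ref{hgeq8} we know that if $h=8$ and $d=10$, then $w_8=\sum_{j=i}^5\partial_j\sum_{I:\,|I|=8}d_I\otimes v_{j,I}$ where $v_{j,I}\neq 0$ only if $\lambda(\partial_jd_I)=(0,0,0,0)$, and the proof of that proposition pins down the shape of the index sets $I$ quite precisely: each $I$ that appears with a nonzero $v_{j,I}$ must contain all four pairs $\{j,l\}$ together with two further pairs among the remaining six that cover the four elements $\ne j$ exactly once, i.e.\ a perfect matching on $[5]\setminus\{j\}$. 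First I would enumerate these possibilities explicitly: for each $j$ there are exactly three such matchings, so the highest term $w_8$ lives in a small, completely explicit space, and the weight-zero condition together with the requirement that the class of $w_8$ in $N_8(V)$ be (a constant multiple of) a Rudakov singular vector of type R1/R6 (Corollary \ref{d=h,h+4}, case $d=h+2$ with $i=1$, since $\lambda(w)=(0,0,0,0)$) forces $V$ and the leading coefficients.

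Having nailed down $w_8$ up to finitely many parameters, the next step is to bring in the lower term $w_6\in L_6(V)$, which must exist by \eqref{-1act}: for every $X\in\g_1$ we need $X^{-1}w_8+X^1w_6=0$, so $X^1w_6$ is determined, and one must check this is consistent (i.e.\ that $X^{-1}w_8$ actually lies in the image of $(\cdot)^1$ on $L_6(V)$ and that the resulting $w_6$ is well-defined independent of which $X$ we use). I would run the same reduction as in Proposition \ref{hgeq8} one degree down: applying \eqref{1actbis}-type relations to $w_6$ to constrain which index sets of size $6$ can occur, and applying the lowest-weight generator $x_5d_{45}$ together with \eqref{-1act} to extract linear relations among the coefficients of $w_8$ and $w_6$. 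The mechanism is exactly the one used in Proposition \ref{d!=h+4}: expand $x_kd_{kl}(w_8+w_6)$ for well-chosen $k,l$, read off the coefficient of a carefully selected monomial $\partial^M d_I\otimes x_m$ in $L_7(V)$, and use \eqref{-1act} to set it to zero. With two or three such expansions (varying the pair $\{k,l\}$ and the target monomial) one should obtain a system forcing the top coefficient $\alpha_1$ of $w_8$ — the one that Corollary \ref{d=h,h+4} guarantees is nonzero — to vanish, which is the contradiction.

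The main obstacle I expect is bookkeeping: unlike the $d=h+4$ case, where height $h=10$ means $d_\Omega$ is the full product and the combinatorics collapses, here the index sets $I$ of size $8$ and $6$ genuinely vary, so the expansions $x_kd_{kl}(w_8+w_6)$ produce many terms and one has to choose the monomial $\partial^M d_I\otimes x_m$ to project onto with some care so that contributions from $w_6$ either vanish or are controlled. It is likely that a direct hand computation is unwieldy and that, as the authors signal in the introduction, the actual argument reduces the claim to the (in)consistency of an explicit finite linear system solved by computer; my plan would mirror that — set up the finite-dimensional ambient space for $(w_8,w_6)$ cut out by the height and weight constraints, impose the singular-vector equations coming from the generator $x_5d_{45}$ of $\g_1$ modulo $M_8(V)$ and the lower-order correction, and show the only solution has $w_8=0$, contradicting height $8$. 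A secondary subtlety is making sure one has correctly identified all admissible $V$: the weight data in Corollary \ref{d=h,h+4}(ii) with $i=1$ says the ``$\g_0$-content'' of the leading term sits in weight $(1,0,0,0)$, so $V$ must be such that $(U_-)_8\otimes V$ contains a highest-weight vector of weight $(0,0,0,0)$ with this structure; listing these candidate $V$ and ruling out each is the part that most resembles the techniques of \cite{CC} referenced in the introduction.
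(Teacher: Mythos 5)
Your setup is right --- you correctly extract from Proposition \ref{hgeq8} that $w_8$ is supported on the fifteen index sets $d^\vee_{jk,lm}$ (for each $i$, the complement of a perfect matching of $[5]\setminus\{i\}$) --- but the proof itself is never carried out, and the route you choose overshoots. The paper derives the contradiction entirely from the highest term: applying \eqref{1actbis} with the elements $x_id_{jk}+x_jd_{ik}$ and $x_id_{jk}-x_kd_{ij}$ for $i<j<k$ gives, via \eqref{permform} and the sign bookkeeping of \eqref{dist1}--\eqref{dist2}, explicit proportionality relations \eqref{gb1}--\eqref{gb2} among the fifteen vectors $v_{i,jk,lm}$, and chaining these relations forces every $v_{i,jk,lm}$ to vanish, i.e.\ $w_8=0$, contradicting height $8$. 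No lower term $w_6$, no use of \eqref{-1act} or \eqref{-2act}, and no computer is needed. Your plan instead defers the contradiction to an unspecified linear system coupling $w_8$ and $w_6$ whose inconsistency is only conjectured (``one should obtain a system forcing \dots''); that is the missing step, and it is precisely the hard part.

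Two further inaccuracies: first, there is no single coefficient ``$\alpha_1$ of $w_8$'' that Corollary \ref{d=h,h+4} guarantees to be nonzero in the height-$8$ case --- what is guaranteed is that the whole class $\sum_I d_I^\prec v_{e_i,I}$ is a nonzero highest weight vector, and indeed the actual contradiction is that \emph{all} coefficients vanish simultaneously (the $\alpha_1\neq 0$ normalization you are recalling belongs to the $d=h+4$, height-$10$ situation of Proposition \ref{d!=h+4}). Second, the weight analysis and the identification of candidate $V$ that you flag as the delicate part plays no role here: the sign relations \eqref{gb1}--\eqref{gb2} are independent of $V$ and kill $w_8$ for every $\g_0$-module $V$. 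Your strategy could in principle be completed, since adding the $w_6$-constraints only adds equations, but as written it does not constitute a proof.
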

\begin{proof}
	Assume by contradiction that $w$ is a singular vector of height 8 and degree 10. For distinct $i,j,k,l\in [5]$, with $i<j$ and $k<l$ we let $d^\vee_{jk,lm}=d_{\Omega\setminus\{jk,lm\}}$. For example $d^\vee_{14,25}=d_{12}d_{13}d_{15}d_{23}d_{24}d_{34}d_{35}d_{45}$. By Proposition \ref{hgeq8} we have that $w_8$ can be expressed in the following way
	\[
	w_8=\sum_{i,j,k,l,m}\de_id^\vee_{jk,lm}\otimes v_{i,jk,lm}
	\]
	where the sum is over all distinct $i,j,k,l,m \in [5]$ such that $j<k,l$, and  $l<m$ (so we have exactly 15 summands).
	We also adopt the convention $v_{i,lm,jk}=v_{i,jk,lm}$ for notational convenience. By construction, we immediately have $(x_i\, d_{ik})^1w_8=0$ for all $i\neq k$. We will therefore consider elements in $\g_1$ of the form $x_i d_{jk}+x_j d_{ik}$ and $x_i d_{jk}- x_k d_{ij}$ (for all $i<j<k$), which will allow us to deduce that $w_8=0$. To perform this computation efficiently we need the following notation.
	
	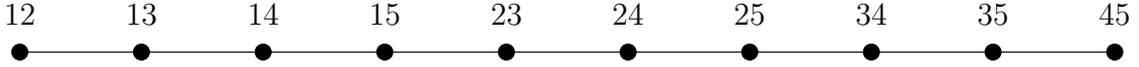
\begin{figure}[h]
		\begin{center}
			$$
			\begin{tikzpicture}
			\draw(-7.2,0)--(7.2,0);

			\draw[fill=black]{(-7.2,0) circle(3pt)};
			\draw[fill=black]{(-5.6,0) circle(3pt)};
			\draw[fill=black]{(-4,0) circle(3pt)};
			\draw[fill=black]{(-2.4,0) circle(3pt)};
			\draw[fill=black]{(-0.8,0) circle(3pt)};
			\draw[fill=black]{(0.8,0) circle(3pt)};
			\draw[fill=black]{(2.4,0) circle(3pt)};
			\draw[fill=black]{(4,0) circle(3pt)};
			\draw[fill=black]{(5.6,0) circle(3pt)};
			\draw[fill=black]{(7.2,0) circle(3pt)};

			\node at (-7.2,0.5) {$12$};
			\node at (-5.6,0.5) {$13$};
			\node at (-4,0.5) {$14$};
			\node at (-2.4,0.5) {$15$};
			\node at (-0.8,0.5) {$23$};
			\node at (0.8,0.5) {$24$};
			\node at (2.4,0.5) {$25$};
			\node at (4,0.5) {$34$};
			\node at (5.6,0.5) {$35$};
			\node at (7.2,0.5) {$45$};
			
		\end{tikzpicture}$$
		\end{center}
		\caption{\label{figlex}The lexicographic order}  
		\end{figure}

\medskip
	
	Let \[\eta_{ij}=\begin{cases}1& \textrm{if } i+j=5 \\0&\textrm{otherwise}.\end{cases}\] The reason for introducing this function is the following: let $d(ij,kl)$ be the distance between the pairs $ij$ and $kl$ in the lexicographic order (i.e. in the graph represented in Figure \ref{figlex}; then one can easily check that for all $i<j<k$ we have
	
	 \begin{equation}(-1)^{d(ik,jk)}=(-1)^{\eta_{ij}+1}\label{dist1}
	 \end{equation}
	 and
	 \begin{equation}\label{dist2}
	 (-1)^{d(ij,jk)}=(-1)^{\eta_{ij}+k+j+1}.
	 \end{equation}
	 
	Let $i,j,k,l,m$ be distinct such that $i<j<k$ and $l<m$. We have
	\[
	(x_id_{jk}+x_j d_{ik})^1w_8\equiv -d_{jk}d^\vee_{jk,lm}\otimes v_{i,jk,lm}-d_{ik}d^\vee_{ik,lm}\otimes v_{j,ik,lm} \mod M_8(V)
	\]
	By \eqref{permform} and \eqref{dist1} we have
	\begin{equation}\label{gb1}
	v_{i,jk,lm}=\begin{cases}
	(-1)^{\eta_{ij}+1}v_{j,ik,lm} &\textrm{if }i<l<j\\
	(-1)^{\eta_{ij}}v_{j,ik,lm}& \textrm{otherwise.}
	\end{cases}
	\end{equation}
	
	Similarly, applying $x_i d_{jk}-x_k d_{ij}$ we obtain
	\[
	(x_id_{jk}-x_k d_{ij})^1w_8\equiv -d_{jk}d^\vee_{jk,lm}\otimes v_{i,jk,lm}+d_{ij}d^\vee_{ij,lm}\otimes v_{k,ij,lm} \mod M_8(V)
\]
	
	and by \eqref{permform} and \eqref{dist2} we have
	\begin{equation}\label{gb2}
	v_{i,jk,lm}=\begin{cases}
	(-1)^{\eta_{ij}+k+j}v_{k,ij,lm} &\textrm{if }i<l<j\\
	(-1)^{\eta_{ij}+k+j+1}v_{k,ij,lm}& \textrm{otherwise.}
	\end{cases}
		\end{equation}
By repeated application of Eq.\ \eqref{gb1} we obtain
\begin{itemize}
\item $v_{1,23,45}=v_{2,13,45}=v_{4,15,23}$
\item $v_{1,24,35}=v_{2,14,35}=-v_{3,15,24}$
\item $v_{1,25,34}=v_{2,15,34}=- v_{3,14,25}$
\item $v_{2,13,45}=v_{4,13,25}$
\item $v_{2,14,35}=-v_{3,14,25}$
\item $v_{2,15,34}=-v_{3,15,24}$
\item $v_{3,12,45}=v_{4,12,35}$
\end{itemize}
and by repeated application of Eq. \eqref{gb2} we obtain
\begin{itemize}
\item $v_{1,23,45}=v_{3,12,45}=v_{5,14,23}$

\item $v_{1,24,35}=-v_{4,12,35}=v_{5,13,24}$

\item $v_{1,25,34}=v_{5,12,34}=-v_{4,13,25}$

\item $v_{2,13,45}=v_{5,13,24}$

\item $v_{2,14,35}=v_{5,14,23}$

\item $v_{2,15,34}= -v_{4,15,23}$

\item $v_{3,12,45}=v_{5,12,34}$.

\end{itemize}
All these equations together imply that all $v_{i,jk,lm}$ vanish.
\end{proof}	

We now consider the case of a singular vector $w$ of  height 9 and degree 11. In this case, as in the proof of Proposition \ref{hgeq8}, we can immediately deduce that $w_9$ must have the following form
\begin{equation}\label{w9}
w_9=\sum_{i,j,k}\de_i d^\vee_{jk}\otimes v_{i,jk},
\end{equation}
where the sum is over all distinct $i,j,k$ with $j<k$ (a total of 30 summands) and $d^\vee_{jk}=d_{\Omega\setminus\{{j,k}\}}$.
As in the case of height 8, we can now proceed by applying all elements in $\g_1$ of the form $x_i d_{jk}+x_j d_{ik}$ and $x_i d_{jk}- x_k d_{ij}$.
\begin{lemma}\label{lemh=9}
	If $w$ is a singular vector of height 9 and degree 11 with highest term $w_9$ as in \eqref{w9}, then
	\begin{itemize}
		\item $v_{1,23}=v_{2,13}=v_{3,12}$;
		\item $v_{1,24}=v_{2,14}=-v_{4,12}$;
		\item $v_{1,25}=v_{2,15}=v_{5,12}$;
		\item $v_{1,34}=v_{3,14}=v_{4,13}$;
		\item $v_{1,35}=v_{3,15}=-v_{5,13}$;
		\item $v_{1,45}=-v_{4,15}=-v_{5,14}$;
		\item $v_{2,34}=-v_{3,24}=-v_{4,23}$;
		\item $v_{2,35}=-v_{3,25}=v_{5,23}$;
		\item $v_{2,45}=v_{4,25}=v_{5,24}$;
		\item $v_{3,45}=v_{4,35}=v_{5,34}$.
	\end{itemize}
\end{lemma}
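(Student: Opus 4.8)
The strategy mirrors exactly the arguments already carried out for heights $8$ and $9$ in Propositions \ref{h8d10} and \ref{hgeq8}: we extract linear relations among the coefficients $v_{i,jk}$ by applying to $w$ the elements of $\g_1$ of the two convenient shapes $x_id_{jk}+x_jd_{ik}$ and $x_id_{jk}-x_kd_{ij}$ (for $i<j<k$), using \eqref{1actbis} and comparing highest terms via \eqref{permform}. The point is that, since $w_9$ is forced by Proposition \ref{hgeq8} (more precisely the argument in its proof, which gives that $w$ of degree $h+2$ has each $v_{i,I}$ supported on $I$ containing all pairs through $i$) to have the restricted form \eqref{w9} with only $30$ unknowns, these two families of identities are already enough to pin the relations down. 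Concretely, for distinct $i,j,k,l,m$ with $i<j<k$ one computes
\[
(x_id_{jk}+x_jd_{ik})^1w_9\equiv -d_{jk}d^\vee_{jk}\otimes v_{i,jk}-d_{ik}d^\vee_{ik}\otimes v_{j,ik}\pmod{M_9(V)},
\]
and comparing the two monomials $d_{jk}d^\vee_{jk}$ and $d_{ik}d^\vee_{ik}$ (which differ by transposing the pair $jk$ with $ik$ inside the full product $d_\Omega$) via \eqref{permform} together with the sign rule \eqref{dist1} gives a relation of the form $v_{i,jk}=\pm v_{j,ik}$ with the sign governed by $\eta_{ij}$; symmetrically $(x_id_{jk}-x_kd_{ij})^1w_9$ yields $v_{i,jk}=\pm v_{k,ij}$ with the sign from \eqref{dist2}.

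\textbf{Key steps in order.} First I would record, for every triple $i<j<k$, the three coefficients $v_{i,jk}$, $v_{j,ik}$, $v_{k,ij}$ sitting on the same support $\Omega\setminus\{\text{the relevant pair}\}$ and write down the two sign relations among them coming from the two $\g_1$-elements above; this is a purely bookkeeping step using \eqref{dist1}–\eqref{dist2} exactly as in the proof of Proposition \ref{h8d10}. Second, specializing these to each of the ten pairs $\{j,k\}$ that can appear as the ``missing'' pair (namely $\{j,k\}\in\Omega$) and to the three choices of the remaining index $i$, one reads off precisely the ten three-term chains of equalities claimed in the statement; the only content is getting the ten signs right, and these are forced by the parity of $\eta_{ij}$ and of $j+k$. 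I expect no genuine obstacle here: unlike the height-$8$ case, where a further round of relations was needed to conclude $w_8=0$, here the lemma only asserts the first batch of identities, so the proof is the direct translation of the two computations $(x_id_{jk}\pm x_k d_{ij})^1w_9\equiv 0$ already templated above.

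\textbf{Main difficulty.} The only place where care is required is the sign arithmetic: one must track how moving a single letter $d_{jk}$ to the front of the ordered product $d_\Omega$ interacts with the lexicographic order of Figure \ref{figlex}, i.e. one must apply \eqref{dist1} and \eqref{dist2} with the correct pair of indices and remember that $v_{i,jk}$ is by our convention symmetric in the two missing pairs only in the height-$8$ notation but here has a single missing pair, so no symmetrization subtlety arises. Once the thirty coefficients are labelled and the two relation-types written out, collecting them pairwise produces exactly the ten displayed chains; since the lemma makes no claim beyond those chains, the argument terminates there.
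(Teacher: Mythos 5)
Your proposal is correct and follows essentially the same route as the paper: the paper's proof consists precisely of applying the elements $x_id_{jk}+x_jd_{ik}$ and $x_id_{jk}-x_kd_{ij}$ to $w_9$, invoking \eqref{1actbis} and \eqref{permform} to compare the resulting multiples of $d_\Omega$, and reading off the thirty sign relations (it works out the single example $v_{2,35}=-v_{3,25}$ and leaves the rest as routine). Your additional observation that the signs are governed by \eqref{dist1}--\eqref{dist2} via $\eta_{ij}$ and the parity of $j+k$ is a correct and slightly more systematic packaging of the same bookkeeping.
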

\begin{proof}
	All equalities are obtained using \eqref{permform} and \eqref{1actbis} applying elements $x_i d_{jk}+x_j d_{ik}$ and $x_i d_{jk}- x_k d_{ij}$. For example we have
	\[(x_2 d_{35}+x_3 d_{25}) w_9\equiv -d_{35} d_{35}^\vee\otimes v_{2,35}-d_{25} d_{25}^\vee\otimes v_{3,25}\equiv d_\Omega(-v_{2,35}-v_{3,25}) \mod M_8(V),\]
	hence $v_{2,35}=-v_{3,25}$. All other equalities can be obtained similarly.
	\end{proof}
Thanks to Lemma \ref{lemh=9} the highest term of the singular vector assumes the following form:
\begin{align}\label{w9reduced}
w_9& = \de_1(d_{23}^\vee\otimes u_1-d_{24}^\vee\otimes u_2+d_{25}^\vee\otimes u_3-d^\vee_{34}\otimes u_4+d^\vee_{35}\otimes u_5-d_{45}^\vee \otimes u_6)\\
\nonumber & \hspace{3mm} +\de_2(d_{13}^\vee\otimes u_1-d_{14}^\vee\otimes u_2+d_{15}^\vee\otimes u_3-d^\vee_{34}\otimes u_7+d_{35}^\vee \otimes u_8-d_{45}^\vee\otimes u_9)\\
\nonumber& \hspace{3mm} +\de_3(d_{12}^\vee\otimes u_1-d_{14}^\vee\otimes u_4+d_{15}^\vee\otimes u_5+d^\vee_{24}\otimes u_7-d_{25}^\vee \otimes u_8-d_{45}^\vee\otimes u_{10})\\
\nonumber& \hspace{3mm} +\de_4(d_{12}^\vee\otimes u_2-d_{13}^\vee\otimes u_4+d_{15}^\vee\otimes u_6+d^\vee_{23}\otimes u_7-d_{25}^\vee \otimes u_9-d_{35}^\vee\otimes u_{10})\\
\nonumber& \hspace{3mm} +\de_5(d_{12}^\vee\otimes u_3-d_{13}^\vee\otimes u_5+d_{14}^\vee\otimes u_6+d^\vee_{23}\otimes u_8-d_{24}^\vee \otimes u_9-d_{34}^\vee\otimes u_{10}).
\end{align}
for suitable elements $u_1,\ldots,u_{10} \in V$.
\begin{lemma}\label{g0h9}Let $E_i=x_i\de_{i+1}\in \g_0$ and $w$ be a singular vector of height 9 and degree 11 with $w_9$ as in \eqref{w9reduced} above. Then 
	\begin{itemize}
		\item $E_1$ annihilates $u_1,u_2,u_3,u_4,u_5,u_6,u_{10}$, $E_1.u_7= u_4$, $E_1. u_8= u_5$, $E_1.u_9=u_6$.
		\item $E_2$ annihilates $u_1,u_2,u_3,u_6,u_7,u_8,u_{9}$, $E_1.u_4= u_2$, $E_1.u_5=u_3$, $E_1 u_{10}= u_9$.
		\item $E_3$ annihilates $u_1,u_3,u_4,u_5,u_7,u_8,u_{10}$, $E_1.u_2= u_1$, $E_1.u_6= u_5$, $E_1 u_9= u_8$.
		\item $E_4$ annihilates $u_1,u_2,u_4,u_6,u_7,u_9,u_{10}$, $E_1.u_3=u_2$, $E_1.u_5= u_4$, $E_1 u_8= u_7$.
\end{itemize}
\end{lemma}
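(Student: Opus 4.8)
The plan is to use condition (i) in the definition of a singular vector, which for $w$ says that $E_iw=0$ for $i=1,2,3,4$; by \eqref{0act} this is equivalent to $E_i^0w_9=0$, i.e. to the vanishing of the $L_9(V)$-component of $E_iw_9$. I would compute this component directly from the expression \eqref{w9reduced}, using that $E_i=x_i\de_{i+1}$ acts on $M(V)=U_-\otimes V$ as a derivation with $[E_i,\de_j]=-\delta_{ij}\de_{i+1}$ and $[E_i,d_{ab}]=\delta_{a,i+1}d_{ib}+\delta_{b,i+1}d_{ai}$, together with \eqref{permform} to restore the lexicographic order (the lower-degree commutators $[d_p,d_q]\in\g_{-2}$ produced in that process only affect the lower-height components of $E_iw_9$, which are irrelevant here). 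Concretely, on a summand $\de_jd^\vee_{kl}\otimes u$ of $w_9$ the operator $E_i$ acts in one of three ways: (a) it sends the leading $\de_i$ to $-\de_{i+1}$ (this occurs only on the $\de_i$-line of \eqref{w9reduced}), leaving $d^\vee_{kl}\otimes u$ alone; (b) it changes the index $i+1$ to $i$ in a single factor $d_{p'}$ of $d^\vee_{kl}$, which contributes to $L_9(V)$ only when the new pair is exactly the missing pair $\{k,l\}$, and then turns $d^\vee_{kl}$ into $\pm d^\vee_{p'}$ (the product of the remaining nine factors, up to the reordering sign of \eqref{permform}); or (c) it fixes $\de_jd^\vee_{kl}$ and replaces $u$ by $E_i.u$. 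In particular, for fixed $i$ only finitely many monomials $\de_jd^\vee_{kl}$ are linked by these moves.

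I would then collect, for each $j$ and each $I$ with $|I|=9$, the coefficient of $\de_jd_I\otimes(\cdot)$ in $E_i^0w_9$: it is a fixed $\C$-linear combination of the $u_m$ and the $E_i.u_m$, and $E_i^0w_9=0$ is precisely the statement that all these combinations vanish. Reading them off yields all the asserted identities. For instance, applying $E_1$, the monomial $\de_2d^\vee_{34}$ arises only from the $\de_1$-line summand $-\de_1d^\vee_{34}\otimes u_4$ via move (a) (contributing $+\de_2d^\vee_{34}\otimes u_4$) and from the $\de_2$-line summand $-\de_2d^\vee_{34}\otimes u_7$ via move (c) (contributing $-\de_2d^\vee_{34}\otimes E_1.u_7$, since $[E_1,d^\vee_{34}]$ contributes nothing to $L_9(V)$); hence $u_4-E_1.u_7=0$, i.e. $E_1.u_7=u_4$. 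The remaining equalities and all the annihilation statements follow in exactly the same way by running through the other triples $(i,j,I)$, as in the proof of Lemma \ref{lemh=9}.

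The only real work is the sign bookkeeping: for each $E_i$ and each monomial $\de_jd^\vee_{kl}$ one has to decide which of the moves (a)--(c) contribute and with which sign, keeping track of the reordering signs of \eqref{permform} and of the many substitutions in (b) that contribute nothing to $L_9(V)$, either because they collapse a single two-form ($dx_c\wedge dx_c=0$) or because they produce a pair already present in $d^\vee_{kl}$. Since $E_i$ only swaps the $\de$-indices $i$ and $i+1$ and only connects $d^\vee_{kl}$ to the handful of $d^\vee_{k'l'}$ reachable by one admissible substitution, this is a finite, entirely routine verification, which can be carried out by hand or with computer assistance as elsewhere in the paper.
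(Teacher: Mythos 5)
Your proposal is correct and follows exactly the paper's own route: the paper likewise invokes \eqref{0act} to get $E_i^0w_9=0$, expands $E_1w_9$ term by term using the commutators with the $\de_j$'s and the $d_{ab}$'s (discarding the lower-height contributions), and reads off the stated relations from the vanishing of each coefficient, treating $E_2,E_3,E_4$ "similarly." Your sample computation of the coefficient of $\de_2d^\vee_{34}$ giving $E_1.u_7=u_4$ matches the paper's displayed calculation.
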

\begin{proof}
Recall the definition of $E_1^0$ from \eqref{not1}. By \eqref{0act} we have
\begin{align*}
0&=E_1^0 w_9\\
&=\de_2(d_{34}^\vee\otimes u_4-d_{35}^\vee\otimes u_5+d_{45}^\vee\otimes u_6)+\de_3(-d_{24}^\vee\otimes u_4+ d_{25}^\vee \otimes u_5)\\
&\hspace{3mm}+\de_4(- d_{23}^\vee\otimes u_4+\de_4 d_{25}^\vee\otimes u_6)+\de_5(-d_{23}^\vee \otimes u_5+d_{24}^\vee \otimes u_6)\\
&\hspace{3mm}+\de_1(d_{23}^\vee\otimes E_1.u_1-d_{24}^\vee\otimes E_1.u_2+d_{25}^\vee\otimes E_1.u_3-d^\vee_{34}\otimes E_1.u_4+d^\vee_{35}\otimes E_1.u_5-d_{45}^\vee \otimes E_1.u_6)\\
& \hspace{3mm} +\de_2(d_{13}^\vee\otimes E_1.u_1-d_{14}^\vee\otimes E_1.u_2+d_{15}^\vee\otimes E_1.u_3-d^\vee_{34}\otimes E_1.u_7+d_{35}^\vee \otimes E_1.u_8-d_{45}^\vee\otimes E_1.u_9)\\
& \hspace{3mm} +\de_3(d_{12}^\vee\otimes E_1.u_1-d_{14}^\vee\otimes E_1.u_4+d_{15}^\vee\otimes E_1.u_5+d^\vee_{24}\otimes E_1.u_7-d_{25}^\vee \otimes E_1.u_8-d_{45}^\vee\otimes E_1.u_{10})\\
& \hspace{3mm} +\de_4(d_{12}^\vee\otimes E_1.u_2-d_{13}^\vee\otimes E_1.u_4+d_{15}^\vee\otimes E_1.u_6+d^\vee_{23}\otimes E_1.u_7-d_{25}^\vee \otimes e_1.u_9-d_{35}^\vee\otimes E_1.u_{10})\\
& \hspace{3mm} +\de_5(d_{12}^\vee\otimes E_1.u_3-d_{13}^\vee\otimes E_1.u_5+d_{14}^\vee\otimes E_1.u_6+d^\vee_{23}\otimes E_1.u_8-d_{24}^\vee \otimes E_1.u_9-d_{34}^\vee\otimes E_1.u_{10}).
\end{align*}

The result for $E_1$ follows. The other statements are obtained similarly.
\end{proof}

Lemma \ref{g0h9} is depicted in Figure \ref{Weightsh=9}, where an arrow from $u_i$ to $u_k$ labelled $E_j$ means $E_j.u_i=u_k$ and the absence of an arrow labelled $E_j$ coming out from $u_i$ means $E_j.u_i=0$.

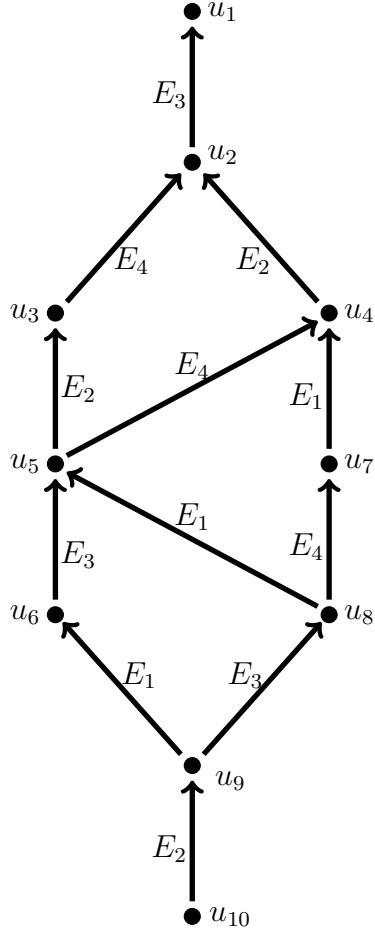
\begin{figure}[h]
	\begin{center}
		$$
		\begin{tikzpicture}
		\draw[->, line width=2pt](0,0.2)--(0,1.8);
		\draw[->, line width=2pt](0.15,2.15)--(1.7,3.9);
		\draw[->, line width=2pt](-0.15,2.15)--(-1.7,3.9);
		\draw[->, line width=2pt](-1.8,4.2)--(-1.8,5.8);
		\draw[->, line width=2pt](1.8,4.2)--(1.8,5.8);
		\draw[->, line width=2pt](-1.8,6.2)--(-1.8,7.8);
		\draw[->, line width=2pt](-1.65,8.15)--(-0.15,9.85);
		\draw[->, line width=2pt](0,10.2)--(0,11.8);
		\draw[->, line width=2pt](1.65,4.11)--(-1.65,5.89);
		\draw[->, line width=2pt](1.8,6.2)--(1.8,7.8);
		\draw[->, line width=2pt](1.65,8.15)--(0.15, 9.85);
		\draw[->, line width=2pt](-1.65,6.11)--(1.65,7.89);
		
		\draw[fill=black]{(0,0) circle(3pt)};
		\draw[fill=black]{(0,2) circle(3pt)};
		\draw[fill=black]{(-1.8,4) circle(3pt)};
		\draw[fill=black]{(1.8,4) circle(3pt)};
		\draw[fill=black]{(-1.8,6) circle(3pt)};
		\draw[fill=black]{(1.8,6) circle(3pt)};
		\draw[fill=black]{(-1.8,8) circle(3pt)};
		\draw[fill=black]{(1.8,8) circle(3pt)};
		\draw[fill=black]{(0,10) circle(3pt)};
		\draw[fill=black]{(0,12) circle(3pt)};
		
		\node at (0.5,0) {$u_{10}$};
		\node at (0.5,1.8) {$u_{9}$};
		\node at (2.2,4) {$u_{8}$};
		\node at (2.2,6) {$u_{7}$};
		\node at (2.2,8) {$u_{4}$};
		\node at (0.4,10.1) {$u_{2}$};
		\node at (0.4,12) {$u_{1}$};
		\node at (-2.2,8) {$u_{3}$};
		\node at (-2.2,6) {$u_{5}$};
		\node at (-2.2,4) {$u_{6}$};
		\node at (-0.3,0.9) {$E_2$};
		\node at (-0.7,3.2) {$E_{1}$};
		\node at (0.7,3.2) {$E_{3}$};
		\node at (-1.5,4.8) {$E_{3}$};
		\node at (0,5.3) {$E_{1}$};
		\node at (1.5,4.9) {$E_{4}$};
		\node at (0,7.3) {$E_{4}$};
		\node at (-1.5,7) {$E_{2}$};
		\node at (1.5,6.9) {$E_{1}$};
		\node at (0.8,8.7) {$E_{2}$};
		\node at (-0.8,8.7) {$E_{4}$};
		\node at (-0.3,10.9) {$E_3$};
	\end{tikzpicture}$$
	\end{center}
	\caption{\label{Weightsh=9} The action of the $E_i$'s on the elements $u_j$'s. } 
	\end{figure}
\begin{proposition}\label{h93cases}
Let $w\in M(V)$ be a singular vector of height 9 and degree 11 and let $w_9$ be as in \eqref{w9reduced}. Then one of the following applies:
\begin{enumerate}
	\item $u_1$ is a highest weight vector, $V=F(0,0,1,0)$ and $\lambda(w)=(0,0,0,0)$;
	\item $u_1=\cdots=u_6=0$, $u_7$ is a highest weight vector, $V=F(0,0,0,1)$ and $\lambda(w)=(1,0,0,0)$. 
	\item $u_1=u_2=\cdots=u_9=0$, $u_{10}$ is a highest weight vector, $V=F(0,0,0,0)$ and $\lambda(w)=(0,1,0,0)$. 
\end{enumerate}
\end{proposition}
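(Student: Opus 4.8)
The plan is to exploit the weight diagram of Figure \ref{Weightsh=9} together with the constraint that $V$ is an irreducible $\slc$-module and $w$ is a genuine singular vector (not merely a highest weight vector for the ambient $S_5$-structure). First, recall from Corollary \ref{d=h,h+4}(ii) that in the case $d=h+2=11$ we are forced to have $\lambda(w)\in\{(0,0,0,0),(1,0,0,0),(0,1,0,0),(0,0,1,0),(0,0,0,1)\}$, and the weight of the $S_5$-singular vector $\sum_I d_Iv_{e_i,I}$ is correspondingly $(1,0,0,0),(0,1,0,0),(0,0,1,0),(0,0,0,1),(0,0,0,0)$. The leading object of $w$ lives in $F_9(V)$, which by Proposition \ref{hgeq8} is concentrated on the $d^\vee_{jk}$ with $\lambda(\de_j d^\vee_{jk})=(0,0,0,0)$, so the $u_1,\ldots,u_{10}$ are themselves weight vectors in $V$ whose weights we can read off from \eqref{w9reduced}: since $\de_j d^\vee_{jk}$ has weight $0$, we get $\lambda(u_m)=\lambda(w)$ for every $m$ with $u_m\neq 0$, and moreover $u_1$ sits at the top of the diagram, $u_{10}$ at the bottom.

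Next I would determine which $u_m$ can be non-zero. By Lemma \ref{g0h9}/Figure \ref{Weightsh=9}, the raising operators $E_1,E_2,E_3,E_4$ act on the span of the $u_m$'s exactly as drawn; in particular $u_1$ is annihilated by all four $E_i$, hence is a highest weight vector whenever $u_1\neq 0$. If $u_1\neq 0$, then since $E_2.u_{10}=u_9$, $E_1.u_9=u_6$, etc., tracing the arrows shows every $u_m$ is obtained from $u_1$ by a sequence of \emph{lowering} operators (the arrows point away from $u_1$ through the diagram down to $u_{10}$), so $V$ is generated by $u_1$; comparing $\lambda(u_1)=\lambda(w)$ with the table in Corollary \ref{d=h,h+4} pins down $\lambda(w)=(0,0,0,0)$ and $V=F(0,0,1,0)$ — that is case (1). (Here one must check $\lambda(u_1)$ really equals the weight $(0,0,1,0)$ of the associated $S_5$-singular vector; this is a short computation using that $\sum_I d_I v_{e_i,I}$ for the relevant $i$ is built from the $u_m$'s with the $\de_j$ stripped.) If $u_1=0$, then by the arrows $u_2=E_3.u_1$'s source is gone but $u_2$ need not vanish; however $u_1=0$ forces (chasing the diagram upward) $u_2=u_3=\cdots=u_6=0$ as well, because each of $u_2,\ldots,u_6$ maps into the chain terminating at $u_1$ under some $E_i$ — more precisely one shows $u_2,\dots,u_6$ all lie in the $\slc$-submodule generated by the remaining $u_m$, and matching weights forces them to be zero unless $u_7\neq 0$ is a new highest weight vector. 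This gives case (2): $u_7$ annihilated by all $E_i$ once $u_1=\cdots=u_6=0$, $V=F(0,0,0,1)$, $\lambda(w)=(1,0,0,0)$. Iterating once more, if also $u_7=u_8=u_9=0$ then $u_{10}$ is the only survivor, it is a highest (indeed also lowest) weight vector, $V=F(0,0,0,0)$, $\lambda(w)=(0,1,0,0)$ — case (3).

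The step I expect to be the main obstacle is the \emph{rigidity} argument showing that the non-zero $u_m$'s must form a full $\slc$-submodule generated by a single highest weight vector equal to one of $u_1,u_7,u_{10}$, and that no "mixed" configuration (e.g. $u_1=0$ but $u_2\neq 0$ with $u_7$ also non-zero and not a highest weight vector) survives. This requires checking that the only $E_i$-invariant "upper sets" in the poset of Figure \ref{Weightsh=9} are $\{u_1,\ldots,u_{10}\}$, $\{u_7,u_8,u_9,u_{10}\}$ and $\{u_{10}\}$, and then using irreducibility of $V$ to conclude that the highest weight vector of each such configuration generates all of $V$ — so the weight of that generator must match the entry predicted by Corollary \ref{d=h,h+4}, which in turn forces $V$ to be the specific module claimed. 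Equivalently, one reads the diagram as the weight lattice of a candidate $\slc$-module and observes that $\{u_1,\ldots,u_6\}$, $\{u_7,u_8,u_9\}$, $\{u_{10}\}$ are exactly the weight strings of $F(0,0,1,0)$ (resp.\ its sub-pieces), so vanishing of the top string forces vanishing of everything above the next highest weight vector. Everything else — the weight bookkeeping identifying $F(0,0,1,0)$, $F(0,0,0,1)$, $F(0,0,0,0)$, and reading off $\lambda(w)$ — is then a finite check against the tables already assembled in Corollary \ref{d=h,h+4} and Lemma \ref{g0h9}.
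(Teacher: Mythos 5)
Your skeleton is the correct one — by Lemma \ref{g0h9} the first non-vanishing $u_m$ (in the order $u_1,\dots,u_{10}$) is annihilated by all four $E_i$, hence is a highest weight vector of the irreducible module $V$, and the problem reduces to deciding which $m$ can occur — but the two steps you yourself flag as delicate are exactly where the proposal breaks, and the mechanisms you propose for them do not work. First, the weight bookkeeping is wrong: Proposition \ref{hgeq8} gives $\lambda(\de_j d_I)=(0,0,0,0)$ only at height $8$; at height $9$ the monomial $\de_i d^\vee_{jk}$ has weight $\lambda(x_{lm})$ where $\{l,m\}$ is the pair complementary to $\{i,j,k\}$, which is never zero. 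Hence $\lambda(u_m)\neq\lambda(w)$ in general (e.g. $\lambda(u_1)=\lambda(w)-(0,0,-1,0)$ but $\lambda(u_2)=\lambda(w)-(0,-1,1,-1)$); indeed, if all the $u_m$ had equal weight, the nonzero raising operators of Figure \ref{Weightsh=9} could not connect them. These \emph{distinct} weights are not a technicality — they are the engine of the whole argument.

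Second, the exclusion of the intermediate cases cannot be done combinatorially. Lemma \ref{g0h9} only forces the set $\{m:u_m\neq 0\}$ to be closed under passing to arrow-predecessors in Figure \ref{Weightsh=9}, and there are many such configurations besides the three you list: for instance $u_1=u_2=0$ with $u_3\neq 0$, or $u_1=\cdots=u_5=0$ with $u_6\neq 0$, are perfectly consistent with the diagram, so your claim about admissible ``upper sets'' is false. What actually kills these cases is representation theory: if $u_1=\cdots=u_{m-1}=0$ and $u_m\neq 0$, then $u_m$ is a highest weight vector whose weight is \emph{forced} — the smallest surviving row $\de_i$ of \eqref{w9reduced} determines the index $i$ in Corollary \ref{d=h,h+4}(ii) and hence $\lambda(w)$, and then $\lambda(u_m)=\lambda(w)-\lambda(\de_i d^\vee_{jk})$ for the relevant pair $jk$. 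For $m\in\{2,3,4,5,6,8,9\}$ this weight is not dominant (e.g. $(0,1,-1,1)$ for $u_2$, $(0,0,1,-1)$ for $u_8$), contradicting the finite-dimensional irreducibility of $V$; only $m=1,7,10$ survive, with dominant weights $(0,0,1,0)$, $(0,0,0,1)$, $(0,0,0,0)$, which is precisely the trichotomy of the statement. Without the corrected weight computation and this dominance test, the proposal does not close.
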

\begin{proof}
If $u_1\neq 0$ then it is a highest weight vector in $V$ by Lemma \ref{g0h9}, and by Corollary \ref{d=h,h+4} we necessarily have $\lambda(w)=(0,0,0,0)$ and hence $\lambda(u_1)=(0,0,1,0)$.
If $u_1=0$ and $u_2\neq 0$, then $u_2$ is a highest weight vector by Lemma \ref{g0h9}, and by Corollary \ref{d=h,h+4} we necessarily have $\lambda(w)=(0,0,0,0)$ and hence $\lambda(u_2)=(0,1,-1,1)$, which is impossible since it is not a dominant weight.
Similarly, if $u_1,u_2=0$ and $u_3\neq 0$ then $u_3$ would be a highest weight vector of weight $(0,1,0,-1)$.

If $u_1=u_2=u_3=0$ and $u_4\neq 0$ then $u_4$ would be a highest weight vector of  weight $(1,-1,0,1)$.

If $u_1=u_2=u_3=u_4=0$ then then $u_5$ would be a highest weight vector of  weight $(1,-1,1,-1)$.

If $u_1=\cdots=u_5=0$ then $u_6$ would be a highest weight vector of  weight $(1,0,-1,0)$.

If $u_1=\cdots =u_6=0$ and $u_7\neq 0$ then by Corollary \ref{d=h,h+4} we have $\lambda(w)=(1,0,0,0)$ and so $\lambda(u_7)=(0,0,0,1)$.

If $u_1=\cdots =u_7=0$ and $u_8\neq 0$, then $\lambda(w)=(1,0,0,0)$ by Corollary \ref{d=h,h+4} and hence $\lambda(u_8)=(0,0,1,-1)$.

If $u_1=\cdots =u_8=0$ and $u_9\neq 0$, then $\lambda(w)=(1,0,0,0)$ by Corollary \ref{d=h,h+4} and hence $\lambda(u_9)=(0,1,-1,0)$.

Finally, if $u_1=\cdots =u_9=0$ then $u_{10}\neq 0$ is a highest weight vector, $\lambda(w)=(0,1,0,0)$ by Corollary \ref{d=h,h+4} and so $\lambda (u_{10})=(0,0,0,0)$.

\end{proof}
Now we show that cases (1) and (3) in Proposition \ref{h93cases} can not occur. Observe that by Theorem \ref{duality} it is enough to show that case (3) does not occur.
\begin{lemma}\label{h9case3}
There are no singular vectors as in Proposition \ref{h93cases} (3).
\end{lemma}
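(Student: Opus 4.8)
The plan is to show that the highest term $w_9$ forced by Proposition \ref{h93cases}(3) cannot be completed to a singular vector. Since $V=F(0,0,0,0)$ is one-dimensional, $u_{10}$ is a nonzero scalar and, after rescaling and dropping the trivial tensor factor,
\[
w_9=-\big(\partial_3\, d^\vee_{45}+\partial_4\, d^\vee_{35}+\partial_5\, d^\vee_{34}\big),
\]
whose class in $N_9$ is a genuine $S_5$-singular vector of Rudakov's type R3; in particular, the obstruction to lifting it is invisible at the $S_5$ level. Because $w$ has degree $11$ and height $9$, we write $w=w_9+w_7+w_5+w_3+w_1$ with $w_j\in L_j(F(0,0,0,0))$ homogeneous of degree $11$ and weight $(0,1,0,0)$, so that $w_j=\sum c_{M,I}\,\partial^M d_I$ over the pairs $(M,I)$ with $|I|=j$, $|M|=\frac{11-j}{2}$ and $\lambda(\partial^M d_I)=(0,1,0,0)$; the weight constraint leaves only finitely many undetermined scalars $c_{M,I}$.

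The next step is to impose the singular vector equations, which by the definition of singular vector (and since $x_5d_{45}$ is a lowest weight vector of $\g_1$) amount to $E_iw=0$ for $i=1,2,3,4$ together with $(x_5d_{45})w=0$. Splitting these by height via \eqref{g0action}--\eqref{x5d45action} yields a cascade of linear relations between consecutive terms: the $\g_0$-equations give the $S_5$-type compatibilities $E_i^{-2}w_{j+2}+E_i^0w_j=0$, and the $\g_1$-equations, applied as before to the generating family $x_id_{jk}\pm x_jd_{ik}$, give $X^{-1}w_{j+2}+X^1w_j=0$ at the first order and $X^{-3}w_{j+4}+X^{-1}w_{j+2}+X^1w_j=0$ at the second. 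Starting from the known $w_9$, equations \eqref{-1act}--\eqref{-2act} pin $w_7$ down up to the kernels of the relevant linear maps; feeding $w_7$ into the next-order relations constrains $w_5$, then $w_3$, then $w_1$. Running this descent to the end produces — exactly as in the endgames of Propositions \ref{d!=h+4} and \ref{h8d10} — a relation forcing a coefficient which the already-determined data makes nonzero to vanish. This contradiction rules out case (3); case (1) then follows from Theorem \ref{duality}.

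I expect the main obstacle to be that, unlike in Proposition \ref{h8d10}, the contradiction is not visible at the first level: $w_9$ does lift consistently to a $w_7$ (and presumably further), so one must genuinely descend several height levels. The intermediate systems are indexed by the monomials $\partial^M d_I$ of weight $(0,1,0,0)$ with $|I|\in\{1,3,5,7,9\}$, and are large enough that, while each individual step is elementary, the full computation — organized with the $d_I$-weight calculus of \cite[\S 6]{CC} — is carried out with computer assistance. The delicate point is to choose a small enough generating set of $\g_1$-equations, and a good order in which to apply them, so that the propagation of constraints from $w_9$ down to $w_1$ stays tractable.
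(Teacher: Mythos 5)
Your proposal does not actually contain the contradiction: it sets up the descent $w_9\to w_7\to\cdots\to w_1$ and then asserts that ``running this descent to the end produces \dots\ a relation forcing a coefficient \dots\ to vanish,'' without identifying which coefficient, which equations, or why the resulting linear system is inconsistent. That is a plan for a computation, not a proof. Worse, your guiding claim --- that the obstruction ``is not visible at the first level'' and that ``$w_9$ does lift consistently to a $w_7$'' so that ``one must genuinely descend several height levels'' --- is false, and it points the argument in the wrong direction. The actual obstruction sits entirely at the $w_7$ level and is detected by exactly two equations: the height-$8$ component of $x_5d_{45}w=0$ (i.e.\ \eqref{-1act} with $X=x_5d_{45}$) forces the coefficient $v_{4,5,\{13,23,45\}}$ of $\de_4\de_5 d^\vee_{13,23,45}\otimes u$ in $w_7$ to equal $-u\neq 0$, while the $\g_0$-equation \eqref{-2act} with $E_4=x_4\de_5$ (using that $x_4\de_5 w_9=0$ and that no other term of $w_7$ can produce the monomial $\de_5^2 d^\vee_{13,23,45}$) forces that same coefficient to vanish. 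So there is no consistent $w_7$ at all, and no descent to $w_5,w_3,w_1$ is needed.

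The gap, concretely, is that you never exhibit an inconsistency; and the heuristic you offer in its place would lead you to search for one in the wrong place (deep in the descent, via a large computer-assisted linear system) when a two-line pointwise argument at height $7$ suffices. The reduction of case (1) to case (3) via Theorem \ref{duality} is correct and matches the paper, and your general framework (splitting the singular-vector equations by height via \eqref{g0action}--\eqref{x5d45action}) is the right one; what is missing is the specific pair of constraints that clash.
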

\begin{proof}
	In this case we have:
	\[
	w_9=(\de_3 d_{45}^\vee+\de_4 d_{35}^\vee+\de_5 d_{34}^\vee)\otimes u,
	\]
	where $u$ is a generator of the trivial $\g_0$-module. By construction $w_9$ satisfies \eqref{1act} for all $X\in \g_1$ and \eqref{0act} for all $i$. We will therefore take into account also \eqref{-1act} and \eqref{-2act} showing that there exists no $w_7$ which satisfies these equations.
	We start computing 
	\begin{align*}
	(x_5 d_{45})^{-1}w_9&\equiv (-\de_1 d_{23,34}^\vee-\de_2 d_{13,34}^\vee+\de_3 d_{13,24}^\vee+\de_4 d_{13,23}^\vee)\otimes u \mod M_7(V).
	\end{align*}
	
	We have 
	\begin{equation}\label{w7}
	w_7=\sum_{i\leq j}\de_i \de_j \sum_{p_1\prec p_2\prec p_3} d^\vee_{p_1,p_2,p_3} \otimes v_{i,j,\{p_1,p_2,p_3\}},
	\end{equation}
	and
	\[
	(x_5 d_{45})^1w_7\equiv  \sum_i (1+\delta_{i,5})\de_i \sum_{p_1\prec p_2 \prec \{4,5\}}d^\vee_{p_1,p_2}d_{45}\otimes v_{i,5,\{p_1,p_2,45\}} \mod M_7(V).
	\]
	We deduce in particular that $v_{4,5,\{13,23,45\}}=-u\neq 0$ by \eqref{-1act}.
	
	Next observe that
	$x_4 \de_5 w_9= 0$
	and 
	\[(x_4 \de_5)_0 \de_4 \de_5 d_{13,23,45}^\vee\otimes u=-\de_5^2 d_{13,23,45}^\vee\otimes u \mod M_6(V),
	\]
	and no other term of $w_7$ in \eqref{w7} can "produce" a summand $\de_5^2 d_{13,23,45}^\vee \otimes u$ by applying $x_4 \de_5$. This would imply $v_{4,5,\{13,23,45\}}=0$ by \eqref{-2act}, a contradiction. 
	\end{proof}
Case (2) in Proposition \ref{h93cases} leads us to the following surprising discovery.
\begin{theorem}\label{w[11]}
The following vector is a (unique up to multiplication by a scalar) singular vector in $M(0,0,0,1)$ of degree 11, height 9, and weight $(1,0,0,0)$:
\begin{align*}
w[11]&=d_{12}d_{13}d_{14}d_{15}\Big(
-\de_2 d_{23}d_{24}d_{25}d_{35}d_{45}\otimes x^*_5
-\de_2 d_{23}d_{24}d_{25}d_{34}d_{45}\otimes x^*_4
\\
& -\de_2 d_{23}d_{24}d_{25}d_{34}d_{35}\otimes x^*_3
+\de_3d_{23}d_{25}d_{34}d_{35}d_{45}\otimes x^*_5
+\de_3 d_{23}d_{24}d_{34}d_{35}d_{45}\otimes x^*_4\\
&+\de_3 d_{23}d_{24}d_{25}d_{34}d_{35}\otimes x^*_2
+\de_4 d_{24}d_{25}d_{34}d_{35}d_{45}\otimes x^*_5
-\de_4 d_{23}d_{24}d_{34}d_{35}d_{45}\otimes x^*_3\\
&+\de_4 d_{23}d_{24}d_{25}d_{34}d_{45}\otimes x^*_2
-\de_5 d_{24}d_{25}d_{34}d_{35}d_{45}\otimes x^*_4
-\de_5 d_{23}d_{25}d_{34}d_{35}d_{45}\otimes x^*_3\\
&+\de_5 d_{23}d_{24}d_{25}d_{35}d_{45}\otimes x^*_2
-\de_1 \de_2 d_{23} d_{24} d_{25} \otimes x^*_2
+\de_2^2 d_{23} d_{24} d_{25} \otimes x^*_1
+\de_1 \de_3 d_{23} d_{25} d_{34}\otimes x^*_2\\
&-\de_2 \de_3 d_{23} d_{25} d_{34}\otimes x^*_1
+\de_1 \de_4 d_{24} d_{25} d_{34}\otimes x^*_2
-\de_2 \de_4 d_{24} d_{25} d_{34}\otimes x^*_1
-\de_1 \de_3 d_{23} d_{24} d_{35} \otimes x^*_2\\
&+\de_2 \de_3 d_{23} d_{24} d_{35} \otimes x^*_1
+\de_1\de_5 d_{24} d_{25} d_{35} \otimes x^*_2
-\de_2 \de_5  d_{24} d_{25} d_{35} \otimes x^*_1
-\de_1 \de_3  d_{23} d_{34}d_{35} \otimes x^*_3\\
&+\de_3^2 d_{23} d_{34}d_{35} \otimes x^*_1
-\de_1 \de_4  d_{24} d_{34}d_{35} \otimes x^*_3
+\de_3 \de_4 d_{24} d_{34}d_{35} \otimes x^*_1
-\de_1 \de_5  d_{25} d_{34} d_{35} \otimes x^*_3\\
&+\de_3 \de_5 d_{25} d_{34}d_{35} \otimes x^*_1
-\de_1 \de_4  d_{23} d_{24} d_{45} \otimes x^*_2
+\de_2 \de_4 d_{23} d_{24} d_{45} \otimes x^*_1
-\de_1 \de_5  d_{23} d_{25} d_{45} \otimes x^*_2\\
&+\de_2 \de_5 d_{23} d_{25} d_{45} \otimes x^*_1
-\de_1 \de_3  d_{23} d_{34}d_{45} \otimes x^*_4
+\de_3 \de_4 d_{23} d_{34}d_{45} \otimes x^*_1
-\de_1 \de_4  d_{24} d_{34}d_{45} \otimes x^*_4\\
&+\de_4^2 d_{24} d_{34}d_{45} \otimes x^*_1
-\de_1 \de_5  d_{25} d_{34}d_{45} \otimes x^*_4
+\de_4 \de_5  d_{25} d_{34}d_{45} \otimes x^*_1
-\de_1 \de_3  d_{23} d_{35} d_{45} \otimes x^*_5\\
&+\de_3 \de_5 d_{23} d_{35} d_{45} \otimes x^*_1
-\de_1 \de_4  d_{24} d_{35} d_{45} \otimes x^*_5
+\de_4 \de_5  d_{24} d_{35} d_{45} \otimes x^*_1
-\de_1 \de_5  d_{25} d_{35} d_{45} \otimes x^*_5\\
&+\de_5^2 d_{25} d_{35} d_{45} \otimes x^*_1
-\de_1^2 \de_3 d_{23} \otimes x^*_2
+\de_1 \de_2 \de_3  d_{23} \otimes x^*_1
-\de_1^2 \de_4  d_{24} \otimes x^*_2\\
&+\de_1 \de_2 \de_4 d_{24} \otimes x^*_1
-\de_1^2 \de_5  d_{25} \otimes x^*_2
+\de_1 \de_2 \de_5 d_{25} \otimes x^*_1\Big).
\end{align*}
\end{theorem}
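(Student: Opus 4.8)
The plan is to split the statement into a uniqueness part and an existence part, and to dispose of uniqueness first, since it follows quickly from what is already in place. I would argue as follows. By Corollary~\ref{d=h,h+4}, a singular vector of degree $11$ has height $h$ with $11-h\in\{0,2,4\}$; the value $h=11$ is impossible because $h\le 10$, and $h=7$ (the case $d=h+4$) is ruled out by Proposition~\ref{d=h+4prima}, which forces $d=14$. Hence every degree-$11$ singular vector has height exactly $9$ and falls under Proposition~\ref{h93cases}; since the three cases there carry weights $(0,0,0,0)$, $(1,0,0,0)$, $(0,0,0,0)$ respectively, a degree-$11$ singular vector of weight $(1,0,0,0)$ must be of type~(2). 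In that case its highest term $w_9$ is the vector \eqref{w9reduced} with $u_1=\dots=u_6=0$ and $u_7$ a highest weight vector of $V=F(0,0,0,1)$; since $u_7$ spans a one-dimensional weight space and $u_8,u_9,u_{10}$ are then forced by the relations of Lemma~\ref{g0h9} (their weight spaces in $(\C^5)^*$ again being one-dimensional), $w_9$ is determined up to one overall scalar. I would then conclude that, given two such singular vectors $w$ and $w'$, after rescaling they have the same $w_9$, so $w-w'$ is a singular vector of degree $11$ and height $<9$ — impossible by the first step — whence uniqueness up to a scalar.

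For existence I would exhibit $w[11]$ and verify it. First one reads off from the displayed formula that every monomial has degree $11$ and weight $(1,0,0,0)$, that at least one monomial is a product of nine $d_{ij}$'s (times a single $\partial_i$) with nonzero coefficient, and that all coefficients lie in $F(0,0,0,1)$, so that $w[11]$ is a homogeneous weight vector of $M(0,0,0,1)$ of degree $11$, height $9$ and weight $(1,0,0,0)$ not lying in $V$. It then remains to check the two conditions (i) $x_i\partial_{i+1}\,w[11]=0$ for $i=1,2,3,4$ and (ii) $z\,w[11]=0$ for all $z\in\g_1$; by the remark following the definition of a singular vector, once (i) is known, (ii) reduces to the single identity $x_5 d_{45}\,w[11]=0$. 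Each check is a finite computation inside $U_-$: one pushes the relevant element of $\g_0$, respectively $x_5 d_{45}\in\g_1$, past the factors $\partial_i$ and $d_{ij}$ using the commutation relations of $\g$ from Section~\ref{S1} together with the $\g_0$-module structures $\g_{-2}\cong(\C^5)^*$ and $\g_{-1}\cong\inlinewedge^2\C^5$, re-expands the outcome in the PBW basis $\partial^M d_I\otimes x^*_k$ of $M(0,0,0,1)$, and checks that every coefficient vanishes.

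A word on how I would actually discover $w[11]$ rather than merely confirm it: I would build it height by height. Starting from the forced $w_9$, I would solve successively for $w_7$, $w_5$, $w_3$, $w_1$ from \eqref{-1act} and \eqref{-2act} and their analogues at lower heights, obtained by expanding $\g_0.w=0$ and $\g_1.w=0$ in the decomposition $M(V)=\bigoplus_h L_h(V)$; at each height this is an inhomogeneous linear system for the weight-constrained coefficients $v_{M,I}\in V$ of the corresponding term. The whole substance of the theorem is that these systems turn out to be \emph{simultaneously consistent} — a priori nothing forces a degree-$11$ singular vector to exist, which is precisely one of the two gaps left open in Figure~2 of \cite{R} — so proving this consistency is the hard part; it amounts to a large system of linear equations, which I would solve with the aid of a computer. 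Any solution thus obtained is automatically the unique singular vector, by the uniqueness argument above, and it is the displayed vector $w[11]$.
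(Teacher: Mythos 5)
Your proposal is correct and follows essentially the same route as the paper: uniqueness is obtained by noting that Proposition \ref{h93cases}(2) together with Lemma \ref{g0h9} pins down $w_9$ up to a scalar (the components $u_8,u_9,u_{10}$ being forced from $u_7$ by the one-dimensionality of the weight spaces of $(\C^5)^*$), so that the difference of two such singular vectors would be a degree-$11$ singular vector of height at most $7$, which is excluded by Proposition \ref{d!=h+4}; and existence is, as in the paper, a long but finite direct verification that is deferred to computation.
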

\begin{proof}
We prefer to omit the long but elementary computations that show that this is indeed a singular vector.  Its uniqueness follows from the fact that the term $w_9$ is determined up to  a scalar by Lemma \ref{g0h9} and Proposition \ref{h93cases}. So if $w'$ is another singular vector with $w_9=w'_9$ then $w-w'$ would be a singular vector of degree 11 with height at most 7 and this would contradict Proposition \ref{d!=h+4}.
\end{proof}

The last possible case with $d>h$ is ruled out by the following result.
\begin{proposition} There are no singular vectors of height 10 and degree 12.
\end{proposition}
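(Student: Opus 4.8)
The plan is to first show that the highest term $w_{10}$ is rigid, cut down to a few cases by duality, and then derive a contradiction by analysing the next term $w_8$.

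Since $|\Omega|=10$, the only subset $I\subseteq\Omega$ with $|I|=10$ is $I=\Omega$, and $d_\Omega$ spans a one-dimensional trivial $\slc$-submodule of $U_-$ (its weight is $(0,0,0,0)$). Applying Corollary \ref{d=h,h+4}(ii) with $h=10$ and $d=h+2$, the highest term must be
$$w_{10}=\sum_{j=i}^5\partial_j\,d_\Omega\otimes v_j$$
for some $i\in[5]$, with $d_\Omega\otimes v_i$ a highest weight vector for $\slc$ in $N_{10}(V)$; since $d_\Omega$ has weight zero this means $v_i$ is a highest weight vector of $V$, so $V=F(\lambda(v_i))$, and the weight table in Corollary \ref{d=h,h+4}(ii) forces $(V,\lambda(w))$ to be one of
$$\bigl(F(1,0,0,0),(0,0,0,0)\bigr),\ \bigl(F(0,1,0,0),(1,0,0,0)\bigr),\ \bigl(F(0,0,1,0),(0,1,0,0)\bigr),\ \bigl(F(0,0,0,1),(0,0,1,0)\bigr),\ \bigl(F(0,0,0,0),(0,0,0,1)\bigr),$$
namely the pair indexed by $i$; in each case $w_{10}$ is determined up to a scalar, its class in $N_{10}(V)$ being (up to scalar) the $S_5$-singular vector listed as R$i$ in Theorem \ref{classS5}. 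By the duality of Theorem \ref{duality}, a singular vector in the case $i$ produces one of the same degree in the case $6-i$, and since $i=3$ is self-dual it is enough to rule out $i=1,2,3$.

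Next I would write the remaining part $w_8\in(L_8(V))_{\lambda(w)}$ as in \eqref{formofterms} (with $j=8$, $|M|=2$), treat its coefficients $v_{M,I}\in V$ as unknowns, and impose the singular vector relations that couple it to $w_{10}$. These are \eqref{-1act}, $X^{-1}w_{10}+X^1w_8=0$, for all $X\in\g_1$ — it suffices to run $X$ over a $\slc$-generating set, e.g.\ $x_5d_{45}$ together with the elements $x_kd_{kl}$ and $x_id_{jk}\pm x_jd_{ik}$ already used in Section \ref{>10} — and \eqref{-2act}, $E_i^{-2}w_{10}+E_i^0w_8=0$, for $E_i=x_i\partial_{i+1}$, $i=1,\dots,4$; the conditions \eqref{1act} and \eqref{0act} involve only $w_{10}$ and hold by its R$i$-construction. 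Since $w_{10}$ is explicitly known ($d_\Omega$ times R$i$), the terms $X^{-1}w_{10}$ and $E_i^{-2}w_{10}$ are computable and the above becomes an explicit inhomogeneous linear system in the $v_{M,I}$, which moreover splits over the single, small weight $\lambda(w)$.

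Finally I would show that in each of the three cases this system has no solution, contradicting the existence of $w$. As in the proofs of Propositions \ref{d!=h+4} and \ref{h9case3}, I expect the contradiction to localize to a single coefficient: for two well-chosen elements of $\g_1$ (say $x_5d_{45}$ and $x_4d_{45}$), or for one such element together with a suitable $E_i$, the relations \eqref{-1act}, \eqref{-2act} give two expressions for the same coefficient of $w_8$ whose difference forces the leading scalar of $w_{10}$ to vanish — which is impossible. Should the relations coupling $w_{10}$ to $w_8$ not yet suffice, one descends one level further, introducing $w_6$ and the corresponding relations. The main obstacle is precisely this last step: unlike $w_{10}$, the term $w_8$ carries many free parameters, so one must either pinpoint by hand the ``diagonal'' coefficient that exposes the incompatibility, or — as done elsewhere in this paper — certify the inconsistency of the full weight-restricted linear system with the aid of a computer.
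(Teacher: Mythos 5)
Your reduction is exactly the paper's: Corollary \ref{d=h,h+4}(ii) pins $w_{10}$ to $d_\Omega$ times one of the $S_5$-singular vectors R1--R5 of Theorem \ref{classS5}, the five pairs $(V,\lambda(w))$ you list are correct, and the duality of Theorem \ref{duality} (combined with Proposition \ref{d!=h+4}, which guarantees that the dual degree-$12$ vector again has height $10$ and so again falls into one of these five cases) cuts the work down to the three cases $i=3$ and one representative from each of the pairs $\{1,5\}$ and $\{2,4\}$. The genuine gap is the decisive step: you only \emph{expect} the linear system coupling $w_8$ to $w_{10}$ via \eqref{-1act} and \eqref{-2act} to be inconsistent, you do not exhibit the elements of $\g_1$ and $\g_0$ that witness the inconsistency, and you leave open whether one might have to descend further to $w_6$. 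Everything before that point is soft bookkeeping, so this unproved inconsistency is the entire content of the proposition; asserting it, or deferring it to an unspecified computer run, does not yet constitute a proof.

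For the record, the paper confirms both of your guesses --- the contradiction does localize to one or two coefficients of $w_8$, and no descent to $w_6$ is needed --- working with the dual cases $i=5,4,3$. For $i=5$ (so $\bar w_{10}=\de_5 d_\Omega\otimes 1$ in $M(0,0,0,0)$) one writes $\bar w_8$ with undetermined coefficients and applies \eqref{-1act} with $X=x_5d_{45}$, $x_5d_{13}+x_3d_{15}$, $x_5d_{12}-x_1d_{25}$ and $x_1d_{25}+x_2d_{15}$, obtaining $\alpha_{23}=1$, $\alpha_{13}=-1$ and $\alpha_{13}=\alpha_{23}$ for two specific coefficients of $\bar w_8$, a contradiction. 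For $i=4$ and $i=3$ the mechanism is the mixed one you anticipated: \eqref{-1act} with $X=x_5d_{45}$ forces the coefficient of $\de_4\de_5 d^\vee_{23,45}\otimes x_1^*$ (resp.\ $\otimes x_{13}^*$) in $w_8$ to equal $1$, while \eqref{-2act} with $E_4=x_4\de_5$ forces that same coefficient to vanish. So your scheme is viable exactly as laid out; what is missing is this explicit selection of operators and the computation of the resulting coefficients.
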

\begin{proof} By hypothesis we are in one of the five cases in $(ii)$ of Corollary
\ref{d=h,h+4}.
Suppose there exists a singular vector $w$ of height 10 and degree 12  in $M(1,0,0,0)$ with weight $(0,0,0,0)$, i.e., by Theorem \ref{classS5},
$$w=\sum_{i=1}^5\de_id_{\Omega}\otimes x_i.$$
Then
there exists a morphism $\varphi$ of $E(5,10)$-modules,
$$\varphi: M(0,0,0,0)\rightarrow M(1,0,0,0).$$
By duality (see Theorem \ref{duality}), there exists a morphism
$$\varphi^*: M(0,0,0,1)\rightarrow M(0,0,0,0),$$
i.e., a singular vector $\bar{w}$ of degree 12,  of weight $(0,0,0,1)$ in $M(0,0,0,0)$. Then, by Theorem
\ref{classS5} and Proposition \ref{d!=h+4}, we have:
$\bar{w}_{10}=\de_5d_{\Omega}\otimes 1$ with $1$ the highest weight vector in $F(0,0,0,0)$. Let 
$$\bar{w}_8=\sum_{i,j,k,l}\alpha_{ij}\de_i\de_jd^\vee_{k5,\ell 5}\otimes 1+\sum_{i,j,k,l,t}\de_i\de_5
(\beta_{i,jk}d_{jk,\ell t}^\vee \otimes 1+\beta_{i,j\ell}d_{jl,kt}^\vee \otimes 1+\beta_{i,jt}d_{jt,k\ell}^\vee \otimes 1)$$
for some $\alpha_{ij}, \beta_{i,rs}\in\F$, where the first sum is over all
$\{i,j,k,l\}=[4]$ with $i<j$ and $k<l$, and the second sum is over all 
$\{i,j,k,l,t\}=[5]$ with $j<k<l<t$.
We apply condition \eqref{-1act} with $h=10$, using the following elements $X$ in $\g_1$:
\begin{itemize}
\item[i)] $X=x_5d_{45}$ hence getting $\beta_{2,45}=-1=\beta_{3,45}$;
\item[ii)] $X=x_5d_{13}+x_3d_{15}$ hence getting $\alpha_{23}=1=-\beta_{2,45}$;
\item[iii)] $X=x_5d_{12}-x_1d_{25}$ hence getting $\alpha_{13}=-1=\beta_{3,45}$;
\item[iv)] $X=x_1d_{25}+x_2d_{15}$ hence getting $\alpha_{13}=\alpha_{23}$.
\end{itemize}
These conditions lead to a contradiction, we therefore conclude that there is no singular vector of degree 12 and height 10 as in  R1 and R5, of Theorem \ref{classS5}.

Now assume that $w$ is a singular vector as in  R2, i.e., by Corollary
\ref{d=h,h+4},
$w=\sum_{i=2}^5\de_id_{\Omega}\otimes x_{1i}$, i.e., that there exists a morphism $\varphi$, of degree 12, of $E(5,10)$-modules:
$$\varphi: M(1,0,0,0) \rightarrow M(0,1,0,0).$$
By duality this means that there exists a morphism
$$\varphi^*: M(0,0,1,0) \rightarrow M(0,0,0,1)$$
of degree 12, i.e., a singular vector $\bar{w}$ of degree 12 and weight $(0,0,1,0)$  in $M(0,0,0,1)$. By Theorem \ref{classS5} and Proposition \ref{h8d10}, $\bar{w}$ is necessarily as in R4, with height 10, i.e.,
$$\bar{w}_{10}=\de_4d_{\Omega}\otimes x_5^*-\de_5d_{\Omega}\otimes x_4^*.$$
We have:
$$(x_5d_{45})^{-1}(\bar{w}_{10})=\de_4(d_{12}^\vee \otimes x_{3}^*+d_{13}^\vee\otimes  x_{2}^*+d_{23}^\vee \otimes x_{1}^*)
+(\de_3d_{12}^\vee+\de_2d_{13}^\vee +\de_1d_{23}^\vee)\otimes x_{4}^*.$$
By condition \eqref{-1act} with $h=10$ and $X=x_5d_{45}$ it follows that
in the expression of $\bar{w}_8$ the summand $\de_4\de_5d_{23,45}^\vee \otimes x_{1}^*$
must appear with coefficient equal to $1$. Now we have:
$$E_4(\de_4\de_5d_{23,45}^\vee \otimes x_{1}^*)= 
(E_4)^0(\de_4\de_5d_{23,45}^\vee\otimes  x_{1}^*)=-\de_5^2d_{23,45}^\vee \otimes x_{1}^*.$$
This contradicts condition \eqref{-2act}  for $h=10$. Indeed, one can see that no term in $E_4^{-2}\bar{w}_{10}+E_4^0\bar{w}_8$ can cancel  the summand 
$\de_5^2d_{23,45}^\vee \otimes x_{1}^*$.

Finally, let us assume that there exists a singular vector of degree 12 and height 10, as in R3, i.e., 
$$w_{10}=\de_3d_{\Omega}\otimes x_{45}^*+\de_4d_{\Omega}\otimes x_{53}^*+\de_5d_{\Omega}\otimes x_{34}^*.$$

Then we have:
\begin{align*}(x_5d_{45})^{-1}(w_{10})= 
&\de_3(d_{12}^\vee \otimes x_{34}^*+d_{13}^\vee \otimes x_{24}^*+d_{23}^\vee \otimes x_{14}^*)\\ &
+\de_4(-d_{13}^\vee \otimes x_{23}^*-d_{23}^\vee \otimes x_{13}^*
%+d_{12}^\vee \otimes x_{34}^*
)-
(\de_3d_{12}^\vee +\de_2d_{13}^\vee+\de_1d_{23}^\vee)\otimes x_{34}^*.
\end{align*}
Therefore, similarly as above, by condition \eqref{-1act} with $h=10$ and $X=x_5d_{45}$, 
in the expression of ${w}_8$ the summand $\de_4\de_5d_{23,45}^\vee x_{13}^*$
must appear with coefficient $1$. Then we have:
$$E_4(\de_4\de_5d_{23,45}^\vee \otimes x_{13}^*)= 
(E_4)^0(\de_4\de_5d_{23,45}^\vee \otimes  x_{13}^*)=-\de_5^2d_{23,45}^\vee \otimes x_{13}^*.$$
This contradicts condition \eqref{-2act}  for $h=10$. Indeed, one can see that no term in $E_4^{-2}{w}_{10}+E_4^0{w}_8$ can cancel  the summand 
$\de_5^2d_{23,45}^\vee \otimes x_{13}^*$.
\end{proof}

\section{Properties of $\omega_I$}
In order to study morphisms between generalized Verma modules and to better understand their structure as $\g_0$-modules, a particular basis of $U_-$ has been introduced in \cite{CC}. The main goal of this section is to show that this basis is also extremely useful when considering the action of the whole $\g$ on a Verma module. We recall some technical notation needed to give an explicit definition of such a basis. We refer the reader to \cite[\S5]{CC} for further details.

We recall that $(U_-)_d$ denotes the homogeneous component of $U_-$ of degree $d$. 
We let 
\[\mathcal I_d=\{ I=(I_1,\ldots,I_d):\, I_l=(i_l,j_l) \textrm{ with  $ 1\leq i_l,j_l\leq 5$ for every $l=1,\ldots,d$}\}.
\]
%If $I=(i,j)$ with $1\leq i,j\leq 5$ we let $d_I=d_{i,j}$. 
If $I=(I_1,\ldots,I_d)\in \mathcal I_d$  we let $d_I=d_{I_1}\cdots d_{I_d}\in (U_-)_d$, with $d_{I_l}=d_{i_l j_l}$. Note that this notation is slightly different from the one adopted in Sections \ref{first} and \ref{>10}.

We let $\mathcal S_d$ be the set of subsets of $[d]$ of cardinality 2, so that $|\mathcal S_d|=\binom{d}{2}$.

Note that elements in $\mathcal I_d$ are ordered tuples of ordered pairs, while elements in $\mathcal S_d$ are unordered tuples of unordered pairs.

If $\{k,l\}\in \mathcal S_d$ and $I\in \mathcal I_d$ we let $t_{I_k,I_l}=t_{i_k,j_k, i_l,j_l}$ and $
\varepsilon_{I_k,I_l}=\varepsilon_{i_k,j_k, i_l,j_l}
$.

We also let
\[
D_{\{k,l\}}(I)=\frac{1}{2}  (-1)^{l+k}  \varepsilon_{I_k,I_l}\partial_{t_{I_k,I_l}}\in (U_-)_2.                                                                                           \]

\begin{definition}
	A subset $S$ of $\mathcal S_d$ is \emph{self-intersection free} if its elements are pairwise disjoint.\end{definition}
For example $S=\{\{1,3\},\{2,5\}, \{4,7\}\}$ is self-intersection free while $\{\{1,3\},\{2,5\}, \{3,7\}\}$ is not. We denote by $\textrm{SIF}_d$ the set of self-intersection free subsets of $\mathcal S_d$.
\begin{definition}
	Let $\{k,l\}, \{h,m\}\in \mathcal S_d$ be disjoint pairs. We say that $\{k,l\}$ and $\{h,m\}$ \emph{cross} if exactly one element in $\{k,l\}$ is between $h$ and $m$. If $S\in \textrm{SIF}_d$ we let the crossing number $c(S)$ of $S$ be the number of pairs of elements in $S$ that cross.
\end{definition}
\begin{definition}
	Let $S=\{S_1,\ldots,S_r\}\in \textrm{SIF}_d$. We let
	\[
	D_S(I)=\prod_{j=1}^rD_{S_j}(I)\in (U_-)_{2r}
	\]
	if $r\geq 2$ and $D_{\emptyset}(I)=1$ (note that the order of multiplication is irrelevant as the elements $D_{S_j}(I)$ commute among themselves).
\end{definition}
\begin{definition}
	For $I=(I_1,\ldots,I_d)\in \mathcal I_d$ and $S=\{S_1,\ldots,S_r\}\in \textrm{SIF}_d$ we let $C_S(I)\in \mathcal I_{d-2r}$ be obtained from $I$ by removing all $I_j$ such that $j\in S_k$ for some $k\in [r]$. 
\end{definition}

\begin{definition}\label{defomega}
	For all $I\in \mathcal I_d$ we let
	\[
	\omega_I=\sum_{S\in \textrm{SIF}_d}(-1)^{c(S)}D_S(I)\,d_{C_S(I)}\in (U_-)_d.
	\]
	
\end{definition}
If $I\in \mathcal I_d$ we let $x_I=x_{I_1}\wedge \cdots \wedge x_{I_d}\in \inlinewedge^d (\inlinewedge^2(\F^5))$.
The main properties of the elements $\omega_I$ have been obtained in {\cite[Proposition 5.6 and Theorem 5.8]{CC}} and can be summarized in the following result.
\begin{proposition}\label{isomega}Let $d=0,\ldots,10$. Then the map $\varphi:\inlinewedge^d (\inlinewedge^2(\F^5))\rightarrow U_-$, given by
	\[
	\varphi(x_{I_1}\wedge\cdots \wedge x_{I_d})=\omega_{I_1,\ldots, I_d}
	\]
	for all $(I_1,\ldots,I_d)\in \mathcal I_d$, is a (well-defined) injective morphism of $\g_0$-modules.
	
\end{proposition}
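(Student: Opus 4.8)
The statement to prove is Proposition~\ref{isomega}: for $d = 0, \dots, 10$, the assignment $x_{I_1} \wedge \cdots \wedge x_{I_d} \mapsto \omega_{I_1,\dots,I_d}$ defines an injective $\g_0$-morphism from $\bigwedge^d(\bigwedge^2(\C^5))$ to $U_-$. Since this is quoted from \cite{CC}, my plan is essentially to reconstruct the argument there. The first thing to check is well-definedness: the map is defined on the standard basis $x_I$ indexed by tuples $I \in \mathcal I_d$, but $\bigwedge^d(\bigwedge^2(\C^5))$ identifies $x_I$ with $x_{\sigma(I)}$ up to the sign $\varepsilon(\sigma)$, and also kills $x_I$ whenever two of the $I_l$ coincide. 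So the real content of well-definedness is an identity $\omega_{\sigma(I)} = \varepsilon(\sigma)\,\omega_I$ for permutations $\sigma$ of the $d$ slots, together with $\omega_I = 0$ when $I$ has a repeated pair. This should be reduced to the case of a transposition of adjacent slots $l, l+1$; one then has to trace through how $D_S(I)$, $d_{C_S(I)}$ and the crossing number $c(S)$ transform under such a transposition. The key points are that $D_{\{k,l\}}(I)$ is built from $\varepsilon_{I_k,I_l}\partial_{t_{I_k,I_l}}$ together with the parity sign $(-1)^{k+l}$, both of which behave predictably under a slot transposition, and that swapping two slots toggles whether certain pairs in $S$ cross, which is exactly compensated by the $(-1)^{c(S)}$ factor. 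The vanishing for a repeated pair follows because $D_{\{k,l\}}(I) = 0$ when $I_k = I_l$ (the $\varepsilon$ vanishes on repeated indices), matched against the antisymmetry of $d_{I_k} d_{I_l}$ in $U_-$.

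Next, $\g_0$-equivariance. Here I would use that $\bigwedge^2(\C^5) \cong \g_{-1}$ as $\g_0$-modules and that $U_-$ carries the adjoint $\g_0$-action; so for $x \in \g_0$ one needs $x \cdot \omega_{I} = \sum_l \omega_{I_1, \dots, x\cdot I_l, \dots, I_d}$, where $x \cdot I_l$ denotes the action on the $l$-th factor. The natural approach is induction on $d$: since $\omega_I = \sum_{S}(-1)^{c(S)} D_S(I)\, d_{C_S(I)}$, and each $D_S(I) \in (U_-)_2$ while $d_{C_S(I)}$ is a product of elements of $\g_{-1}$, one computes $x \cdot \omega_I$ by the Leibniz rule and checks that the action of $\g_0$ on the $D_{\{k,l\}}(I)$ pieces (which live in $\g_{-2} \cong (\C^5)^*$, via the $\partial_{t}$'s) is compatible with the action on the omitted pair $I_k \wedge I_l$. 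This is a finite check once one knows how $x \in \g_0 = \mathfrak{sl}_5$ acts on $d_{ij}$ and on $\partial_k$, i.e. via the standard and co-standard representations; the bracket relation $[f d_{ij}, g d_{kl}] = \varepsilon_{ijkl} fg \,\partial_{t_{ijkl}}$ from Section~\ref{S1} is what ties the $D$-terms to pairs of $d$'s.

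For injectivity, the cleanest route is a leading-term/triangularity argument: order $\mathcal I_d$ (or rather the basis of $\bigwedge^d \bigwedge^2 \C^5$) and observe that in the expansion of $\omega_I$ the term corresponding to $S = \emptyset$ is exactly $d_{I_1} \cdots d_{I_d}$, while all other terms $D_S(I)\, d_{C_S(I)}$ with $S \ne \emptyset$ have strictly fewer $d$-factors (they live in lower "height" in the sense of Section~\ref{>10}, being divisible by extra $\partial$'s). By the Poincar\'e--Birkhoff--Witt theorem for $U(\g_-)$, the monomials $\partial^M d_{p_1} \cdots d_{p_j}$ with $p_1 \prec \cdots \prec p_j$ are linearly independent, so the top-degree-in-$d$ part of $\omega_I$ is $d_{I_1} \cdots d_{I_d}$ up to reordering, and these are linearly independent across distinct basis elements $x_I$ of $\bigwedge^d \bigwedge^2 \C^5$. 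Hence $\varphi$ is injective. The constraint $d \le 10$ enters because $\bigwedge^2 \C^5$ is $10$-dimensional, so $\bigwedge^d \bigwedge^2 \C^5 = 0$ for $d > 10$ and the indexing makes sense only up to $d = 10$; within this range PBW gives no unexpected relations among the $d_I$'s.

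The main obstacle I anticipate is the well-definedness identity $\omega_{\sigma(I)} = \varepsilon(\sigma)\omega_I$: bookkeeping the interaction of the three signs — the permutation sign absorbed by the $d$-factors, the $(-1)^{k+l}$ position signs built into $D_{\{k,l\}}$, and the crossing sign $(-1)^{c(S)}$ — under an adjacent transposition is delicate, and getting the conventions consistent is where the proof in \cite{CC} does its real work. Everything else (equivariance, injectivity) is then relatively formal given the PBW theorem and the explicit $\g_0$-action. Since this is exactly \cite[Proposition 5.6 and Theorem 5.8]{CC}, I would in practice cite those and only indicate this outline.
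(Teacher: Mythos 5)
The paper gives no proof of this proposition: it is imported verbatim from \cite[Proposition 5.6 and Theorem 5.8]{CC}, so your plan of citing those results and sketching the argument is exactly what the paper itself does. Your outline of what the cited proof must contain (the sign identity $\omega_{\sigma(I)}=\varepsilon(\sigma)\,\omega_I$ together with vanishing on repeated pairs for well-definedness, equivariance via the $\slc$-equivariance of $(x_{I_k},x_{I_l})\mapsto \varepsilon_{I_k,I_l}\partial_{t_{I_k,I_l}}$, and injectivity from the leading term $d_{I_1}\cdots d_{I_d}$ at $S=\emptyset$ plus PBW as in Proposition \ref{NkV}) is sound and consistent with how the paper later uses the result.
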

We will also need the following very useful notation.  Let  $I\in \mathcal I_d$ and $J\in \mathcal I_c$, with $c\leq d$.
If there exists $K\in \mathcal I_{d-c}$ such that $x_I=x_J\wedge x_K\neq 0$ we let $\omega_{I\setminus J}=\omega_K$, and we let $\omega_{I\setminus J}=0$ if such $K$ does not exist. Note that this notation is well-defined also thanks to Proposition \ref{isomega}. 

For example, in order to compute $\omega_{(12,24,35,54)\setminus (24,45)}$ we observe that $x_{12,24,35,54}=x_{24,45}\wedge x_{12,35}$, therefore  $\omega_{(12,24,35,54)\setminus (24,45)}=\omega_{12,35}$.

Instead of the explicit definition of the elements $\omega_I$ given in Definition \ref{defomega}, we will need some (equivalent) recursive properties that they satisfy.
\begin{lemma} Let $I=(I_1,\ldots,I_d)$. Then for all $k>1$ we have
	\[\sum_{S\in \textrm{SIF}_d:\,\{1,k\}\in S}(-1)^{c(S)}D_S(I)\,d_{C_S(I)}=-\frac{1}{2} \varepsilon_{I_1,I_k}\de_{t_{I_1,I_k}}\omega_{(I_2,\ldots,I_d)\setminus I_k}.
	\]
	
	Furthermore
	\begin{equation} \label{ricomega}\omega_I=d_{I_1}\omega_{(I_2,\ldots,I_d)}-\frac{1}{2}\sum_{k=2}^d \varepsilon_{I_1,I_k}\de_{t_{I_1,I_k}}\omega_{(I_2,\ldots,I_d)\setminus I_k}
	\end{equation}
\end{lemma}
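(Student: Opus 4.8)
The plan is to start from the defining sum
$\omega_I=\sum_{S\in\textrm{SIF}_d}(-1)^{c(S)}D_S(I)\,d_{C_S(I)}$
of Definition \ref{defomega} and split $\textrm{SIF}_d$ according to the role of the index $1$. Since every $S\in\textrm{SIF}_d$ is self-intersection free, either $1$ lies in no pair of $S$, or $1$ lies in a unique pair $\{1,k\}$ with $2\le k\le d$; hence
\[
\omega_I=\sum_{\substack{S\in\textrm{SIF}_d\\ 1\notin\bigcup S}}(-1)^{c(S)}D_S(I)\,d_{C_S(I)}\ +\ \sum_{k=2}^d\ \sum_{\substack{S\in\textrm{SIF}_d\\ \{1,k\}\in S}}(-1)^{c(S)}D_S(I)\,d_{C_S(I)}.
\]
Equation \eqref{ricomega} follows once the first summand is identified with $d_{I_1}\omega_{(I_2,\dots,I_d)}$ and each inner sum in the second with $-\tfrac12\varepsilon_{I_1,I_k}\de_{t_{I_1,I_k}}\omega_{(I_2,\dots,I_d)\setminus I_k}$, which is exactly the first displayed identity of the statement; so it suffices to prove these two facts.

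For the sum over $S$ with $1\notin\bigcup S$: such an $S$ is a self-intersection free collection of pairs inside $\{2,\dots,d\}$, and the order-preserving bijection $a\mapsto a-1$ of $\{2,\dots,d\}$ onto $\{1,\dots,d-1\}$ carries it to some $\widetilde S\in\textrm{SIF}_{d-1}$. Under this relabeling the exponent $k+l$ in $D_{\{k,l\}}$ is unchanged (both indices drop by one), the crossing number is preserved (the bijection is monotone and crossing depends only on relative order), and $C_S(I)$ keeps $I_1$ in first position, so $d_{C_S(I)}=d_{I_1}\,d_{C_{\widetilde S}(I_2,\dots,I_d)}$. Summing over $\widetilde S\in\textrm{SIF}_{d-1}$ then reproduces $d_{I_1}\omega_{(I_2,\dots,I_d)}$.

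The core step is the first displayed identity. Fix $k>1$ and write each $S$ with $\{1,k\}\in S$ as $S=\{\{1,k\}\}\sqcup S''$ with $S''$ a self-intersection free collection of pairs inside $\{2,\dots,d\}\setminus\{k\}$, and factor $D_S(I)=D_{\{1,k\}}(I)\,D_{S''}(I)$ with $D_{\{1,k\}}(I)=\tfrac12(-1)^{1+k}\varepsilon_{I_1,I_k}\de_{t_{I_1,I_k}}$. Now relabel $\{2,\dots,d\}\setminus\{k\}$ onto $\{1,\dots,d-2\}$ by the order-preserving bijection, sending $S''$ to $\widetilde{S''}\in\textrm{SIF}_{d-2}$ and $(I_2,\dots,\widehat{I_k},\dots,I_d)$ to $J\in\mathcal I_{d-2}$. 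Two sign computations occur and must be seen to cancel: (i) replacing $D_{S''}(I)$ by $D_{\widetilde{S''}}(J)$ introduces $(-1)^m$, where $m$ is the number of pairs $\{a,b\}\in S''$ with $a<k<b$, since deleting positions $1$ and $k$ changes the parity of the index sum $a+b$ in $D_{\{a,b\}}$ precisely for these straddling pairs; (ii) $c(S)=c(\widetilde{S''})+m$, because a pair $\{a,b\}\in S''$ crosses $\{1,k\}$ exactly when $a<k<b$, while the internal crossings of $S''$ are preserved by the monotone relabeling. The two factors $(-1)^m$ cancel, $d_{C_S(I)}=d_{C_{\widetilde{S''}}(J)}$, and summing over all such $S$ (equivalently over all $\widetilde{S''}\in\textrm{SIF}_{d-2}$) yields $\tfrac12(-1)^{1+k}\varepsilon_{I_1,I_k}\de_{t_{I_1,I_k}}\omega_J$. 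Finally, invoking Proposition \ref{isomega} (which provides the well-defined $\g_0$-morphism $\varphi$ and thereby the well-definedness of the $\setminus$-notation), $\omega_J=\omega_{(I_2,\dots,\widehat{I_k},\dots,I_d)}$ differs from $\omega_{(I_2,\dots,I_d)\setminus I_k}$ by the sign $(-1)^{k-2}=(-1)^k$ coming from pulling $x_{I_k}$ to the front of $x_{I_2}\wedge\cdots\wedge x_{I_d}$; together with $(-1)^{1+k}$ this produces the coefficient $-\tfrac12$, and the same proposition guarantees that everything vanishes consistently in degenerate cases where some of the $x_{I_l}$ coincide.

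The main obstacle is precisely the sign bookkeeping in the last step: one has to check with care that the parity corrections coming from the two order-preserving relabelings (of $D_{S''}$ and of the wedge product) match exactly the change $c(S)-c(\widetilde{S''})$ in crossing number, so that the only surviving signs are the transparent ones $(-1)^{1+k}$ and $(-1)^{k}$, whose product is $-1$. Granting this combinatorial cancellation, the remainder is a routine reorganization of finite sums made possible by the partition of $\textrm{SIF}_d$ according to the position of the index $1$.
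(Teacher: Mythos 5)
Your proof is correct, but it follows a genuinely different route from the paper's. The paper proves the first identity by induction on $k$: the base case $k=2$ is immediate (no pair of $S''$ can straddle positions $1$ and $2$, so the factorization is clean), and the inductive step uses the adjacent transposition swapping $I_{k-1}$ and $I_k$ together with the induced bijection between the sets $S\in\textrm{SIF}_d$ containing $\{1,k\}$ and those containing $\{1,k-1\}$, checking in two cases how $(-1)^{c(S)}D_S(I)$ transforms. You instead give a direct one-shot argument: factor out $D_{\{1,k\}}(I)$, map the remaining data onto $\textrm{SIF}_{d-2}$ and $\mathcal I_{d-2}$ by the order-preserving relabeling, and observe that the parity change $(-1)^m$ in the exponents $a+b$ of the $D_{\{a,b\}}$ (caused exactly by the pairs straddling position $k$) cancels against the change $c(S)=c(\widetilde{S''})+m$ in the crossing number, leaving only the transparent signs $(-1)^{1+k}$ from $D_{\{1,k\}}$ and $(-1)^{k-2}$ from pulling $x_{I_k}$ to the front of the wedge. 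I checked both sign computations and the final bookkeeping, and they are right. The paper's induction localizes the sign analysis to a single adjacent swap at the cost of an inductive framework; your argument treats all $k$ uniformly and has the merit of explaining structurally why the crossing number appears in Definition \ref{defomega} at all, namely as the compensator for the position-shift parity of the $D$-factors. Either proof is acceptable; deriving \eqref{ricomega} from the first identity plus the identification of the $1\notin\bigcup S$ part with $d_{I_1}\omega_{(I_2,\ldots,I_d)}$ is the same in both.
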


\begin{proof}
	We prove the first statement for all $I\in \mathcal I_d$ by induction on $k$. If $k=2$ we have $D_{\{1,2\}}(I)=-\frac{1}{2} \varepsilon_{I_1,I_2}\de_{t_{I_1,I_2}}$ and so, letting $J=(J_1,\ldots,J_{d-2})=(I_3,\ldots,I_{d})$ we have
	
	\begin{align*}\sum_{S\in \textrm{SIF}_d:\,\{1,2\}\in S}(-1)^{c(S)}D_S(I)\,d_{C_S(I)}&=-\frac{1}{2} \varepsilon_{I_1,I_2}\de_{t_{I_1,I_2}} \sum_{S\in \textrm{SIF}_{d-2}}(-1)^{c(S)}D_S(J)d_{C_S(J)}\\
	&= -\frac{1}{2} \varepsilon_{I_1,I_2}\de_{t_{I_1,I_2}} \omega_{I_3,\ldots,I_d}\\
	&=-\frac{1}{2} \varepsilon_{I_1,I_2}\de_{t_{I_1,I_2}} \omega_{(I_2,\ldots,I_d)\setminus I_2}.
	\end{align*}
	If $k>2$ we let $J=(I_1,\ldots,I_{k-2},I_k,I_{k-1},I_{k+1},\ldots,I_d)$ be obtained from $I$ by swapping $I_k$ and $I_{k-1}$. We also observe that swapping $k$ with $k-1$ provides a bijection $S\mapsto S'$ between elements in $\textrm{SIF}_d$ containing $\{1,k\}$ and elements in $\textrm{SIF}_d$ containing $\{1,k-1\}$; we also observe that by this bijection we have $d_{C_S(I)}=d_{C_{S'}(J)}$ and $(-1)^{c(S)}D_S(I)=-(-1)^{c(S')}D_{S'}(J)$: indeed if there exists $l$ such that $\{k,l\}\in S$ then $(-1)^{c(S)}=-(-1)^{c(S')}$ and $D_S(I)=D_{S'}(J)$, and if such element $l$ does not exist then $(-1)^{c(S)}=(-1)^{c(S')}$ and $D_S(I)=-D_{S'}(J)$. Therefore, using the inductive hypothesis, we have
	\begin{align*}
	\sum_{S\in \textrm{SIF}_d:\,\{1,k\}\in S}(-1)^{c(S)}D_S(I)\,d_{C_S(I)}&= -\sum_{S'\in \textrm{SIF}_d:\,\{1,k-1\}\in S'}(-1)^{c(S')}D_{S'}(J)\,d_{C_S(J)}\\
	&=\frac{1}{2} \varepsilon_{J_1,J_{k-1}} \de_{t_{J_1,J_{k-1}}}\omega_{(J_2,\ldots,J_d)\setminus J_{k-1}}\\
	&=-\frac{1}{2} \varepsilon_{I_1,I_{k}}\de_{t_{I_1,I_{k}}}\omega_{(I_2,\ldots,I_d)\setminus I_{k}}.
	\end{align*}
	
	Equation \eqref{ricomega} now follows from the first part observing that the first summand in the right-hand side of \eqref{ricomega} is just
	
	\[
	\sum_{S\in \textrm{SIF}_d:\,\{1,k\}\notin S \,\forall k}(-1)^{c(S)}D_S(I)\,d_{C_S(I)}.
	\]
\end{proof}
The following result is probably the easiest way to handle and compute the elements $\omega_I$ in a recursive way.
\begin{proposition}\label{omegarec}Let $I=(I_1,\ldots,I_d)$. Then
	\[\omega_I=\frac{1}{d}\sum_{j=1}^d d_{I_j} \omega_{I\setminus I_j}.\]
\end{proposition}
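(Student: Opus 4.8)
The plan is to deduce Proposition~\ref{omegarec} from the recursion~\eqref{ricomega} by symmetrizing over the $d$ entries of $I$, so that the single distinguished index $I_1$ in~\eqref{ricomega} is replaced by an average over all the $I_j$. Throughout I will use Proposition~\ref{isomega}: since $x_{I_1}\wedge\cdots\wedge x_{I_d}\mapsto\omega_{I_1,\ldots,I_d}$ is well defined, $\omega_I$ depends only on the wedge product $x_I$. Two immediate consequences I will need are that moving $I_j$ to the front of the tuple multiplies $\omega$ by $(-1)^{j-1}$, and that the truncation symbol satisfies $\omega_{I\setminus I_j}=(-1)^{j-1}\,\omega_{(I_1,\ldots,\widehat{I_j},\ldots,I_d)}$ (this is exactly the sign picked up in $x_I=x_{I_j}\wedge x_K$).

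First I would fix $j\in\{1,\ldots,d\}$ and apply~\eqref{ricomega} to the reordered tuple $(I_j,I_1,\ldots,\widehat{I_j},\ldots,I_d)$. Using the two sign facts above to rewrite both the left-hand side and the leading term, this gives, for every $j$,
\[
\omega_I \;=\; d_{I_j}\,\omega_{I\setminus I_j}\;-\;\frac{(-1)^{j-1}}{2}\sum_{k\neq j}\varepsilon_{I_j,I_k}\,\partial_{t_{I_j,I_k}}\,\omega_{(I_1,\ldots,\widehat{I_j},\ldots,I_d)\setminus I_k}.
\]
Summing these $d$ identities yields $d\,\omega_I=\sum_{j}d_{I_j}\,\omega_{I\setminus I_j}-R$, where $R$ is the double sum over all ordered pairs $(j,k)$ with $j\neq k$, so the whole proof reduces to showing $R=0$.

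To see $R=0$ I would pair the $(j,k)$ term with the $(k,j)$ term. Assuming $j<k$, the two terms involve the same differential operator: indeed $t_{I_j,I_k}=t_{I_k,I_j}$ and $\varepsilon_{I_j,I_k}=\varepsilon_{I_k,I_j}$, because interchanging the two pairs inside $\varepsilon_{abcd}$ is the permutation $(1\,3)(2\,4)$, which is even. They also involve the same $\omega$, namely $\omega$ of the tuple with \emph{both} $I_j$ and $I_k$ deleted, once one accounts for positions: deleting $I_k$ from $(I_1,\ldots,\widehat{I_j},\ldots,I_d)$ contributes $(-1)^{k-2}$, while deleting $I_j$ from $(I_1,\ldots,\widehat{I_k},\ldots,I_d)$ contributes $(-1)^{j-1}$. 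Combining these with the prefactors $(-1)^{j-1}$ and $(-1)^{k-1}$, the two contributions equal $(-1)^{j+k-1}$ and $(-1)^{j+k}$ times one common expression, hence cancel. The only genuinely delicate point is this threefold sign bookkeeping (the cyclic-reordering sign, the deletion sign, and the symmetry of $\varepsilon$ under swapping pairs); once it is set up correctly, $R=0$ and the proposition follows. The degenerate case in which $x_I=0$ requires no separate argument: then $\omega_I=0$ and every $\omega_{I\setminus I_j}=0$ by Proposition~\ref{isomega}. As an alternative one could prove the identity by induction on $d$ straight from~\eqref{ricomega}, but the symmetrization above seems the most transparent route.
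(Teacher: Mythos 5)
Your proposal is correct and follows essentially the same route as the paper: symmetrize the recursion \eqref{ricomega} over all $d$ cyclic reorderings $(I_j,I_1,\ldots,\widehat{I_j},\ldots,I_d)$ using Proposition \ref{isomega}, and observe that the residual double sum cancels by antisymmetry in $(j,k)$. Your explicit pairing of the $(j,k)$ and $(k,j)$ terms is just an unpacked version of the paper's one-line remark that $\omega_{I\setminus(I_j,I_k)}=-\omega_{I\setminus(I_k,I_j)}$.
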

\begin{proof}
	By \eqref{ricomega} and Proposition \ref{isomega} we have
	\begin{align*}
	\omega_I&=\frac{1}{d}\sum_{j=1}^d (-1)^{j+1} \omega_{I_j,I_1,\ldots,\hat I_j, \ldots,I_d}\\
	&=\frac{1}{d}\sum_{j=1}^d(-1)^{j+1} \big(d_{I_j}\omega_{ I_1,\ldots,\hat I_j,\ldots,I_d} -\frac{1}{2}\sum_{k\neq j} \varepsilon_{I_j,I_k} \de _{t_{I_j,I_k}}\omega_{(I_1,\ldots,\hat I_j,\ldots,I_d)\setminus I_k} \big)\\
	&=\frac{1}{d} \sum_{j=1}^d d_{I_j}\omega_{I\setminus I_j}-\frac{1}{2}\sum_{k\neq j}\varepsilon_{I_j,I_k} \de _{t_{I_j,I_k}}\omega_{I\setminus (I_j,I_k)}\\
	&=\frac{1}{d} \sum_{j=1}^d d_{I_j}\omega_{I\setminus I_j},
	\end{align*}
	since, clearly, $\omega_{I\setminus(I_j,I_k)}=-\omega_{I\setminus(I_k,I_j)}$ for all $k\neq j$.
	
\end{proof}
The following is an immediate consequence which is not needed in the sequel but sheds more light on the symmetric nature of the elements $\omega_I$'s.
\begin{corollary}
	We have
	\[\omega_{I_1,\ldots,I_d}=\frac{1}{d!}\sum_{\sigma \in S_d}\varepsilon_{\sigma} d_{I_{\sigma(1)}}\cdots d_{I_{\sigma(d)}}. \]
\end{corollary}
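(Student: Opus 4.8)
The identity is an immediate consequence of the recursion in Proposition \ref{omegarec}, and the plan is to prove it by induction on $d$, comparing $\omega_I$ with the candidate right-hand side $\Theta_I:=\frac{1}{d!}\sum_{\sigma\in S_d}\varepsilon_\sigma\,d_{I_{\sigma(1)}}\cdots d_{I_{\sigma(d)}}$. For $d=0$ both sides equal $1$, which is the base case (and for $d=1$ both reduce to $d_{I_1}$). For the inductive step I would start from Proposition \ref{omegarec}, $\omega_I=\frac1d\sum_{j=1}^d d_{I_j}\,\omega_{I\setminus I_j}$, and use the identification $\omega_{I\setminus I_j}=(-1)^{j+1}\,\omega_{(I_1,\ldots,\widehat{I_j},\ldots,I_d)}$ that comes from the wedge-sign convention in the definition of $\omega_{I\setminus J}$ together with the linearity of the map of Proposition \ref{isomega} — this is exactly the step already used in the proof of Proposition \ref{omegarec}. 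Thus $\omega_I=\frac1d\sum_{j=1}^d(-1)^{j+1}\,d_{I_j}\,\omega_{(I_1,\ldots,\widehat{I_j},\ldots,I_d)}$.

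Next I would apply the inductive hypothesis to each length-$(d-1)$ tuple $(I_1,\ldots,\widehat{I_j},\ldots,I_d)$, expanding $\omega_{(I_1,\ldots,\widehat{I_j},\ldots,I_d)}$ as $\frac{1}{(d-1)!}$ times the signed sum, over all orderings $\tau$ of $[d]\setminus\{j\}$, of the corresponding products of $d_{I_k}$'s. This turns $\omega_I$ into a double sum indexed by pairs $(j,\tau)$ with $j\in[d]$ and $\tau$ (equivalently an element of $S_{d-1}$) an ordering of $[d]\setminus\{j\}$. The final step is the bijection $(j,\tau)\mapsto\rho\in S_d$, where $\rho$ is the permutation whose first value is $\rho(1)=j$ followed by the entries of $[d]\setminus\{j\}$ listed in the order $\tau$; under this bijection $\varepsilon_\rho=(-1)^{j+1}\varepsilon_\tau$, since moving $j$ into first position costs the sign $(-1)^{j-1}$ and then permuting the remaining $d-1$ entries by $\tau$ contributes $\varepsilon_\tau$. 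Re-indexing the double sum by $\rho$ and using $\frac1d\cdot\frac{1}{(d-1)!}=\frac{1}{d!}$ gives $\omega_I=\frac{1}{d!}\sum_{\rho\in S_d}\varepsilon_\rho\,d_{I_{\rho(1)}}\cdots d_{I_{\rho(d)}}=\Theta_I$, completing the induction.

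The only point requiring attention — hence the \emph{main obstacle}, though a mild one — is the sign bookkeeping: verifying $\omega_{I\setminus I_j}=(-1)^{j+1}\omega_{(I_1,\ldots,\widehat{I_j},\ldots,I_d)}$ and $\varepsilon_\rho=(-1)^{j+1}\varepsilon_\tau$ for the bijection above. Neither of these touches the relations among the $d_I$'s, so no genuine computation inside $U_-$ is needed; the whole argument is formal, resting on Propositions \ref{omegarec} and \ref{isomega}.
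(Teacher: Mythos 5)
Your argument is correct and is essentially the paper's own proof: both proceed by induction on $d$ via Proposition \ref{omegarec}, using the sign identification $\omega_{I\setminus I_j}=(-1)^{j+1}\omega_{(I_1,\ldots,\widehat{I_j},\ldots,I_d)}$ and the bijection between $S_d$ and pairs $(j,\tau)$ with first entry $j$. The only difference is cosmetic — the paper runs the computation from the right-hand side back to $\frac{1}{d}\sum_j d_{I_j}\omega_{I\setminus I_j}$, while you expand $\omega_I$ forward — so there is nothing to add.
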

\begin{proof}
	We proceed by induction on $d$, the result being trivial for $d=1$. 
	
	We have
	\begin{align*}
	\frac{1}{d!}\sum_{\sigma \in S_d}\varepsilon_{\sigma} d_{I_{\sigma(1)}}\cdots d_{I_{\sigma(d)}}&= \frac{1}{d!}\sum_{j=1}^d\sum_{\sigma \in S([n]\setminus j)}(-1)^{j-1}\varepsilon_\sigma d_{I_j}d_{I_{\sigma(1)}}\cdots \hat d_{I_{\sigma(j)}} \cdots d_{I_{\sigma(d)}} \\
	&=\frac{1}{d}\sum_{j=1}^d (-1)^{j-1}d_{I_j}\sum _{\sigma \in S([n]\setminus j)}\varepsilon_\sigma d_{I_{\sigma(1)}}\cdots \hat d_{I_{\sigma(j)}} \cdots d_{I_{\sigma(d)}}\\
	&= \frac{1}{d}\sum_{j=1}^d (-1)^{j-1}d_{I_j} \omega_{I_1,\ldots,\hat I_j,\ldots,I_d}\\
	&=\frac{1}{d}\sum_{j=1}^d d_{I_j} \omega_{I\setminus I_j}.
	\end{align*}
\end{proof}

We now reformulate  \eqref{ricomega} in a way  which is more suitable for our next arguments.
\begin{lemma}\label{adgl}
Let $I\in \mathcal I_d$ and let $\{i,j,r,s,t\}= \{1,2,3,4,5\}$ be such that $\varepsilon_{ij,rs}=\varepsilon_{ij,st}=\varepsilon_{ij,tr}=1$. Then
	\[
	d_{ij}\omega_I=\omega_{ij,I}+\frac{1}{2}\de_r \omega_{I\setminus st}+\frac{1}{2}\de_s \omega_{I\setminus  tr}+\frac{1}{2}\de_t \omega_{I\setminus rs }.
	\]
\end{lemma}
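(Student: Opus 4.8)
The plan is to read off the identity from the recursion \eqref{ricomega}. Apply \eqref{ricomega} to the tuple $(ij,I_1,\ldots,I_d)\in\mathcal I_{d+1}$ having first entry $ij$; since $(\tilde I_2,\ldots,\tilde I_{d+1})=I$, after reindexing the sum this reads
\[
\omega_{ij,I}=d_{ij}\omega_I-\frac12\sum_{k=1}^d\varepsilon_{ij,I_k}\,\partial_{t_{ij,I_k}}\,\omega_{I\setminus I_k},
\]
equivalently
\[
d_{ij}\omega_I=\omega_{ij,I}+\frac12\sum_{k=1}^d\varepsilon_{ij,I_k}\,\partial_{t_{ij,I_k}}\,\omega_{I\setminus I_k}.
\]
So everything reduces to identifying this sum with $\partial_r\omega_{I\setminus st}+\partial_s\omega_{I\setminus tr}+\partial_t\omega_{I\setminus rs}$.

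First I would dispose of the case $\omega_I=0$: then $x_I=0$, hence $x_{ij,I}=0$ and no factorization $x_I=x_{rs}\wedge x_K$ (or with $st$, $tr$) exists, so $\omega_{ij,I}=0$ and $\omega_{I\setminus rs}=\omega_{I\setminus st}=\omega_{I\setminus tr}=0$, and both sides of the lemma vanish. Assume now $x_I\neq 0$, so that $x_{I_1},\ldots,x_{I_d}$ are linearly independent in $\inlinewedge^2(\F^5)$ and, in particular, no unordered pair $\{i_k,j_k\}$ is repeated among the entries of $I$. Now $\varepsilon_{ij,I_k}\neq 0$ only when $\{i_k,j_k\}$ is disjoint from $\{i,j\}$, i.e.\ when $\{i_k,j_k\}$ is one of $\{r,s\},\{s,t\},\{t,r\}$, in which case $t_{ij,I_k}$ is the complementary index $t$, $r$, or $s$ respectively. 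Grouping the surviving summands by the value-set $\{i_k,j_k\}$ (at most one index $k$ for each such value-set), the claim reduces to showing that the unique summand attached to a pair $I_k$ with $\{i_k,j_k\}=\{r,s\}$ equals $\partial_t\omega_{I\setminus rs}$, and similarly for the other two value-sets.

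The only delicate point — and the step I expect to be the main obstacle — is reconciling orientations, i.e.\ matching the literal entries $I_k$ of $I$ with the ordered pairs $rs,st,tr$ appearing in the statement, together with the sign conventions built into the $\omega_{I\setminus J}$ notation (which live in the exterior algebra of $\inlinewedge^2(\F^5)$). If $I_k=(r,s)$ the summand is $\varepsilon_{ij,rs}\,\partial_t\,\omega_{I\setminus rs}=\partial_t\omega_{I\setminus rs}$ by hypothesis; if $I_k=(s,r)$, then using $\varepsilon_{ij,(s,r)}=-\varepsilon_{ij,(r,s)}$, that $t_{ij,(s,r)}=t_{ij,(r,s)}$, and that reversing a pair changes the sign of the associated $\omega$ (so $\omega_{I\setminus(s,r)}=-\omega_{I\setminus(r,s)}$, by the linearity in Proposition \ref{isomega} and well-definedness of this notation), the two sign changes cancel and the summand is again $\partial_t\omega_{I\setminus rs}$. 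Finally, if $I$ contains no pair with value-set $\{r,s\}$, there is no such summand, which is consistent because then $x_{rs}$ does not lie in the span of $x_{I_1},\ldots,x_{I_d}$ (the $x_{pq}$, $p<q$, form a basis of $\inlinewedge^2(\F^5)$), so no factorization $x_I=x_{rs}\wedge x_K$ exists and $\omega_{I\setminus rs}=0$ by definition. Summing the three value-set contributions yields $\partial_r\omega_{I\setminus st}+\partial_s\omega_{I\setminus tr}+\partial_t\omega_{I\setminus rs}$, which completes the proof.
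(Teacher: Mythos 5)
Your proof is correct and follows exactly the route the paper intends: the paper states Lemma \ref{adgl} as a direct reformulation of the recursion \eqref{ricomega} and gives no further argument, and your derivation is precisely that specialization to the tuple $(ij,I_1,\ldots,I_d)$, with the bookkeeping (vanishing of $\varepsilon_{ij,I_k}$ unless $\{i_k,j_k\}\subseteq\{r,s,t\}$, the orientation signs cancelling, and the degenerate cases $x_I=0$ or a missing value-set) carried out carefully. Nothing is missing.
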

Next target is to study the commutator between an element in $\g_1$ of the form $x_p d_{pq}$ and a generic element $\omega_I$. In order to simplify the reading of the arguments we prefer to show the proof explicitly in the special case $x_5d_{45}$.
\begin{lemma}\label{piec}
	We have
	\[
	\sum_{k\neq j}\big[[x_5d_{45},d_{I_k}],d_{I_j}\big]\omega_{I\setminus (I_k,I_j)}=\sum_j \big[[x_5d_{45},d_{I_j}],\omega_{I\setminus I_j}\big] -3 \de_4 \omega_{I\setminus(12,23,31)}+\frac{3}{2}\sum_{(\alpha,\beta,\gamma)\in S_3}\de_\alpha \omega_{I\setminus (\alpha\beta,\beta\gamma, \gamma 4)}.
	\]

\end{lemma}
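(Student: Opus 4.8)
The statement is an identity in $(U_-)_{d+2}$ (well, in $\g$-valued expressions of the appropriate degree), and the natural strategy is to rewrite everything in terms of the elements $\omega_I$ using the recursive properties established above, and then to match terms. The key tool is Lemma \ref{adgl}: for any $\ell$ and any index pair $I_\ell = pq$ we can write $d_{pq}\omega_J = \omega_{pq,J} + \tfrac12(\text{sum of three }\partial\text{-terms involving }\omega_{J\setminus\cdot})$, and dually $\omega_{pq,J}$ appears when we strip a $d_{pq}$ off the front. Both sides of the claimed identity are built out of double commutators $[[x_5d_{45},d_{I_k}],d_{I_j}]$, and the plan is first to compute $[x_5d_{45},d_{I_k}]$ explicitly: since $x_5d_{45}\in\g_1$ and $d_{I_k}\in\g_{-1}$, this bracket lies in $\g_0=\slc$, so it acts on the remaining $\omega$'s by the $\g_0$-module structure, and a second bracket with $d_{I_j}$ lands back in $\g_{-1}$, i.e.\ is again a ``$d$''-type element (a $\C$-linear combination of $d_{ab}$'s), possibly zero.

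\textbf{Step 1.} Compute $[x_5 d_{45}, d_{ab}]$ for all pairs $ab$. Using the bracket formula $[fd_{ij},gd_{kl}] = \varepsilon_{ijkl}fg\,\partial_{t_{ijkl}}$ together with the Lie-derivative action of the even vector field $x_5\partial_j$ (arising after reordering), one sees that $[x_5d_{45},d_{ab}]$ is: zero unless $\{a,b\}$ meets $\{4,5\}$ appropriately, and otherwise an element of $\g_0$ of the form $x_5\partial_c$ for a suitable $c$ (plus possibly a Cartan correction). One then records $[[x_5d_{45},d_{ab}],d_{ef}] = [x_5\partial_c, d_{ef}]$, which by the same two formulas is a $\C$-multiple of some single $d_{gh}$. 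This is the bookkeeping heart of the computation; it is finite (one only needs the cases that actually occur) but must be done carefully with signs.

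\textbf{Step 2.} Expand the left-hand side $\sum_{k\neq j}[[x_5d_{45},d_{I_k}],d_{I_j}]\,\omega_{I\setminus(I_k,I_j)}$ by inserting the Step-1 data. The terms where the double bracket produces a nonzero $d_{gh}$ are then re-absorbed: $d_{gh}\,\omega_{I\setminus(I_k,I_j)}$ is rewritten via Lemma \ref{adgl} (or Proposition \ref{omegarec}) as $\omega_{gh,I\setminus(I_k,I_j)}$ plus lower ($\partial$-)terms. Meanwhile expand the right-hand side: the first sum $\sum_j[[x_5d_{45},d_{I_j}],\omega_{I\setminus I_j}]$ is a single commutator of a $\g_0$-element with an $\omega$, so it equals $[x_5d_{45},d_{I_j}]$ acting as a derivation on the product defining $\omega_{I\setminus I_j}$; expanding $\omega_{I\setminus I_j}$ by Proposition \ref{omegarec} produces exactly the double-commutator terms $[[x_5d_{45},d_{I_j}],d_{I_k}]\omega_{I\setminus(I_j,I_k)}$ together with terms where $x_5d_{45}$ hits the $\partial$-type summands inside $\omega_{I\setminus I_j}$ (recall $[x_5d_{45},\partial_r]\in\g_{-1}$). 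The plan is to check that the ``off-diagonal'' $\omega_{gh,\dots}$ pieces on the two sides coincide, so that the discrepancy between LHS and RHS is purely a sum of $\partial$-type corrections, which should collapse to exactly the two explicit correction terms $-3\partial_4\omega_{I\setminus(12,23,31)} + \tfrac32\sum_{(\alpha,\beta,\gamma)\in S_3}\partial_\alpha\omega_{I\setminus(\alpha\beta,\beta\gamma,\gamma4)}$.

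\textbf{Main obstacle.} The hard part is \emph{not} any single bracket computation but the combinatorial matching in Step 2: tracking precisely which $\partial_r\,\omega_{I\setminus(\dots)}$ terms survive. The index sets $\{1,2,3\}$ and $\{1,2,3,4\}$ singled out in the correction terms come from the special role of $\{4,5\}$ in $x_5d_{45}$ (namely $t_{45\,ab}$-combinatorics forces the complementary indices to sit in $\{1,2,3\}$ or $\{1,2,3,4\}$), and the coefficients $-3$ and $\tfrac32$ come from the symmetrization over the three (resp.\ six) orderings together with the $\tfrac12$'s in Lemma \ref{adgl}. I would organize the verification by fixing $I$ arbitrary and grouping the $\partial$-corrections according to which pair of indices from $\{1,2,3,4,5\}$ has been ``removed'', using the antisymmetry $\omega_{I\setminus(I_j,I_k)}=-\omega_{I\setminus(I_k,I_j)}$ (as in the proof of Proposition \ref{omegarec}) to cancel spurious contributions, and then reading off the residue. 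Since the general-index case reduces, by $\g_0$-equivariance and multilinearity in the $I_\ell$, to checking the identity on the finitely many relevant configurations of the $I_\ell$ relative to $\{4,5\}$, the whole argument is a finite (if tedious) check; I would present only the representative cases and indicate that the rest are analogous, exactly as is done for the earlier lemmas in this section.
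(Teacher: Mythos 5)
Your plan is essentially the paper's proof: the paper likewise computes the double brackets explicitly, observes that only $I_k\in\{12,23,31\}$ contribute to the left-hand side, converts the resulting $d_{gh}\,\omega_{I\setminus(I_k,I_j)}$ back into $\omega_{gh,I\setminus(I_k,I_j)}$ plus $\partial$-corrections via Lemma \ref{adgl}, identifies the $\omega$-part with $\big[[x_5d_{45},d_{I_k}],\omega_{I\setminus I_k}\big]$ by the $\g_0$-equivariance of Proposition \ref{isomega}, and collects the surviving corrections case by case. One slip to fix in your Step 1: $[x_5d_{45},d_{ab}]=\varepsilon_{45ab}\,x_5\partial_{t_{45ab}}$ is nonzero precisely when $\{a,b\}$ is \emph{disjoint} from $\{4,5\}$ (so $\{a,b\}\subseteq\{1,2,3\}$, and no Cartan term ever appears), not when it meets $\{4,5\}$ --- this is exactly the observation the paper's proof opens with.
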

\begin{proof} We first notice that in the left-hand side 
we have nonzero contributions  only for those $k$ such that $I_k=12,23,31$ (up to order). We compute the contribution of $I_k=12$, the others will be similar. We have
\[
\sum_j [x_5\de_3,d_{I_j}]\omega_{I\setminus(12,I_j)}=d_{25} \omega_{I\setminus(12,23)}+d_{51}\omega_{I\setminus(12,31)}+d_{54}\omega_{I\setminus(12,34)}
\]
and by Lemma \ref{adgl} and Proposition \ref{isomega} we have
\begin{align*}
\sum_j [&x_5\de_3,d_{I_j}]\omega_{I\setminus(12,I_j)}=\omega_{25,I\setminus(12,23)}+\omega_{51,I\setminus(12,31)}+\omega_{54,I\setminus(12,34)}\\
&+\frac{1}{2}\big(\de_1\omega_{I\setminus(12,23,34)}+\de_3 \omega_{I\setminus(12,23,41)}+\de_4\omega_{I\setminus(12,23,13)}+ \de_2 \omega_{I\setminus(12,31,34)}+\de_3\omega_{I\setminus(12,31,42)}+\de_4\omega_{I\setminus(12,31,23)}\\
&+ \de_1\omega_{I\setminus(12,34,32)}+\de_3\omega_{I\setminus(12,34,21)}+\de_2\omega_{I\setminus(12,34,13)}\big)\\
&= [x_5\de_3,\omega_{I\setminus 12}]-\de_4 \omega_{I\setminus(12,23,31)}+\de_1 \omega_{I\setminus(12,23,34)}+\de_2 \omega_{I\setminus(21,13,34)}+\frac{1}{2}\de_3 \omega_{I\setminus(32,21,14)}+\frac{1}{2}\de_3\omega_{I\setminus(31,12,24)}
\end{align*}
The contributions of $I_k=23,31$ are similarly computed and the result follows.
\end{proof}
\begin{theorem}
	For all $I\in \mathcal I_d$ we have
	
	\begin{align*}[x_5 d_{45},\omega_I]&=\sum_{j=1}^d \Big(\frac{1}{2} \big[[x_5d_{45},d_{I_j}],\omega_{I\setminus I_j}\big]+\omega_{I\setminus I_j} [x_5d_{45},d_{I_j}]\Big)\\
	&\hspace{5mm} +\frac{1}{2} \de_4 \omega_{I\setminus (12,23,31)}-\frac{1}{4}\sum_{(\alpha,\beta,\gamma) \in S_3} \de_{\alpha} \omega_{I\setminus (\alpha \beta, \beta \gamma, \gamma 4)}.
	\end{align*}
\end{theorem}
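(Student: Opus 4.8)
The plan is to derive the stated formula for $[x_5d_{45},\omega_I]$ by combining the recursive formula for $\omega_I$ from Proposition \ref{omegarec} with the (already proved) Lemma \ref{piec}, which does most of the combinatorial bookkeeping. First I would expand $\omega_I = \frac{1}{d}\sum_{j} d_{I_j}\omega_{I\setminus I_j}$ and apply $\mathrm{ad}(x_5d_{45})$ to each summand via the Leibniz rule:
\[
[x_5d_{45},\omega_I]=\frac{1}{d}\sum_{j=1}^d\Big([x_5d_{45},d_{I_j}]\,\omega_{I\setminus I_j}+d_{I_j}\,[x_5d_{45},\omega_{I\setminus I_j}]\Big).
\]
For the first type of term, I would push $[x_5d_{45},d_{I_j}]$ (which lies in $\g_{-1}$, hence is some $d_{pq}$ up to scalar) past nothing — it is already on the left — but then rewrite $[x_5d_{45},d_{I_j}]\,\omega_{I\setminus I_j}$ as $\omega_{I\setminus I_j}\,[x_5d_{45},d_{I_j}] + \big[[x_5d_{45},d_{I_j}],\omega_{I\setminus I_j}\big]$ in order to match the shape of the right-hand side; the second type of term will be treated by induction on $d$.

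The heart of the argument is the induction on $d$. Assuming the formula holds for all indices of length $<d$, I would substitute the inductive expression for $[x_5d_{45},\omega_{I\setminus I_j}]$ into the term $\frac{1}{d}\sum_j d_{I_j}[x_5d_{45},\omega_{I\setminus I_j}]$. This produces, after re-expanding $d_{I_j}\omega_{(I\setminus I_j)\setminus I_k}$ back via Lemma \ref{adgl} (to re-absorb the leading $d_{I_j}$ into an $\omega$ with one more index, at the cost of $\partial$-correction terms), a sum over unordered pairs $\{j,k\}$ of the double-commutator expressions $\big[[x_5d_{45},d_{I_k}],d_{I_j}\big]\omega_{I\setminus(I_k,I_j)}$, together with first-commutator terms and $\partial$-correction terms. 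At exactly this point Lemma \ref{piec} applies: it rewrites $\sum_{k\neq j}\big[[x_5d_{45},d_{I_k}],d_{I_j}\big]\omega_{I\setminus(I_k,I_j)}$ in terms of single commutators $\sum_j\big[[x_5d_{45},d_{I_j}],\omega_{I\setminus I_j}\big]$ plus the two explicit $\partial_4\omega_{I\setminus(12,23,31)}$ and $\sum_{S_3}\partial_\alpha\omega_{I\setminus(\alpha\beta,\beta\gamma,\gamma4)}$ terms. Collecting the coefficients of the single-commutator terms should reproduce the $\frac12\big[[x_5d_{45},d_{I_j}],\omega_{I\setminus I_j}\big]+\omega_{I\setminus I_j}[x_5d_{45},d_{I_j}]$ shape, and tracking the constants in front of the $\partial$-terms should yield the claimed $\tfrac12$ and $-\tfrac14$.

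The main obstacle will be the bookkeeping of the correction terms: there are several sources of $\partial_\alpha\omega_{\dots}$ contributions — from Lemma \ref{adgl} applied to re-form $d_{I_j}\omega_{(I\setminus I_j)\setminus I_k}$, from the inductive hypothesis itself (which already carries $\tfrac12\partial_4$ and $-\tfrac14\partial_\alpha$ terms), and from Lemma \ref{piec} — and these must be shown to combine with the correct multiplicities. Because the indices of these $\omega$'s (triples of pairs like $(12,23,31)$ and $(\alpha\beta,\beta\gamma,\gamma4)$) are antisymmetric in their constituent pairs and satisfy various sign and vanishing conventions, the cancellations are delicate; I would organize the computation by first fixing which triple of pairs $\{I_i,I_j,I_k\}$ is being removed and summing all contributions that produce $\partial_\bullet\,\omega_{I\setminus(I_i,I_j,I_k)}$ for that triple, using the $S_3$-symmetry to reduce to a single representative. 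The rest is the kind of routine but lengthy verification that, as elsewhere in the paper, one may reasonably summarize rather than display in full.
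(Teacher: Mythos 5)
Your overall strategy coincides with the paper's: induction on $d$, expansion of $\omega_I=\frac{1}{d}\sum_j d_{I_j}\omega_{I\setminus I_j}$ via Proposition \ref{omegarec}, substitution of the inductive hypothesis, and Lemma \ref{piec} to trade the double commutators $\sum_{k\neq j}\big[[x_5d_{45},d_{I_k}],d_{I_j}\big]\omega_{I\setminus(I_k,I_j)}$ for single commutators plus the explicit $\de_4$ and $\de_\alpha$ corrections. However, your opening display contains a sign error that would derail the whole coefficient count: $x_5d_{45}\in\g_1$ and $d_{I_j}\in\g_{-1}$ are both \emph{odd}, so the super-Leibniz rule gives $[x_5d_{45},\,d_{I_j}\omega_{I\setminus I_j}]=[x_5d_{45},d_{I_j}]\,\omega_{I\setminus I_j}-d_{I_j}\,[x_5d_{45},\omega_{I\setminus I_j}]$, with a minus sign on the second term, not the plus sign you wrote. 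That minus sign, combined with the antisymmetry $\omega_{I\setminus(I_j,I_k)}=-\omega_{I\setminus(I_k,I_j)}$ and the identity $\sum_{j\neq k}d_{I_j}\omega_{I\setminus(I_k,I_j)}=(d-1)\,\omega_{I\setminus I_k}$ (Proposition \ref{omegarec} again), is exactly what makes the coefficient of $\omega_{I\setminus I_j}[x_5d_{45},d_{I_j}]$ come out as $\frac{1}{d}\big(1+(d-1)\big)=1$; with your sign it would be $\frac{2-d}{d}$, and analogous discrepancies would corrupt the constants $\frac{1}{2}$ and $-\frac{1}{4}$ in front of the $\de$-terms. So as written the plan fails at its first step, even though the architecture is right.

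A smaller point of divergence: you propose to re-absorb the leading $d_{I_j}$ into a longer $\omega$ via Lemma \ref{adgl}, at the cost of fresh $\de$-corrections. The paper avoids this: after moving $d_{I_j}$ inside the double bracket (which is precisely where the terms $\big[[x_5d_{45},d_{I_k}],d_{I_j}\big]\omega_{I\setminus(I_k,I_j)}$ handled by Lemma \ref{piec} come from), it sums over $j$ and applies Proposition \ref{omegarec} in the form $\sum_{j\neq k}d_{I_j}\omega_{I\setminus(I_k,I_j)}=(d-1)\,\omega_{I\setminus I_k}$, so the only $\de$-corrections in the entire computation are those already packaged in Lemma \ref{piec} and in the inductive hypothesis. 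Your route is not wrong in principle, but it adds exactly the kind of bookkeeping you flag as the main obstacle. With the parity sign fixed and the re-absorption done through Proposition \ref{omegarec} rather than Lemma \ref{adgl}, your plan becomes the paper's proof.
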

\begin{proof}
	We proceed by induction on $d$, the case $d=1$ being easy. 
	Note that by induction hypothesis we can assume that
	\begin{align*}
	[x_5d_{45},\omega_{I\setminus I_j}]&=\sum_{k\neq j} \Big(\frac{1}{2} \big[[x_5d_{45},d_{I_k}],\omega_{I\setminus (I_j,I_k)}\big]+\omega_{I\setminus (I_j,I_k)} [x_5d_{45},d_{I_k}]\Big)\\
	&\hspace{5mm} +\frac{1}{2} \de_4 \omega_{I\setminus (I_j,12,23,31)}-\frac{1}{4}\sum_{(\alpha,\beta,\gamma) \in S_3} \de_{\alpha} \omega_{I\setminus (I_j,\alpha \beta, \beta \gamma, \gamma 4)} .
	\end{align*}
	Using Proposition \ref{omegarec} and the inductive hypothesis we have:
	\begin{align*}
	[x_5 d_{45},\omega_I]&=\frac{1}{d}\sum_{j=1}^d \big([x_5d_{45},d_{I_j}]\omega_{I\setminus I_j}-d_{I_j}[x_5d_{45},\omega_{I\setminus I_j}]\big)\\
	&=\frac{1}{d}\sum_{j=1}^d \Big(\big[[x_5d_{45},d_{I_j}],\omega_{I\setminus I_j}\big]+\omega_{I\setminus I_j}[x_5d_{45},d_{I_j}]\\
	&\hspace{20mm} -d_{I_j}\sum_{k\neq j} \big(\frac{1}{2} \big[[x_5d_{45},d_{I_k}],\omega_{I\setminus (I_j,I_k)}\big]+\omega_{I\setminus (I_j,I_k)} [x_5d_{45},d_{I_k}]\big)\\
	&\hspace{20mm} +\frac{1}{2} \de_4 \omega_{I\setminus (I_j,12,23,31)}-\frac{1}{4}\sum_{(\alpha,\beta,\gamma) \in S_3} \de_{\alpha} \omega_{I\setminus (I_j,\alpha \beta, \beta \gamma, \gamma 4)} \Big)\\
	&=\frac{1}{d}\sum_{j=1}^d \big(\omega_{I\setminus I_j}[x_5d_{45},d_{I_j}]-d_{I_j} \sum_{k\neq j} \omega_{I\setminus (I_j,I_k)} [x_5d_{45},d_{I_k}]\big)\\
	& \hspace{5mm}+\frac{1}{d}\sum_{j=1}^d \big(\big[[x_5d_{45},d_{I_j}],\omega_{I\setminus I_j}\big]-d_{I_j}\sum_{k\neq j} \frac{1}{2} \big[[x_5d_{45},d_{I_k}],\omega_{I\setminus (I_j,I_k)}\big]\big)\\
	& \hspace{5mm}+ \frac{1}{d}\sum_{j=1}^dd_{I_j}\big(-\frac{1}{2} \de_4 \omega_{I\setminus (I_j,12,23,31)}+\frac{1}{4}\sum_{(\alpha,\beta,\gamma) \in S_3} \de_{\alpha} \omega_{I\setminus (I_j,\alpha \beta, \beta \gamma, \gamma 4)} \big)
	\end{align*}
	 
We split this formula into three parts (according to the last three lines above):
the first part is
\begin{align*}\frac{1}{d}&\sum_{j=1}^d \big(\omega_{I\setminus I_j}[x_5d_{45},d_{I_j}]-d_{I_j} \sum_{k\neq j} \omega_{I\setminus (I_j,I_k)} [x_5d_{45},d_{I_k}]\big)\\
&= \frac{1}{d} \big( \sum_{j=1}^d \omega_{I\setminus I_j}[x_5d_{45},d_{I_j}]+\sum_{k=1}^d \sum_{j\neq k} d_{I_j}\omega_{I\setminus(I_k,I_j)}[x_5d_{45},d_{I_k}]\big)\\
&=\frac{1}{d} \big( \sum_{j=1}^d \omega_{I\setminus I_j}[x_5d_{45},d_{I_j}]+\sum_{k=1}^d (d-1)\omega_{I\setminus I_k}[x_5d_{45},d_{I_k}]\big)\\
&= \sum_{j=1}^d \omega_{I\setminus I_j}[x_5d_{45},d_{I_j}].
\end{align*}

The third part is
\begin{align*}\frac{1}{d}&\sum_{j=1}^dd_{I_j}\big(-\frac{1}{2} \de_4 \omega_{I\setminus (I_j,12,23,31)}+\frac{1}{4}\sum_{(\alpha,\beta,\gamma) \in S_3} \de_{\alpha} \omega_{I\setminus (I_j,\alpha \beta, \beta \gamma, \gamma 4)} \big)\\
&=\frac{1}{2d}\de_4 \sum_{j=1}^d d_{I_j} \omega_{I\setminus (12,23,31,I_j)}-\frac{1}{4d}\sum_{(\alpha,\beta,\gamma) \in S_3} \de_{\alpha} \sum_{j=1}^d d_{I_j}\omega_{I\setminus (\alpha \beta, \beta \gamma, \gamma 4,I_j)} \\
&=\frac{d-3}{2d}\de_4 \omega_{I\setminus(12,23,31)}-\frac{d-3}{4d} \sum_{(\alpha,\beta,\gamma) \in S_3} \de_{\alpha}\omega_{I\setminus (\alpha \beta, \beta \gamma, \gamma 4)}.
\end{align*}
In order to compute the second part we notice, using Lemma \ref{piec}, that the following holds:
\begin{align*}
-d_{I_j}\sum_{k,j}& \big[[x_5d_{45},d_{I_k}],\omega_{I\setminus (I_j,I_k)}\big]=d_{I_j}\sum_{k, j} \big[[x_5d_{45},d_{I_k}],\omega_{I\setminus (I_k,I_j)}\big]\\
&=\sum_{j,k}\Big(\big[[x_5d_{45},d_{I_k}],d_{I_j}\omega_{I\setminus(I_k,I_j)}\big]-\big[[x_5d_{45},d_{I_k}],d_{I_j}\big]\omega_{I\setminus(I_k,I_j)}\Big)\\
&=\sum_k\big[[x_5d_{45},d_{I_k}],\sum_jd_{I_j}\omega_{I\setminus (I_k,I_j)}\big]-\sum_{j,k}\big[[x_5d_{45},d_{I_k}],d_{I_j}\big]\omega_{I\setminus(I_k,I_j)}\\
&=(d-1)\sum_k \big[[x_5d_{45},d_{I_k}],\omega_{I\setminus I_k}\big]-\sum_j \big[[x_5d_{45},d_{I_j}],\omega_{I\setminus I_j}\big] +3 \de_4 \omega_{I\setminus(12,23,31)}\\
&-\frac{3}{2}\sum_{(\alpha,\beta,\gamma)\in S_3}\de_\alpha \omega_{I\setminus (\alpha\beta,\beta\gamma, \gamma 4)}\\
&=(d-2)\sum_k \big[[x_5d_{45},d_{I_k}],\omega_{I\setminus I_k}\big]+3 \de_4 \omega_{I\setminus(12,23,31)}-\frac{3}{2}\sum_{(\alpha,\beta,\gamma)\in S_3}\de_\alpha \omega_{I\setminus (\alpha\beta,\beta\gamma, \gamma 4)}.
\end{align*}
Therefore the whole second part is
\begin{align*}\frac{1}{d}&\sum_{j=1}^d \big(\big[[x_5d_{45},d_{I_j}],\omega_{I\setminus I_j}\big]-d_{I_j}\sum_{k\neq j} \frac{1}{2} \big[[x_5d_{45},d_{I_k}],\omega_{I\setminus (I_j,I_k)}\big]\big)\\
&=\frac{1}{d}\Big(\sum_j \big[[x_5d_{45},d_{I_j}],\omega_{I\setminus I_j}\big]+\frac{1}{2} (d-2)\sum_k \big[[x_5d_{45},d_{I_k}],\omega_{I\setminus I_k}\big]+\frac{3}{2} \de_4 \omega_{I\setminus(12,23,31)}\\
&-\frac{3}{4}\sum_{(\alpha,\beta,\gamma)\in S_3}\de_\alpha \omega_{I\setminus (\alpha\beta,\beta\gamma, \gamma 4)}\Big)\\
&=\frac{1}{2} \sum_j \big[[x_5d_{45},d_{I_j}],\omega_{I\setminus I_j}+\frac{3}{2d} \de_4 \omega_{I\setminus(12,23,31)}-\frac{3}{4d}\sum_{(\alpha,\beta,\gamma)\in S_3}\de_\alpha \omega_{I\setminus (\alpha\beta,\beta\gamma, \gamma 4)}.
\end{align*}
The sum of the three parts gives the result.
	\end{proof}
One can analogously prove the following result. 
\begin{theorem}\label{xpdpqomega}
	Let $\{p,q,a,b,c\}=\{1,2,3,4,5\}$ and $I\in \mathcal I_d$. Then
	
	\begin{align*}[x_p d_{pq},\omega_I]&=\sum_{j=1}^d \Big(\frac{1}{2} \big[[x_pd_{pq},d_{I_j}],\omega_{I\setminus I_j}\big]+\omega_{I\setminus I_j} [x_pd_{pq},d_{I_j}]\Big)\\
	&\hspace{5mm} -\frac{1}{2} \de_q \omega_{I\setminus (ab,bc,ca)}+\frac{1}{4}\sum_{(\alpha,\beta,\gamma) \in S(a,b,c)} \de_{\alpha} \omega_{I\setminus (\alpha \beta, \beta \gamma, \gamma q)},
	\end{align*}
where $S(a,b,c)$ denotes the set of permutations of $\{a,b,c\}$.
\end{theorem}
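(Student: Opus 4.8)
The statement is precisely the preceding theorem specialized to $p=5$, $q=4$, $\{a,b,c\}=\{1,2,3\}$, once one recalls that $x_5d_{45}=-x_5d_{54}$; so the plan is to repeat that proof with $x_5d_{45}$ replaced by $x_pd_{pq}$, the index $4$ by $q$, and the triple $\{1,2,3\}$ by $\{a,b,c\}$. The first step is the analogue of Lemma \ref{piec}: for every $I\in\mathcal I_d$,
\[
\sum_{k\neq j}\big[[x_pd_{pq},d_{I_k}],d_{I_j}\big]\,\omega_{I\setminus(I_k,I_j)}=\sum_j\big[[x_pd_{pq},d_{I_j}],\omega_{I\setminus I_j}\big]+3\,\de_q\,\omega_{I\setminus(ab,bc,ca)}-\frac32\sum_{(\alpha,\beta,\gamma)\in S(a,b,c)}\de_\alpha\,\omega_{I\setminus(\alpha\beta,\beta\gamma,\gamma q)}.
\]
As in Lemma \ref{piec}, the key starting observation is that $[x_pd_{pq},d_{I_k}]=\varepsilon_{pq,I_k}\,x_p\de_{t_{pq,I_k}}$ is nonzero only when $I_k$ is, up to order, one of $ab$, $bc$, $ca$, because these are exactly the two-element subsets of $[5]$ disjoint from $\{p,q\}$. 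For each such $I_k$ one expands $\sum_j[x_p\de_{\cdot},d_{I_j}]\,\omega_{I\setminus(I_k,I_j)}$ by isolating the summands with $I_j\in\{pa,pb,pc,pq\}$ and rewriting the remaining ones through Lemma \ref{adgl} (with $\{i,j\}=\{p,q\}$, $\{r,s,t\}=\{a,b,c\}$) and Proposition \ref{isomega}, recognizing the leading sum as the single commutator $[x_p\de_{\cdot},\omega_{I\setminus I_k}]$ plus explicit $\de$-corrections, exactly as is done for $I_k=12$ in the proof of Lemma \ref{piec}.

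With this identity in hand I would run the same induction on $d$. The case $d=1$ is immediate from the bracket relations. For $d\ge2$, one uses Proposition \ref{omegarec} to write $\omega_I=\frac1d\sum_jd_{I_j}\omega_{I\setminus I_j}$, expands $[x_pd_{pq},\omega_I]=\frac1d\sum_j\big([x_pd_{pq},d_{I_j}]\,\omega_{I\setminus I_j}-d_{I_j}\,[x_pd_{pq},\omega_{I\setminus I_j}]\big)$, substitutes the inductive hypothesis for $[x_pd_{pq},\omega_{I\setminus I_j}]$, and sorts the result into three blocks. The first block collapses to $\sum_j\omega_{I\setminus I_j}[x_pd_{pq},d_{I_j}]$ using $\sum_{j\neq k}d_{I_j}\omega_{I\setminus(I_k,I_j)}=(d-1)\omega_{I\setminus I_k}$ (Proposition \ref{omegarec} applied to $I\setminus I_k$); the third block, made of the $\de$-terms carried over from the inductive hypothesis, collapses by the same identity to $\frac{d-3}{d}$ times the desired $\de$-terms; and the second block, once the analogue of Lemma \ref{piec} above is applied, reduces to $\frac12\sum_j[[x_pd_{pq},d_{I_j}],\omega_{I\setminus I_j}]$ plus $\frac3d$ times the desired $\de$-terms. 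Adding the three blocks, the coefficients $\frac{d-3}{d}$ and $\frac3d$ sum to $1$, the $d$-dependence cancels, and the stated formula emerges.

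An alternative, slicker route is to deduce the result directly from the preceding theorem by equivariance. Choose an even permutation $\sigma\in A_5$ with $\sigma(5)=p$, $\sigma(4)=q$, and $\{\sigma(1),\sigma(2),\sigma(3)\}=\{a,b,c\}$ (three of the six admissible matchings are even). The automorphism of $E(5,10)$ it induces is $\Z$-graded, hence acts on $U_-$ by an algebra automorphism which, by Proposition \ref{isomega} and the connectedness of $\mathrm{SL}_5$, sends $\omega_I$ to $\omega_{\sigma(I)}$. Applying $\sigma$ to the identity for $[x_5d_{45},\omega_I]$ in the preceding theorem, and using $\sigma(x_5d_{45})=x_pd_{qp}=-x_pd_{pq}$ together with $\sigma(\de_4)=\de_q$, then yields the claimed formula (with $\sigma(I)$ running over all of $\mathcal I_d$).

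In either approach the only delicate point is sign bookkeeping. In the direct proof the $\de$-corrections in the analogue of Lemma \ref{piec} and in the third block carry $d$-dependent coefficients that must cancel exactly, and the sign of each $\omega_{I\setminus(\cdots)}$ depends on the order in which pairs are removed and on the position of $q$; in the equivariant proof one must track the sign $x_5d_{45}=-x_5d_{54}$ and the effect of $\mathrm{sgn}(\sigma)$. In both, the triple $(ab,bc,ca)$ has to be oriented compatibly with the convention of Lemma \ref{adgl}, namely so that $\varepsilon_{pq,ab}=\varepsilon_{pq,bc}=\varepsilon_{pq,ca}=1$; once this normalization is fixed everywhere, the symmetrized sum $\sum_{(\alpha,\beta,\gamma)\in S(a,b,c)}\de_\alpha\omega_{I\setminus(\alpha\beta,\beta\gamma,\gamma q)}$ is well defined and the computation of the previous theorem carries over word for word.
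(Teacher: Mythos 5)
Your proposal is correct and follows essentially the same route as the paper, which proves this theorem simply by declaring it ``analogous'' to the preceding special case $[x_5d_{45},\omega_I]$: you correctly identify the required generalized version of Lemma \ref{piec} (with the signs of the $\de_q\,\omega_{I\setminus(ab,bc,ca)}$ and symmetrized terms adjusted so that the three blocks recombine to the stated $-\tfrac12$ and $+\tfrac14$ coefficients) and rerun the induction on $d$ via Proposition \ref{omegarec} word for word. Your alternative derivation by transporting the special case along an even permutation acting as a graded automorphism is a genuinely slicker observation not used in the paper, but the main line of your argument coincides with the paper's.
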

\section{The fundamental equations}
We are now going to use Theorem \ref{xpdpqomega} to study possible morphisms
of finite Verma modules $\varphi:M(V)\rightarrow M(W)$. Let $\sim$ be the equivalence relation on $\mathcal I_d$ such that $I\sim I'$ if and only if $\omega_I=\pm \omega_{I'}$, i.e. if  $I'$ can be obtained from $I$ by permuting the pairs in $I$ and the elements in each pair. By Proposition \ref{morphisms}, Remark \ref{dual} and Proposition \ref{isomega} we know that if a morphism has degree $d$ then it can be expressed in the following way
\begin{equation}\label{varfi}
\varphi(v)=\sum_{l\leq d/2} \sum_{I\in \mathcal I_{d-2l}/\sim} \sum_{1\leq r_1\leq \cdots \leq r_l\leq 5} \de_{r_1}\cdots \de_{r_l} \omega_I \otimes \theta^{r_1,\ldots,r_l}_I(v)
\end{equation}
where the $\theta^{r_1,\ldots,r_l}_I:V\rightarrow W$ are such that the map
\[\Sym^l(\F^5)\otimes \displaywedge^{d-2l} (\displaywedge^2((\F^5)^*)\rightarrow \Hom(V,W)
\]
given by \begin{equation}\label{isotheta}x_{r_1}\cdots x_{r_l}\otimes  x_{I_1}^*\wedge \cdots \wedge x_{I_{d-2l}}^*\mapsto \theta^{r_1,\ldots,r_l}_{I_1,\ldots,I_{d-2l}}\end{equation}
is a (well-defined) morphism of $\g_0$-modules. This fact permits us to easily compute the action of $\g_0$ on the morphisms $\theta^{r_1,\ldots,r_l}_I$'s. For example we have 
\[x_1 \de_2.\theta^{2,3}_{12,13,14,23}=\theta^{1,3}_{12,13,14,23}
%-\theta^{2,3}_{13,13,14,23}
-\theta^{2,3}_{12,23,14,23}-\theta^{2,3}_{12,13,24,23}=\theta^{1,3}_{12,13,14,23}+\theta^{2,3}_{12,13,23,24}.\]
A technical lemma is in order.
\begin{lemma}\label{jkjk} For all distinct $\alpha, \beta, \gamma, p \in [5]$ we have:
\[
\sum_{J\in \mathcal I_d/\sim} [x_p\de_\gamma,\omega_J]\otimes \theta_{\alpha\beta, J}=-\sum_{J\in \mathcal I_d/\sim} \omega_J \otimes (x_p \de_\gamma.\theta_{\alpha \beta, J}).
\]
and
\[
\sum_{K\in \mathcal I_d/\sim }(-\de_{\gamma} \omega_K \otimes \theta^p_{\alpha \beta, K}+\sum_{t=1}^5 \de_t [x_p \de_\gamma, \omega_K]\otimes \theta^t_{\alpha\beta, K})=-\sum_{t=1}^5 \sum_{K\in \mathcal I_d/\sim} \de_t \omega_K\otimes (x_p \de_\gamma.\theta^t_{\alpha \beta,K}).
\]
\end{lemma}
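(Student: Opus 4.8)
The plan is to reduce both identities to a single input: the $\g_0$-invariance of the canonical element
\[
\Delta=\sum_{K\in\mathcal I_d/\sim}x_K\otimes x_K^*\ \in\ \inlinewedge^d(\inlinewedge^2\F^5)\otimes \inlinewedge^d(\inlinewedge^2(\F^5)^*),
\]
together with the two equivariance facts already available: the map $\omega\colon \inlinewedge^d(\inlinewedge^2\F^5)\to U_-$, $x_K\mapsto\omega_K$, is a $\g_0$-morphism (Proposition \ref{isomega}), and the map $\Theta$ of \eqref{isotheta} is a $\g_0$-morphism. First I would recall that, under the determinant pairing $\langle x_I^*,x_J\rangle=\det\big(\langle x_{I_a}^*,x_{J_b}\rangle\big)$, the nonzero $x_K$ (for $K$ running over representatives of $\sim$) form a basis of $\inlinewedge^d(\inlinewedge^2\F^5)$ whose dual basis consists of the $x_K^*$; hence $\Delta$ is well defined (independent of the chosen representatives) and corresponds to the identity endomorphism, so $x.\Delta=0$ for every $x\in\g_0$. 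We will only use this for $x=x_p\partial_\gamma$, which lies in $\g_0$ since $p\neq\gamma$, and which acts on $U_-$ by the adjoint action.

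For the first identity I would apply to the relation $x_p\partial_\gamma.\Delta=0$ the linear map $a\otimes b\longmapsto \omega(a)\otimes\Theta(x^*_{\alpha\beta}\wedge b)$. By the Leibniz rule, $x_p\partial_\gamma.\Delta=\sum_K(x_p\partial_\gamma.x_K)\otimes x_K^*+\sum_K x_K\otimes(x_p\partial_\gamma.x_K^*)$; the first sum is carried, using equivariance of $\omega$, to $\sum_J[x_p\partial_\gamma,\omega_J]\otimes\theta_{\alpha\beta,J}$, while the second sum is carried to $\sum_J\omega_J\otimes(x_p\partial_\gamma.\theta_{\alpha\beta,J})$, because $x^*_{\alpha\beta}\wedge\big(x_p\partial_\gamma.b\big)=x_p\partial_\gamma.\big(x^*_{\alpha\beta}\wedge b\big)$ and then equivariance of $\Theta$ applies. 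This last identity holds precisely because $x_p\partial_\gamma.x^*_{\alpha\beta}=0$, which in turn is where the hypothesis that $\alpha,\beta,\gamma,p$ are all distinct enters (so that $x_p\partial_\gamma$ annihilates both $x^*_\alpha$ and $x^*_\beta$). Adding the two transformed sums gives the first identity.

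The second identity is obtained in the same manner, now applying to $x_p\partial_\gamma.\Delta=0$ the map $a\otimes b\longmapsto \sum_{t=1}^5\partial_t\,\omega(a)\otimes\Theta(x_t\otimes x^*_{\alpha\beta}\wedge b)$ (the spectator factor $\partial_t$ and the spectator slot $x_t$ cause no trouble, as this map need not be equivariant). The only new phenomenon is that when rewriting $\Theta\big(x_t\otimes x^*_{\alpha\beta}\wedge(x_p\partial_\gamma.b)\big)$ one must compensate for the action of $x_p\partial_\gamma$ on the slot $x_t$: from $x_p\partial_\gamma.\big(x_t\otimes x^*_{\alpha\beta}\wedge b\big)=(\delta_{\gamma t}x_p)\otimes x^*_{\alpha\beta}\wedge b+x_t\otimes x_p\partial_\gamma.\big(x^*_{\alpha\beta}\wedge b\big)$, applying $\Theta$ and using $x_p\partial_\gamma.x^*_{\alpha\beta}=0$ produces the correction $\delta_{\gamma t}\,\theta^p_{\alpha\beta,K}$; after summing over $t$ these corrections assemble into $\sum_K\partial_\gamma\omega_K\otimes\theta^p_{\alpha\beta,K}$, which is exactly the extra summand appearing on the left-hand side of the stated identity. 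Rearranging then yields the claim.

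I do not expect a conceptual obstacle: the whole content is the invariance of $\Delta$ together with the vanishing $x_p\partial_\gamma.x^*_{\alpha\beta}=0$. The part requiring care is the bookkeeping — checking that $\{x_K^*\}$ really is the dual basis of $\{x_K\}$ with uniform normalization (so that $\Delta$ is the invariant element, not merely some invariant combination), tracking the Koszul signs concealed in the shorthands $\theta_{\alpha\beta,J}$ and $\theta^t_{\alpha\beta,K}$, and confirming that the $\delta_{\gamma t}$ correction in the second identity is collected with the right sign. If a purely computational proof is preferred, one can instead expand both sides directly using the explicit $\g_0$-action on the $\theta$'s illustrated just before the lemma, but the equivariance argument above is cleaner.
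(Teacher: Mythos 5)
Your proof is correct and is essentially the paper's argument in different clothing: the invariance of the canonical element $\Delta$ is exactly the paper's statement that if $[x_p\de_\gamma,\omega_J]=\sum_{J'}a_{J,J'}\omega_{J'}$ then $x_p\de_\gamma.\theta_{J'}=-\sum_J a_{J,J'}\theta_J$, and both proofs then use that $x_p\de_\gamma$ kills $x_{\alpha\beta}^*$ (since $p\notin\{\alpha,\beta\}$) for the first identity and isolate the correction $\theta^p_{\alpha\beta,K}$ coming from the slot $t=\gamma$ for the second. The bookkeeping points you flag (dual-basis normalization, the sign of the $\delta_{\gamma t}$ correction) all check out against the paper's explicit computation.
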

\begin{proof}
The first equation follows from the following observation. By Proposition \ref{isomega} and \eqref{isotheta} we have that if
$
[x_p \de_\gamma,\omega_J]=\sum_{J'} a_{J,J'} \omega_{J'}
$
then
$
x_p \de_\gamma.\theta_{J'}=-\sum_J a_{J,J'}\theta_J
$
 and hence also
\[
x_p \de_\gamma.\theta_{\alpha \beta,J'}=-\sum_J a_{J,J'}\theta_{\alpha \beta,J},
\]
since $\alpha$ and $\beta$ are distinct from $p$.
We can conclude that

\[\sum_J [x_p \de_\gamma,\omega_J]\otimes \theta_{\alpha \beta,J}=\sum_{J'} \omega_{J'}\otimes \sum_J a_{J,J'}\theta_{\alpha \beta,J}=-\sum_{J'}\omega_{J'}\otimes (x_p\de_\gamma.\theta_{\alpha \beta, J'}).
\]	
In order to prove the second equation we proceed in a similar way. If $[x_p\de_\gamma, \omega_K]=\sum_{K'} a_{K,K'} \omega_{K'}$
we have $x_p \de_\gamma.\theta_{K'}=-\sum_K a_{K,K'} \theta_K$ and also $x_p \de_\gamma.\theta_{\alpha \beta,K'}=-\sum_K a_{K,K'} \theta_{\alpha \beta,K}$. Therefore, if $t\neq \gamma$ we have $x_p \de_{\gamma}.\theta^t_{\alpha \beta,K'}=-\sum_K a_{K,K'} \theta^t_{\alpha \beta,K}$ and for $t=\gamma$ we have
\[
(x_p \de_\gamma.\theta^{\gamma}_{\alpha \beta,K'})=\theta^p_{\alpha \beta,K'}-\sum_K a_{K,K'} \theta^{\gamma}_{\alpha \beta,K}.
\]
So we can compute
\begin{align*}\sum_{K}&(-\de_{\gamma} \omega_K \otimes \theta^p_{\alpha \beta, K}+\sum_t \de_t [x_p \de_\gamma, \omega_K]\otimes \theta^t_{\alpha\beta, K})\\
&=\sum_{K}(-\de_{\gamma} \omega_K \otimes \theta^p_{\alpha \beta, K}+\sum_t \de_t \sum_{K'} a_{K,K'}\omega_K' \otimes \theta^t_{\alpha\beta, K})\\
&=\sum _{K'}\de_{\gamma}\omega_{K'}\otimes (\theta^p_{\alpha \beta,K'}+\sum_K a_{K,K'} \theta^\gamma_{\alpha \beta,K})+\sum_{t\neq \gamma} \sum_{K'} \de_t \omega_{K'}\otimes \sum _K a_{K,K'}\theta^t_{\alpha \beta, K}\\
&= -\sum_t \sum_{K'} \de_t \omega_K' \otimes (x_p \de_\gamma.\theta^\gamma_{\alpha \beta,K'}).
\end{align*}
\end{proof}
We let $C(a,b,c)=\{(a,b,c), (b,c,a), (c,a,b)\}$ the set of cyclic permutations of $(a,b,c)$. From now on when we write $\sum_{\alpha \beta \gamma}$ we always mean the sum over $(\alpha,\beta,\gamma)\in C(a,b,c)$.
\begin{lemma}\label{xpdpqaomth1}
	Let $\varphi:M(V)\rightarrow M(W)$ be a morphism of Verma modules as in \eqref{varfi} and $(p,q,a,b,c)$ be any permutation of $[5]$. Then
\begin{align*}
x_pd_{pq} &\sum_{I\in \mathcal I_d/\sim}\omega_I\otimes \theta_I(v)=\sum_{J\in \mathcal I_{d-1}/\sim} \omega_J\otimes \frac{1}{2}\varepsilon_{pqabc} \sum_{\alpha\beta\gamma} \big(-(x_p\de_\gamma.\theta_{\alpha\beta,J})(v)+2x_p\de_\gamma.(\theta_{\alpha\beta,J}(v))\big)\\
\nonumber &\hspace{5mm}+\sum_{K\in \mathcal I_{d-3}/\sim}\de_q\omega_K\otimes -\frac{1}{2}\theta_{ab,bc,ca,K}(v)+\sum_{\alpha\beta\gamma}\de_\alpha \omega_K \otimes \frac{1}{4}\big(\theta_{\alpha\beta,\beta\gamma,\gamma q,K}(v)+\theta_{\alpha\gamma,\gamma \beta,\beta q,K}(v)\big).
\end{align*}

\end{lemma}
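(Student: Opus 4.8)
The plan is to derive the identity directly from Theorem \ref{xpdpqomega}. Since $x_pd_{pq}\in\g_1\subseteq\g_+$ annihilates $W$ and $x_pd_{pq}\in\mathcal U(\g_{\geq 0})$, for $\omega\in U_-$ and $w\in W$ one has $x_pd_{pq}(\omega\otimes w)=[x_pd_{pq},\omega]\otimes w$ in $M(W)$, and moreover $\omega g\otimes w=\omega\otimes gw$ for $g\in\g_0$. Hence the left-hand side equals $\sum_{I\in\mathcal I_d/\sim}[x_pd_{pq},\omega_I]\otimes\theta_I(v)$, with the convention that any $\g_0$-element occurring to the right of an $\omega$-factor in Theorem \ref{xpdpqomega} is moved across $\otimes$ and applied to $W$. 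I would then substitute the formula of Theorem \ref{xpdpqomega}, splitting $[x_pd_{pq},\omega_I]$ into its \emph{commutator part} $\sum_j\bigl(\tfrac12[[x_pd_{pq},d_{I_j}],\omega_{I\setminus I_j}]+\omega_{I\setminus I_j}[x_pd_{pq},d_{I_j}]\bigr)$ and its \emph{$\de$-part} $-\tfrac12\de_q\omega_{I\setminus(ab,bc,ca)}+\tfrac14\sum_{(\alpha,\beta,\gamma)\in S(a,b,c)}\de_\alpha\omega_{I\setminus(\alpha\beta,\beta\gamma,\gamma q)}$, and handle the two parts separately.

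For the $\de$-part I would just reindex. If $[I]\in\mathcal I_d/\sim$ contains the three pairs $\{a,b\},\{b,c\},\{c,a\}$, then setting $K=I\setminus(ab,bc,ca)$ gives $\omega_I=\omega_{ab,bc,ca,K}$ and $\theta_I=\theta_{ab,bc,ca,K}$ with matching signs (both $\omega_I$ and $\theta_I$ being antisymmetric under reordering pairs and flipping the entries of a pair, by Proposition \ref{isomega} and \eqref{isotheta}), while for the other classes the corresponding term vanishes. This turns $-\tfrac12\sum_I\de_q\omega_{I\setminus(ab,bc,ca)}\otimes\theta_I(v)$ into $-\tfrac12\sum_{K\in\mathcal I_{d-3}/\sim}\de_q\omega_K\otimes\theta_{ab,bc,ca,K}(v)$, and likewise $\tfrac14\sum_I\sum_{(\alpha,\beta,\gamma)\in S(a,b,c)}\de_\alpha\omega_{I\setminus(\alpha\beta,\beta\gamma,\gamma q)}\otimes\theta_I(v)$ into $\tfrac14\sum_K\sum_{(\alpha,\beta,\gamma)\in S(a,b,c)}\de_\alpha\omega_K\otimes\theta_{\alpha\beta,\beta\gamma,\gamma q,K}(v)$; grouping each cyclic triple $(\alpha,\beta,\gamma)\in C(a,b,c)$ with the anti-cyclic triple $(\alpha,\gamma,\beta)$ produces the two summands $\theta_{\alpha\beta,\beta\gamma,\gamma q,K}+\theta_{\alpha\gamma,\gamma\beta,\beta q,K}$ summed over $C(a,b,c)$, i.e. exactly the last line of the claim.

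For the commutator part, the bracket rule $[fd_{ij},gd_{kl}]=\varepsilon_{ijkl}fg\,\de_{t_{ijkl}}$ gives $[x_pd_{pq},d_{I_j}]=\varepsilon_{pq\alpha\beta}\,x_p\de_\gamma$ when $I_j=(\alpha,\beta)$ with $\{\alpha,\beta,\gamma\}=\{a,b,c\}$, and $[x_pd_{pq},d_{I_j}]=0$ otherwise; since a $3$-cycle is even, $\varepsilon_{pq\alpha\beta}=\varepsilon_{pqabc}$ for $(\alpha,\beta,\gamma)\in C(a,b,c)$. Extracting the pair $I_j$ (so $J=I\setminus I_j\in\mathcal I_{d-1}/\sim$ and $\theta_I=\theta_{\alpha\beta,J}$), and using $\omega_Jg\otimes\theta_I(v)=\omega_J\otimes g\,\theta_I(v)$ and $g\omega_J\otimes\theta_I(v)=[g,\omega_J]\otimes\theta_I(v)+\omega_J\otimes g\,\theta_I(v)$ with $g=x_p\de_\gamma$, the commutator part becomes
\[
\varepsilon_{pqabc}\sum_{J\in\mathcal I_{d-1}/\sim}\sum_{\alpha\beta\gamma}\Bigl(\tfrac12\,[x_p\de_\gamma,\omega_J]\otimes\theta_{\alpha\beta,J}(v)+\omega_J\otimes x_p\de_\gamma.\bigl(\theta_{\alpha\beta,J}(v)\bigr)\Bigr).
\]
Applying the first identity of Lemma \ref{jkjk} — legitimate since $\alpha,\beta$ are distinct from both $p$ and $\gamma$ — to rewrite $\sum_J[x_p\de_\gamma,\omega_J]\otimes\theta_{\alpha\beta,J}(v)=-\sum_J\omega_J\otimes(x_p\de_\gamma.\theta_{\alpha\beta,J})(v)$ and combining the two contributions yields $\sum_J\omega_J\otimes\tfrac12\varepsilon_{pqabc}\sum_{\alpha\beta\gamma}\bigl(-(x_p\de_\gamma.\theta_{\alpha\beta,J})(v)+2\,x_p\de_\gamma.(\theta_{\alpha\beta,J}(v))\bigr)$, which is the first term of the claim; adding the $\de$-part finishes the proof.

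The only real difficulty is bookkeeping: tracking signs through $\varepsilon_{pqabc}$ against the antisymmetry of $\omega_I$ and $\theta_I$, and checking that the passage from sums over $\mathcal I_d/\sim$ to sums over $\mathcal I_{d-1}/\sim$ and $\mathcal I_{d-3}/\sim$ carries multiplicity one (each nonzero class has distinct pairs, so removing a fixed pair, respectively a fixed triple of pairs, from a class is unambiguous). There is no conceptual obstacle beyond Theorem \ref{xpdpqomega} and Lemma \ref{jkjk}.
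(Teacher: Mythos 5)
Your proposal is correct and follows essentially the same route as the paper: the paper likewise rewrites Theorem \ref{xpdpqomega} so that the nonzero brackets $[x_pd_{pq},d_{\alpha\beta}]=\varepsilon_{pqabc}\,x_p\de_\gamma$ are indexed by the cyclic permutations of $(a,b,c)$ (its equation \eqref{xpdpqomegabis}), reindexes the sums over $\mathcal I_d/\sim$ to sums over $J\in\mathcal I_{d-1}/\sim$ and $K\in\mathcal I_{d-3}/\sim$, and then invokes the first identity of Lemma \ref{jkjk} to trade $[x_p\de_\gamma,\omega_J]\otimes\theta_{\alpha\beta,J}(v)$ for $-\omega_J\otimes(x_p\de_\gamma.\theta_{\alpha\beta,J})(v)$. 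Your bookkeeping remarks (the evenness of the $3$-cycle giving $\varepsilon_{pq\alpha\beta}=\varepsilon_{pqabc}$, the pairing of cyclic with anti-cyclic triples in the $\de$-part, and the multiplicity-one reindexing) are exactly the points the paper handles implicitly.
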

\begin{proof}Theorem \ref{xpdpqomega} can be reformulated in the following more convenient way
\begin{align}\label{xpdpqomegabis}
[x_pd_{pq},\omega_I]= & \frac{1}{2}\varepsilon_{pqabc}\sum_{(\alpha,\beta,\gamma)\in C(a,b,c)}\big([x_p\de_\gamma,\omega_{I\setminus \alpha\beta} ]+2\omega_{I\setminus \alpha \beta}\, x_p \de_\gamma\big)\\ & \nonumber -\frac{1}{2} \de_q \omega_{I\setminus(ab,bc,ca)}+\frac{1}{4} \sum_{(\alpha,\beta,\gamma)\in C(a,b,c)}\de_\alpha (\omega_{I\setminus(\alpha \beta, \beta \gamma, \gamma q)}+\omega_{I\setminus(\alpha \gamma,\gamma \beta , \beta q)}).
\end{align}

We can therefore compute
\begin{align*}
x_pd_{pq} &\sum_{I\in \mathcal I_d/\sim}\omega_I\otimes \theta_I(v)=
\frac{1}{2}\varepsilon_{pqabc} \sum_{I\in \mathcal I_d/\sim} \,\,\sum_{(\alpha,\beta,\gamma)\in C(a,b,c)}\Big([x_p\de_\gamma,\omega_{I\setminus \alpha \beta}]\otimes \theta_I(v)+2\omega_{I\setminus \alpha\beta}\otimes x_p \de_\gamma.(\theta_I(v)) \Big)\\
&+\sum_{I\in \mathcal I_d/\sim}\Big(-\frac{1}{2} \de_q \omega_{I\setminus(ab,bc,ca)}\otimes \theta_I(v)+\frac{1}{4}\sum_{(\alpha,\beta,\gamma)\in C(a,b,c)} \de_\alpha (\omega_{I\setminus(\alpha \beta, \beta \gamma, \gamma q)}+\omega_{I\setminus(\alpha \gamma,  \gamma \beta, \beta q)})\otimes \theta_I(v)\Big)\\
&= \frac{1}{2}\varepsilon_{pqabc} \sum_{J\in \mathcal I_{d-1}/\sim} \,\,\sum_{(\alpha,\beta,\gamma)\in C(a,b,c)}\Big([x_p\de_\gamma,\omega_J]\otimes \theta_{\alpha \beta,J}(v)+2\omega_{J}\otimes x_p \de_\gamma.(\theta_{\alpha \beta,J}(v)) \Big)\\
&+\sum_{K\in \mathcal I_{d-3}} \Big(-\frac{1}{2}\de_q \omega_{K}\otimes \theta_{ab,bc,ca,K}(v)+\frac{1}{4}\sum_{(\alpha,\beta,\gamma)\in C(a,b,c)} \de_\alpha \omega_K\otimes (\theta_{\alpha \beta, \beta \gamma, \gamma q, K}(v)+\theta_{\alpha \gamma, \gamma\beta , \beta q, K}(v))\Big)\\
&= \frac{1}{2}\varepsilon_{pqabc} \sum_{J\in \mathcal I_{d-1}/\sim} \,\,\sum_{(\alpha,\beta,\gamma)\in C(a,b,c)}\Big(-\omega_J\otimes (x_p\de_\gamma.\theta_{\alpha \beta,J})(v)+2\omega_{J}\otimes x_p \de_\gamma.(\theta_{\alpha \beta,J}(v)) \Big)\\
&+\sum_{K\in \mathcal I_{d-3}} \Big(-\frac{1}{2}\de_q \omega_{K}\otimes \theta_{ab,bc,ca,K}(v)+\frac{1}{4}\sum_{(\alpha,\beta,\gamma)\in C(a,b,c)} \de_\alpha \omega_K\otimes (\theta_{\alpha \beta, \beta \gamma, \gamma q, K}(v)+\theta_{\alpha \gamma, \gamma\beta , \beta q,K}(v))\Big),
\end{align*}
where we have used Lemma \ref{jkjk}.
\end{proof}
\begin{lemma}\label{xpdpqaomth2}Let $\varphi:M(V)\rightarrow M(W)$ be a morphism of Verma modules as in \eqref{varfi}. Then
\begin{align*}
x_pd_{pq} &\sum_{t=1}^5\sum_{I\in \mathcal I_{d-2}/\sim}\de_t\omega_I\otimes \theta^t_I(v)=\sum_{J\in \mathcal I_{d-1}/\sim}\omega_J\otimes -\theta^p_{J\setminus pq}(v)\\
\nonumber&+ \sum_{t=1}^5 \sum_{K\in \mathcal I_{d-3}/\sim} \de_t\omega_K\otimes \frac{1}{2}\varepsilon_{pqabc} \sum_{\alpha\beta\gamma} \big(-(x_p\de_\gamma.\theta^t_{\alpha\beta,K})(v)+2x_p\de_\gamma.(\theta^t_{\alpha\beta,K}(v))\big)\\
\nonumber & \hspace{5mm}+\sum_t\sum_{L\in \mathcal I_{d-5}/\sim}\de_t\de_q\omega_L\otimes -\frac{1}{2}\theta^t_{ab,bc,ca,L}(v)+\sum_{\alpha\beta\gamma}\de_t\de_\alpha \omega_L \otimes \frac{1}{4}\big(\theta^t_{\alpha\beta,\beta\gamma,\gamma q,L}(v)+\theta^t_{\alpha\gamma,\gamma \beta,\beta q,L}(v)\big)
\end{align*}
\end{lemma}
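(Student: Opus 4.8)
The plan is to follow the proof of Lemma~\ref{xpdpqaomth1} almost verbatim; the one genuinely new feature is that $x_pd_{pq}$ must now also act on the extra factor $\de_t$. Since $x_pd_{pq}\in\g_1$ annihilates $W$, for each $t$ and each $I\in\mathcal I_{d-2}$ one has
\[
x_pd_{pq}\bigl(\de_t\omega_I\otimes\theta^t_I(v)\bigr)=\bigl([x_pd_{pq},\de_t]\,\omega_I+\de_t\,[x_pd_{pq},\omega_I]\bigr)\otimes\theta^t_I(v),
\]
by the derivation property of the commutator on $U_-$. A direct computation of the Lie derivative gives $[x_pd_{pq},\de_t]=-\delta_{tp}\,d_{pq}$, so the first summand survives only for $t=p$ and equals $-d_{pq}\omega_I\otimes\theta^p_I(v)$.

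I would then expand $d_{pq}\omega_I$ by Lemma~\ref{adgl}, with $(p,q)$ in the role of $(i,j)$ and $\{a,b,c\}$ the complementary triple. This produces $-\omega_{pq,I}\otimes\theta^p_I(v)$, which after reindexing $J=(pq,I)$ becomes exactly the first term $\sum_J\omega_J\otimes(-\theta^p_{J\setminus pq}(v))$ of the statement, together with three further terms of the form $-\tfrac12\varepsilon_{pqabc}\,\de_\alpha\omega_{I\setminus\beta\gamma}\otimes\theta^p_I(v)$, one for each $(\alpha,\beta,\gamma)\in C(a,b,c)$. For the second summand $\de_t[x_pd_{pq},\omega_I]$ I would insert the reformulation \eqref{xpdpqomegabis} of Theorem~\ref{xpdpqomega}, prepend the factor $\de_t$, tensor with $\theta^t_I(v)$, sum over $t$ and $I$, and reindex the resulting $\omega$'s: the terms $2\,\de_t\omega_{I\setminus\alpha\beta}\,x_p\de_\gamma$ give, upon pushing the $\g_0$-element $x_p\de_\gamma$ onto $W$, the contributions $2\,x_p\de_\gamma.(\theta^t_{\alpha\beta,K}(v))$; the terms $\de_t\,[x_p\de_\gamma,\omega_{I\setminus\alpha\beta}]$ are rewritten by the \emph{second} identity of Lemma~\ref{jkjk}, which turns $\sum_{t,K}\de_t[x_p\de_\gamma,\omega_K]\otimes\theta^t_{\alpha\beta,K}$ into $-\sum_{t,K}\de_t\omega_K\otimes(x_p\de_\gamma.\theta^t_{\alpha\beta,K})$ modulo the correction $\sum_K\de_\gamma\omega_K\otimes\theta^p_{\alpha\beta,K}$; and the terms $\de_t\de_q\omega_{I\setminus(ab,bc,ca)}$ and $\de_t\de_\alpha\omega_{I\setminus(\cdots)}$ yield directly, after reindexing $L$, the two families $\de_t\de_q\omega_L\otimes(-\tfrac12\theta^t_{ab,bc,ca,L}(v))$ and $\de_t\de_\alpha\omega_L\otimes\tfrac14(\theta^t_{\alpha\beta,\beta\gamma,\gamma q,L}(v)+\theta^t_{\alpha\gamma,\gamma\beta,\beta q,L}(v))$.

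The decisive step is then to check that the correction term $\tfrac12\varepsilon_{pqabc}\sum_{(\alpha,\beta,\gamma)\in C(a,b,c)}\sum_K\de_\gamma\omega_K\otimes\theta^p_{\alpha\beta,K}(v)$ produced by Lemma~\ref{jkjk} cancels precisely the three $\de_\alpha\omega\otimes\theta^p$ terms left over from the Lemma~\ref{adgl} expansion of the first summand: running over the cyclic triples $(\alpha,\beta,\gamma)=(a,b,c),(b,c,a),(c,a,b)$ one sees that in both places the monomials are $\de_a\omega_K\otimes\theta^p_{bc,K}(v)$, $\de_b\omega_K\otimes\theta^p_{ca,K}(v)$, $\de_c\omega_K\otimes\theta^p_{ab,K}(v)$, occurring with opposite signs. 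I expect this bookkeeping to be the main obstacle: one must keep careful track of the sign conventions for ordered versus unordered pairs implicit in the notations $\omega_{I\setminus(\cdot,\cdot)}$ and $\theta_{\cdots}$, and verify that the $\varepsilon_{pqabc}$ factors coming out of Lemma~\ref{adgl} agree with those coming from \eqref{xpdpqomegabis}, so that the cancellation is exact. Once this is done, the only surviving contributions are the $-\omega_J\otimes\theta^p_{J\setminus pq}(v)$ family, the middle $\de_t\omega_K$ family now combining $2\,x_p\de_\gamma.(\theta^t_{\alpha\beta,K}(v))$ with $-(x_p\de_\gamma.\theta^t_{\alpha\beta,K})(v)$ and carrying the prefactor $\tfrac12\varepsilon_{pqabc}$, and the two $\de_t\de_\bullet\omega_L$ families, which is exactly the asserted formula.
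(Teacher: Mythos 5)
Your proposal is correct and follows essentially the same route as the paper: the commutator $[x_pd_{pq},\partial_t]=-\delta_{tp}d_{pq}$, the expansion of $d_{pq}\omega_I$ via Lemma \ref{adgl}, the substitution of \eqref{xpdpqomegabis} into $\partial_t[x_pd_{pq},\omega_I]$, and the second identity of Lemma \ref{jkjk} are exactly the ingredients used there. The only (immaterial) difference is bookkeeping: the paper feeds the leftover $-\frac{1}{2}\varepsilon_{pqabc}\sum_{\alpha\beta\gamma}\partial_\gamma\omega_K\otimes\theta^p_{\alpha\beta,K}$ terms directly into the left-hand side of that identity, whereas you treat them as a correction that cancels afterwards — the same computation phrased in two ways.
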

\begin{proof}
	By Lemma \ref{adgl}, Lemma \ref{jkjk} and \eqref{xpdpqomegabis} we have
	\begin{align*}
	x_pd_{pq} &\sum_{t=1}^5\sum_{I\in \mathcal I_{d-2}/\sim}\de_t\omega_I\otimes \theta^t_I(v)=\\
	\nonumber&=\sum_{I\in \mathcal I_{d-2}/\sim}(-\omega_{pq,I}-\frac{1}{2}\varepsilon_{pqabc} \sum_{\alpha\beta\gamma} \de_\gamma \omega_{I\setminus \alpha \beta})\otimes \theta^p_I(v)\\
	\nonumber&\hspace{5mm}+\sum_{t=1}^5 \sum_{I\in \mathcal I_{d-2}/\sim}  \frac{1}{2}\varepsilon_{pqabc} \de_t\sum_{\alpha\beta\gamma}\big([x_p \de_\gamma,\omega_{I\setminus \alpha \beta}]\otimes \theta_I^t(v)+2 \omega_{I\setminus \alpha \beta} \otimes x_p\de_\gamma.(\theta^t_{I}(v))\big)\\
	\nonumber & \hspace{5mm}+\sum_t\sum_{I\in \mathcal I_{d-2}/\sim}\Big(-\frac{1}{2}\de_t\de_q\omega_{I\setminus (ab,bc,ca)}\otimes \theta^t_{I}(v)+\frac{1}{4}\sum_{\alpha\beta\gamma}\de_t\de_\alpha \big(\omega_{I\setminus (\alpha \beta, \beta \gamma, \gamma q)}+\omega_{I\setminus (\alpha \gamma, \gamma \beta, \beta q)}) \otimes \theta_I^t(v)\Big)\\
	\nonumber&=\sum_{J\in \mathcal I_{d-1}/\sim}\omega_J\otimes -\theta^p_{J\setminus pq}(v) \\
	&\hspace{5mm} +\frac{1}{2}\varepsilon_{pqabc}\sum_{\alpha \beta \gamma}\sum_{K\in \mathcal I_{d-3}/\sim}\Big(-\de_\gamma \omega_K\otimes \theta^p_{\alpha \beta, K}+\sum_{t=1}^5 \de_t[x_p\de_\gamma, \omega_K]\otimes \theta^t_{\alpha \beta,K}\\
	&\hspace{10mm}+ \sum_{t=1}^5 2 \de_t \omega_K \otimes x_p\de_\gamma.(\theta^t_{\alpha \beta, K}(v))\Big)\\
	& \hspace{5mm}+\sum_t\sum_{L\in \mathcal I_{d-5}/\sim}\de_t\de_q\omega_L\otimes -\frac{1}{2}\theta^t_{ab,bc,ca,L}(v)+\sum_{\alpha\beta\gamma}\de_t\de_\alpha \omega_L \otimes \frac{1}{4}\big(\theta^t_{\alpha\beta,\beta\gamma,\gamma q,L}(v)+\theta^t_{\alpha\gamma,\gamma \beta,\beta q,L}(v)\big)\\
	&= \sum_{J\in \mathcal I_{d-1}/\sim}\omega_J\otimes -\theta^p_{J\setminus pq}(v)\\
	&\hspace{5mm} +\frac{1}{2}\varepsilon_{pqabc}\sum_{t=1}^5\sum_{K\in \mathcal I_{d-3}/\sim} \de_t \omega_K\otimes \sum_{\alpha \beta \gamma}-(x_p\de_\gamma.\theta^t_{\alpha \beta,K})(v)+2x_p \de_\gamma.(\theta^t_{\alpha \beta, K}(v))\\
	&\hspace{5mm}+\sum_t\sum_{L\in \mathcal I_{d-5}/\sim}\de_t\de_q\omega_L\otimes -\frac{1}{2}\theta^t_{ab,bc,ca,L}(v)+\sum_{\alpha\beta\gamma}\de_t\de_\alpha \omega_L \otimes \frac{1}{4}\big(\theta^t_{\alpha\beta,\beta\gamma,\gamma q,L}(v)+\theta^t_{\alpha\gamma,\gamma \beta,\beta q,L}(v)\big).
	\end{align*}
\end{proof}
The following result is fundamental in our study.
\begin{corollary}\label{fund1}
	If $\varphi:M(V)\rightarrow M(W)$ is a morphism of Verma modules as in \eqref{varfi}, then for all $p,q,a,b,c$ such that $\{p,q,a,b,c\}=[5]$ we have
	\begin{equation}\label{sing1}-\theta^p_{J\setminus pq}(v)+\frac{1}{2}\varepsilon_{pqabc} \sum_{\alpha\beta\gamma} \big(-(x_p\de_\gamma.\theta_{\alpha\beta,J})(v)+2x_p\de_\gamma.(\theta_{\alpha\beta,J}(v))\big)=0
	\end{equation}
for all $J\in \mathcal I_{d-1}$ and
\begin{equation} \label{sing2}
\frac{1}{4}\big(\theta_{ab,bc,cq,K}(v)+\theta_{ac,cb,bq,K}(v)\big)-\theta^{a,p}_{K\setminus pq}(v)+\frac{1}{2}\varepsilon_{pqabc} \sum_{\alpha\beta\gamma} \big(-(x_p\de_\gamma.\theta^a_{\alpha\beta,K})(v)+2x_p\de_\gamma.(\theta^a_{\alpha\beta,K}(v))\big)=0,
\end{equation}
\begin{equation} \label{sing3}
-2\theta^{p,p}_{K\setminus pq}(v)+\frac{1}{2}\varepsilon_{pqabc}\sum_{\alpha\beta\gamma} \big(-(x_p\de_\gamma.\theta^p_{\alpha\beta,K})(v)+2x_p\de_\gamma.(\theta^p_{\alpha\beta,K}(v))\big)=0,
\end{equation}
\begin{equation} \label{sing4}
-2\theta^{p,q}_{K\setminus pq}(v)-\theta_{ab,bc,ca,K}(v)+\varepsilon_{pqabc}\big(-(x_p\de_\gamma.\theta^q_{\alpha\beta,K})(v)+2x_p\de_\gamma.(\theta^q_{\alpha\beta,K}(v))\big)=0,
\end{equation}
for all $K\in \mathcal I_{d-3}$.

\end{corollary}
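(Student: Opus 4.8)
The plan is to extract the identities \eqref{sing1}--\eqref{sing4} from condition (b) of Proposition \ref{morphisms}. Since $\varphi$ is a morphism of $\g$-modules, condition (b) yields $X\varphi(v)=0$ for every $X\in\g_1$ and $v\in V$; I apply this with $X=x_pd_{pq}$, which lies in $\g_1$ for every ordered pair $p\ne q$ (it is a closed $2$-form with linear coefficient). Write $\varphi(v)=\sum_{l\ge 0}\varphi_l(v)$, where $\varphi_l(v)=\sum_{I\in\mathcal I_{d-2l}/\sim}\sum_{1\le r_1\le\cdots\le r_l\le 5}\partial_{r_1}\cdots\partial_{r_l}\omega_I\otimes\theta^{r_1,\dots,r_l}_I(v)$ is the $l$-th summand of \eqref{varfi}. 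The idea is to compute each $x_pd_{pq}\varphi_l(v)$ and then equate to zero the coefficients of $\omega_J$ for $J\in\mathcal I_{d-1}/\sim$ and of $\partial_t\omega_K$ for $K\in\mathcal I_{d-3}/\sim$; this is legitimate because the elements $\partial^M\omega_J$, with $M\in\N^5$ and $J$ ranging over the $\sim$-classes in $\bigcup_k\mathcal I_k$, form a basis of $U_-$ (Proposition \ref{isomega} together with the PBW theorem for $U(\g_-)$).

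The governing combinatorial fact is a degree count. From $[x_pd_{pq},\partial_r]=-\delta_{pr}d_{pq}$ one gets $x_pd_{pq}\,\partial^M=-m_p(M)\,\partial^{M-e_p}d_{pq}+\partial^M x_pd_{pq}$, where $m_p(M)$ is the multiplicity of $p$ in $M$; expanding $x_pd_{pq}\omega_I$ by Theorem \ref{xpdpqomega} and $d_{pq}\omega_I$ by Lemma \ref{adgl}, one sees that $x_pd_{pq}(\partial^M\omega_I)$ is a sum of terms of the three shapes $\partial^{|M|-1}\omega$, $\partial^{|M|}\omega$, $\partial^{|M|+1}\omega$, all of total degree $d-1$. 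Hence $\varphi_l$ contributes only to the coefficients of $\partial^{l-1}\omega$, $\partial^l\omega$, $\partial^{l+1}\omega$; in particular the coefficient of $\omega_J$ ($J\in\mathcal I_{d-1}$) comes only from $\varphi_0$ and $\varphi_1$, and the coefficient of $\partial_t\omega_K$ ($K\in\mathcal I_{d-3}$) only from $\varphi_0,\varphi_1,\varphi_2$. Reading off the former from the $\omega_J$-lines of Lemmas \ref{xpdpqaomth1} and \ref{xpdpqaomth2} gives \eqref{sing1}.

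For \eqref{sing2}--\eqref{sing4} I collect the coefficient of $\partial_t\omega_K$. Its $l=0$ and $l=1$ parts are, respectively, the $\partial_q\omega_K$- and $\partial_\alpha\omega_K$-lines of Lemma \ref{xpdpqaomth1} and the $\partial_t\omega_K$-line of Lemma \ref{xpdpqaomth2}. Its $l=2$ part comes, by the degree count, only from the commutator piece $-m_p(e_s+e_t)\,\partial^{(e_s+e_t)-e_p}d_{pq}\omega_I$ of $x_pd_{pq}(\partial_s\partial_t\omega_I)$, and there only from the derivative-free summand $\omega_{pq,I}$ of $d_{pq}\omega_I$ in Lemma \ref{adgl} (the remaining summands of $d_{pq}\omega_I$, and the term $\partial_s\partial_t\,x_pd_{pq}\omega_I$, all carry at least two derivatives and so enter other, unlisted, fundamental equations). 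Distinguishing the cases $t\in\{a,b,c\}$, $t=p$, $t=q$ then yields the three identities: for $t=p$ the term $\partial_p\partial_p\omega_I$ occurs with $p$ of multiplicity $2$, giving $-2\theta^{p,p}_{K\setminus pq}$ as in \eqref{sing3}; for $t\in\{a,b,c\}$ one gets $-\theta^{a,p}_{K\setminus pq}$ as in \eqref{sing2}; and for $t=q$ one gets $-\theta^{p,q}_{K\setminus pq}$, which becomes $-2\theta^{p,q}_{K\setminus pq}$ after multiplying the equation by $2$ to clear the $\frac{1}{2}\theta_{ab,bc,ca,K}$ contributed by $\varphi_0$, yielding \eqref{sing4}. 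One finally checks that the signs $\varepsilon_{pqabc}$ and the cyclic sums $\sum_{\alpha\beta\gamma}$ produced by Lemmas \ref{xpdpqaomth1} and \ref{xpdpqaomth2} match the statement and that running the argument over every ordered quintuple $(p,q,a,b,c)$ of $[5]$ delivers exactly \eqref{sing1}--\eqref{sing4}.

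I expect the delicate point to be the $l=2$ contribution: one must carefully separate, inside $x_pd_{pq}\varphi_2(v)$, the one-derivative terms (which alone feed \eqref{sing2}--\eqref{sing4}) from the two- and three-derivative ones, and track the multiplicities and signs that appear when a derivative index equals $p$ or $q$ — this is precisely the origin of the asymmetric coefficients $-1$, $-2$, $-2$ in \eqref{sing2}, \eqref{sing3}, \eqref{sing4}. Everything else is a mechanical, if lengthy, application of the $\omega$-calculus of the two preceding sections, Lemma \ref{jkjk} being what converts the $[x_p\partial_\gamma,\omega]$-terms into $x_p\partial_\gamma$ acting on the $\theta$'s.
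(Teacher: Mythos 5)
Your proposal is correct and follows essentially the same route as the paper: apply $x_pd_{pq}$ to $\varphi(v)$, expand in the basis $\de_{r_1}\cdots\de_{r_l}\omega_I$ of $U_-$ (Proposition \ref{isomega} plus PBW), and read off the vanishing coefficients of $\omega_J$ and $\de_t\omega_K$ from Lemmas \ref{xpdpqaomth1} and \ref{xpdpqaomth2}, with the extra term $-\theta^{\,\cdot\,,p}_{K\setminus pq}$ coming from the $l=2$ summand exactly as you identify. Your degree count isolating the $l=2$ contribution to the one-derivative terms, and the case analysis $t\in\{a,b,c\}$, $t=p$, $t=q$ producing the coefficients $-1$, $-2$, $-2$ (the last after clearing the factor $\tfrac12$), matches the paper's treatment of $v_{(a),K}$, $v_{(p),K}$, $v_{(q),K}$.
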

\begin{proof}
	Recall that $\varphi(v)=\sum_{l\leq d/2} \sum_{I\in \mathcal I_{d-2l}/\sim} \sum_{1\leq r_1\leq \cdots \leq r_l\leq 5} \de_{r_1}\cdots \de_{r_l} \omega_I \otimes \theta^{r_1,\ldots,r_l}_I(v)$.
	By Proposition \ref{morphisms} we have $x_pd_{pq} \varphi(v)=0$ and if we expand
	\[
	x_p d_{pq}\varphi(v)=\sum_{l\leq (d-1)/2} \sum_{1\leq r_1\leq\cdots\leq r_l\leq 5} \sum_{I\in \mathcal I_{d-1-2l}/\sim}\de_{r_1}\cdots \de_{r_l} \omega_I\otimes v_{(r_1,\ldots,r_l),I} 
	\]
	we have that all vectors $v_{(r_1,\ldots,r_l),I}$ must be 0.
	By Lemmas \ref{xpdpqaomth1} and \ref{xpdpqaomth2} we have
	for all $J\in \mathcal I_{d-1}$
	\[
	v_{(\,),J}=-\theta^p_{J\setminus pq}(v)+\frac{1}{2}\varepsilon_{pqabc} \sum_{\alpha\beta\gamma} \big(-(x_p\de_\gamma.\theta_{\alpha\beta,J})(v)+2x_p\de_\gamma.(\theta_{\alpha\beta,J}(v))\big)
	\]
	hence \eqref{sing1} follows.
	Moreover, for all $K\in \mathcal I_{d-3}$, by Lemmas \ref{xpdpqaomth1} and \ref{xpdpqaomth2} we have
	\[
	v_{(a),K}=\frac{1}{4}\big(\theta_{ab,bc,cq,K}(v)+\theta_{ac,cb,bq,K}(v)\big)-\theta^{a,p}_{K\setminus pq}(v)+\frac{1}{2}\varepsilon_{pqabc} \sum_{\alpha\beta\gamma} \big(-(x_p\de_\gamma.\theta^a_{\alpha\beta,K})(v)+2x_p\de_\gamma.(\theta^a_{\alpha\beta,K}(v))\big).
	\]
	Note that in this case we have an additional term $-\theta^{a,p}_{K\setminus pq}(v)$ which is produced by
	\[x_p d_{pq}\,\de_a \de_p \omega_{K\setminus pq} \otimes \theta_{K\setminus pq}(v).\] Equation \eqref{sing2} follows. Equations \eqref{sing3} and \eqref{sing4} are obtained similarly by considering $v_{(p),K}$ and $v_{(q),K}$.
\end{proof}

\medskip

Corollary \ref{fund1} can be slightly simplified if $v$ is a highest weight vector in $V$.
For  all $n,m$ we let
\[
\chi_{n>m}=\begin{cases}
1& \textrm{if }n>m;\\
0& \textrm{otherwise.}
\end{cases}\]
\begin{corollary}\label{singo}
	Let $\varphi:M(V)\rightarrow M(W)$ be a morphism of Verma modules as in \eqref{varfi} and let $s\in V$ be a highest weight vector. Then for all $\{p,q,a,b,c\}$ we have:
	\begin{equation}\label{sing1b}-2\theta^p_{J\setminus pq}(s)+\varepsilon_{pqabc} \sum_{\alpha\beta\gamma} \big((-1)^{\chi_{p>\gamma}}(x_p\de_\gamma.\theta_{\alpha\beta,J})(s)+2\chi_{p>\gamma}x_p\de_\gamma.(\theta_{\alpha\beta,J}(s))\big)=0
	\end{equation}
	for all $J\in \mathcal I_{d-1}$ and
	\begin{equation} \label{sing2b}
	-4\theta^{a,p}_{K\setminus pq}(s)+\big(\theta_{ab,bc,cq,K}(s)+\theta_{ac,cb,bq,K}(s)\big)+2\varepsilon_{pqabc} \sum_{\alpha\beta\gamma} \big((-1)^{\chi_{p>\gamma}}(x_p\de_\gamma.\theta^a_{\alpha\beta,K})(s)+2\chi_{p>\gamma}x_p\de_\gamma.(\theta^a_{\alpha\beta,K}(s))\big)=0,
	\end{equation}
	\begin{equation} \label{sing3b}
	-4\theta^{p,p}_{K\setminus pq}(s)+\varepsilon_{pqabc}\sum_{\alpha\beta\gamma} \big((-1)^{\chi_{p>\gamma}}(x_p\de_\gamma.\theta^p_{\alpha\beta,K})(s)+2\chi_{p>\gamma}x_p\de_\gamma.(\theta^p_{\alpha\beta,K}(s))\big)=0,
	\end{equation}
	\begin{equation} \label{sing4b}
	-2\theta^{p,q}_{K\setminus pq}-\theta_{ab,bc,ca,K}(s)+\varepsilon_{pqabc}\big((-1)^{\chi_{p>\gamma}}(x_p\de_\gamma.\theta^q_{\alpha\beta,K})(s)+2\chi_{p>\gamma}x_p\de_\gamma.(\theta^q_{\alpha\beta,K}(s))\big)=0
	\end{equation}
	for all $K\in \mathcal I_{d-3}$.
\end{corollary}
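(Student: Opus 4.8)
The plan is to obtain Corollary~\ref{singo} as a direct specialization of Corollary~\ref{fund1}. The only structural input needed is that, since the assignment \eqref{isotheta} is a morphism of $\g_0$-modules, the symbol $x_p\de_\gamma.\theta_{\alpha\beta,J}$ (and likewise $x_p\de_\gamma.\theta^a_{\alpha\beta,K}$, etc.) denotes the natural action of $x_p\de_\gamma\in\g_0$ on $\Hom(V,W)$, so that for every $\xi\in V$
\[
(x_p\de_\gamma.\theta_{\alpha\beta,J})(\xi)=x_p\de_\gamma.(\theta_{\alpha\beta,J}(\xi))-\theta_{\alpha\beta,J}(x_p\de_\gamma.\xi),
\]
and similarly for the terms carrying a superscript.

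First I would fix $\{p,q,a,b,c\}=[5]$ and set $v=s$ in each of \eqref{sing1}--\eqref{sing4}; this is legitimate because those identities hold for all $v\in V$. Then, for each summand indexed by a triple $(\alpha,\beta,\gamma)\in C(a,b,c)$ with $p<\gamma$, the element $x_p\de_\gamma$ is a positive root vector for the fixed Borel subalgebra of $\slc$ and hence annihilates the highest weight vector $s$. By the displayed identity this yields $(x_p\de_\gamma.\theta_{\alpha\beta,J})(s)=x_p\de_\gamma.(\theta_{\alpha\beta,J}(s))$, so that the combination $-(x_p\de_\gamma.\theta_{\alpha\beta,J})(s)+2x_p\de_\gamma.(\theta_{\alpha\beta,J}(s))$ collapses to $(x_p\de_\gamma.\theta_{\alpha\beta,J})(s)$ — exactly the value recorded by the $\chi_{p>\gamma}$-controlled expression in \eqref{sing1b}--\eqref{sing4b} in the case $\chi_{p>\gamma}=0$. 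For $p>\gamma$ no cancellation is available, and the term is retained verbatim, which is the branch $\chi_{p>\gamma}=1$. The remaining terms $\theta^p_{J\setminus pq}(s)$, $\theta^{a,p}_{K\setminus pq}(s)$, $\theta^{p,p}_{K\setminus pq}(s)$, $\theta^{p,q}_{K\setminus pq}(s)$ and the pure form terms $\theta_{ab,bc,cq,K}(s)$, $\theta_{ac,cb,bq,K}(s)$, $\theta_{ab,bc,ca,K}(s)$ carry no $\g_0$-action and are simply evaluated at $s$.

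Finally I would clear denominators: multiplying \eqref{sing1} and \eqref{sing3} by $2$, \eqref{sing2} by $4$, and leaving \eqref{sing4} as is, turns the prefactors $\tfrac12$, $\tfrac14$, $\tfrac12\varepsilon_{pqabc}$ and $\varepsilon_{pqabc}$ into the integer coefficients appearing in \eqref{sing1b}--\eqref{sing4b}, and one verifies that each rescaled equation of Corollary~\ref{fund1} matches its counterpart term by term. I do not expect a genuine obstacle here: the argument is pure bookkeeping, and the only point requiring attention is to keep the scalar factors and the signs encoded by $(-1)^{\chi_{p>\gamma}}$ consistent across the four equations. Accordingly, in the write-up I would record the substitution $v=s$, the vanishing $x_p\de_\gamma.s=0$ for $p<\gamma$, and the rescaling, and then leave the routine term-by-term check to the reader.
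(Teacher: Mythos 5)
Your proposal is correct and coincides with the paper's own argument: substitute $v=s$ into \eqref{sing1}--\eqref{sing4}, use $x_p\de_\gamma.s=0$ for $p<\gamma$ together with the Leibniz rule on $\Hom(V,W)$ to collapse $-(x_p\de_\gamma.\theta)(s)+2x_p\de_\gamma.(\theta(s))$ to $(x_p\de_\gamma.\theta)(s)$, and rescale by $2$, $4$, $2$, $1$ respectively. The scalar bookkeeping you describe checks out against \eqref{sing1b}--\eqref{sing4b}.
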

\begin{proof}
We prove Equation \eqref{sing1b}, the others are similar. By \eqref{sing1}, if $p>\gamma$ we clearly have \[(-1)^{\chi_{p>\gamma}}(x_p\de_\gamma.\theta_{\alpha\beta,J})(s)+2\chi_{p>\gamma}x_p\de_\gamma.(\theta_{\alpha\beta,J}(s))=-(x_p\de_\gamma.\theta_{\alpha\beta,J})(s)+2x_p\de_\gamma.(\theta_{\alpha\beta,J}(s)).
\]
If $p<\gamma$ we have
\[
-(x_p\de_\gamma.\theta_{\alpha\beta,J})(s)+2x_p\de_\gamma.(\theta_{\alpha\beta,J}(s))=(x_p \de_\gamma.\theta_{\alpha\beta,J})(s)
\]
since $s$ is a highest weight vector, and the result follows.
\end{proof}
Now observe that all (non trivial) summands in any equation appearing in Corollary \ref{singo} have the same weight, and we call it the weight of the equation. Next result shows that if $\varphi:M(\lambda)\rightarrow M(\mu)$ is a morphism between finite Verma modules, then every equation of weight $\mu$ in Corollary \ref{singo} can be further simplified. 
	\begin{corollary}\label{singol}Let $\varphi:M(\lambda)\rightarrow M(\mu)$ be a morphism of finite Verma modules.
		If $J\in \mathcal I_{d-1}$ and $a,b,c,p,q$ are such that Equation \eqref{sing1b} has weight $\mu$, then
	\begin{equation}\label{sing1c}-2\theta^p_{J\setminus pq}(s)+\varepsilon_{pqabc} \sum_{\alpha\beta\gamma} (-1)^{\chi_{p>\gamma}}(x_p\de_\gamma.\theta_{\alpha\beta,J})(s)=0.
	\end{equation}
	If $K$ and $a,b,c,p,q$ are such that Equation \eqref{sing2b} has weight $\mu$, then
	\begin{equation} \label{sing2c}
	-4\theta^{a,p}_{K\setminus pq}(s)+\theta_{ab,bc,cq,K}(s)+\theta_{ac,cb,bq,K}(s)+2\varepsilon_{pqabc} \sum_{\alpha\beta\gamma} (-1)^{\chi_{p>\gamma}}(x_p\de_\gamma.\theta^a_{\alpha\beta,K})(s)=0.
	\end{equation}
	
	If $K$ and $a,b,c,p,q$ are such that Equation \eqref{sing3b} has weight $\mu$ then
	\begin{equation} \label{sing3c}
	-4\theta^{p,p}_{K\setminus pq}(s)+\varepsilon_{pqabc}\sum_{\alpha\beta\gamma} (-1)^{\chi_{p>\gamma}}(x_p\de_\gamma.\theta^p_{\alpha\beta,K})(s)=0.
	\end{equation}
	If $K$ and $a,b,c,p,q$ are such that Equation \eqref{sing4b} has weight $\mu$ then
	\begin{equation} \label{sing4c}
	-2\theta^{p,q}_{K\setminus pq}(s)-\theta_{ab,bc,ca,K}(s)+\varepsilon_{pqabc}(-1)^{\chi_{p>\gamma}}(x_p\de_\gamma.\theta^q_{\alpha\beta,K})(s)=0.
	\end{equation}
\end{corollary}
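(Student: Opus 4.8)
The plan is to prove Corollary~\ref{singol} by a short weight argument, using in an essential way that $\varphi$ is a morphism between \emph{finite} Verma modules, so that the target $W=F(\mu)$ is a finite-dimensional irreducible $\g_0$-module of highest weight $\mu$ and hence has no weight strictly larger than $\mu$. Comparing each of \eqref{sing1b}--\eqref{sing4b} with the corresponding assertion \eqref{sing1c}--\eqref{sing4c}, the only difference is that the summands of the form $2\chi_{p>\gamma}\,x_p\de_\gamma.(\theta_{\alpha\beta,J}(s))$ (and likewise with $\theta^a_{\alpha\beta,K}$, $\theta^p_{\alpha\beta,K}$, $\theta^q_{\alpha\beta,K}$ in place of $\theta_{\alpha\beta,J}$) have been deleted. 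Thus it suffices to show that every such summand vanishes whenever the equation in which it appears has weight $\mu$.

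First I would record the facts already at hand. By Proposition~\ref{isomega} and \eqref{isotheta}, each $\theta_{\alpha\beta,J}$ (resp.\ $\theta^a_{\alpha\beta,K}$, etc.) is a weight vector of $\Hom(V,W)$, so, since $s$ is a highest weight vector of $V$, the element $\theta_{\alpha\beta,J}(s)\in W$ is a weight vector; and, as noted right before the statement, all nonzero summands of any one of the equations in Corollary~\ref{singo} share a common weight, which here we are assuming is $\mu$. The key step is then the following observation. Suppose $p>\gamma$, so $\chi_{p>\gamma}=1$, and suppose that $x_p\de_\gamma.(\theta_{\alpha\beta,J}(s))\neq 0$. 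For $p>\gamma$ the element $x_p\de_\gamma$ is a negative root vector of $\g_0$, hence strictly lowers $\g_0$-weights; thus $\theta_{\alpha\beta,J}(s)$ would be a nonzero weight vector of $W=F(\mu)$ whose weight is obtained from the weight $\mu$ of $x_p\de_\gamma.(\theta_{\alpha\beta,J}(s))$ by \emph{adding} a positive root, and is therefore strictly larger than $\mu$. This contradicts the fact that $\mu$ is the highest weight of $F(\mu)$, so $x_p\de_\gamma.(\theta_{\alpha\beta,J}(s))=0$, and the term $2\chi_{p>\gamma}\,x_p\de_\gamma.(\theta_{\alpha\beta,J}(s))$ drops out of \eqref{sing1b}; this gives \eqref{sing1c}.

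Running the identical argument on \eqref{sing2b}, \eqref{sing3b}, \eqref{sing4b} --- now with $\theta^a_{\alpha\beta,K}(s)$, $\theta^p_{\alpha\beta,K}(s)$, $\theta^q_{\alpha\beta,K}(s)$ playing the role of $\theta_{\alpha\beta,J}(s)$ --- yields \eqref{sing2c}, \eqref{sing3c}, \eqref{sing4c} and finishes the proof. Note that the surviving terms $(x_p\de_\gamma.\theta_{\alpha\beta,J})(s)$ are genuinely untouched: expanding the $\g_0$-action on $\Hom(V,W)$ produces the extra contribution $-\theta_{\alpha\beta,J}(x_p\de_\gamma.s)$, and $x_p\de_\gamma.s$ is a perfectly legitimate (lower) weight vector of $V=F(\lambda)$, so no weight obstruction arises there. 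I do not expect a serious obstacle; the only point needing a little care is the bookkeeping of weights required to distinguish the two kinds of terms, $x_p\de_\gamma.(\theta(s))$ versus $(x_p\de_\gamma.\theta)(s)$, and to confirm that for $\gamma<p$ passing from $\theta_{\alpha\beta,J}(s)$ to $x_p\de_\gamma.(\theta_{\alpha\beta,J}(s))$ indeed subtracts a positive root --- after that the argument is just the elementary fact that a finite-dimensional irreducible $\g_0$-module has no weights above its highest weight.
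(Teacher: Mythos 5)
Your argument is correct and is exactly the (implicit) reasoning behind the corollary in the paper: since every nonzero summand of an equation of weight $\mu$ lies in the $\mu$-weight space of $W=F(\mu)$, a nonzero term $x_p\de_\gamma.(\theta_{\alpha\beta,J}(s))$ with $p>\gamma$ would force $\theta_{\alpha\beta,J}(s)$ to have weight $\mu$ plus a positive root, which is impossible in the finite-dimensional irreducible module $F(\mu)$, so all terms carrying the factor $\chi_{p>\gamma}$ in \eqref{sing1b}--\eqref{sing4b} vanish. The paper states the corollary without a written proof, and your weight argument supplies precisely the intended justification.
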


\medskip

Note that all Equations appearing in Corollary \ref{singol} do not depend on the weights $\lambda$ and $\mu$: this observation will be the keypoint of our final classification.
\section{Singular vectors of degree between 5 and 10}\label{min10}
If $w\in M(\mu)$ is a singular vector of degree $d$ at most 10 we know that it also has height $d$ by the results in Section 4.
In particular we can express it as
\begin{equation}\label{lare}
w=\sum_{l\leq d/2} \sum_{I\in \mathcal I_{d-2l}/\sim} \sum_{1\leq r_1\leq \cdots \leq r_l\leq 5} \de_{r_1}\cdots \de_{r_l} \omega_I \otimes \theta^{r_1,\ldots,r_l}_I(s),
\end{equation}
where $s$ is a highest weight vector in $F(\lambda)$.
\begin{lemma}\label{I0}
If $w\in M(\mu)$ is a singular vector of degree and height $d$ as in \eqref{lare}, then there exists $I_0\in \mathcal I_d$ such that $\theta_{I_0}(s)\neq 0$ is a highest weight vector in $F(\mu)$.
\end{lemma}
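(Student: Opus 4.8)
The plan is to analyse the highest term $w_d^\prec$ of $w$ in the sense of Section~5, taken with respect to the decomposition $M(\mu)=\bigoplus_h L_h^\prec(V)$ with $V=F(\mu)$, and then to use an elementary fact about highest weight vectors in tensor products.

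First I would identify $w_d^\prec$ explicitly. Since $w$ has degree $d$ equal to its height, the $l=0$ summand $\sum_{I\in\mathcal I_d/\sim}\omega_I\otimes\theta_I(s)$ of \eqref{lare} is the only one of height $d$, as every summand with $l\geq1$ has height at most $d-2$; hence $w\equiv\sum_I\omega_I\otimes\theta_I(s)\pmod{M_{d-1}(V)}$, and since $\omega_I$ equals $d_I$ up to a sign modulo terms of lower height, we obtain $w_d^\prec=\sum_I\pm d_I\otimes\theta_I(s)$, the sum being over the classes in $\mathcal I_d/\sim$ with distinct pairs. This element has degree $d$ and involves no $\partial$'s, so it lies in $F_d(V)$, and the $d_I$'s form a weight basis of the first tensor factor of $F_d(V)\cong\inlinewedge^d\g_{-1}\otimes F(\mu)$. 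Being a singular vector of degree equal to height, $w$ satisfies \eqref{0act}, i.e.\ $x_i\partial_{i+1}$ annihilates $w_d^\prec$ modulo $L_{d-2}^\prec(V)$, and so annihilates the class of $w_d^\prec$ in $N_d(V)$; since moreover $F_d(V)$ is killed by $(S_5)_{>0}$ (Proposition~\ref{NkVisVerma}), this class --- which is $w_d^\prec$ itself, as it sits at the bottom of $N_d(V)$ --- is a highest weight vector for $\slc$. (This is exactly the content of Corollary~\ref{d=h,h+4} in the case $d=h$.)

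Next I would establish the general fact: let $A,B$ be finite-dimensional $\slc$-modules, let $x=\sum_i a_i\otimes b_i\in A\otimes B$ be a weight vector with $\{a_i\}$ a weight basis of $A$ and $b_i\in B$, and suppose $x$ is annihilated by $x_i\partial_{i+1}$ for $i=1,\ldots,4$; then, choosing $i_0$ with $\lambda(b_{i_0})$ maximal among the indices $i$ with $b_i\neq0$, the vector $b_{i_0}$ is a nonzero highest weight vector of $B$. Indeed, matching weights shows each $b_i$ is a weight vector of weight $\lambda(x)-\lambda(a_i)$; writing $E_ka_i=\sum_m(c_k)_{mi}a_m$ and comparing, in $0=E_kx$, the coefficients of each $a_m$, one gets $E_kb_m=-\sum_i(c_k)_{mi}b_i$, and $(c_k)_{mi}\neq0$ forces $\lambda(b_i)=\lambda(b_m)+\alpha_k$; for $m=i_0$, any $b_i$ surviving on the right would have weight strictly larger than $\lambda(b_{i_0})$, contrary to the choice of $i_0$, so $E_kb_{i_0}=0$ for all $k$, whence $b_{i_0}$ is a highest weight vector of $B$.

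Applying this fact to $x=w_d^\prec$, with $A=\inlinewedge^d\g_{-1}$ and $B=F(\mu)$, yields a class in $\mathcal I_d/\sim$, hence a representative $I_0\in\mathcal I_d$, for which $\pm\theta_{I_0}(s)$ --- and therefore $\theta_{I_0}(s)$ --- is a nonzero highest weight vector of $F(\mu)$; by irreducibility of $F(\mu)$ it then spans the one-dimensional space $F(\mu)_\mu$, and one even reads off that $\lambda(d_{I_0})=\lambda(w)-\mu$. I do not foresee a real obstacle: the only delicate point is the bookkeeping identifying $w_d^\prec$ with the claimed element of $F_d(V)$ and confirming it is a highest weight vector there, after which the tensor-product argument is straightforward weight-chasing.
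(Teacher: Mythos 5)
Your argument is correct and is essentially the paper's own proof: the paper likewise picks $I_0$ with $\lambda(\theta_{I_0}(s))$ maximal among the nonzero $\theta_I(s)$ and observes that in $E_k w=0$ the term $\omega_{I_0}\otimes E_k.\theta_{I_0}(s)$ can only be cancelled by contributions coming from $\theta_I(s)$ of strictly larger weight, which vanish by maximality. Your detour through the highest term $w_d^\prec$ in $F_d(V)$ and the general tensor-product lemma is just a more formalized packaging of that same weight-chasing step.
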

\begin{proof}
Since $w$ has height $d$, we know that there exists $I\in \mathcal I_d$ such that $\theta_I(s)\neq 0$. Among all such $I$'s choose $I_0$ such that $\theta_{I_0}(s)$ has maximal weight. Applying $x_i\de_{i+1}$ to $w$ we obtain a term $\omega_{I_0}\otimes x_i \de_{i+1}.\theta_{I_0}(s)$, which cannot be simplified by any other term. Therefore $x_i \de_{i+1}.\theta_{I_0}(s)=0$.
\end{proof}
If we fix any possible $I_0$ as in Lemma \ref{I0}, we can consider all equations in Corollary \ref{singol} with weight $\mu$ and we observe that these equations do not depend on $\mu$.
For example, if we choose $I_0=(12,24,34,45)$, we can consider Equation \eqref{sing1c} with  $(a,b,c,p,q)=(1,2,4,5,3)$ and $J=(25,34,45)$, getting
\[
2\theta_{14,24,25,34}(s)+2\theta_{12,24,34,45}(s)=0,
\]
and also with $(a,b,c,p,q)=(2,4,5,3,1)$ and $J=(14,23,34)$, getting
\[
2\theta_{14,24,25,34}(s)=0,\]
and deducing $\theta_{I_0}(s)=0$.
\begin{theorem}\label{5o7}
	Let 
	\[
	w=\sum_{l\leq d/2} \sum_{I\in \mathcal I_{d-2l}/\sim} \sum_{1\leq r_1\leq \cdots \leq r_l\leq 5} \de_{r_1}\cdots \de_{r_l} \omega_I \otimes \theta^{r_1,\ldots,r_l}_I(s)
	\]
	 be a singular vector of degree $d$, with $5\leq d\leq 10$ and let ${I_0}\in \mathcal I_d$ be such that $\theta_{I_0}(s)\neq 0$ is a highest weight vector. Then $d=7$ and $I_0\sim (12,13,14,15,25,35,45)$ or $d=5$ and $I_0\sim (12,13,14,15,45)$ or $I_0\sim(12,15,25,35,45)$.
\end{theorem}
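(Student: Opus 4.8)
The statement restricts the possible "leading index" $I_0\in\mathcal I_d$ of a singular vector of degree $5\le d\le 10$ to three explicit $\sim$-classes. Since the full set of equivalence classes of $I_0$ with $\theta_{I_0}(s)$ a highest weight vector is large, the strategy I would follow is: for every candidate $I_0$ not on the allowed list, exhibit a short chain of instances of Equations \eqref{sing1c}--\eqref{sing4c} of Corollary \ref{singol} that forces $\theta_{I_0}(s)=0$, contradicting Lemma \ref{I0}. The crucial leverage, already emphasised at the end of Section 8, is that these equations \emph{do not depend on the weights $\lambda,\mu$}; so the elimination of a given $I_0$ is a finite, purely combinatorial computation, independent of which Verma module we are in. This is exactly the mechanism illustrated just before the theorem with $I_0=(12,24,34,45)$: two applications of \eqref{sing1c} with different choices of $(a,b,c,p,q)$ and $J$ give first $2\theta_{14,24,25,34}(s)+2\theta_{12,24,34,45}(s)=0$ and then $2\theta_{14,24,25,34}(s)=0$, hence $\theta_{I_0}(s)=0$.

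\textbf{Organising the case analysis.} First I would recall that, by Lemma \ref{I0}, $\theta_{I_0}(s)$ is a highest weight vector for $\slc$, so $\lambda(d_{I_0})=\mu$ is dominant; combined with Proposition \ref{isomega} (the $\omega_I$ span an $\inlinewedge^d(\inlinewedge^2\C^5)$) this already cuts down the list of $\sim$-classes to finitely many. I would then stratify the candidates by how "spread out" $I_0$ is: the key structural observation (mirroring the arguments in Section 5) is that if some index $i$ appears in $I_0$ together with \emph{fewer} than all four partners $\{i,j\}$, $j\ne i$, then one can apply an equation of the form \eqref{sing1c} with $(a,b,c,p,q)$ chosen so that $p=i$ and the cyclic sum $\sum_{\alpha\beta\gamma}$ on the left hand side collapses, yielding $\theta_{I_0}(s)$ as a multiple of $\theta^p_{\,\cdot}(s)$ plus a $\theta$-term whose index is strictly "closer to a star at $i$"; iterating drives the system to a contradiction unless $I_0$ contains a full star $\{i1,i2,i3,i4,i5\}\setminus\{ii\}$. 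The allowed $I_0$'s are precisely those built from (at most two overlapping) such stars: $(12,13,14,15,25,35,45)$ is the union of the star at $1$ and the star at $5$; $(12,13,14,15,45)$ is the star at $1$ plus one extra pair; $(12,15,25,35,45)$ is the star at $5$ plus one extra pair. For every other dominant-weight candidate I would produce the explicit two- or three-step elimination, grouping the candidates by weight $\mu$ so that within each group the same family of equations applies. Finally the parity/degree bookkeeping (an $I_0$ of degree $d$ must have $\lambda(d_{I_0})$ with entry sum of the right parity, cf.\ \cite[\S6]{CC}) together with $d\le 10$ rules out the cases $d=6,8,9,10$ entirely and pins the surviving possibilities to $d=7$ and $d=5$ as stated.

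\textbf{The main obstacle.} The genuine difficulty is bookkeeping: there are many $\sim$-classes to dispose of, and for each one must find the \emph{right} specialisation $(a,b,c,p,q)$ and the right auxiliary $J$ (or $K$) so that the cyclic sums in \eqref{sing1c}--\eqref{sing4c} telescope and the $\theta^{r}$-terms (the ones with upper indices, coming from the $\partial_r\omega$ part of \eqref{lare}) either vanish by degree reasons or feed back into an earlier equation. Keeping the $\varepsilon_{pqabc}$ signs and the $(-1)^{\chi_{p>\gamma}}$ factors straight across a chain of substitutions is error-prone, and some candidates will require a genuine $2\times2$ or $3\times3$ linear system rather than a single cancellation. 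This is where, as the authors note in the introduction, computer assistance is used: one enumerates all $I_0$ with dominant $\lambda(d_{I_0})$ and $d\le 10$, generates all instances of the four equations of Corollary \ref{singol} of the matching weight, and checks by linear algebra over $\Q$ that $\theta_{I_0}(s)=0$ is forced in every case except the three listed. I would present the argument by doing one representative elimination in full detail (as in the worked example preceding the theorem) and then asserting that the remaining, entirely analogous, cases are handled the same way, referring to the computer verification for completeness.
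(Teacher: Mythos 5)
Your proposal follows essentially the same route as the paper: for each candidate $I_0$, assemble all instances of Equations \eqref{sing1c}--\eqref{sing4c} of weight $\lambda(\theta_{I_0})$ into a homogeneous linear system among the finitely many $\theta_I(s)$, $\theta^r_J(s)$, $\theta^{r_1,r_2}_K(s)$ of that weight, exploit the fact that these equations are independent of $\lambda$ and $\mu$, and verify by computer that the system forces $\theta_{I_0}(s)=0$ in every case except the three listed. This is exactly what the paper does, down to the worked example $I_0=(12,24,34,45)$.

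One concrete claim in your write-up is false and should be removed: parity/degree bookkeeping does \emph{not} rule out the cases $d=6,8,9,10$. The hardest case treated in the paper is precisely $d=10$ with $I_0=(12,13,14,15,23,24,25,34,35,45)$, where $\lambda(d_{I_0})=(0,0,0,0)$ is perfectly admissible and one must solve a system of $140$ equations (from \eqref{sing2c} and \eqref{sing4c}) in $86$ unknowns to conclude that all of them vanish. The degrees $6,8,9,10$ are eliminated only because the linear systems happen to have no nonzero solution, not by any a priori parity obstruction. Likewise, your ``star'' heuristic is a plausible organising principle (the three surviving classes are indeed built from stars at $1$ and $5$) but is not proved and is not needed; since you ultimately defer to the exhaustive computer verification, the proof stands once the parity assertion is deleted.
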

\begin{proof}
The proof is based on Corollary \ref{singol}. The set of Equations \eqref{sing1c}--\eqref{sing4c} of weight $\lambda(\theta_{I_0}(s))$ provides a system of homogeneous linear equations among all $\theta_I(s)$, $\theta_J^r(s)$ and $\theta^{r_1,r_2}_K(s)$ (with $I\in \mathcal I_d$, $J\in \mathcal I_{d-2}$ and $K \in \mathcal I_{d-4}$) such that $\theta_I$, $\theta_J^r$ and $\theta^{r_1,r_2}_K$ have the same weight of $\theta_{I_0}$, and which do not depend on (the weight of) $s$. This system can be solved with the help of a computer in all possible cases and one can check that it  implies $\theta_{I_0}(s)=0$ in all cases, but in the three exceptions stated above.

We add a few words to explain what happens in the most complicated case, i.e., $d=10$ and $I_0=(12,13,14,15,23,24,25,34,35,45)$. In this case 86 variables are involved: $\theta_{I_0}(s)$, 15 vectors of the form $\theta^r_J(s)$, and 70 vectors of the form $\theta^{r_1,r_2}_K(s)$ with $r_1\neq r_2$. Equations \eqref{sing1c} and \eqref{sing3c} do not provide any condition.
In Equation \eqref{sing2c} we can choose $(a,b,c,p,q)$ to be any permutation in $S_5$ and $K=(ac,ap,aq,bp,bq,cp,pq)$, getting 120 linear equations among our 86 variables, and in Equation \eqref{sing4c} we can choose $(a,b,c,p,q)$ to be any permutation in $S_5$ (with $a<b<c$ to avoid repeated equations) and $K=(ap,aq,bp,bq,cp,cq,pq)$ getting 20 more equations. This system of 140 equations implies that all 86 variables involved vanish.
\end{proof}
Now we study the exceptions given by Theorem \ref{5o7}. The case of degree 7 leads to another new singular vector.
\begin{theorem}\label{w[7]}
The following vector $w[7]\in M(0,0,0,2)$ of weight $(2,0,0,0)$ is the unique (up to a scalar factor) singular vector of degree 7 in a finite Verma module: 
\begin{align*}w[7]&=d_{12}d_{13} d_{14} d_{15} \Big( d_{23} d_{24} d_{25} \otimes ({x_{2}^*})^2
- d_{23} d_{25} d_{34} \otimes x_2^*x_3^*
-  d_{24} d_{25} d_{34} \otimes x_2^*x_4^*
+  d_{23} d_{24} d_{35} \otimes x_2^*x_3^*\\
&-  d_{24} d_{25} d_{35} \otimes x_2^* x_5^*
+  d_{23} d_{34} d_{35} \otimes (x_3^*)^2
+  d_{24} d_{34} d_{35} \otimes x_3^* x_4^*
+  d_{25} d_{34} d_{35} \otimes x_3^* x_5^*
+  d_{23} d_{24} d_{45} \otimes x_2^*x_4^*\\
&+  d_{23} d_{25} d_{45} \otimes x_2^* x_5^*
+  d_{23} d_{34} d_{45} \otimes x_3^* x_4^*
+  d_{24} d_{34} d_{45} \otimes (x_4^*)^2
+  d_{25} d_{34} d_{45} \otimes x_4^* x_5^*
+  d_{23} d_{35} d_{45} \otimes x_3^* x_5^*\\
&+  d_{24} d_{35} d_{45} \otimes x_4^* x_5^*
+  d_{25} d_{35} d_{45} \otimes (x_5^*)^2
+\de_1  d_{23} \otimes x_2^* x_3^*
+\de_1  d_{24} \otimes x_2^* x_4^*
+\de_1  d_{25} \otimes x_2^* x_5^*\\
&-\de_2  d_{23} \otimes x_1^* x_3^*
-\de_2   d_{24} \otimes x_1^* x_4^*
-\de_2  d_{25} \otimes x_1^* x_5^*
+\de_3  d_{23} \otimes x_1^* x_2^*
-\de_3 d_{34} \otimes x_1^* x_4^*
-\de_3   d_{35} \otimes x_1^* x_5^*\\
&+\de_4   d_{24}\otimes x_1^* x_2^*
+\de_4   d_{34}\otimes x_1^* x_3^*
-\de_4  d_{45} \otimes x_1^* x_5^*
+\de_5   d_{25}\otimes x_1^* x_2^*
+\de_5   d_{35}\otimes x_1^* x_3^*
+\de_5  d_{45}\otimes x_1^* x_4^*\Big).
\end{align*}
\end{theorem}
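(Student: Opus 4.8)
The strategy is to mirror the treatment of the degree $11$ singular vector in Theorem \ref{w[11]}: prove that the displayed vector $w[7]$ is indeed singular by a lengthy but mechanical direct check, and then deduce that it is unique up to a scalar from the fact that the leading term of a degree $7$ singular vector is pinned down up to a scalar.

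For existence one verifies the two defining conditions for $w[7]$. Condition (i), $x_i\de_{i+1}w[7]=0$ for $i=1,2,3,4$, is automatic from $\g_0$-equivariance: the coefficients of $w[7]$ are organized, via Proposition \ref{isomega} and \eqref{isotheta}, as the values on a highest weight vector of $\Sym^2((\C^5)^*)$ of a genuine $\g_0$-morphism into $\Hom(\Sym^2(\C^5),\Sym^2((\C^5)^*))$. Since $x_5d_{45}$ generates $\g_1$ as a $\g_0$-module and (i) holds, condition (ii) reduces to the single identity $x_5d_{45}\,w[7]=0$; expanding this with the bracket rules of \S\ref{S1} together with the recursion of Proposition \ref{omegarec} (equivalently, with Theorem \ref{xpdpqomega}) gives a finite, routine computation, which we omit. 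The labour is roughly halved by Theorem \ref{duality}: since $(2,0,0,0)^\ast=(0,0,0,2)$, the candidate morphism $M(2,0,0,0)\to M(0,0,0,2)$ is isomorphic to its own dual, so the relations produced by the various $x_pd_{pq}$ occur in dual pairs.

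For uniqueness and the identification of the weights, I invoke Theorem \ref{5o7}: every degree $7$ singular vector $w$ in a finite Verma module has leading term attached to $I_0\sim(12,13,14,15,25,35,45)$, and its proof moreover shows that, in this case, the solution space of the homogeneous system \eqref{sing1c}--\eqref{sing4c} of the relevant weight is one-dimensional. A direct weight computation gives $\lambda(\omega_{I_0})=(2,0,0,-2)$, and since $\theta_{I_0}(s)$ is a highest weight vector of the target $F(\mu)$ while $s$ is a highest weight vector of the source $F(\lambda)$, we get $\lambda=\mu+(2,0,0,-2)$. The one-dimensionality above fixes the leading term of $w$, hence $w$ itself, up to a scalar (a singular vector being determined by its leading term); since $w[7]$ realizes one such $w$, it follows that $\lambda=(2,0,0,0)$, $\mu=(0,0,0,2)$, and every degree $7$ singular vector is a scalar multiple of $w[7]$. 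Equivalently, after rescaling, $w-w[7]$ would be a singular vector of degree $7$ of height $<7$, hence with $d>h$, contradicting Corollary \ref{d=h,h+4} together with Propositions \ref{d!=h+4} and \ref{hgeq8}; so $w=w[7]$ up to a scalar.

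The only genuinely laborious steps are the verification that $x_5d_{45}$ annihilates $w[7]$ and the linear-algebra computation underlying Theorem \ref{5o7}; both are purely mechanical, so the real obstacle is the volume of bookkeeping rather than any conceptual difficulty. The nontrivial input is the explicit form of $w[7]$ itself, which --- as in the degree $11$ case --- is most naturally obtained by solving the system \eqref{sing1c}--\eqref{sing4c} with the aid of a computer and then using the self-duality above to present the answer.
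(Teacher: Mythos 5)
Your existence argument and the overall architecture (verify the displayed vector directly, then get uniqueness from Theorem \ref{5o7} plus the fact that a singular vector is determined by its leading term) match the paper. But there is a genuine gap in your identification of the weights, i.e.\ in showing that $M(0,0,0,2)$ is the \emph{only} finite Verma module containing a degree~$7$ singular vector. The relation $\lambda=\mu+(2,0,0,-2)$, together with the one-dimensionality of the solution space of the system \eqref{sing1c}--\eqref{sing4c}, does not pin down $\mu$: those equations are deliberately weight-independent (this is stressed right after Corollary \ref{singol}), so they constrain only the \emph{shape} of the leading term and leave open every dominant pair $\mu=(a,b,c,d)$ with $d\ge 2$, $\lambda=(a+2,b,c,d-2)$. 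Your sentence ``since $w[7]$ realizes one such $w$, it follows that $\lambda=(2,0,0,0)$, $\mu=(0,0,0,2)$'' is a non sequitur: for each admissible $\mu$ you get at most one candidate singular vector in $M(\mu)$, but nothing you have said excludes the other values of $\mu$. That this is not a vacuous worry is shown by the degree~$4$ case, where an entire infinite family $M(m,0,0,0)$, $m\in\N$, of degenerate modules genuinely occurs.

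The paper closes this gap by going back to the weight-\emph{dependent} equations \eqref{sing1b} of Corollary \ref{singo} (whose terms $2\chi_{p>\gamma}\,x_p\de_\gamma.(\theta_{\alpha\beta,J}(s))$ see the $\slc$-structure of $F(\mu)$): three specific choices of $(a,b,c,p,q)$ and $J$, combined with the relations \eqref{condsette} and an application of a lowering operator, yield $\mu_{34}=0$, $\mu_{13}=0$ and $\mu_{45}=2$, whence $\mu=(0,0,0,2)$. You need this step (or an equivalent one); without it the classification claim of the theorem is not proved. The remaining ingredients of your argument are fine: once $\mu$ is fixed, Lemma \ref{I0} together with the observation that $I_0$ is the unique class in $\mathcal I_7$ of its weight gives uniqueness of the leading term, and hence of the singular vector. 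Your observation that the morphism is self-dual under Theorem \ref{duality} is a pleasant bookkeeping aid not used in the paper, but it does not substitute for the missing weight computation.
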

%Let $v_{11},v_{12},\ldots, v_{45}, v_{55}$ be a basis of the irreducible $sl_5$-module of highest weight $(0,0,0,2)$, with $v_{55}$ highest weight vector. Then

%\begin{align*}
%s&=d_{12}d_{13} d_{14} d_{15} \Big( d_{23} d_{24} d_{25} v_{22}
%- d_{23} d_{25} d_{34} v_{23}
%-  d_{24} d_{25} d_{34} v_{24}
%+  d_{23} d_{24} d_{35} v_{23}
%-  d_{24} d_{25} d_{35} v_{25}\\
%+  d_{23} d_{34} d_{35} v_{33}
%+  d_{24} d_{34} d_{35} v_{34}
%+  d_{25} d_{34} d_{35} v_{35}
%+  d_{23} d_{24} d_{45} v_{24}
%+  d_{23} d_{25} d_{45} v_{25}
%+  d_{23} d_{34} d_{45} v_{34}\\
%&+  d_{24} d_{34} d_{45} v_{44}
%+  d_{25} d_{34} d_{45} v_{45}
%+  d_{23} d_{35} d_{45} v_{35}
%+  d_{24} d_{35} d_{45} v_{45}
%+  d_{25} d_{35} d_{45} v_{55}\\
%&+\de_1  d_{23} v_{23}
%+\de_1  d_{24} v_{24}
%+\de_1  d_{25} v_{25}
%-\de_2  d_{23} v_{13}
%-\de_2   d_{24} v_{14}
%-\de_2  d_{25} v_{15}\\
%&+\de_3  d_{23} v_{12}
%-\de_3 d_{34} v_{14}
%-\de_3   d_{35} v_{15}
%+\de_4   d_{24} v_{12}
%+\de_4   d_{34} v_{13}
%-\de_4  d_{45} v_{15}\\
%+\de_5   d_{25} v_{12}
%+\de_5   d_{35} v_{13}
%+\de_5  d_{45} v_{14}\Big)
%\end{align*}
%is a singular vector of degree 7 and weight $(2,0,0,0)$.
\begin{proof}
Let $I_0=(12,13,14,15,25,35,45)$. In this case Equations \eqref{sing1c}--\eqref{sing4c} of weight $\lambda(\theta_{I_0}(s))$ provide the following homogeneous linear relations:
\begin{align}\label{condsette}
\theta_{13, 14, 15, 25, 35, 45}(s)&=
-2\theta_{14, 15, 25, 35}^2(s)=
2\theta_{12, 13, 15, 25, 45}^2(s)=
-2\theta_{13, 14, 15, 25, 35}^3(s)=
2\theta_{12, 13, 15, 35, 45}^3(s)\\
\nonumber &=
-2\theta_{13, 14, 15, 25, 45}^4(s)=
2\theta_{12, 14, 15, 35, 45}^4(s)=
-4\theta_{12, 15, 25}^{2,2}(s)=
-4\theta_{13, 15, 25}^{2,3}(s)\\
\nonumber &=
-4\theta_{ 12, 15, 35}^{2,3}(s)=
-4\theta_{ 13, 15, 35}^{3,3}(s)=
-4	\theta_{ 14, 15, 25}^{2,4}(s)=
-4	\theta_{ 12, 15, 45}^{2,4}(s)\\
\nonumber &=
-4	\theta_{ 14, 15, 35}^{3,4}(s)=
-4	\theta_{ 13, 15, 45}^{3,4}(s)=
-4	\theta_{ 14, 15, 45}^{4,4}(s).
\end{align}
We use Equation \eqref{sing1b} to determine the weight $\mu=(\mu_{12},\mu_{23},\mu_{34}, \mu_{45})$.
Taking $(a,b,c,p,q)=(1,2,3,4,5)$ and $J=(13,14,15,25,35,45)$ in \eqref{sing1b} we obtain $\mu_{34}=0$, using \eqref{condsette}.

Taking $(a,b,c,p,q)=(4,5,1,3,2)$ and $J=(12,13,14,15,25,35,)$ in \eqref{sing1b} we obtain $\mu_{13}=0$.

Taking $(a,b,c,p,q)=(1,2,4,5,3)$ and $J=(13,14,15,25,35,45)$ in \eqref{sing1b} we have
\begin{align*}
0&=-2\theta^5_{(13,14,15,25,35,45)\setminus(53)}-(x_5\de_4.\theta_{12,13,14,15,25,35,45})(s)+2x_5\de_4(\theta_{12,13,14,15,25,35,45}(s))\\&\hspace{5mm}-(x_5\de_1.\theta_{24,13,14,15,25,35,45})(s)\\
&=2\theta^5_{13,14,15,25,45}(s)+\theta_{12,13,14,15,25,35,45}(s)+\theta_{12,13,14,15,25,34,45}(s)+2x_5\de_4.(\theta_{12,13,14,15,25,35,45}(s))\\&\hspace{5mm}+\theta_{24,13,14,15,21,35,45}(s)\\
&=2\theta^5_{13,14,15,25,45}(s)+2\theta_{12,13,14,15,25,35,45}(s)+\theta_{12,13,14,15,25,34,45}(s)+2x_5\de_4.(\theta_{12,13,14,15,25,35,45}(s)).
\end{align*}
Finally we can apply $x_4\de_5$ to this equation and use \eqref{condsette} to conclude
\begin{align*}
0&=2\theta^4_{13,14,15,25,45}(s)-2\theta_{12,13,14,15,25,35,45}(s)-\theta_{12,13,14,15,25,35,45}(s)+2\mu_{45}\theta_{12,13,14,15,25,35,45}(s)\\&=2(\mu_{45}-2)\theta_{12,13,14,15,25,35,45}(s).
\end{align*}
This shows that the only possible singular vector of degree 7 sits in $M(0,0,0,2)$ and has weight $(0,0,0,2)+\lambda(\omega_{12,13,14,15,25,35,45})=(2,0,0,0)$. The uniqueness of such singular vector follows from Lemma \ref{I0}, since there are no other $I\in \mathcal I_7$ such that $\lambda (I)=\lambda (I_0)$.
The fact that the displayed vector is indeed a singular vector can be checked with a long and technical calculation.

Note that \eqref{condsette} is consistent with the vector $w[7]$ since one can check that
\begin{align*}\label{sing7}
4&d_{12}d_{13}d_{14}d_{15}d_{25}d_{35}d_{45}=\\
&4\omega_{13, 14, 15, 25, 35, 45}
-2\de_2\omega_{12,14, 15, 25, 35}+
2\de_2\omega_{12, 13, 15, 25, 45}
-2\de_3\omega_{13, 14, 15, 25, 35}+
2\de_3\omega_{12, 13, 15, 35, 45}\\
\nonumber &
-2\de_4\omega_{13, 14, 15, 25, 45}+
2\de_4\omega_{12, 14, 15, 35, 45}
-\de_2^2\omega_{12, 15, 25}
-\de_2\de_3\omega_{13, 15, 25}-\de_2\de_3\omega_{ 12, 15, 35}-\de_3^2\omega_{ 13, 15, 35}\\
\nonumber &
-	\de_2\de_4\omega_{ 14, 15, 25}
-	\de_2\de_4\omega_{ 12, 15, 45}
-	\de_3\de_4\omega_{ 14, 15, 35}
-	\de_3\de_4\omega_{ 13, 15, 45}-	\de_4^2\omega_{ 14, 15, 45}.
\end{align*}
\end{proof}
The two possible cases in degree 5 given by Theorem \ref{5o7} are dual to each other. They lead to singular vectors which were already known to Rudakov in \cite{R}.
\begin{theorem}
	Let $w$ be a singular vector of degree 5 in $M(\mu)$ of weight $\lambda$. Then one of the following occurs.
\begin{enumerate}
	\item $\mu=(0,0,1,0)$, $\lambda=(3,0,0,0)$ and 
	\[
	w=w[5_{CD}]=d_{12}d_{13}d_{14}d_{15}(d_{45}\otimes x_{45}^*+d_{35}\otimes x_{35}^* +d_{25}\otimes x_{25}^*+d_{24}\otimes x_{24}^*+d_{23}\otimes x_{23}^*);
	\]
	\item $\mu=(0,0,0,3)$, $\lambda=(0,1,0,0)$ and $w=w[5_{EA}]=d_{12}w[4_E]$,
where $w[4_E]$ is explicitly described in Section \ref{w4E}.
\end{enumerate}
\end{theorem}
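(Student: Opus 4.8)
The plan is to specialize the method already used for $w[7]$ and $w[11]$ to degree $d=5$. Write a degree-$5$ singular vector $w\in M(\mu)$ as in \eqref{lare}, with $s$ a highest weight vector of $F(\lambda)$. By Theorem \ref{5o7} there is an index $I_0\in\mathcal I_5$ with $\theta_{I_0}(s)\neq 0$ a highest weight vector of $F(\mu)$, and either $I_0\sim(12,13,14,15,45)$ or $I_0\sim(12,15,25,35,45)$. These two configurations are exchanged by the duality of Theorem \ref{duality} (note that $(3,0,0,0)^\ast=(0,0,0,3)$ and $(0,0,1,0)^\ast=(0,1,0,0)$), so it suffices to analyze the first one; the second case, together with the identification $w[5_{EA}]=d_{12}w[4_E]$, then follows by applying $\varphi\mapsto\varphi^\ast$.

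So fix $I_0=(12,13,14,15,45)$ (we may replace $I_0$ by any $\sim$-equivalent tuple). As in the proof of Theorem \ref{w[7]}, the first step is to write down Equations \eqref{sing1c}--\eqref{sing4c} of weight $\lambda(\theta_{I_0}(s))$; these form a homogeneous linear system relating the finitely many vectors $\theta_I(s)$, $\theta^r_J(s)$, $\theta^{r_1,r_2}_K(s)$ of that weight, the system does not involve $\mu$ or $\lambda$, and solving it (by computer, in the spirit of \eqref{condsette}) should express every such vector as a scalar multiple of $\theta_{I_0}(s)$. The second step is to pin down $\mu=(\mu_{12},\mu_{23},\mu_{34},\mu_{45})$: one substitutes a few well-chosen quintuples $(a,b,c,p,q)$ and tuples $J\in\mathcal I_4$ into Equation \eqref{sing1b}, which unlike \eqref{sing1c} retains the dependence on the weight of $s$, and reads off each $\mu_{i\,i+1}$ in turn, exactly as for $w[7]$; this should force $\mu=(0,0,1,0)$, whence $\lambda(w)=\mu+\lambda(\omega_{I_0})=(0,0,1,0)+(3,0,-1,0)=(3,0,0,0)$. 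For uniqueness, one checks that $I_0$ is, up to $\sim$, the only tuple in $\mathcal I_5$ with $\omega_{I_0}\neq 0$ of weight $\lambda(\omega_{I_0})$ (all competitors contain a repeated pair and hence vanish); by Lemma \ref{I0} the highest term $w_5$ of $w$ is then determined up to a scalar, and if two degree-$5$ singular vectors share the same highest term their difference has degree $5$ and height at most $3$, which is impossible by Propositions \ref{hgeq8} and \ref{d!=h+4} (a singular vector with $d>h$ has $d\geq 10$). Finally one exhibits the displayed vector $w[5_{CD}]$ and checks directly that it is annihilated by $x_i\partial_{i+1}$ for $i=1,\dots,4$ and by $\g_1$.

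The obstacle is bookkeeping rather than conceptual: assembling the systems coming from Corollary \ref{singol} modulo the equivalence $\sim$ without sign errors, choosing the right substitutions in \eqref{sing1b} to extract $\mu$, and carrying out the long but mechanical verification that $w[5_{CD}]$ (and, in the dual case, $d_{12}w[4_E]$) is indeed a singular vector. Everything else is a direct specialization of the degree-$7$ and degree-$11$ arguments already carried out.
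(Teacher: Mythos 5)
Your proposal follows the paper's own proof essentially step for step: reduce to $I_0=(12,13,14,15,45)$ via Theorem \ref{5o7} and the duality of Theorem \ref{duality}, extract the linear relations from Equations \eqref{sing1c}--\eqref{sing4c}, determine $\mu=(0,0,1,0)$ by specializing Equation \eqref{sing1b} (the paper uses $(a,b,c,p,q)=(4,5,1,3,2)$ with $J=(12,13,14,15)$ and then $(1,2,3,4,5)$, $(1,2,3,5,4)$ with $J=(13,14,15,45)$), compute $\lambda=\mu+\lambda(\omega_{I_0})=(3,0,0,0)$, get uniqueness from Lemma \ref{I0}, and verify the explicit vectors directly, with case (2) obtained by duality. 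The only work you defer — the explicit substitutions and the verification that $w[5_{CD}]$ and $d_{12}w[4_E]$ are singular — is exactly what the paper carries out or likewise leaves to the reader, so the approach is the same and correct.
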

\begin{proof}
	By Theorem \ref{5o7} we can assume that $\theta_{I_0}(s)$ is a highest weight vector with $I_0=(12,13,14,15,45)$ or $I_0=(12,15,25,35,45)$ and by Theorem \ref{duality} it is enough to show that the case $I_0=(12,13,14,15,45)$ leads to  conditions (1).

In this case Equations \eqref{sing1c}--\eqref{sing4c} easily provide the following relations:
\begin{equation}\label{condcinque}
\theta_{12,13,14,15,45}(s)=-2\theta^2_{12,14,15}(s)=-2\theta^3_{13,14,15}(s).
\end{equation}
We use Equation \eqref{sing1b} three times to show that  necessarily $\mu=(0,0,1,0)$.
We first use Equation \eqref{sing1b} with $(a,b,c,p,q)=(4,5,1,3,2)$ and $J=(12,13,14,15)$. All terms but one vanish and we obtain
\[
x_3\de_1.(\theta_{45,12,13,14,15}(s))=0,
\]
and so $\mu_{1,2}=\mu_{2,3}=0$.
Using Equation (9) with $(a,b,c,p,q)=(1,2,3,4,5)$ and $J=(13,14,15,45)$, we obtain
\begin{align*}
0&=-2\theta^4_{(13,14,15,45)\setminus(45)}-(x_4\de_3.\theta_{12,13,14,15,45})(s)+2x_4\de_3.(\theta_{12,13,14,15,45}(s))
-(x_4\de_2.\theta_{31,13,14,15,45})(s)\\
&\hspace{5mm}-2x_4\de_2.(\theta_{31,13,14,15,45}(s))
-(x_4\de_1.\theta_{23,13,14,15,45})(s)-2x_4\de_1.(\theta_{23,13,14,15,45}(s))\\
&=2\theta^4_{13,14,15}(s)+\theta_{12,13,14,15,35}(s)+2x_4\de_3.(\theta_{12,13,14,15,45}(s)),
\end{align*}
where we have used the fact that $\theta_{23,13,14,15,45}(s)=0$ since it has weight greater than $\theta_{12,13,14,15,45}(s)$.
Applying $x_3\de_4$ to the previous equation, we obtain
\[
-2\theta^3_{13,14,15}(s)-\theta_{12,13,14,15,45}(s)+2\mu_{34}\theta_{12,13,14,15,45}(s)=0.
\] 
By \eqref{condcinque} we can conclude that $\mu_{34}=1$.

Similarly, by Equation (9) with $(a,b,c,p,q)=(1,2,3,5,4)$ and $J=(13,14,15,45)$, we obtain
\begin{align*}
0&=-2\theta^5_{(13,14,15,45)\setminus(54)}+(x_5\de_3.\theta_{12,13,14,15,45})(s)-2x_5\de_3.(\theta_{12,13,14,15,45}(s))
\end{align*}
and applying $x_3\de_5$ and then using \eqref{condcinque} we obtain
\[
0=-2\theta^3_{13,14,15}(s)+\theta_{12,13,14,15,45}(s)-2\mu_{35}\theta_{12,13,14,15,45}(s)=2(1-\mu_{35})\theta_{12,13,14,15,45}(s).
\]
So $\mu_{35}=1$ and $\mu_{45}=\mu_{35}-\mu_{34}=0$.
The weight of $w$ is $\lambda=\mu+\lambda(\omega_{12,13,14,15,45})=(0,0,1,0)+(3,0,-1,0)=(3,0,0,0)$. The uniqueness follows by Lemma \ref{I0} and the verification that $d_{12}d_{13}d_{14}d_{15}(d_{45}\otimes x_{45}^*+d_{35}\otimes x_{35}^* +d_{25}\otimes x_{25}^*+d_{24}\otimes x_{24}^*+d_{23}\otimes x_{23}^*)$ is actually such a singular vector is left to the reader.
This shows, by duality, that there exists a (unique) singular vector in $M(0,0,0,3)$ of weight $(0,1,0,0)$, and one can check that it is given by $d_{12}w[4_E]$.
\end{proof}
\section{Degree 4}\label{d4}

The last case to be considered concerns singular vectors of degree and height 4.
\begin{proposition}\label{gr4}
	Let $w$ be a singular vector of degree 4 as in (\ref{varfi}) and let $I_0\in \mathcal I_4$ be such that $\theta_{I_0}(s)\neq 0$ is a highest weight vector in $M(\mu)$. Then $I_0\sim(12,13,14,15)$ or $I_0\sim(15,25,35,45)$.
\end{proposition}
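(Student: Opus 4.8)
The plan is to run the argument of Theorem \ref{5o7} one degree lower, for $d=4$. Since a singular vector of degree at most $10$ has height equal to its degree, $w$ has height $4$ and can be written as in \eqref{lare}: the only summands are $\omega_I\otimes\theta_I(s)$ with $I\in\mathcal I_4/\!\!\sim$, the summands $\partial_r\omega_J\otimes\theta^r_J(s)$ with $J\in\mathcal I_2/\!\!\sim$ and $r\in[5]$, and the summands $\partial_{r_1}\partial_{r_2}\otimes\theta^{r_1,r_2}_\emptyset(s)$ with $1\le r_1\le r_2\le 5$. By Lemma \ref{I0} there is $I_0\in\mathcal I_4$ with $\theta_{I_0}(s)\neq 0$ a highest weight vector, and the problem is to decide for which classes $[I_0]\in\mathcal I_4/\!\!\sim$ this is possible. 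The key tool is Corollary \ref{singol}: the subsystem of Equations \eqref{sing1c}--\eqref{sing4c} whose weight equals that of $\theta_{I_0}(s)$ is a finite homogeneous linear system in the finitely many $\theta_I(s)$, $\theta^r_J(s)$ and $\theta^{r_1,r_2}_\emptyset(s)$ of that same weight, and this system does not depend on the source or target weights $\lambda,\mu$.

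First I would enumerate $\mathcal I_4/\!\!\sim$. By Proposition \ref{isomega} its elements correspond to the nonzero products $x_{p_1}\wedge\cdots\wedge x_{p_4}$ with distinct $p_i\in\Omega$, i.e.\ to the $\binom{10}{4}=210$ four-element subsets of $\Omega$, i.e.\ to the $4$-edge simple graphs on the vertex set $[5]$. Up to relabeling there are exactly six isomorphism types: the star $K_{1,4}$; the ``chair'' ($P_4$ with an extra pendant edge at an interior vertex); the ``paw'' (a triangle with a pendant edge); a $4$-cycle $C_4$ plus an isolated vertex; the path $P_5$; and a triangle plus a disjoint edge. For each representative one writes out the relevant subsystem of \eqref{sing1c}--\eqref{sing4c} and solves it, with computer assistance exactly as in the hardest ($d=10$) case of Theorem \ref{5o7}. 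The outcome will be that $\theta_{I_0}(s)=0$ is forced for every class whose graph is not a star, and that among the five $\sim$-classes in the star orbit only the two stars centered at the extreme vertices, $I_0\sim(12,13,14,15)$ and $I_0\sim(15,25,35,45)$, admit a nonzero solution; the three stars centered at $2,3,4$ are killed because Equations \eqref{sing1b} and \eqref{sing1c}--\eqref{sing4c} are written with respect to the fixed Borel $\langle x_i\partial_j\mid i<j\rangle$ (note the factors $(-1)^{\chi_{p>\gamma}}$ and $\varepsilon_{pqabc}$), so that the symmetric group $S_5$ acting by relabeling $1,\dots,5$ does not carry one star $\sim$-class to another in a way preserving the highest weight normalization.

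The point I expect to be the main obstacle is exactly this last phenomenon: there is no transitive symmetry to exploit, so one cannot reduce to the six orbit representatives --- in particular all five star $\sim$-classes (and, to be on the safe side, potentially all $210$ classes) must be checked individually --- and several of the resulting systems are sizeable, of the same order as the $86$-variable, $140$-equation system handled in degree $10$ in Theorem \ref{5o7}. Once the two surviving classes are isolated, they are the possible leading-index classes for degree-$4$ morphisms; by Theorem \ref{duality} they are interchanged by duality, and both are realized --- concretely by Rudakov's vector $w[4_E]$ (which already appeared above through $w[5_{EA}]=d_{12}w[4_E]$) and its dual --- so that the analysis in the next section completes the classification in degree $4$.
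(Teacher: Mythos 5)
Your overall strategy --- height equals degree, Lemma \ref{I0}, and the weight-independent linear system of Corollary \ref{singol} solved class by class with computer assistance and with duality halving the work --- is indeed how the paper's proof begins, but your proposal has a genuine gap at its central claim. It is not true that Equations \eqref{sing1c}--\eqref{sing4c} force $\theta_{I_0}(s)=0$ for every class other than the two extreme stars. The paper's proof exhibits eight weight-families of exceptional classes that survive this system, among them non-star configurations such as $(12,14,15,23)$, $(15,25,34,45)$ and $(14,25,35,45)$, as well as the stars centered at the interior vertices, e.g.\ $(13,23,34,35)$ and $(14,24,34,45)$. For all of these the $\mu$-independent system only yields relations among the surviving unknowns (e.g.\ $\theta_{12,14,15,23}=\theta_{12,13,14,25}=-\theta_{12,13,15,24}$), not their vanishing, so the computation you describe would terminate with many more than two candidate classes.

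The missing idea is the second stage of the argument: for each exceptional class one must return to Equation \eqref{sing1b}, which, unlike \eqref{sing1c}--\eqref{sing4c}, retains the terms $2\chi_{p>\gamma}\,x_p\de_\gamma.(\theta_{\alpha\beta,J}(s))$ involving the $\g_0$-action on the target module $F(\mu)$. Applying a suitable lowering operator $x_\gamma\de_p$ to such an equation produces a factor $\mu_{\gamma p}$ and leads to an identity of the form $(c+\mu_{ij})\,\theta_{I_0}(s)=0$ with $c>0$ (for instance $(-\mu_{15}-3)\theta_{12,14,15,23}(s)=0$, $(-\mu_{25}-1)\theta_{13,23,34,35}(s)=0$, $(1+\mu_{35})\theta_{14,24,34,45}(s)=0$), which is a contradiction because $\mu$ is dominant. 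Your remark that the equations are tied to the fixed Borel, so that $S_5$-relabeling cannot be used to reduce to graph-isomorphism representatives, correctly explains why the classes must be examined individually, but it does not explain why the interior stars and the other exceptional classes are excluded; without this $\mu$-dependent step the classification in degree $4$ cannot be completed.
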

\begin{proof}
	
We make use of the duality in Theorem \ref{duality} to consider nearly a half of the cases. Indeed let $w$ be a singular vector in $M(\mu)$ of weight $\lambda$ such that $\theta_{I_0}(s)$ is a highest weight vector (of weight $\mu$). Also consider the dual singular vector $w^*$ in $M(\lambda^*)$ of weight $\mu^*$ and assume that $w^*=\varphi^*(s^*)$, where $s^*$ is a highest weight vector in $M(\lambda^*$),  can be expressed as in \eqref{varfi} with $\theta^*$'s instead of $\theta$'s, and with $\theta^*_{J_0}(s')$ of weight $\lambda'$. Then $\lambda(\theta_{I_0})=\mu-\lambda$ and $\lambda(\theta^*_{J_0})=\lambda^*-\mu^*=-\lambda(\theta_{I_0})^*$.  In particular if there are no singular vectors such that $\theta_{I_0}(s)\neq 0$ for all $I_0$ such that $\lambda(\theta_{I_0})=\lambda$ then there are no singular vectors such that $\theta_{I_0}(s)\neq 0$ for all $I_0$ such that $\lambda(\theta_{I_0})=-\lambda^*$. 

As in Theorem \ref{5o7}, we can use Equations \eqref{sing1c}--\eqref{sing4c}, but in this case we have some more exceptions which must be considered apart. 

More precisely we have that $\theta_{I_0}(s)=0$ in all but the following cases (and their duals):
\begin{enumerate}
	\item $(12,13,14,15)$;
	\item $(12,14,15,23)$, $(12,13,15,24)$, $(12,13,14,25)$;
	\item $(13,23,34,35)$;
	\item $(14,23,34,35)$, $(13,24,34,35)$, $(13,23,34,45)$;
	\item $(14,24,34,35)$, $(13,24,34,45)$, $(14,23,34,45)$;
	\item $(14,24,34,45)$;
	\item $(15,24,34,45)$, $(14,34,25,45)$, $(14,24,35,45)$;
	\item $(15,25,34,45)$, $(14,25,35,45)$, $(15,24,35,45)$.
\end{enumerate}
These exceptions have been grouped according to their weight.
We now analyze  all these cases.
\begin{enumerate}
\item This is the case which is not excluded by the statement.
\item Equations \eqref{sing1c}--\eqref{sing4c} provide $\theta_{12,14,15,23}=\theta_{12,13,14,25}=-\theta_{12,13,15,24}$.
In this case Equation \eqref{sing1b} with $a=2$, $b=3$, $c=1$, $p=5$, $q=4$ and $J=(12,14,15)$ gives:
$$x_5\de_2.(\theta_{31,12,14,15}(s))+x_5\de_1.(\theta_{23,12,14,15}(s))=0.$$
If we apply the vector field $x_1\de_5$, we get the following equation:
$$x_1\de_2.(\theta_{31,12,14,15}(s))+\mu_{15}(\theta_{23,12,14,15}(s))=0,$$
where we used that if $\theta_{23,12,14,15}(s)$ has the highest weight, then
$\theta_{35,12,14,15}(s)=\theta_{31,52,14,15}(s)=\theta_{31,12,54,15}(s)=0$.
Hence we get
$$-\theta_{32,12,14,15}(s)-\theta_{31,12,24,15}(s)-\theta_{31,12,14,15}(s)
+\mu_{15}(\theta_{23,12,14,15}(s))=0,$$ i.e.,
$(-\mu_{15}-3)(\theta_{12,14,15,23}(s))=0$,
a contradiction.
\item 
In this case Equation \eqref{sing1b} with $a=1$, $b=3$, $c=2$, $p=5$, $q=4$ and $J=(23,34,35)$ gives:
$$-x_5\de_2.(\theta_{13,23,34,35}(s))+x_5\de_3.(\theta_{12,23,34,35}(s))=0.$$
If we apply the vector field $x_2\de_5$, we get the following equation:
$$-\mu_{25}(\theta_{13,23,34,35}(s))+x_2\de_3.(\theta_{12,23,34,35}(s))=0,$$
where we used that if $\theta_{13,23,34,35}(s)$ has the highest weight then
$\theta_{15,23,34,35}(s)=0$.
Hence we get
$$(-\mu_{25}-1)(\theta_{13,23,34,35}(s))=0,$$
a contradiction.
\item In this case Equations \eqref{sing1c}--\eqref{sing4c} provide $\theta_{14,23,34,35}=\theta_{13,24,34,35}=\theta_{13,23,34,45}$.
Equation \eqref{sing1b} with $a=1$, $b=2$, $c=3$, $p=5$, $q=4$ and $J=(24,34,35)$ gives:
\begin{align*} -(x_5\de_1. \theta_{23,24,34,35})(s)+(x_5\de_2.\theta_{31,24,34,35})(s)
+
(x_5\de_3.\theta_{12,24,34,35})(s)\\
+2
x_5\de_2.(\theta_{31,24,34,35}(s))+
2x_5\de_3.(\theta_{12,24,34,35}(s))=0,
\end{align*}
where we used that, if $\theta_{13,24,34,35}(s)$ has  the highest weight, then
$\theta_{23,24,34,35}(s)=0$. This is equivalent to the following:
$$2\theta_{13,23,24,34}(s)-2x_5\de_2.(\theta_{13,24,34,35}(s))+
2x_5\de_3.(\theta_{12,24,34,35}(s))=0.$$
If we apply the vector field $x_2\de_5$ and use the equality
$\theta_{13,24,34,35}(s)=\theta_{13,23,34,45}(s)$, we get the following equation:
$$-\mu_{25}(\theta_{13,24,34,35}(s))+x_2\de_3.(\theta_{12,24,34,35}(s))=0,$$
where we used that, if $\theta_{13,24,34,35}(s)$ has the highest weight, then
$\theta_{15,24,34,35}(s)=0=\theta_{12,34,35,45}(s)$.
Hence we get
$$(-\mu_{25}-1)(\theta_{13,24,34,35}(s))=0,$$
a contradiction.
\item  In this case Equations \eqref{sing1c}--\eqref{sing4c} provide $\theta_{14,24,34,35}=\theta_{14,23,34,45}=\theta_{13,24,34,45}$.
Equation \eqref{sing1} with $a=1$, $b=4$, $c=3$, $p=5$, $q=2$ and $J=(24,34,35)$ gives:
$$-x_5\de_4.(\theta_{13,24,34,35}(s))+x_5\de_3.(\theta_{14,24,34,35}(s))=0.$$
If we apply the vector field $x_3\de_5$, we get the following equation:
$$-x_3\de_4.(\theta_{13,24,34,35}(s))+\mu_{35}(\theta_{14,24,34,35}(s))=0,$$
where we used that if $\theta_{14,24,34,35}(s)$ has the highest weight, then
$\theta_{15,24,34,35}(s)=0=\theta_{13,24,35,45}(s)$.
Hence, using the hypothesis $\theta_{14,24,34,35}(s)=\theta_{13,24,34,45}(s)$, we get
$$(2+\mu_{35})(\theta_{14,24,34,35}(s))=0,$$
a contradiction.
\item In this case Equation \eqref{sing1b} with $a=1$, $b=4$, $c=3$, $p=5$, $q=2$ and $J=(24,34,45)$ gives:
$$-x_5\de_4.(\theta_{13,24,34,45}(s))+x_5\de_3.(\theta_{14,24,34,45}(s))=0.$$
If we apply the vector field $x_3\de_5$, we get the following equation:
$$-x_3\de_4.(\theta_{13,24,34,45}(s))+\mu_{35}(\theta_{14,24,34,45}(s))=0,$$
where we used that if $\theta_{14,24,34,45}(s)$ has the highest weight, then
$\theta_{15,24,34,45}(s)=0$.
Hence we get
$$(1+\mu_{35})(\theta_{14,24,34,45}(s))=0,$$
a contradiction.
\item In this case Equations \eqref{sing1c}--\eqref{sing4c} provide $\theta_{15,24,34,45}=\theta_{14,25,34,45}=\theta_{14,24,35,45}$.
Equation \eqref{sing1b} with $a=1$, $b=2$, $c=4$, $p=5$, $q=3$ and $J=(15,34,45)$ gives:
\begin{align*}
-(x_5\de_1.\theta_{24,15,34,45})(s)&-(x_5\de_2.\theta_{41,15,34,45})(s)-(x_5\de_4.\theta_{12,15,34,45})(s)+2x_5\de_1.(\theta_{24,15,34,45}(s))\\&+2x_5\de_2.(\theta_{41,15,34,45}(s))+2x_5\de_4.(\theta_{12,15,34,45}(s))=0,
\end{align*}
which is equivalent to the following equation:
\begin{align*}
-2\theta_{14,15,24,34}(s)&+2\theta_{12,14,34,45}(s)
-2x_5\de_1.(\theta_{15,24,34,45}(s))-2x_5\de_2.(\theta_{14,15,34,45}(s))+\\
&2x_5\de_4.(\theta_{12,15,34,45}(s))=0.
\end{align*}
If we apply the vector field $x_1\de_5$ we get the following equation:
\begin{align*}
-2\theta_{54,15,24,34}(s)&-2\theta_{52,14,34,45}(s)-
2\mu_{15}(\theta_{15,24,34,45}(s))-2x_1\de_2.(\theta_{14,15,34,45}(s))\\
&+2x_1\de_4.(\theta_{12,15,34,45}(s))=0,\end{align*}
where we used that if $\theta_{15,24,34,45}(s)$ has highest weight then
$\theta_{15,25,34,45}(s)=0$.
Hence we get
$$(-6-2\mu_{15})(\theta_{15,24,34,45}(s))=0,$$
a contradiction.
\item We have by Equations \eqref{sing1c}--\eqref{sing4c} $\theta_{15,25,34,45}=\theta_{15,24,35,45}=\theta_{14,25,35,45}$ and $\theta^1_{15,45}=\theta^2_{25,45}=\theta^3_{35,45}=0$.
 In this case Equation \eqref{sing1b} with $a=1$, $b=2$, $c=4$, $p=5$, $q=3$ and $J=(25,35,45)$ gives:
 \begin{align*}
 -2\theta^5_{J\setminus \{53\}}(s)-(x_5\de_1.\theta_{24,25,35,45})(s)-
 (x_5\de_2.\theta_{41,25,35,45})(s)-(x_5\de_4.\theta_{12,25,35,45})(s)\\
 -2x_5\de_2.(\theta_{14,25,35,45}(s))+2x_5\de_4.(\theta_{12,25,35,45}(s))=0
 \end{align*}
 where we used that if $\theta_{14,25,35,45}(s)$ has highest weight then
$\theta_{24,25,35,45}(s)=0$.
Hence we have:
\begin{align*}
 -2\theta^5_{J\setminus \{53\}}(s)+2\theta_{12,24,35,45}(s)+\theta_{24,25,31,45}(s)
+\theta_{41,25,32,45}(s)+2\theta_{14,24,25,35}(s)\\
 +\theta_{12,25,34,45}(s)-2x_5\de_2.(\theta_{14,25,35,45}(s))+
 2x_5\de_4.(\theta_{12,25,35,45}(s))=0.
 \end{align*}
 If we apply the vector field $x_2\de_5$ we obtain the following equation:
 \begin{align*}
 -2\theta_{15,24,35,45}(s)-
\theta_{41,25,35,45}(s)-2\theta_{14,54,25,35}(s)\\
 -\theta_{15,25,34,45}(s)-2\mu_{25}(\theta_{14,25,35,45}(s))-
 2\theta_{14,25,35,45}(s)=0
 \end{align*}
 where we used $\theta^2_{25,45}(s)=0$ and that if $\theta_{14,25,35,45}(s)$ has highest weight then
$\theta_{15,25,35,45}(s)=0$. Now, using the hypotheses
$\theta_{15,25,34,45}=\theta_{15,24,35,45}=\theta_{14,25,35,45}$, we get:
$$-2(\mu_{25}+1)\theta_{14,25,35,45},$$
a contradiction.

\end{enumerate}
\end{proof}
The following result completes the study of singular vectors of degree 4
\begin{theorem}\label{degree4}
	Let $w$ be a singular vector in $M(\mu)$ of weight $\lambda$ and degree 4. Then one of the following occurs:
	\begin{enumerate}
		\item $\mu=(n,0,0,0)$, $\lambda=(n+3,0,0,0)$ and $w=d_{12}d_{13}d_{14}d_{15} \otimes x_1^{n}$ for some $n\in \mathbb N$.
		\item $\mu=(0,0,0,n+3)$, $\lambda=(0,0,0,n)$ and $w=w[4_E]$ (see Section \ref{w4E}) for some $n\in \mathbb N$.
	\end{enumerate}
\end{theorem}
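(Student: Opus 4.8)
The plan is to run the argument of Corollary \ref{singol} on the two index classes left open by Proposition \ref{gr4}, and to reduce the second one to the first via the duality of Theorem \ref{duality}. By Proposition \ref{gr4} we may assume that the index $I_0$ with $\theta_{I_0}(s)\neq 0$ a highest weight vector of $F(\mu)$ satisfies $I_0\sim(12,13,14,15)$ or $I_0\sim(15,25,35,45)$; since $\varphi\mapsto\varphi^*$ interchanges these two possibilities, it is enough to treat $I_0=(12,13,14,15)$, which will give case (1), and case (2) will follow by applying Theorem \ref{duality} to the resulting morphism and identifying the dual singular vector with $w[4_E]$ of Section \ref{w4E}. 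Two preliminary remarks: first, $\omega_{12,13,14,15}=d_{12}d_{13}d_{14}d_{15}$, because in Definition \ref{defomega} every correction term $D_S$ vanishes (the pairs $\{1,a\},\{1,b\}$ are never disjoint, so $\varepsilon_{1a,1b}=0$); second, $\lambda(\omega_{12,13,14,15})=3\varphi_{12}$, and $(12,13,14,15)$ is, up to $\sim$, the only element of $\mathcal I_4$ of this weight. Hence a singular vector $w\in M(\mu)$ with this $I_0$ has weight $\lambda=\mu+(3,0,0,0)$, and its leading term is a scalar multiple of $\omega_{I_0}\otimes\theta_{I_0}(s)$; in particular, once $\mu$ is pinned down, $w$ is unique up to a scalar.

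First I would extract, exactly as in the proof of Theorem \ref{5o7}, the homogeneous linear relations among the vectors $\theta_I(s)$ ($I\in\mathcal I_4$), $\theta_J^r(s)$ ($J\in\mathcal I_2$) and $\theta^{r_1,r_2}_K(s)$ ($K\in\mathcal I_0$) having the same weight as $\theta_{I_0}(s)$ that are forced by Equations \eqref{sing1c}--\eqref{sing4c}; this system does not involve $\mu$ and, solved with the computer, expresses each such component as a multiple of $\theta_{I_0}(s)$. Next I would determine $\mu$ by feeding the general Equation \eqref{sing1b} with carefully chosen quintuples $(a,b,c,p,q)$ and indices $J\in\mathcal I_3$, and then applying the raising operators $x_i\de_{i+1}$, in the style of the weight determination in the proof of Theorem \ref{w[7]}. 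The bookkeeping is lightened by the fact that any $\theta_X(s)$ whose prescribed weight is not a weight of $F(\mu)$ vanishes, because $\theta_{I_0}(s)$ spans the highest weight space of $F(\mu)$. I expect this to produce scalar identities of the form $(c_{i,i+1}-\mu_{i,i+1})\,\theta_{I_0}(s)=0$ with $c_{23}=c_{34}=c_{45}=0$, forcing $\mu_{23}=\mu_{34}=\mu_{45}=0$, while $\mu_{12}$ remains completely unconstrained; writing $n=\mu_{12}\in\N$ we get $\mu=(n,0,0,0)$ and $\lambda=(n+3,0,0,0)$.

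To conclude case (1), I would check directly that $d_{12}d_{13}d_{14}d_{15}\otimes x_1^n\in M(n,0,0,0)$ is a singular vector for every $n$ --- this is the known degree $4$ family from \cite{R}, and by Proposition \ref{morphisms} it amounts to verifying that each $x_id_{ij}$ kills it, which is routine. Its weight is $(n,0,0,0)+3\varphi_{12}=(n+3,0,0,0)$, and by the uniqueness of the leading term this is the only singular vector of degree $4$ in $M(n,0,0,0)$. Finally, Theorem \ref{duality} applied to the morphism $M(n+3,0,0,0)\to M(n,0,0,0)$ yields a singular vector of degree $4$ in $M(0,0,0,n+3)$ of weight $(0,0,0,n)$, which by the same analysis applied to the class $I_0\sim(15,25,35,45)$ is unique and coincides with $w[4_E]$; this is case (2).

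The step I expect to be the main obstacle is pinning down $\mu$: the parameters in \eqref{sing1b} and the subsequent raising operators must be chosen so as to force all three of $\mu_{23},\mu_{34},\mu_{45}$ to vanish, and one must check that the computation genuinely leaves $\mu_{12}$ free --- this is what makes degree $4$ an infinite family rather than a rigid case like degrees $5$ and $7$. Solving the weight-$\mu$ systems coming from Equations \eqref{sing1c}--\eqref{sing4c} is mechanical but sizeable and, as in Theorem \ref{5o7}, is best left to a computer.
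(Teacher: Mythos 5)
Your proposal is correct and follows essentially the same route as the paper: reduce to $I_0\sim(12,13,14,15)$ via Proposition \ref{gr4}, pin down $\mu$ using Equation \eqref{sing1b}, invoke Lemma \ref{I0} for uniqueness, and obtain case (2) by the duality of Theorem \ref{duality} together with the computer identification of the dual vector with $w[4_E]$. The only (inessential) difference is tactical: the paper extracts all three conditions $\mu_{23}=\mu_{34}=\mu_{45}=0$ in one stroke, by choosing $(a,b,c,p,q)=(1,2,3,5,4)$ and $J=(12,14,15)$ in \eqref{sing1b} to get $x_5\de_2.(\theta_{12,13,14,15}(s))=0$, i.e.\ the lowering operator for the root $\alpha_{25}$ kills the highest weight vector, whereas you propose to force the three simple-root coordinates to vanish one at a time in the style of the degree 5 and 7 arguments.
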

\begin{proof}
By Proposition \ref{gr4} we know that we can assume that $w$ is as in \eqref{varfi} with $\theta_{12,13,14,15}(s)$ a highest weight vector. By \eqref{sing1b} with $(a,b,c,p,q)=(1,2,3,5,4)$ and $J=(12,14,15)$ we immediately get $x_5\de_2.(\theta_{12,13,14,15}(s))=0$ (recalling that $\theta_{12,23,14,15}(s)=0$ for weight reasons) and therefore $\mu=(n,0,0,0)$ for some $n$ and $\lambda=\lambda(\omega_{12,13,14,15})+(n,0,0,0)=(n+3,0,0,0)$. The uniqueness follows by Lemma \ref{I0}. It is a trivial check that the vector $d_{12}d_{13}d_{14}d_{15} \otimes x_1^{n}$ is such a singular vector. By duality we have that the other possible case in Proposition \ref{gr4} leads to a unique singular vector in $M(0,0,0,n+3)$ of weight $(0,0,0,n)$. The fact that this vector is actually $w[4_E]$ displayed in Section 10 is a huge verification that can be made with a computer.
\end{proof}
\section{Conclusions}
As a result of discussions in the previous Section we are now in a position to state a complete classification of singular vectors in finite Verma modules for $E(5,10)$.
\begin{theorem}\label{conclusions}
	The following is a complete classification of singular vectors and corresponding morphisms for finite Verma modules.

In degree 1 we have
\begin{itemize}
	\item $w[1_A]=d_{12}\otimes x_1^{m}x_{12}^{n}\in M(m,n,0,0)$ for all $m,n\geq 0$, giving a morphism 
	\[\varphi[1_A]:M(m,n+1,0,0)\rightarrow M(m,n,0,0);\]
	\item $w[1_B]=d_{15}\otimes x_1^m (x_5^*)^{n+1}+d_{14}\otimes x_1^m x_4^*(x_5^*)^{n}+d_{13}\otimes x_1^m x_3^*(x_5^*)^{n}+d_{12}\otimes x_1^m x_2^*(x_5^*)^{n}$ for all $m,n\geq 0$, giving a morphism
	\[\varphi[1_B]:M(m+1,0,0,n)\rightarrow M(m,0,0,n+1);\]
	\item $w[1_C]=\sum_{i<j} d_{ij}\otimes x_{ij}^*(x_{45}^*)^{m}(x_5*)^n$ for all $m,n \geq 0$, giving a morphism
	\[\varphi[1_C]:M(0,0,m,n)\rightarrow M(0,0,m+1,n).
	\] 
\end{itemize}
In degree 2 we have
\begin{itemize}
	\item $w[2_{BA}]=\sum_{j>1}d_{12}d_{1j}\otimes x_1^m x_j^* \in M(m,0,0,1)$ for all $m\geq 0$, giving a morphism
	\[
	\varphi[2_{BA}]=\varphi[1_B]\circ \varphi[1_A]:M(m+1,1,0,0)\rightarrow M(m,0,0,1);
	\]
	\item $w[2_{CB}]=\sum_{j>1} \sum_{h<k}d_{1j}d_{hk}\otimes x_{hk}^* x_j^* (x_5^*)^n \in M(0,0,1,n+1)$ for all $n\geq 0$, giving a morphism
	\[
	\varphi[2_{CB}]=\varphi[1_C]\circ \varphi[1_B]:M(1,0,0,n)\rightarrow M(0,0,1,n+1);
	\]
	\item $w[2_{CA}]=\sum_{i<j}d_{12}d_{ij}\otimes x_{ij}^* \in M(0,0,1,0)$, giving a morphism
	\[
	\varphi[2_{CA}]=\varphi[1_C]\circ \varphi[1_A]:M(0,1,0,0)\rightarrow M(0,0,1,0).
	\]
\end{itemize}
In degree 3 we have
\begin{itemize}
	\item $w[3_{CBA}]=\sum_{j>1}\sum_{k<l}d_{12}d_{1j}d_{kl}\otimes x_j^* x_{kl}^* \in M(0,0,1,1)$, giving a morphism
	\[
	\varphi[3_{CBA}])=\varphi[1_C]\circ \varphi[1_B]\circ \varphi[1_A]:M(1,1,0,0)\rightarrow M(0,0,1,1).
	\]
\end{itemize}
In degree 4 we have
\begin{itemize}
	\item $w[4_D]=d_{12}d_{13}d_{14}d_{15}\otimes x_1^m \in M(m,0,0,0)$ for all $m\geq 0$, giving a morphism
	\[
	\varphi[4_D]:M(m+3,0,0,0)\rightarrow M(m,0,0,0);
	\]
	\item $w[4_E]\in M(0,0,0,n+3)$ shown in \S \ref{w4E}, giving a morphism
	\[
	\varphi[4_E]:M(0,0,0,n)\rightarrow M(0,0,0,n+3).
	\]
\end{itemize}
In degree 5 we have
\begin{itemize}
	\item $w[5_{CD}]=d_{12}d_{13}d_{14}d_{15} \sum_{2<i<j} d_{ij}\otimes x_{ij}^*$, giving a morphism
	\[
	\varphi[5_{CD}]=\varphi[1_C]\circ \varphi:[4_D]: M(3,0,0,0)\rightarrow M(0,0,1,0);
	\]
	\item $w[5_{EA}]=d_{12}w[4_E]\in M(0,0,0,3)$, giving a morphism
	\[
	\varphi[5_{EA}]=\varphi[4_E]\circ \varphi[1_A]:M(0,1,0,0)\rightarrow M(0,0,0,3).
	\]
\end{itemize}
In degree 7 we have
\begin{itemize}
\item $w[7]\in M(0,0,0,2)$ given in Theorem \ref{w[7]}, giving a morphism
\[
\varphi[7]:M(2,0,0,0)\rightarrow M(0,0,0,2).
\]
\end{itemize}
In degree 11 we have
\begin{itemize}
	\item $w[11]\in M(0,0,0,1)$ given in Theorem \ref{w[11]}, giving a morphism
	\[
	\varphi[11]:M(1,0,0,0)\rightarrow M(0,0,0,1).
	\]
\end{itemize}
\end{theorem}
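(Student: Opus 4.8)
The plan is to assemble the classification by going through the possible degrees, using the bounds and case analyses of Sections \ref{first}--\ref{d4} together with the already known results in degrees $1$, $2$ and $3$. First I would recall two facts set up in Section \ref{S3}: by \cite[Proposition 3.5]{CC} a finite Verma module $M(\mu)$ contains a singular vector of degree $d$ and weight $\lambda$ precisely when there is a morphism $M(\lambda)\to M(\mu)$ of degree $d$, and by \cite{CC} a singular vector is uniquely determined up to a scalar by its leading term. Hence it is enough to produce, for each pair $(\lambda,\mu)$ occurring, one explicit singular vector and to show that no other pairs can occur. The starting point is the Corollary at the end of Section \ref{first}, which bounds the degree of any singular vector by $14$.

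Next I would treat degrees greater than $10$. By Propositions \ref{d!=h+4} and \ref{hgeq8} a singular vector of degree $d$ and height $h$ satisfies $h\le d<h+4$, with $h\ge 8$ when $d=h+2$, so together with the bound $d\le 14$ only the pairs $(h,d)\in\{(8,10),(9,11),(10,12),(9,13),(10,14)\}$ survive, hence in degree $>10$ only $(9,11)$, $(10,12)$, $(9,13)$, $(10,14)$. Propositions \ref{d=h+4prima} and \ref{d!=h+4} kill degrees $13$ and $14$, the last Proposition of Section \ref{>10} kills degree $12$, and for $(h,d)=(9,11)$ the chain of results Lemma \ref{lemh=9}, Lemma \ref{g0h9}, Proposition \ref{h93cases}, Lemma \ref{h9case3} leaves exactly case (2) of Proposition \ref{h93cases}, where Theorem \ref{w[11]} produces the unique singular vector $w[11]\in M(0,0,0,1)$ of weight $(1,0,0,0)$, yielding $\varphi[11]$.

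Then I would cover degrees $4$ through $10$. Theorem \ref{5o7} shows that a singular vector of degree $d$ with $5\le d\le 10$ forces $d=7$ or $d=5$: Theorem \ref{w[7]} identifies the degree $7$ one with $w[7]\in M(0,0,0,2)$ of weight $(2,0,0,0)$, and the degree $5$ theorem identifies the degree $5$ ones with $w[5_{CD}]\in M(0,0,1,0)$ and $w[5_{EA}]=d_{12}w[4_E]\in M(0,0,0,3)$; Theorem \ref{degree4} classifies the degree $4$ ones as $w[4_D]=d_{12}d_{13}d_{14}d_{15}\otimes x_1^m$ and $w[4_E]$. Finally, in degrees $1$, $2$ and $3$ the classification is that of \cite{R} (degree $1$: $w[1_A], w[1_B], w[1_C]$) and \cite{CC} (degrees $2$ and $3$: $w[2_{BA}], w[2_{CB}], w[2_{CA}], w[3_{CBA}]$). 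It then remains to record the composition identities: each of $\varphi[1_B]\circ\varphi[1_A]$, $\varphi[1_C]\circ\varphi[1_B]$, $\varphi[1_C]\circ\varphi[1_A]$, $\varphi[1_C]\circ\varphi[1_B]\circ\varphi[1_A]$, $\varphi[1_C]\circ\varphi[4_D]$ and $\varphi[4_E]\circ\varphi[1_A]$ is a nonzero morphism of the expected degree between the indicated modules, so by the uniqueness of singular vectors it agrees up to a scalar with the morphism attached to $w[2_{BA}]$, $w[2_{CB}]$, $w[2_{CA}]$, $w[3_{CBA}]$, $w[5_{CD}]$, $w[5_{EA}]$ respectively; this also shows these morphisms fit into the complexes of Figure \ref{E510morphisms}.

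I expect the genuinely delicate points to be two. One is purely computational: verifying that the explicit vectors $w[7]$, $w[11]$ and $w[4_E]$ (and the degree $5$ ones) are actually singular, which in Theorems \ref{w[7]}, \ref{w[11]} and \ref{degree4} is done by a long computer-assisted check; uniqueness of each is cheap once the leading term is pinned down by Lemma \ref{I0}. The other is bookkeeping: making sure the case analysis in degrees $\le 14$ is exhaustive and that no pair $(\lambda,\mu)$ is counted twice or missed, which is best organized through the duality of Theorem \ref{duality}, so that, for example, the morphism $M(1,0,0,0)\to M(0,0,0,1)$ and its dual are recognized as the same datum. Both are more a matter of care than of a new idea, so no single step is a conceptual obstacle --- the theorem is the synthesis of the preceding sections.
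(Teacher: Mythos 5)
Your proposal is correct and follows essentially the same route as the paper, whose own proof of Theorem \ref{conclusions} is likewise just an assembly of the classifications from Sections \ref{>10}, \ref{min10} and \ref{d4} together with the results of \cite{KR}, \cite{R} and \cite{CC} in degrees at most $3$. The only bookkeeping point you should make explicit is the pair $(h,d)=(8,10)$, which you list among the surviving cases but never dispatch: it has degree strictly greater than height, so it is not covered by Theorem \ref{5o7} (which presupposes $d=h$) and must be excluded by Proposition \ref{h8d10}.
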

\begin{proof}
In \cite{KR} singular vectors of degree 1 were constructed, and in \cite{R} it was shown that there are no other ones. In \cite{R} singular vectors of degree 2,3,4 and 5 were constructed, and it was shown in \cite{CC} that there are no other ones of degree 2 and 3. In the paper we show that there are no other singular vectors of degree 4 and 5. More precisely, singular vectors of degree 4 are classified in Section \ref{d4}, singular vectors with degree equal to height greater than 5 are classified in Section \ref{min10}, and singular vectors of degree greater than height are classified in Section \ref{>10}.
\end{proof}
This theorem gives an affirmative answer to the conjecture posed in \cite{KR}.
\begin{corollary} All degenerate finite Verma modules over $E(5,10)$ are of the form $M(m,n,0,0)$, $M(0,0,m,n)$ or $M(m,0,0,n)$, where $m,n\in\N$.
\end{corollary}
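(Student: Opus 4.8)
The plan is to deduce the statement directly from the classification of singular vectors in Theorem \ref{conclusions}. Recall from \cite[Proposition 3.3]{CC} that a finite Verma module $M(\mu)$ is degenerate precisely when it contains a singular vector. Hence it suffices to check that every singular vector listed in Theorem \ref{conclusions} lies in a module $M(\mu)$ whose highest weight $\mu$ belongs to one of the three announced families $\{(m,n,0,0)\}$, $\{(0,0,m,n)\}$, $\{(m,0,0,n)\}$ with $m,n\in\N$, since by the completeness assertion of Theorem \ref{conclusions} there are no other singular vectors.

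First I would group the singular vectors of Theorem \ref{conclusions} according to the highest weight $\mu$ of the module containing them. The vector $w[1_A]$ lives in $M(m,n,0,0)$, which is already of the first type. The vectors $w[1_B]$, $w[2_{BA}]$, $w[4_D]$ live respectively in $M(m,0,0,n+1)$, $M(m,0,0,1)$, $M(m,0,0,0)$, all of the third type $M(m,0,0,n)$; likewise $w[4_E]$, $w[5_{EA}]$, $w[7]$, $w[11]$ live in $M(0,0,0,n+3)$, $M(0,0,0,3)$, $M(0,0,0,2)$, $M(0,0,0,1)$, which are again of the third type (with first entry $0$). Finally $w[1_C]$, $w[2_{CB}]$, $w[2_{CA}]$, $w[3_{CBA}]$, $w[5_{CD}]$ live in $M(0,0,m+1,n)$, $M(0,0,1,n+1)$, $M(0,0,1,0)$, $M(0,0,1,1)$, $M(0,0,1,0)$, all of the second type $M(0,0,m,n)$. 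This exhausts the list in Theorem \ref{conclusions}, and every $\mu$ that occurs indeed has one of the three prescribed forms, so the corollary follows.

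The argument is pure bookkeeping once Theorem \ref{conclusions} is in hand, so I do not expect a genuine obstacle; the only point requiring a moment's care is checking that the various shifted weights appearing as codomains of the constructed morphisms -- for instance $M(0,0,1,n+1)$ or $M(0,0,0,n+3)$ -- are really of the stated form, i.e. that a weight supported on the third and fourth fundamental weights lies in $\{(0,0,m,n)\}$ and a weight supported on the first and fourth lies in $\{(m,0,0,n)\}$. I would also remark that the corollary is a one-sided statement: it does not assert that every module of one of these three forms is degenerate (for example $M(0,0,0,0)$ does not occur as a codomain in Theorem \ref{conclusions}, hence is irreducible), but only that degeneracy forces $\mu$ into one of these three families.
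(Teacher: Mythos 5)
Your main argument is correct and is exactly the (unwritten) proof the paper intends: the corollary is immediate bookkeeping from Theorem \ref{conclusions} together with the fact that degeneracy of $M(\mu)$ is equivalent to the existence of a singular vector, and your tabulation of the weights $\mu$ occurring in the theorem is accurate. One caveat: your closing aside is wrong. The module $M(0,0,0,0)$ \emph{is} degenerate and \emph{does} occur as a codomain in Theorem \ref{conclusions} --- take $w[1_A]=d_{12}\otimes 1\in M(0,0,0,0)$ (the case $m=n=0$), which gives the morphism $\varphi[1_A]:M(0,1,0,0)\rightarrow M(0,0,0,0)$ appearing in the complex through the origin in Figure \ref{E510morphisms}. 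Since this remark is not used in your proof of the corollary, the proof itself stands, but the claim that ``$M(0,0,0,0)$ does not occur as a codomain \dots hence is irreducible'' should be deleted.
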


Figure \ref{E510morphisms} represents all morphisms between  finite degenerate Verma modules, which are not compositions of other morphisms. %All compositions between morphisms given by concatenation of two arrows in this picture vanish.

Since a singular vector of weight $\mu$ in a finite Verma module with highest weight $\lambda$ corresponds to a non-zero morphism $M(\mu)\rightarrow M(\lambda)$, we can construct infinite sequences of morphisms as in Figure \ref{E510morphisms}. All of these sequences are complexes (i.e. all compositions of consecutive morphisms vanish), except for the one through the origin; the latter becomes a complex if we replace the sequence
$$M(0,1,0,0)\rightarrow M(0,0,0,0)\rightarrow M(1,0,0,0)$$
of morphisms of degree 1 by their composition of degree 2. This claim, when morphisms of degree 7 and 11 are not involved, follows from \cite{R}; if they are involved, it follows since there are no morphisms of degree 8 and 12.

Figure \ref{E510morphismsbis} represents all morphisms between  finite degenerate Verma modules of degree 2 and 3; the corresponding bilateral infinite sequences shown in this picture are complexes, since any possible composition of two of these morphisms does not appear in Theorem \ref{conclusions}.

\begin{figure}[h]
	\begin{center}
		\scalebox{1}{	$$
			\begin{tikzpicture}
			\draw[fill=white]{(0,0) circle(3pt)}; 
			\draw[fill=white]{(1,0) circle(3pt)}; 
			\draw[fill=none]{(2,0) circle(3pt)};
			\draw[fill=none]{(3,0) circle(3pt)};
			\draw[fill=none]{(4,0) circle(3pt)};
			\draw[fill=none]{(5,0) circle(3pt)};
			\draw[fill=none]{(6,0) circle(3pt)};
			\draw[fill=none]{(6,1) circle(3pt)};
			\draw[fill=none]{(6,2) circle(3pt)};
			\draw[fill=none]{(6,3) circle(3pt)};
			\draw[fill=none]{(6,4) circle(3pt)};
			\draw[fill=none]{(6,5) circle(3pt)};
			\draw[fill=none]{(-6,-6) circle(3pt)};
			\draw[fill=none]{(-1,-6) circle(3pt)};
			\draw[fill=none]{(-2,-6) circle(3pt)};
			\draw[fill=none]{(-3,-6) circle(3pt)};
			\draw[fill=none]{(-4,-6) circle(3pt)};
			\draw[fill=none]{(-5,-6) circle(3pt)};
			\draw[fill=none]{(0,1) circle(3pt)};
			\draw[fill=none]{(1,1) circle(3pt)};
			\draw[fill=none]{(2,1) circle(3pt)};
			\draw[fill=none]{(3,1) circle(3pt)};
			\draw[fill=none]{(4,1) circle(3pt)};
			\draw[fill=none]{(5,1) circle(3pt)};
			\draw[fill=none]{(0,2) circle(3pt)};
			\draw[fill=none]{(1,2) circle(3pt)};
			\draw[fill=none]{(2,2) circle(3pt)};
			\draw[fill=none]{(3,2) circle(3pt)};
			\draw[fill=none]{(4,2) circle(3pt)};
			\draw[fill=none]{(5,2) circle(3pt)};
			\draw[fill=none]{(0,3) circle(3pt)};
			\draw[fill=none]{(1,3) circle(3pt)};
			\draw[fill=none]{(2,3) circle(3pt)};
			\draw[fill=none]{(3,3) circle(3pt)};
			\draw[fill=none]{(4,3) circle(3pt)};
			\draw[fill=none]{(5,3) circle(3pt)};
			\draw[fill=none]{(0,4) circle(3pt)};
			\draw[fill=none]{(1,4) circle(3pt)};
			\draw[fill=none]{(2,4) circle(3pt)};
			\draw[fill=none]{(3,4) circle(3pt)};
			\draw[fill=none]{(4,4) circle(3pt)};
			\draw[fill=none]{(5,4) circle(3pt)};
			\draw[fill=none]{(0,5) circle(3pt)};
			\draw[fill=none]{(1,5) circle(3pt)};
			\draw[fill=none]{(2,5) circle(3pt)};
			\draw[fill=none]{(3,5) circle(3pt)};
			\draw[fill=none]{(4,5) circle(3pt)};
			\draw[fill=none]{(5,5) circle(3pt)};
			
			\draw[fill=none]{(1,-7) circle(3pt)};
			\draw[fill=none]{(2,-7) circle(3pt)};
			\draw[fill=none]{(3,-7) circle(3pt)};
			\draw[fill=none]{(4,-7) circle(3pt)};
			\draw[fill=none]{(5,-7) circle(3pt)};
			\draw[fill=none]{(6,-7) circle(3pt)};
			\draw[fill=none]{(1,-2) circle(3pt)};
			\draw[fill=none]{(2,-2) circle(3pt)};
			\draw[fill=none]{(3,-2) circle(3pt)};
			\draw[fill=none]{(4,-2) circle(3pt)};
			\draw[fill=none]{(5,-2) circle(3pt)};
			\draw[fill=none]{(6,-2) circle(3pt)};
			\draw[fill=none]{(1,-3) circle(3pt)};
			\draw[fill=none]{(2,-3) circle(3pt)};
			\draw[fill=none]{(3,-3) circle(3pt)};
			\draw[fill=none]{(4,-3) circle(3pt)};
			\draw[fill=none]{(5,-3) circle(3pt)};
			\draw[fill=none]{(6,-3) circle(3pt)};
			\draw[fill=none]{(1,-4) circle(3pt)};
			\draw[fill=none]{(2,-4) circle(3pt)};
			\draw[fill=none]{(3,-4) circle(3pt)};
			\draw[fill=none]{(4,-4) circle(3pt)};
			\draw[fill=none]{(5,-4) circle(3pt)};
			\draw[fill=none]{(6,-4) circle(3pt)};
			\draw[fill=none]{(1,-5) circle(3pt)};
			\draw[fill=none]{(2,-5) circle(3pt)};
			\draw[fill=none]{(3,-5) circle(3pt)};
			\draw[fill=none]{(4,-5) circle(3pt)};
			\draw[fill=none]{(5,-5) circle(3pt)};
			\draw[fill=none]{(6,-5) circle(3pt)};
			\draw[fill=none]{(1,-6) circle(3pt)};
			\draw[fill=none]{(2,-6) circle(3pt)};
			\draw[fill=none]{(3,-6) circle(3pt)};
			\draw[fill=none]{(4,-6) circle(3pt)};
			\draw[fill=none]{(5,-6) circle(3pt)};
			\draw[fill=none]{(6,-6) circle(3pt)};
			
			\draw[fill=none]{(-1,-7) circle(3pt)};
			\draw[fill=none]{(-2,-7) circle(3pt)};
			\draw[fill=none]{(-3,-7) circle(3pt)};
			\draw[fill=none]{(-4,-7) circle(3pt)};
			\draw[fill=none]{(-5,-7) circle(3pt)};
			\draw[fill=none]{(-6,-7) circle(3pt)};
			\draw[fill=none]{(-6,-1) circle(3pt)};
			\draw[fill=none]{(-1,-1) circle(3pt)};
			\draw[fill=none]{(-2,-1) circle(3pt)};
			\draw[fill=none]{(-3,-1) circle(3pt)};
			\draw[fill=none]{(-4,-1) circle(3pt)};
			\draw[fill=none]{(-5,-1) circle(3pt)};
			\draw[fill=none]{(-6,-2) circle(3pt)};
			\draw[fill=none]{(-1,-2) circle(3pt)};
			\draw[fill=none]{(-2,-2) circle(3pt)};
			\draw[fill=none]{(-3,-2) circle(3pt)};
			\draw[fill=none]{(-4,-2) circle(3pt)};
			\draw[fill=none]{(-5,-2) circle(3pt)};
			\draw[fill=none]{(-6,-3) circle(3pt)};
			\draw[fill=none]{(-1,-3) circle(3pt)};
			\draw[fill=none]{(-2,-3) circle(3pt)};
			\draw[fill=none]{(-3,-3) circle(3pt)};
			\draw[fill=none]{(-4,-3) circle(3pt)};
			\draw[fill=none]{(-5,-3) circle(3pt)};
			\draw[fill=none]{(-6,-4) circle(3pt)};
			\draw[fill=none]{(-1,-4) circle(3pt)};
			\draw[fill=none]{(-2,-4) circle(3pt)};
			\draw[fill=none]{(-3,-4) circle(3pt)};
			\draw[fill=none]{(-4,-4) circle(3pt)};
			\draw[fill=none]{(-5,-4) circle(3pt)};
			\draw[fill=none]{(-6,-5) circle(3pt)};
			\draw[fill=none]{(-1,-5) circle(3pt)};
			\draw[fill=none]{(-2,-5) circle(3pt)};
			\draw[fill=none]{(-3,-5) circle(3pt)};
			\draw[fill=none]{(-4,-5) circle(3pt)};
			\draw[fill=none]{(-5,-5) circle(3pt)};
			
			\draw[gray,thick](-6.5,-1)--(-6.1,-1);
			\draw[gray,thick](0.1,0)--(0.9,0);
			\draw[gray,thick](1.1,0)--(1.9,0);
			\draw[gray,thick](2.1,0)--(2.9,0);
			\draw[gray,thick](3.1,0)--(3.9,0);
			\draw[gray,thick](4.1,0)--(4.9,0);
			\draw[gray,thick](5.1,0)--(5.9,0);
			\draw[gray,thick](6.1,0)--(6.5,0);
			
			\draw[gray,thick](1.1,-2)--(1.9,-2);
			\draw[gray,thick](2.1,-2)--(2.9,-2);
			\draw[gray,thick](3.1,-2)--(3.9,-2);
			\draw[gray,thick](4.1,-2)--(4.9,-2);
			\draw[gray,thick](5.1,-2)--(5.9,-2);
			\draw[gray,thick](6.1,-2)--(6.5,-2);
			
			\draw[gray,thick](1,-2.1)--(1,-2.9);
			\draw[gray,thick](1,-3.1)--(1,-3.9);
			\draw[gray,thick](1,-4.1)--(1,-4.9);
			\draw[gray,thick](1,-5.1)--(1,-5.9);
			\draw[gray,thick](1,-6.1)--(1,-6.9);
			\draw[gray,thick](1,-7.1)--(1,-7.5);
			
			\draw[gray,thick](-1,-7.1)--(-1,-7.5);
			\draw[gray,thick](-1,-1.1)--(-1,-1.9);
			\draw[gray,thick](-1,-2.1)--(-1,-2.9);
			\draw[gray,thick](-1,-3.1)--(-1,-3.9);
			\draw[gray,thick](-1,-4.1)--(-1,-4.9);
			\draw[gray,thick](-1,-5.1)--(-1,-5.9);
			\draw[gray,thick](-1,-6.1)--(-1,-6.9);
			
			\draw[->, line width=1pt](0,0.9)--(0,0.1);
			\draw[->, line width=1pt](1,0.9)--(1,0.1);
			\draw[->, line width=1pt](2,0.9)--(2,0.1);
			\draw[->, line width=1pt](3,0.9)--(3,0.1);
			\draw[->, line width=1pt](4,0.9)--(4,0.1);
			\draw[->, line width=1pt](5,0.9)--(5,0.1);
			\draw[->, line width=1pt](6,0.9)--(6,0.1);
			\draw[->, line width=1pt](0,1.9)--(0,1.1);
			\draw[->, line width=1pt](1,1.9)--(1,1.1);
			\draw[->, line width=1pt](2,1.9)--(2,1.1);
			\draw[->, line width=1pt](3,1.9)--(3,1.1);
			\draw[->, line width=1pt](4,1.9)--(4,1.1);
			\draw[->, line width=1pt](5,1.9)--(5,1.1);
			\draw[->, line width=1pt](6,1.9)--(6,1.1);
			\draw[->, line width=1pt](0,2.9)--(0,2.1);
			\draw[->, line width=1pt](1,2.9)--(1,2.1);
			\draw[->, line width=1pt](2,2.9)--(2,2.1);
			\draw[->, line width=1pt](3,2.9)--(3,2.1);
			\draw[->, line width=1pt](4,2.9)--(4,2.1);
			\draw[->, line width=1pt](5,2.9)--(5,2.1);
			\draw[->, line width=1pt](6,2.9)--(6,2.1);
			\draw[->, line width=1pt](0,3.9)--(0,3.1);
			\draw[->, line width=1pt](1,3.9)--(1,3.1);
			\draw[->, line width=1pt](2,3.9)--(2,3.1);
			\draw[->, line width=1pt](3,3.9)--(3,3.1);
			\draw[->, line width=1pt](4,3.9)--(4,3.1);
			\draw[->, line width=1pt](5,3.9)--(5,3.1);
			\draw[->, line width=1pt](6,3.9)--(6,3.1);
			\draw[->, line width=1pt](0,4.9)--(0,4.1);
			\draw[->, line width=1pt](1,4.9)--(1,4.1);
			\draw[->, line width=1pt](2,4.9)--(2,4.1);
			\draw[->, line width=1pt](3,4.9)--(3,4.1);
			\draw[->, line width=1pt](4,4.9)--(4,4.1);
			\draw[->, line width=1pt](5,4.9)--(5,4.1);
			\draw[->, line width=1pt](6,4.9)--(6,4.1);
			\draw[->, line width=1pt](0,5.5)--(0,5.1);
			\draw[->, line width=1pt](1,5.5)--(1,5.1);
			\draw[->, line width=1pt](2,5.5)--(2,5.1);
			\draw[->, line width=1pt](3,5.5)--(3,5.1);
			\draw[->, line width=1pt](4,5.5)--(4,5.1);
			\draw[->, line width=1pt](5,5.5)--(5,5.1);
			\draw[->, line width=1pt](6,5.5)--(6,5.1);

			\draw[->, line width=1pt](-5.1,-7)--(-5.9,-7);
			\draw[->, line width=1pt](-1.1,-7)--(-1.9,-7);
			\draw[->, line width=1pt](-2.1,-7)--(-2.9,-7);
			\draw[->, line width=1pt](-3.1,-7)--(-3.9,-7);
			\draw[->, line width=1pt](-4.1,-7)--(-4.9,-7);
			\draw[line width=1pt](-6.1,-7)--(-6.5,-7);
			\draw[line width=1pt](-6.1,-2)--(-6.5,-2);
			\draw[line width=1pt](-6.1,-3)--(-6.5,-3);
			\draw[line width=1pt](-6.1,-4)--(-6.5,-4);
			\draw[line width=1pt](-6.1,-5)--(-6.5,-5);
			\draw[line width=1pt](-6.1,-6)--(-6.5,-6);
			\draw[->, line width=1pt](-5.1,-1)--(-5.9,-1);
			\draw[->, line width=1pt](-1.1,-1)--(-1.9,-1);
			\draw[->, line width=1pt](-2.1,-1)--(-2.9,-1);
			\draw[->, line width=1pt](-3.1,-1)--(-3.9,-1);
			\draw[->, line width=1pt](-4.1,-1)--(-4.9,-1);
			\draw[->, line width=1pt](-5.1,-2)--(-5.9,-2);
			\draw[->, line width=1pt](-1.1,-2)--(-1.9,-2);
			\draw[->, line width=1pt](-2.1,-2)--(-2.9,-2);
			\draw[->, line width=1pt](-3.1,-2)--(-3.9,-2);
			\draw[->, line width=1pt](-4.1,-2)--(-4.9,-2);
			\draw[->, line width=1pt](-5.1,-3)--(-5.9,-3);
			\draw[->, line width=1pt](-1.1,-3)--(-1.9,-3);
			\draw[->, line width=1pt](-2.1,-3)--(-2.9,-3);
			\draw[->, line width=1pt](-3.1,-3)--(-3.9,-3);
			\draw[->, line width=1pt](-4.1,-3)--(-4.9,-3);
			\draw[->, line width=1pt](-5.1,-4)--(-5.9,-4);
			\draw[->, line width=1pt](-1.1,-4)--(-1.9,-4);
			\draw[->, line width=1pt](-2.1,-4)--(-2.9,-4);
			\draw[->, line width=1pt](-3.1,-4)--(-3.9,-4);
			\draw[->, line width=1pt](-4.1,-4)--(-4.9,-4);
			\draw[->, line width=1pt](-5.1,-5)--(-5.9,-5);
			\draw[->, line width=1pt](-1.1,-5)--(-1.9,-5);
			\draw[->, line width=1pt](-2.1,-5)--(-2.9,-5);
			\draw[->, line width=1pt](-3.1,-5)--(-3.9,-5);
			\draw[->, line width=1pt](-4.1,-5)--(-4.9,-5);
			\draw[->, line width=1pt](-5.1,-6)--(-5.9,-6);
			\draw[->, line width=1pt](-1.1,-6)--(-1.9,-6);
			\draw[->, line width=1pt](-2.1,-6)--(-2.9,-6);
			\draw[->, line width=1pt](-3.1,-6)--(-3.9,-6);
			\draw[->, line width=1pt](-4.1,-6)--(-4.9,-6);

			\draw[->, line width=1pt](1.9,-2.1)--(1.1,-2.9);
			\draw[->, line width=1pt](2.9,-2.1)--(2.1,-2.9);
			\draw[->, line width=1pt](3.9,-2.1)--(3.1,-2.9);
			\draw[->, line width=1pt](4.9,-2.1)--(4.1,-2.9);
			\draw[->, line width=1pt](5.9,-2.1)--(5.1,-2.9);
			\draw[->, line width=1pt](6.5,-2.5)--(6.1,-2.9);
			\draw[->, line width=1pt](1.9,-3.1)--(1.1,-3.9);
			\draw[->, line width=1pt](2.9,-3.1)--(2.1,-3.9);
			\draw[->, line width=1pt](3.9,-3.1)--(3.1,-3.9);
			\draw[->, line width=1pt](4.9,-3.1)--(4.1,-3.9);
			\draw[->, line width=1pt](5.9,-3.1)--(5.1,-3.9);
			\draw[->, line width=1pt](6.5,-3.5)--(6.1,-3.9);
			\draw[->, line width=1pt](1.9,-4.1)--(1.1,-4.9);
			\draw[->, line width=1pt](2.9,-4.1)--(2.1,-4.9);
			\draw[->, line width=1pt](3.9,-4.1)--(3.1,-4.9);
			\draw[->, line width=1pt](4.9,-4.1)--(4.1,-4.9);
			\draw[->, line width=1pt](5.9,-4.1)--(5.1,-4.9);
			\draw[->, line width=1pt](6.5,-4.5)--(6.1,-4.9);
			\draw[->, line width=1pt](1.9,-5.1)--(1.1,-5.9);
			\draw[->, line width=1pt](2.9,-5.1)--(2.1,-5.9);
			\draw[->, line width=1pt](3.9,-5.1)--(3.1,-5.9);
			\draw[->, line width=1pt](4.9,-5.1)--(4.1,-5.9);
			\draw[->, line width=1pt](5.9,-5.1)--(5.1,-5.9);
			\draw[->, line width=1pt](6.5,-5.5)--(6.1,-5.9);
			\draw[->, line width=1pt](1.9,-6.1)--(1.1,-6.9);
			\draw[->, line width=1pt](2.9,-6.1)--(2.1,-6.9);
			\draw[->, line width=1pt](3.9,-6.1)--(3.1,-6.9);
			\draw[->, line width=1pt](4.9,-6.1)--(4.1,-6.9);
			\draw[->, line width=1pt](5.9,-6.1)--(5.1,-6.9);
			\draw[->, line width=1pt](6.5,-6.5)--(6.1,-6.9);
			\draw[line width=1pt](1.9,-7.1)--(1.6,-7.4);
			\draw[line width=1pt](2.9,-7.1)--(2.6,-7.4);
			\draw[line width=1pt](3.9,-7.1)--(3.6,-7.4);
			\draw[line width=1pt](4.9,-7.1)--(4.6,-7.4);
			\draw[line width=1pt](5.9,-7.1)--(5.6,-7.4);

			\draw[->, line width=1.5pt](0.9,-0.1)--(-0.9,-1.9);
			\draw[->, line width=1.5pt](1.9,-0.1)--(-0.9,-2.9);
			\draw[->, line width=1.5pt](2.9,-0.1)--(1.1,-1.9);
			\draw[->, line width=1.5pt](3.9,-0.1)--(2.1,-1.9);
			\draw[->, line width=1.5pt](4.9,-0.1)--(3.1,-1.9);
			\draw[->, line width=1.5pt](5.9,-0.1)--(4.1,-1.9);
			\draw[->, line width=1.5pt](6.5,-0.5)--(5.1,-1.9);
			\draw[->, line width=1.5pt](6.5,-1.5)--(6.1,-1.9);
			
			\draw[->, line width=1.5pt](0.9,-2.1)--(-0.9,-3.9);
			\draw[->, line width=1.5pt](0.9,-3.1)--(-0.9,-4.9);
			\draw[->, line width=1.5pt](0.9,-4.1)--(-0.9,-5.9);
			\draw[->, line width=1.5pt](0.9,-5.1)--(-0.9,-6.9);
			\draw[ line width=1.5pt](0.9,-6.1)--(-0.3,-7.3);
			\draw[line width=1.5pt](0.9,-7.1)--(0.5,-7.5);

			%\draw[fill=black]{(-2,0) circle(3pt)};
			%\draw[fill=black]{(3.23,3.8) circle(3pt)};
			%\draw[fill=black]{(-3.23,3.8) circle(3pt)};
			%\draw[fill=black]{(0,6.75) circle(3pt)};
			
			\node at (-7,-1) {$m$};
			\node at (7,0) {$m$};
			\node at (7,-2) {$m$};
			\node at (0,5.8) {$n$};
			\node at (-1,-7.8) {$n$};
			\node at (1,-7.8) {$n$};
			\node at (-0.2,-0.8) {$11$};
			\node at (0.3,-1.4) {$7$};
			\node at (0,-2.7) {$4$};
			\node at (0,-3.7) {$4$};
			\node at (0,-4.7) {$4$};
			\node at (0,-5.7) {$4$};
			\node at (1.8,-0.9) {$4$};
			\node at (2.8,-0.9) {$4$};
			\node at (3.8,-0.9) {$4$};
			\node at (4.8,-0.9) {$4$};
			\node at (6.5,6){$M(m,n,0,0)$};
			\node at (-6.5,-7.5){$M(0,0,m,n)$};
			\node at (7,-7.8){$M(m,0,0,n)$};
			\node[rotate=45] at (-0.5,-0.5){$=$};
			\end{tikzpicture}$$}
	\end{center}

	\caption{\label{E510morphisms} All nonzero morphisms between finite generalized Verma modules for $E(5,10)$ which are not compositions of other morphisms. Morphisms of degree $>1$ are labelled by their degree.} 
\end{figure}

\newpage

\newpage
\section{The singular vector in $M(0,0,0,n+3)$ of degree 4 and weight $(0,0,0,n)$}\label{w4E}
The following is the singular vector (which has been found and checked with the aid of a computer) in $M(0,0,0,n+3)$ of degree 4 and weight $(0,0,0,n)$. Here we denote $x_i^*$ by $f_i$ and we omit all tensor product symbols. 
\begin{align*}
w[4_E]&=d_{12}d_{13}d_{14}d_{15}f_1^3f_5^{n}
+  d_{12}d_{14}d_{15}d_{23}f_1^2 f_2f_5^{n}
+  d_{13}d_{14}d_{15}d_{23}f_1^2 f_3  f_5^{n}
- d_{12}d_{13}d_{15}d_{24}f_1^2 f_2f_5^{n}\\&
+  d_{13}d_{14}d_{15}d_{24}f_1^2 f_4f_5^{n}
+  d_{12}d_{15}d_{23}d_{24}f_1 f_2^2 f_5^{n}
+  d_{13}d_{15}d_{23}d_{24}f_1 f_2 f_3 f_5^{n}
+  d_{14}d_{15}d_{23}d_{24}f_1 f_2 f_4 f_5^{n}\\&
+  d_{12}d_{13}d_{14}d_{25}f_1^2 f_2f_5^{n}
+  d_{13}d_{14}d_{15}d_{25}f_1^2 f_5^{n+1}
- d_{12}d_{14}d_{23}d_{25}f_1 f_2^2 f_5^{n}
- d_{13}d_{14}d_{23}d_{25}f_1 f_2 f_3 f_5^{n}
\end{align*}
\begin{align*}
&+  d_{14}d_{15}d_{23}d_{25}f_1 f_2 f_5^{n+1}
+  d_{12}d_{13}d_{24}d_{25}f_1 f_2^2 f_5^{n}
- d_{13}d_{14}d_{24}d_{25}f_1 f_2 f_4 f_5^{n}
- d_{13}d_{15}d_{24}d_{25}f_1 f_2 f_5^{n+1}\\&
+  d_{12}d_{23}d_{24}d_{25}f_2^3  f_5^{n}
+  d_{13}d_{23}d_{24}d_{25}f_2^2 f_3  f_5^{n}
+  d_{14}d_{23}d_{24}d_{25}f_2^2 f_4  f_5^{n}
+  d_{15}d_{23}d_{24}d_{25}f_2^2  f_5^{n+1}\\&
- d_{12}d_{13}d_{15}d_{34}f_1^2 f_3  f_5^{n}
- d_{12}d_{14}d_{15}d_{34}f_1^2 f_4f_5^{n}
+  d_{12}d_{15}d_{23}d_{34}f_1 f_2 f_3 f_5^{n}
+  d_{13}d_{15}d_{23}d_{34}f_1 f_3^2 f_5^{n}\\&
+  d_{14}d_{15}d_{23}d_{34}f_1 f_3 f_4 f_5^{n}
+  d_{12}d_{15}d_{24}d_{34}f_1 f_2 f_4 f_5^{n}
+  d_{13}d_{15}d_{24}d_{34}f_1 f_3 f_4 f_5^{n}
+  d_{14}d_{15}d_{24}d_{34}f_1 f_4^2  f_5^{n}\\&
- d_{12}d_{13}d_{25}d_{34}f_1 f_2 f_3 f_5^{n}
- d_{12}d_{14}d_{25}d_{34}f_1 f_2 f_4 f_5^{n}
+  d_{13}d_{15}d_{25}d_{34}f_1 f_3  f_5^{n+1}
+  d_{14}d_{15}d_{25}d_{34}f_1 f_4  f_5^{n+1}\\&
- d_{12}d_{23}d_{25}d_{34}f_2^2 f_3  f_5^{n}
- d_{13}d_{23}d_{25}d_{34}f_2 f_3^2  f_5^{n}
- d_{14}d_{23}d_{25}d_{34}f_2 f_3 f_4  f_5^{n}
- d_{15}d_{23}d_{25}d_{34}f_2 f_3  f_5^{n+1}\\&
- d_{12}d_{24}d_{25}d_{34}f_2^2 f_4  f_5^{n}
- d_{13}d_{24}d_{25}d_{34}f_2 f_3 f_4  f_5^{n}
- d_{14}d_{24}d_{25}d_{34}f_2 f_4^2  f_5^{n}
- d_{15}d_{24}d_{25}d_{34}f_2 f_4  f_5^{n+1}\\&
+  d_{12}d_{13}d_{14}d_{35}f_1^2 f_3  f_5^{n}
- d_{12}d_{14}d_{15}d_{35}f_1^2 f_5^{n+1}
- d_{12}d_{14}d_{23}d_{35}f_1 f_2 f_3 f_5^{n}
- d_{13}d_{14}d_{23}d_{35}f_1 f_3^2 f_5^{n}\\&
+  d_{14}d_{15}d_{23}d_{35}f_1 f_3  f_5^{n+1}
+  d_{12}d_{13}d_{24}d_{35}f_1 f_2 f_3 f_5^{n}
- d_{13}d_{14}d_{24}d_{35}f_1 f_3 f_4 f_5^{n}
+  d_{12}d_{15}d_{24}d_{35}f_1 f_2 f_5^{n+1}\\&
+  d_{14}d_{15}d_{24}d_{35}f_1 f_4  f_5^{n+1}
+  d_{12}d_{23}d_{24}d_{35}f_2^2 f_3  f_5^{n}
+  d_{13}d_{23}d_{24}d_{35}f_2 f_3^2  f_5^{n}
+  d_{14}d_{23}d_{24}d_{35}f_2 f_3 f_4  f_5^{n}\\&
+  d_{15}d_{23}d_{24}d_{35}f_2 f_3  f_5^{n+1}
- d_{12}d_{14}d_{25}d_{35}f_1 f_2 f_5^{n+1}
- d_{13}d_{14}d_{25}d_{35}f_1 f_3  f_5^{n+1}
+  d_{14}d_{15}d_{25}d_{35}f_1  f_5^{n+2}\\&
- d_{12}d_{24}d_{25}d_{35}f_2^2  f_5^{n+1}
- d_{13}d_{24}d_{25}d_{35}f_2 f_3  f_5^{n+1}
- d_{14}d_{24}d_{25}d_{35}f_2 f_4  f_5^{n+1}
- d_{15}d_{24}d_{25}d_{35}f_2  f_5^{n+2}\\&
+  d_{12}d_{13}d_{34}d_{35}f_1 f_3^2 f_5^{n}
+  d_{12}d_{14}d_{34}d_{35}f_1 f_3 f_4 f_5^{n}
+  d_{12}d_{15}d_{34}d_{35}f_1 f_3  f_5^{n+1}
+  d_{12}d_{23}d_{34}d_{35}f_2 f_3^2  f_5^{n}\\&
+  d_{13}d_{23}d_{34}d_{35}f_3^3 f_5^{n}
+  d_{14}d_{23}d_{34}d_{35}f_3^2 f_4 f_5^{n}
+  d_{15}d_{23}d_{34}d_{35}f_3^2 f_5^{n+1}
+  d_{12}d_{24}d_{34}d_{35}f_2 f_3 f_4  f_5^{n}\\&
+  d_{13}d_{24}d_{34}d_{35}f_3^2 f_4 f_5^{n}
+  d_{14}d_{24}d_{34}d_{35}f_3 f_4^2 f_5^{n}
+  d_{15}d_{24}d_{34}d_{35}f_3 f_4 f_5^{n+1}
+  d_{12}d_{25}d_{34}d_{35}f_2 f_3  f_5^{n+1}\\&
+  d_{13}d_{25}d_{34}d_{35}f_3^2 f_5^{n+1}
+  d_{14}d_{25}d_{34}d_{35}f_3 f_4 f_5^{n+1}
+  d_{15}d_{25}d_{34}d_{35}f_3  f_5^{n+2}
+  d_{12}d_{13}d_{14}d_{45}f_1^2 f_4f_5^{n}\\&
+  d_{12}d_{13}d_{15}d_{45}f_1^2 f_5^{n+1}
- d_{12}d_{14}d_{23}d_{45}f_1 f_2 f_4 f_5^{n}
- d_{13}d_{14}d_{23}d_{45}f_1 f_3 f_4 f_5^{n}
- d_{12}d_{15}d_{23}d_{45}f_1 f_2 f_5^{n+1}\\&
- d_{13}d_{15}d_{23}d_{45}f_1 f_3  f_5^{n+1}
+  d_{12}d_{13}d_{24}d_{45}f_1 f_2 f_4 f_5^{n}
- d_{13}d_{14}d_{24}d_{45}f_1 f_4^2  f_5^{n}
- d_{13}d_{15}d_{24}d_{45}f_1 f_4  f_5^{n+1}\\&
+  d_{12}d_{23}d_{24}d_{45}f_2^2 f_4  f_5^{n}
+  d_{13}d_{23}d_{24}d_{45}f_2 f_3 f_4  f_5^{n}
+  d_{14}d_{23}d_{24}d_{45}f_2 f_4^2  f_5^{n}
+  d_{15}d_{23}d_{24}d_{45}f_2 f_4  f_5^{n+1}\\&
+  d_{12}d_{13}d_{25}d_{45}f_1 f_2 f_5^{n+1}
- d_{13}d_{14}d_{25}d_{45}f_1 f_4  f_5^{n+1}
- d_{13}d_{15}d_{25}d_{45}f_1  f_5^{n+2}
+  d_{12}d_{23}d_{25}d_{45}f_2^2  f_5^{n+1}\\&
+  d_{13}d_{23}d_{25}d_{45}f_2 f_3  f_5^{n+1}
+  d_{14}d_{23}d_{25}d_{45}f_2 f_4  f_5^{n+1}
+  d_{15}d_{23}d_{25}d_{45}f_2  f_5^{n+2}
+  d_{12}d_{13}d_{34}d_{45}f_1 f_3 f_4 f_5^{n}\\&
+  d_{12}d_{14}d_{34}d_{45}f_1 f_4^2  f_5^{n}
+  d_{12}d_{15}d_{34}d_{45}f_1 f_4  f_5^{n+1}
+  d_{12}d_{23}d_{34}d_{45}f_2 f_3 f_4  f_5^{n}
+  d_{13}d_{23}d_{34}d_{45}f_3^2 f_4 f_5^{n}\\&
+  d_{14}d_{23}d_{34}d_{45}f_3 f_4^2 f_5^{n}
+  d_{15}d_{23}d_{34}d_{45}f_3 f_4 f_5^{n+1}
+  d_{12}d_{24}d_{34}d_{45}f_2 f_4^2  f_5^{n}
+  d_{13}d_{24}d_{34}d_{45}f_3 f_4^2 f_5^{n}\\&
+  d_{14}d_{24}d_{34}d_{45}f_4^3 f_5^{n}
+  d_{15}d_{24}d_{34}d_{45}f_4^2 f_5^{n+1}
+  d_{12}d_{25}d_{34}d_{45}f_2 f_4  f_5^{n+1}
+  d_{13}d_{25}d_{34}d_{45}f_3 f_4 f_5^{n+1}\\&
+  d_{14}d_{25}d_{34}d_{45}f_4^2 f_5^{n+1}
+  d_{15}d_{25}d_{34}d_{45}f_4  f_5^{n+2}
+  d_{12}d_{13}d_{35}d_{45}f_1 f_3  f_5^{n+1}
+  d_{12}d_{14}d_{35}d_{45}f_1 f_4  f_5^{n+1}\\&
+  d_{12}d_{15}d_{35}d_{45}f_1  f_5^{n+2}
+  d_{12}d_{23}d_{35}d_{45}f_2 f_3  f_5^{n+1}
+  d_{13}d_{23}d_{35}d_{45}f_3^2 f_5^{n+1}
+  d_{14}d_{23}d_{35}d_{45}f_3 f_4 f_5^{n+1}\\&
+  d_{15}d_{23}d_{35}d_{45}f_3  f_5^{n+2}
+  d_{12}d_{24}d_{35}d_{45}f_2 f_4  f_5^{n+1}
+  d_{13}d_{24}d_{35}d_{45}f_3 f_4 f_5^{n+1}
+  d_{14}d_{24}d_{35}d_{45}f_4^2 f_5^{n+1}\\&
+  d_{15}d_{24}d_{35}d_{45}f_4  f_5^{n+2}
+  d_{12}d_{25}d_{35}d_{45}f_2  f_5^{n+2}
+  d_{13}d_{25}d_{35}d_{45}f_3  f_5^{n+2}
+  d_{14}d_{25}d_{35}d_{45}f_4  f_5^{n+2}
\end{align*}
\begin{align*}
&+  d_{15}d_{25}d_{35}d_{45}f_5^{n+3}
+ \de_1 d_{12}d_{13}f_1 f_2 f_3 f_5^{n}
+ \de_1 d_{12}d_{14}f_1 f_2 f_4 f_5^{n}
+ \de_1 d_{12}d_{15}f_1 f_2 f_5^{n+1}\\&
+ \de_1 d_{12}d_{23}f_2^2 f_3  f_5^{n}
+ \de_1 d_{13}d_{23}f_2 f_3^2  f_5^{n}
+ \de_1 d_{14}d_{23}f_2 f_3 f_4  f_5^{n}
+ \de_1 d_{15}d_{23}f_2 f_3  f_5^{n+1}\\&
+ \de_1 d_{12}d_{24}f_2^2 f_4  f_5^{n}
+ \de_1 d_{13}d_{24}f_2 f_3 f_4  f_5^{n}
+ \de_1 d_{14}d_{24}f_2 f_4^2  f_5^{n}
+ \de_1 d_{15}d_{24}f_2 f_4  f_5^{n+1}\\&
+ \de_1 d_{12}d_{25}f_2^2  f_5^{n+1}
+ \de_1 d_{13}d_{25}f_2 f_3  f_5^{n+1}
+ \de_1 d_{14}d_{25}f_2 f_4  f_5^{n+1}
+ \de_1 d_{15}d_{25}f_2  f_5^{n+2}\\&
- \de_2 d_{12}d_{13}f_1^2 f_3  f_5^{n}
- \de_2 d_{12}d_{14}f_1^2 f_4f_5^{n}
- \de_2 d_{12}d_{15}f_1^2 f_5^{n+1}
- \de_2 d_{12}d_{23}f_1 f_2 f_3 f_5^{n}\\&
- \de_2 d_{13}d_{23}f_1 f_3^2 f_5^{n}
- \de_2 d_{14}d_{23}f_1 f_3 f_4 f_5^{n}
- \de_2 d_{15}d_{23}f_1 f_3  f_5^{n+1}
- \de_2 d_{12}d_{24}f_1 f_2 f_4 f_5^{n}\\&
- \de_2 d_{13}d_{24}f_1 f_3 f_4 f_5^{n}
- \de_2 d_{14}d_{24}f_1 f_4^2  f_5^{n}
- \de_2 d_{15}d_{24}f_1 f_4  f_5^{n+1}
- \de_2 d_{12}d_{25}f_1 f_2 f_5^{n+1}\\&
- \de_2 d_{13}d_{25}f_1 f_3  f_5^{n+1}
- \de_2 d_{14}d_{25}f_1 f_4  f_5^{n+1}
- \de_2 d_{15}d_{25}f_1  f_5^{n+2}
+ \de_3 d_{12}d_{13}f_1^2 f_2f_5^{n}\\&
- \de_3 d_{13}d_{14}f_1^2 f_4f_5^{n}
- \de_3 d_{13}d_{15}f_1^2 f_5^{n+1}
+ \de_3 d_{12}d_{23}f_1 f_2^2 f_5^{n}
+ \de_3 d_{13}d_{23}f_1 f_2 f_3 f_5^{n}\\&
+ \de_3 d_{14}d_{23}f_1 f_2 f_4 f_5^{n}
+ \de_3 d_{15}d_{23}f_1 f_2 f_5^{n+1}
- \de_3 d_{12}d_{34}f_1 f_2 f_4 f_5^{n}
- \de_3 d_{13}d_{34}f_1 f_3 f_4 f_5^{n}\\&
- \de_3 d_{14}d_{34}f_1 f_4^2  f_5^{n}
- \de_3 d_{15}d_{34}f_1 f_4  f_5^{n+1}
- \de_3 d_{12}d_{35}f_1 f_2 f_5^{n+1}
- \de_3 d_{13}d_{35}f_1 f_3  f_5^{n+1}\\&
- \de_3 d_{14}d_{35}f_1 f_4  f_5^{n+1}
- \de_3 d_{15}d_{35}f_1  f_5^{n+2}
+ \de_4 d_{12}d_{14}f_1^2 f_2f_5^{n}
+ \de_4 d_{13}d_{14}f_1^2 f_3  f_5^{n}\\&
- \de_4 d_{14}d_{15}f_1^2 f_5^{n+1}
+ \de_4 d_{12}d_{24}f_1 f_2^2 f_5^{n}
+ \de_4 d_{13}d_{24}f_1 f_2 f_3 f_5^{n}
+ \de_4 d_{14}d_{24}f_1 f_2 f_4 f_5^{n}\\&
+ \de_4 d_{15}d_{24}f_1 f_2 f_5^{n+1}
+ \de_4 d_{12}d_{34}f_1 f_2 f_3 f_5^{n}
+ \de_4 d_{13}d_{34}f_1 f_3^2 f_5^{n}
+ \de_4 d_{14}d_{34}f_1 f_3 f_4 f_5^{n}\\&
+ \de_4 d_{15}d_{34}f_1 f_3  f_5^{n+1}
- \de_4 d_{12}d_{45}f_1 f_2 f_5^{n+1}
- \de_4 d_{13}d_{45}f_1 f_3  f_5^{n+1}
- \de_4 d_{14}d_{45}f_1 f_4  f_5^{n+1}\\&
- \de_4 d_{15}d_{45}f_1  f_5^{n+2}
+ \de_5 d_{12}d_{15}f_1^2 f_2f_5^{n}
+ \de_5 d_{13}d_{15}f_1^2 f_3  f_5^{n}
+ \de_5 d_{14}d_{15}f_1^2 f_4f_5^{n}\\&
+ \de_5 d_{12}d_{25}f_1 f_2^2 f_5^{n}
+ \de_5 d_{13}d_{25}f_1 f_2 f_3 f_5^{n}
+ \de_5 d_{14}d_{25}f_1 f_2 f_4 f_5^{n}
+ \de_5 d_{15}d_{25}f_1 f_2 f_5^{n+1}\\&
+ \de_5 d_{12}d_{35}f_1 f_2 f_3 f_5^{n}
+ \de_5 d_{13}d_{35}f_1 f_3^2 f_5^{n}
+ \de_5 d_{14}d_{35}f_1 f_3 f_4 f_5^{n}
+ \de_5 d_{15}d_{35}f_1 f_3  f_5^{n+1}\\&
+ \de_5 d_{12}d_{45}f_1 f_2 f_4 f_5^{n}
+ \de_5 d_{13}d_{45}f_1 f_3 f_4 f_5^{n}
+ \de_5 d_{14}d_{45}f_1 f_4^2  f_5^{n}
+ \de_5 d_{15}d_{45}f_1 f_4  f_5^{n+1}.
\end{align*}

\medskip

We recall that the construction of this vector is also sketched by Rudakov in \cite[\S 4]{R}.

\begin{figure}
	\begin{center}
		\scalebox{1}{	$$
			\begin{tikzpicture}
			
			\draw[fill=none]{(6,1) circle(3pt)};
			\draw[fill=none]{(6,2) circle(3pt)};
			\draw[fill=none]{(6,3) circle(3pt)};
			\draw[fill=none]{(6,4) circle(3pt)};
			\draw[fill=none]{(6,5) circle(3pt)};
			
			\draw[fill=none]{(0,1) circle(3pt)};
			\draw[fill=none]{(1,1) circle(3pt)};
			\draw[fill=none]{(2,1) circle(3pt)};
			\draw[fill=none]{(3,1) circle(3pt)};
			\draw[fill=none]{(4,1) circle(3pt)};
			\draw[fill=none]{(5,1) circle(3pt)};
			\draw[fill=none]{(0,2) circle(3pt)};
			\draw[fill=none]{(1,2) circle(3pt)};
			\draw[fill=none]{(2,2) circle(3pt)};
			\draw[fill=none]{(3,2) circle(3pt)};
			\draw[fill=none]{(4,2) circle(3pt)};
			\draw[fill=none]{(5,2) circle(3pt)};
			\draw[fill=none]{(0,3) circle(3pt)};
			\draw[fill=none]{(1,3) circle(3pt)};
			\draw[fill=none]{(2,3) circle(3pt)};
			\draw[fill=none]{(3,3) circle(3pt)};
			\draw[fill=none]{(4,3) circle(3pt)};
			\draw[fill=none]{(5,3) circle(3pt)};
			\draw[fill=none]{(0,4) circle(3pt)};
			\draw[fill=none]{(1,4) circle(3pt)};
			\draw[fill=none]{(2,4) circle(3pt)};
			\draw[fill=none]{(3,4) circle(3pt)};
			\draw[fill=none]{(4,4) circle(3pt)};
			\draw[fill=none]{(5,4) circle(3pt)};
			\draw[fill=none]{(0,5) circle(3pt)};
			\draw[fill=none]{(1,5) circle(3pt)};
			\draw[fill=none]{(2,5) circle(3pt)};
			\draw[fill=none]{(3,5) circle(3pt)};
			\draw[fill=none]{(4,5) circle(3pt)};
			\draw[fill=none]{(5,5) circle(3pt)};
			
			\draw[fill=none]{(1,-2) circle(3pt)};
			\draw[fill=none]{(2,-2) circle(3pt)};
			\draw[fill=none]{(3,-2) circle(3pt)};
			\draw[fill=none]{(4,-2) circle(3pt)};
			\draw[fill=none]{(5,-2) circle(3pt)};
			\draw[fill=none]{(1,-2) circle(3pt)};
			%\draw[fill=none]{(1,-3) circle(3pt)};
			\draw[fill=none]{(2,-3) circle(3pt)};
			\draw[fill=none]{(3,-3) circle(3pt)};
			\draw[fill=none]{(4,-3) circle(3pt)};
			\draw[fill=none]{(5,-3) circle(3pt)};
			\draw[fill=none]{(1,-3) circle(3pt)};
			%\draw[fill=none]{(1,-4) circle(3pt)};
			\draw[fill=none]{(2,-4) circle(3pt)};
			\draw[fill=none]{(3,-4) circle(3pt)};
			\draw[fill=none]{(4,-4) circle(3pt)};
			\draw[fill=none]{(5,-4) circle(3pt)};
			\draw[fill=none]{(1,-4) circle(3pt)};
			%\draw[fill=none]{(1,-5) circle(3pt)};
			\draw[fill=none]{(2,-5) circle(3pt)};
			\draw[fill=none]{(3,-5) circle(3pt)};
			\draw[fill=none]{(4,-5) circle(3pt)};
			\draw[fill=none]{(5,-5) circle(3pt)};
			\draw[fill=none]{(1,-5) circle(3pt)};
			%\draw[fill=none]{(1,-6) circle(3pt)};
			\draw[fill=none]{(2,-1) circle(3pt)};
			\draw[fill=none]{(3,-1) circle(3pt)};
			\draw[fill=none]{(4,-1) circle(3pt)};
			\draw[fill=none]{(5,-1) circle(3pt)};
			\draw[fill=none]{(1,-1) circle(3pt)};
			\draw[fill=none]{(-1,0) circle(3pt)};
			\draw[fill=none]{(-2,0) circle(3pt)};
			\draw[fill=none]{(-3,0) circle(3pt)};
			\draw[fill=none]{(-4,0) circle(3pt)};
			\draw[fill=none]{(-5,0) circle(3pt)};
			%\draw[fill=none]{(-1,-7) circle(3pt)};
			\draw[fill=none]{(-2,-6) circle(3pt)};
			\draw[fill=none]{(-3,-6) circle(3pt)};
			\draw[fill=none]{(-4,-6) circle(3pt)};
			\draw[fill=none]{(-5,-6) circle(3pt)};
			\draw[fill=none]{(-1,-6) circle(3pt)};
			\draw[fill=none]{(-1,-1) circle(3pt)};
			%\draw[fill=none]{(-1,-1) circle(3pt)};
			\draw[fill=none]{(-2,-1) circle(3pt)};
			\draw[fill=none]{(-3,-1) circle(3pt)};
			\draw[fill=none]{(-4,-1) circle(3pt)};
			\draw[fill=none]{(-5,-1) circle(3pt)};
			\draw[fill=none]{(-1,-2) circle(3pt)};
			%\draw[fill=none]{(-1,-2) circle(3pt)};
			\draw[fill=none]{(-2,-2) circle(3pt)};
			\draw[fill=none]{(-3,-2) circle(3pt)};
			\draw[fill=none]{(-4,-2) circle(3pt)};
			\draw[fill=none]{(-5,-2) circle(3pt)};
			\draw[fill=none]{(-1,-3) circle(3pt)};
			%\draw[fill=none]{(-1,-3) circle(3pt)};
			\draw[fill=none]{(-2,-3) circle(3pt)};
			\draw[fill=none]{(-3,-3) circle(3pt)};
			\draw[fill=none]{(-4,-3) circle(3pt)};
			\draw[fill=none]{(-5,-3) circle(3pt)};
			\draw[fill=none]{(-1,-4) circle(3pt)};
			%\draw[fill=none]{(-1,-4) circle(3pt)};
			\draw[fill=none]{(-2,-4) circle(3pt)};
			\draw[fill=none]{(-3,-4) circle(3pt)};
			\draw[fill=none]{(-4,-4) circle(3pt)};
			\draw[fill=none]{(-5,-4) circle(3pt)};
			\draw[fill=none]{(-1,-5) circle(3pt)};
			%\draw[fill=none]{(-1,-5) circle(3pt)};
			\draw[fill=none]{(-2,-5) circle(3pt)};
			\draw[fill=none]{(-3,-5) circle(3pt)};
			\draw[fill=none]{(-4,-5) circle(3pt)};
			\draw[fill=none]{(-5,-5) circle(3pt)};
			
			\draw[gray,thick](-5.5,0)--(-5.1,0);
			\draw[gray,thick](0,0)--(6.5,0);
			\draw[gray,thick](0,0)--(0,-6);
			\draw[gray, thick](0,0.9)--(0,0);
			\draw[gray, thick](-0.9,0)--(0,0);
			%oblique di grado 2
			\draw[->, line width=1pt] (2,0.9) to [out=270,in=45] (1.09,-0.91);
			%\draw[->, line width=1pt](2,0.9)--(1.9,0)--(1,-1.9);
			\draw[->, line width=1pt](3,0.9) to [out=270, in=45] (2.09,-0.91);
			\draw[->, line width=1pt](4,0.9) to [out=270, in=45] (3.09,-0.91);
			\draw[->, line width=1pt](5,0.9) to [out=270, in=45] (4.09,-0.91);
			\draw[->, line width=1pt](6,0.9) to [out=270, in=45] (5.09,-0.91);
			%verticali di grado 1
			\draw[->, line width=1pt](0,1.9)--(0,1.1);
			\draw[->, line width=1pt](1,1.9)--(1,1.1);
			\draw[->, line width=1pt](2,1.9)--(2,1.1);
			\draw[->, line width=1pt](3,1.9)--(3,1.1);
			\draw[->, line width=1pt](4,1.9)--(4,1.1);
			\draw[->, line width=1pt](5,1.9)--(5,1.1);
			\draw[->, line width=1pt](6,1.9)--(6,1.1);
			\draw[->, line width=1pt](0,2.9)--(0,2.1);
			\draw[->, line width=1pt](1,2.9)--(1,2.1);
			\draw[->, line width=1pt](2,2.9)--(2,2.1);
			\draw[->, line width=1pt](3,2.9)--(3,2.1);
			\draw[->, line width=1pt](4,2.9)--(4,2.1);
			\draw[->, line width=1pt](5,2.9)--(5,2.1);
			\draw[->, line width=1pt](6,2.9)--(6,2.1);
			\draw[->, line width=1pt](0,3.9)--(0,3.1);
			\draw[->, line width=1pt](1,3.9)--(1,3.1);
			\draw[->, line width=1pt](2,3.9)--(2,3.1);
			\draw[->, line width=1pt](3,3.9)--(3,3.1);
			\draw[->, line width=1pt](4,3.9)--(4,3.1);
			\draw[->, line width=1pt](5,3.9)--(5,3.1);
			\draw[->, line width=1pt](6,3.9)--(6,3.1);
			\draw[->, line width=1pt](0,4.9)--(0,4.1);
			\draw[->, line width=1pt](1,4.9)--(1,4.1);
			\draw[->, line width=1pt](2,4.9)--(2,4.1);
			\draw[->, line width=1pt](3,4.9)--(3,4.1);
			\draw[->, line width=1pt](4,4.9)--(4,4.1);
			\draw[->, line width=1pt](5,4.9)--(5,4.1);
			\draw[->, line width=1pt](6,4.9)--(6,4.1);
			\draw[->, line width=1pt](0,5.5)--(0,5.1);
			\draw[->, line width=1pt](1,5.5)--(1,5.1);
			\draw[->, line width=1pt](2,5.5)--(2,5.1);
			\draw[->, line width=1pt](3,5.5)--(3,5.1);
			\draw[->, line width=1pt](4,5.5)--(4,5.1);
			\draw[->, line width=1pt](5,5.5)--(5,5.1);
			\draw[->, line width=1pt](6,5.5)--(6,5.1);

			%orizzontali di grado 1
			\draw[->, line width=1pt](-1.1,0)--(-1.9,0);
			%\draw[->, line width=1pt](-1.1,-7)--(-1.9,-7);
			\draw[->, line width=1pt](-2.1,0)--(-2.9,0);
			\draw[->, line width=1pt](-3.1,0)--(-3.9,0);
			\draw[->, line width=1pt](-4.1,0)--(-4.9,0);
			\draw[line width=1pt](-5.1,-1)--(-5.5,-1);
			\draw[line width=1pt](-5.1,-2)--(-5.5,-2);
			\draw[line width=1pt](-5.1,-3)--(-5.5,-3);
			\draw[line width=1pt](-5.1,-4)--(-5.5,-4);
			\draw[line width=1pt](-5.1,-5)--(-5.5,-5);
			\draw[line width=1pt](-5.1,-6)--(-5.5,-6);
			\draw[->, line width=1pt](-1.1,-1)--(-1.9,-1);
			%\draw[->, line width=1pt](-1.1,-1)--(-1.9,-1);
			\draw[->, line width=1pt](-2.1,-1)--(-2.9,-1);
			\draw[->, line width=1pt](-3.1,-1)--(-3.9,-1);
			\draw[->, line width=1pt](-4.1,-1)--(-4.9,-1);
			\draw[->, line width=1pt](-1.1,-2)--(-1.9,-2);
			%\draw[->, line width=1pt](-1.1,-2)--(-1.9,-2);
			\draw[->, line width=1pt](-2.1,-2)--(-2.9,-2);
			\draw[->, line width=1pt](-3.1,-2)--(-3.9,-2);
			\draw[->, line width=1pt](-4.1,-2)--(-4.9,-2);
			\draw[->, line width=1pt](-1.1,-3)--(-1.9,-3);
			%\draw[->, line width=1pt](-1.1,-3)--(-1.9,-3);
			\draw[->, line width=1pt](-2.1,-3)--(-2.9,-3);
			\draw[->, line width=1pt](-3.1,-3)--(-3.9,-3);
			\draw[->, line width=1pt](-4.1,-3)--(-4.9,-3);
			\draw[->, line width=1pt](-1.1,-4)--(-1.9,-4);
			\draw[->, line width=1pt](-2.1,-4)--(-2.9,-4);
			\draw[->, line width=1pt](-3.1,-4)--(-3.9,-4);
			\draw[->, line width=1pt](-4.1,-4)--(-4.9,-4);
			\draw[->, line width=1pt](-1.1,-5)--(-1.9,-5);
			%\draw[->, line width=1pt](-1.1,-5)--(-1.9,-5);
			\draw[->, line width=1pt](-2.1,-5)--(-2.9,-5);
			\draw[->, line width=1pt](-3.1,-5)--(-3.9,-5);
			\draw[->, line width=1pt](-4.1,-5)--(-4.9,-5);
			\draw[->, line width=1pt](-1.1,-6)--(-1.9,-6);
			%\draw[->, line width=1pt](-1.1,-6)--(-1.9,-6);
			\draw[->, line width=1pt](-2.1,-6)--(-2.9,-6);
			\draw[->, line width=1pt](-3.1,-6)--(-3.9,-6);
			\draw[->, line width=1pt](-4.1,-6)--(-4.9,-6);
			%\draw[->, line width=1pt](2,0.9)--(2,-2.9);
			%oblique di grado 2 e 3 
			\draw[->, line width=1pt](0.91,-2.09) to [out=225, in=0](-0.9,-3);
			\draw[->, line width=1pt](0.91,-3.09) to [out=225, in=0](-0.9,-4);
			\draw[->, line width=1pt](0.91,-4.09) to [out=225, in=0](-0.9,-5);
			\draw[->, line width=1pt](0.91,-5.09) to [out=225, in=0](-0.9,-6);
			\draw[->, line width=1pt](0.91,-1.09)  to [out=225, in=0](-0.9,-2);
			
			\draw[->, line width=1pt](1,0.9) to [out=270, in=45] (0.5,-0.5) to [out=225, in=0](-0.9,-1);
			\draw[->, line width=1pt](0,0.9) to [out=270, in=0] (-0.9,0);
			
			%oblique di grado 1
			\draw[->, line width=1pt](1.9,-2.1)--(1.1,-2.9);
			\draw[->, line width=1pt](2.9,-2.1)--(2.1,-2.9);
			\draw[->, line width=1pt](3.9,-2.1)--(3.1,-2.9);
			\draw[->, line width=1pt](4.9,-2.1)--(4.1,-2.9);
			%\draw[->, line width=1pt](5.9,-2.1)--(5.1,-2.9);
			\draw[->, line width=1pt](5.5,-2.5)--(5.1,-2.9);
			\draw[->, line width=1pt](1.9,-3.1)--(1.1,-3.9);
			\draw[->, line width=1pt](2.9,-3.1)--(2.1,-3.9);
			\draw[->, line width=1pt](3.9,-3.1)--(3.1,-3.9);
			\draw[->, line width=1pt](4.9,-3.1)--(4.1,-3.9);
			\draw[->, line width=1pt](1.9,-3.1)--(1.1,-3.9);
			\draw[->, line width=1pt](5.5,-3.5)--(5.1,-3.9);
			%\draw[->, line width=1pt](1.9,-4.1)--(1.1,-4.9);
			\draw[->, line width=1pt](2.9,-4.1)--(2.1,-4.9);
			\draw[->, line width=1pt](3.9,-4.1)--(3.1,-4.9);
			\draw[->, line width=1pt](4.9,-4.1)--(4.1,-4.9);
			\draw[->, line width=1pt](1.9,-4.1)--(1.1,-4.9);
			\draw[->, line width=1pt](5.5,-4.5)--(5.1,-4.9);
			%\draw[->, line width=1pt](1.9,-5.1)--(1.1,-5.9);
			\draw[->, line width=1pt](2.9,-1.1)--(2.1,-1.9);
			\draw[->, line width=1pt](3.9,-1.1)--(3.1,-1.9);
			\draw[->, line width=1pt](4.9,-1.1)--(4.1,-1.9);
			\draw[->, line width=1pt](1.9,-1.1)--(1.1,-1.9);
			\draw[->, line width=1pt](5.5,-1.5)--(5.1,-1.9);
			%\draw[->, line width=1pt](1.9,-6.1)--(1.1,-6.9);
			%\draw[->, line width=1pt](2.9,-6.1)--(2.1,-6.9);
			%\draw[->, line width=1pt](3.9,-6.1)--(3.1,-6.9);
			%\draw[->, line width=1pt](4.9,-6.1)--(4.1,-6.9);
			%\draw[->, line width=1pt](5.9,-6.1)--(5.1,-6.9);
			%\draw[->, line width=1pt](6.5,-6.5)--(6.1,-6.9);
			%\draw[line width=1pt](1.9,-7.1)--(1.6,-7.4);
			\draw[line width=1pt](2.9,-5.1)--(2.6,-5.4);
			\draw[line width=1pt](3.9,-5.1)--(3.6,-5.4);
			\draw[line width=1pt](4.9,-5.1)--(4.6,-5.4);
			\draw[line width=1pt](1.9,-5.1)--(1.6,-5.4);
			
			%\draw[->,line width=1pt](2.9,-0.05)--(-1.9,-0.95);

			%\draw[fill=black]{(-2,0) circle(3pt)};
			%\draw[fill=black]{(3.23,3.8) circle(3pt)};
			%\draw[fill=black]{(-3.23,3.8) circle(3pt)};
			%\draw[fill=black]{(0,6.75) circle(3pt)};
			
			\node at (-6,0) {$m$};
			\node at (7,0) {$m$};
			\node at (0,5.8) {$n$};
			\node at (0,-6.3) {$n$};
			\node at (-0.5,0.4) {$2$};
			\node at (0.4,-0.3) {$3$};
			\node at (-0.3,-2.7) {$2$};
			\node at (-0.3,-3.7) {$2$};
			\node at (-0.3,-4.7) {$2$};
			\node at (-0.3,-5.7) {$2$};
			\node at (-0.3,-1.7) {$2$};
			%\node at (-0.4,-1.8) {$5$};
			%\node at (0.7,-0.7) {$5$};
			\node at (1.9,-0.3) {$2$};
			\node at (2.9,-0.3) {$2$};
			\node at (3.9,-0.3) {$2$};
			\node at (4.9,-0.3) {$2$};
			\node at (5.9,-0.3) {$2$};
			\node at (6.6,6){$M(m,n,0,0)$};
			\node at (-5.5,-6.8){$M(0,0,m,n)$};
			\node at (6,-6.8){$M(m,0,0,n)$};
		\end{tikzpicture}$$}
	\caption{\label{E510morphismsbis} All nonzero morphisms between finite generalized Verma modules for $E(5,10)$ of degree 2 and 3 and their infinite bilateral complexes.} 
	\end{center}
	\end{figure}
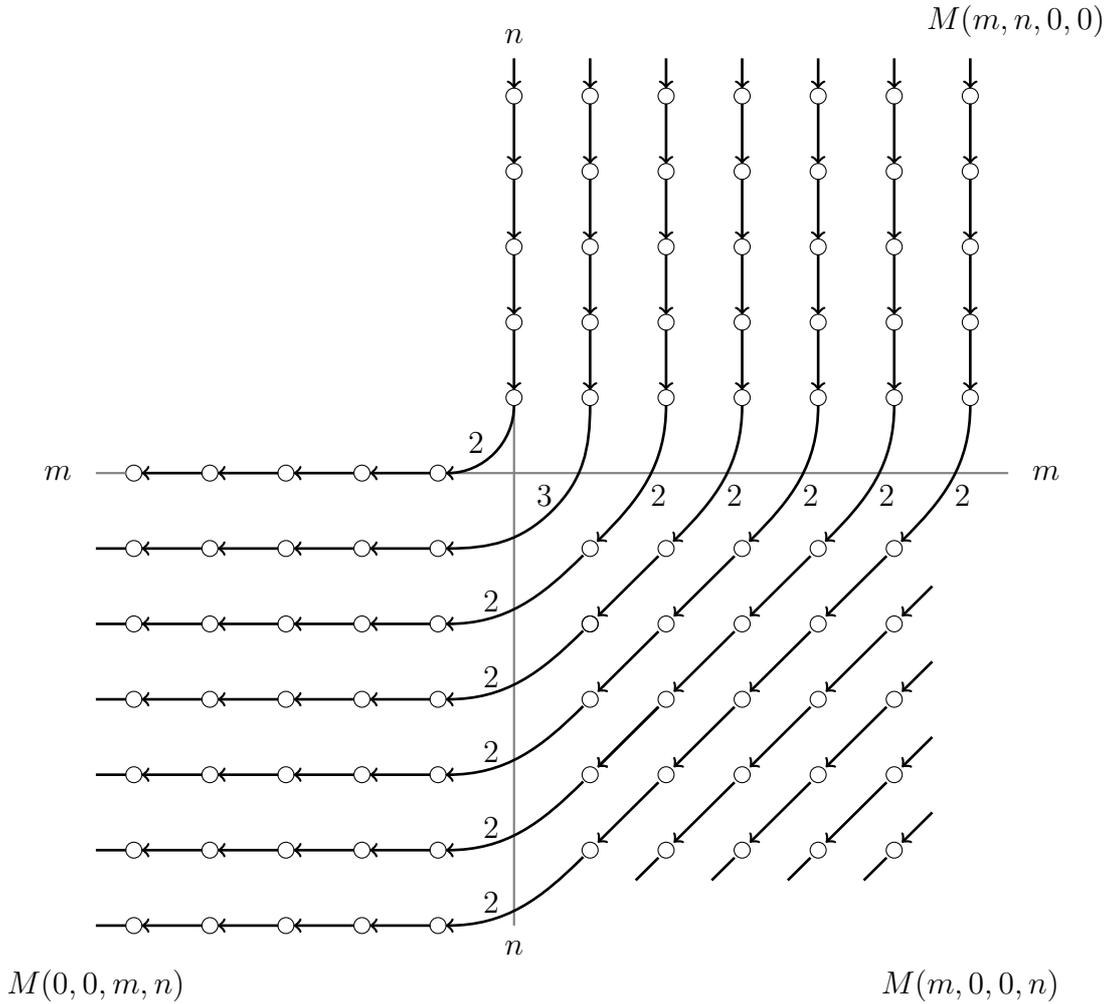
\begin{ack} N.C. and F.C. would like to thank Alessandro D'Andrea for useful conversations. F.C. would like to thank his wife Rossana for her support in the preparation of the computer program that has been used in this work.
\end{ack}
\newpage

\end{document}